\title[An excision theorem 
 for the K-theory of $C^{*}$-algebras]{An excision theorem 
 for the K-theory of $C^{*}$-algebras, with 
 applications to groupoid $C^{*}$-algebras}
\author{Ian F. Putnam}
 \address{Department of Mathematics and Statistics,\\
 University of Victoria, \\
 Victoria, B.C. \\
 Canada}
 \email{ifputnam at uvic.ca}
 \date{August 21, 2020}
\thanks{Supported by a Discovery Grant
 from the Natural Sciences and Engineering Research Council, Canada}
\subjclass[2010]{19K99,22A22,46L80}
\newcommand{\N}{\mathbb{N}}
\newcommand{\Z}{\mathbb{Z}}
\newcommand{\R}{\mathbb{R}}
\newcommand{\C}{\mathbb{C}}
\newcommand{\tA}{\widetilde{A}}
\newcommand{\ta}{\widetilde{A'}}
\newcommand{\tB}{\widetilde{B}}
\newtheorem{defn}{Definition}[section]
\newtheorem{thm}[defn]{Theorem}
\newtheorem{ex}[defn]{Example}
\newtheorem{prop}[defn]{Proposition}
\newtheorem{cor}[defn]{Corollary}
\newtheorem{lemma}[defn]{Lemma}
\newtheorem{rmk}[defn]{Remark}
\begin{document}

\begin{abstract}
We discuss the relative K-theory for a $C^{*}$-algebra, $A$, together 
with a $C^{*}$-subalgebra, $A' \subseteq A$. The relative group 
is denoted $K_{i}(A';A), i = 0, 1$, and is due to Karoubi. 
 We present a situation of two pairs $A' \subseteq A$ and $B' \subseteq B$
 are related so that there is a natural isomorphism 
 between their respective relative K-theories. We  also discuss 
 applications to the case where $A$ and $B$ are $C^{*}$-algebras
 of a pair of locally compact, Hausdorff topological groupoids, with 
 Haar systems. 
\end{abstract}

\maketitle

\section{Introduction}
\label{1}

The  goal of this paper is the computation of K-theory groups
of the reduced $C^{*}$-algebras of groupoids, meaning locally compact, Hausdorff groupoids with a Haar system. 
 To be more specific, we will be concerned with a pair of 
 groupoids which are related in some way so that one 
 reduced $C^{*}$-algebra is a $C^{*}$-subalgebra of the other. 
 Our results  allow
 computation of the relative K-theory  of this pair.
 
 Results along this line have already been obtained in \cite{Put:exc} 
 and \cite{Put:grpd}, but under very restrictive hypotheses.
 In particular, the groupoids there are principal and \'{e}tale.
 Moreover, the relation between the pair of groupoids is very limited.
 Our aim here is to extend the generality of these results.
 At the same, we give a much simpler, more conceptual 
 description of the isomorphism between relative groups that
 is our main objective.
 
 As the theory of groupoid $C^{*}$-algebras becomes rather technical
 quite quickly, we will devote this section to a discussion of the 
 the principal ingredients in the paper along with a couple
 of rather simple examples which 
 nicely illustrate some of the main ideas.
 
 Our first key ingredient is the notion of \emph{relative}
 K-theory. Given a $C^{*}$-algebra, $A$, and a $C^{*}$-subalgebra, 
 $A'$, Karoubi \cite{Ka:book}
 defined relative groups $K_{i}(A';A), i = 0,1$. We will 
 review his definition (at least for $i=0$) in the next section.
 (This can be defined for any 
 $*$-homomorphism $\alpha: A' \rightarrow A$. We will not need the definition
 in this generality, but
  we refer the reader to \cite{Ha:relK} for
 more details.)
 A key consequence is the existence of a six-term exact sequence:
 
 \vspace{.5cm}
\hspace{2cm}

\xymatrix{ 
  K_{0}(A';A)  \ar[r]^{\nu} & 
  K_{0}(A')  \ar[r]^{i_{*}}   
     & K_{0}(A)  \ar[d]^{\mu} \\
 K_{1}(A) \ar[u]^{\mu} &  
  K_{1}( A')  \ar[l]^{i_{*}} &
  K_{1}(A';A)  \ar[l]^{\nu} }
 
  \vspace{.5cm}
  where $i$ denotes the inclusion map.
  
 Our main results may be described as \emph{excision}, meaning 
 that the relative K-theory of a pair $A' \subseteq A$ depends 
 only on  $A - A'$. Of course, if $A', A$ are topological spaces
 $A - A'$ makes perfect sense as a topological space, but this doesn't
 make so much sense for $C^{*}$-algebras. In that setting, a 
 nice first example of excision is the following: suppose that 
 $A'$ is a closed two-sided ideal in $A$, then 
 $K_{i}(A';A) \cong K_{i+1}(A/A')$, where $A/A'$ is the usual quotient 
 $C^{*}$-algebra.
 
 Let us re-state that result in a way
  which will be useful
 for comparison later. Suppose that $A, B, C$ are 
 $C^{*}$-algebras and $\alpha: A \rightarrow C, \beta: B \rightarrow C$
 are $*$-homomorphisms. If $\alpha(A) = \beta(B)$, then the fact that
 the kernels are ideals implies that
 \[
 K_{*}(\ker( \alpha); A) \cong K_{*}(\alpha(A) ) = 
 K_{*}(\beta(B) )  \cong K_{*}(\ker( \beta); B).
 \]
 Our main results will be concerned with replacing $*$-homomorphisms
 in this statement with bounded $*$-derivations. At this point, we
 merely note that the kernel of a bounded $*$-derivation is
 a $C^{*}$-subalgebra, although not an ideal. 
 
 Let us give a very easy example using commutative $C^{*}$-algebras.
 Let $X$ be any compact, Hausdorff space. Choose two 
 distinct points, $y_{1}, y_{2}$, in $X$ and let $X'$ be the quotient space
 obtained by identifying them and $\pi:X \rightarrow X'$ be 
 the quotient map. This means that $\pi$ induces 
 an injection of $C(X')$ in $C(X)$. Alternately, 
 we can write
 \[
 C(X') = \{ f \in C(X) \mid f(y_{1}) = f(y_{2}) \}.
 \]
 
 These two algebras differ only at the points $Y = \{ y_{1}, y_{2} \}$
 in $X$, or at $Y' = \{ [y_{1}] \}$ in $X'$. We have the diagram:
 \vspace{.5cm}
 
 \hspace{1cm}
 \xymatrix{
 0 \ar[r] & C_{0}(X-Y) \ar[r] & C(X) \ar[r] & C(Y) \ar[r] & 0 \\
 0 \ar[r] & C_{0}(X'-Y') \ar[r]  \ar[u]^{=} &
  C(X') \ar[r]  \ar[u]^{\subseteq}  & C(Y') \ar[r] \ar[u]^{\subseteq} & 0 }
 \vspace{.5cm}
 
 \noindent
 which is commutative and has  exact rows.
There is an associated six-term exact sequence involving the three
relative K-groups, and since in one of them, the inclusion
is actually an equality, that relative group is zero. We conclude that
there is an isomorphism
\[
K_{i}(C(X');C(X)) \cong  K_{i}(C(Y');C(Y)), i = 0,1.
\]
The latter group is rather easy to compute using the exact sequence above.

This computation can also be regarded as an excision result. While considering
 this example, let us see how the notion of 
$*$-derivation can be useful. Assuming that $X$ is separable, 
let  us choose a countable dense
subset, $Y \subset Z \subset X$. There is an obvious
representation of $C(X)$ on $\ell^{2}(Z)$ and a slightly
less obvious one of $C(Y)$, which is zero on $\ell^{2}(Z - Y)$.
Let $F$ be the self-adjoint operator which is the identity
on $\ell^{2}(Z - Y)$ and such that $F \xi(y_{i}) = \xi(y_{3-i})$, 
for $i=1,2$ and $\xi$ in $\ell^{2}(Z)$. Also, let 
$\delta(a) = i [F, a]$, for any bounded operator $a$ on $\ell^{2}(Z)$.
Observe that $\delta$ is a bounded $*$-derivation with
\[
\begin{array}{rcl}
\ker(\delta) \cap C(X)&  = &  C(X') \\
\ker(\delta) \cap C(Y)&  = &  C(Y')  \\
\delta(C(X)) & = & \delta(C(Y)).
\end{array}
\]
Put in this way, the excision result above
 now looks similar to the earlier
result on the kernels of $*$-homomorphisms.

We now look at another example, which has many similarities with 
the last, but displays some important new features.
Let $X = \{ 0, 1 \}^{\N}$, $ X' = [0, 1]$ and $\pi:X \rightarrow X'$
be defined by 
\[
\pi(x) = \sum_{n=1}^{\infty} x_{n}2^{-n}, 
\]
for $x = (x_{n})_{n=1}^{\infty}$ in $X = \{ 0, 1 \}^{\N}$.
This can also be described as the restriction of the devil's staircase
to the Cantor ternary set. It is also known less formally as
 base $2$ expansion of real numbers.

We let $Y' = \{ k2^{-n} \mid n \geq 1, 0 < k < 2^{n} \}$ and 
$Y = \pi^{-1}(Y')$. It is a rather simple matter to check that 
$\pi$ is one-to-one on $X-Y$ and is two-to-one on $Y$. In fact, for $y'$ in 
$Y'$, $\pi^{-1}\{ y' \}$ consists of two points,
$(x_{1}, x_{2}, \ldots, x_{n}, 1, 0, 0, \ldots)$ and 
$(x_{1}, x_{2}, \ldots, x_{n}, 0, 1, 1, \ldots)$. At this point, 
the situation is very much like our last example. The significant difference
is that $Y$ and $Y'$ are no longer closed and the diagram we had 
above is no longer available.

The solution here is to introduce new topologies on $Y \subseteq X$ 
and $Y' \subseteq [0,1]$ which are finer
than the relative topologies from $X$ and $[0,1]$, respectively, in which
they are locally compact (and still Hausdorff). In this case, the 
obvious choice is the discrete topologies.  Let us continue the 
development with derivations we had in the earlier case. We represent
$C(X)$ and $C_{0}(Y)$ as multiplication operators on $\ell^{2}(Y)$
(noting that $Y$ is conveniently dense in $X$). Observe that $C(X)$ acts as 
multipliers of $C_{0}(Y)$. This is a result of the fact that for 
any $f$ in $C(X)$, its restriction to $Y$ will remain continuous
in any finer topology. Further, define $F$ to be the operator 
$F \xi(y) = \xi(\bar{y})$, where $\bar{y}$ is the unique point in $Y$ 
with $\pi(\bar{y}) = \pi(y)$ and $\bar{y} \neq y$. Again define 
$\delta(a) = i[F, a]$ for any bounded operator on $\ell^{2}(Y)$. 
It is a simple matter to check that 
\[
\begin{array}{rcl}
\ker(\delta) \cap C(X)&  = &  C(X') \\
\ker(\delta) \cap C_{0}(Y)&  = &  C_{0}(Y')  \\
\delta(C(X)) & = & \delta(C_{0}(Y)).
\end{array}
\]
In this situation, the conclusion that
\[
K_{i}(C(X');C(X)) \cong  K_{i}(C_{0}(Y');C_{0}(Y)), i = 0,1.
\]
follows from our main result, Theorem \ref{3:70}.

The key feature in this last example, which differs from the first we gave, 
is the idea that the subset where the two algebras of functions differ
must be endowed with a new, finer topology. At the same time, we are 
interested in groupoid $C^{*}$-algebras and the issue of endowing 
a subgroupoid with a new finer topology so that the original algebra
acts as multipliers of the smaller one is a considerable technical one.

The two examples we have listed above are part of a general class
which we refer to as 'factor groupoids'. We develop the theory in some generality
in section 7. The idea, avoiding many technical issues,
 is to take a surjective morphism
of groupoids $\pi: G \rightarrow G'$. Under some  hypotheses, we
show that this induces an inclusion
$C^{*}_{r}(G') \subseteq C^{*}_{r}(G)$. We then consider $H \subseteq G$ 
and $H' \subseteq G'$ to be the subgroupoids where the map $\pi$ fails to be 
one-to-one. Under a number of technical hypotheses,
 we first show that $H$ and $H'$ 
may be given new, finer topologies and prove that we have 
\[
K_{*}(C^{*}_{r}(G'); C^{*}_{r}(G)) \cong K_{*}(C^{*}_{r}(H'); C^{*}_{r}(H)),
\]
the latter being significantly simpler to compute in many examples.

The other situation which is considered in section 6 is one we refer to
an 'subgroupoids'. Here, we suppose that $G$ is a groupoid 
and $G^{0} \subseteq  G' \subseteq G$ is an open subgroupoid. Again we have 
an inclusion $C^{*}_{r}(G') \subseteq C^{*}(G)$. In this situation, we 
introduce $H \subseteq G$ 
and $H' \subseteq G'$ as the subgroupoids where the groupoids $G$ and $G'$ differ.
 Again, under a number of technical hypotheses we first show that $H$ and $H'$ 
may be given new, finer topologies and prove that we have 
\[
K_{*}(C^{*}_{r}(G'); C^{*}_{r}(G)) \cong K_{*}(C^{*}_{r}(H'); C^{*}_{r}(H)),
\]
the latter again 
being significantly simpler to compute in many examples.

Let us mention that, if the groupoids are all amenable, then 
the Baum-Connes conjecture holds \cite{Tu:BC}. It seems likely
that a proof of our results could be given by using this and conventional
excision results in topology. We believe there is some virtue
in working with the $C^{*}$-algebras themselves. 
In particular, this is preferable for doing the computations in 
most applications.

 Let us briefly mention some  applications of the results.
In some cases, these will follow from the earlier paper \cite{Put:grpd}.

For the subgroupoid situation, the simplest 
  example of this
  are the so-called orbit-breaking subalgebras,
   $A_{Y} \subseteq C(X) \ltimes \Z$
  first introduced in \cite{Put:ZCan}.  Indeed, we give a 
  considerable 
  generalization of this construction at the end of the section 
  6 in 
  Theorem \ref{6:250} and Corollary \ref{6:260}.
  
  Another application was given in \cite{Put:K}.  The main question is, 
  given some $K$-theory data, can one construct an \'{e}tale
   groupoid whose associated $C^{*}$-algebra falls in the 
   Elliott classification scheme and has the given $K$-theory groups.
   In this case, assuming the $K$-zero group is a simple , 
   acyclic dimension group
   $K$-one is torsion free,
   one begins with $G'$ as the AF-equivalence relation with that $K$-zero group
   and constructs $G' \subseteq G$ so that $K_{0}(G^{*}(G))$ remains the 
   same, while $K_{1}(C^{*}(G))$ becomes the desired $K$-one group.
   
The subgroupoid results are also used in \cite{DPS:2} for similar purposes, 
including the case of 
non-zero real rank $C^{*}$-algebras.

In \cite{DPS:NonH}, examples were given of non-homogeneous extensions of
minimal Cantor $\Z$-actions. The $K$-theory of these extensions can 
be computed in specific examples \cite{Ha:fac}, using the 
factor groupoid situation.
Additionally, \cite{Ha:fac} considers quotients which may be 
constructed rather analogously to the extensions given in \cite{Put:K}.

Finally, we mention work in progress with Rodrigo Trevi\~{n}o. 
This is based on work of Lindsey and Trevi\~{n}o \cite{LT:BD}
which begins with a bi-infinite ordered Bratteli diagram 
and constructs from it a flat surface with vertical foliation.
Typically, the surface is infinite genus. The foliation 
$C^{*}$-algebra is actually a subalgebra of the AF-algebra 
associated with the Bratteli diagram. In fact, their groupoids
can be related by a two-step process through a third 
groupoid. The first step is that the intermediate 
groupoid is a factor of the AF-equivalence relation. 
The second is that the foliation groupoid is a subgroupoid 
of the intermediate 
groupoid. An interesting consequence of these $K$-theory 
computations is that, if the $K$-zero group of the Bratteli diagram
is not finitely-generated, then the surface is necessarily
infinite genus.

The paper is organized as follows. The next section outlines 
basic facts about relative K-theory for $C^{*}$-algebras.
 The third section is our 
 excision result.
It is stated in considerable generality
 for derivations between $C^{*}$-algebras.
  Its proof is rather long and technical, 
  so it appears separately in section 4.
  
  In section 5, we turn to the rather general question: given 
  a groupoid, $G$, and  a subgroupoid, $H \subseteq G$, endowed with a finer 
  topology, what conditions ensure that the reduced groupoid 
  $C^{*}$-algebra of $G$ acts as multipliers of that of $H$? 
  
  In section 6, we combine the excision results of section three 
  and those of section
  five to consider the situation of an 
  open subgroupoid.
  In section 7, we do the same for the situation of a factor groupoid. 
  
  \vspace{1cm}
  
 \noindent
\textbf{Acknowledgements}

I am particularly grateful to Jonathan Rosenberg for directing me to Karoubi's 
book for relative K-theory. I am also grateful to Jean Renault and 
Dana Williams for  advice and insights on groupoids.
  I have also benefited 
from helpful conversations with Chris Bruce, Anna Duwenig and Mitch Haslehurst.

\section{Relative $K$-theory}
\label{2}

In this section, we discuss a relative K-theory for 
$C^{*}$-algebras introduced by Karoubi \cite{Ka:book}.
Most of the basic ideas are already in \cite{Ka:book}, but we 
will add to them slightly.

The  idea is to consider a $C^{*}$-algebra, $A$, together 
with a \newline
$C^{*}$-subalgebra, $A' \subseteq A$, and to define a 
relative group for the pair, denoted $K_{0}(A';A)$.
Let us mention that this definition has a 
generalization to the case where $\varphi: A' \rightarrow A$ is a 
$*$-homomorphism, but we need only consider the case when $\varphi$ is the 
inclusion map. More information can be found 
in \cite{Ka:book} or \cite{Ha:relK}.

 We also remark that there is a definition of a second
relative group $K_{1}(A';A)$ (perhaps it would be more accurate to say 
a sequence of relative groups satisfying Bott periodicity).  
Our results hold for these groups as well, but we will not need that 
for our applications. Again, we refer the reader to \cite{Ha:relK}
for more information.

Before beginning, we also remark that there is another
natural  description of the relative
K-theory by simply defining $K_{i}(A';A)$ to be $K_{i}(C(A';A))$, for 
$i=0,1$, where 
$C(A';A)$ is the mapping cone
\[
C(A';A) = \{ f : [0,1] \rightarrow A \mid f \text{ continuous }, f(0)=0, f(1) \in A' \}.
\]
In fact, this was the definition given in \cite{Put:exc}. The advantage
of Karoubi's definition is that the classes are presented in a much more 
convenient fashion. In \cite{Put:exc}, the first task was to give an equivalent
formulation in what are essentially Karoubi's terms. While the 
definition of our key map (Theorem \ref{3:10}) is given in fairly 
abstract terms, the proof of Theorem \ref{3:70} relies quite heavily 
on the presentation of the classes given by Karoubi. In addition, 
in most examples, it seems much simpler to use Karoubi's presentation.
In the end, this seems a much more natural, if somewhat longer,
 definition of relative K-theory.

Let $A$ be a unital $C^{*}$-algebra. As usual, for $n \geq 1$, 
we let $M_{n}(A)$
 be the $n \times n$-matrices over $A$, regarded as a $C^{*}$-algebra.
 We use the usual (non-unital) inclusions $M_{n}(A) \subseteq M_{n+1}(A)$, 
 for all $n \geq 1$ and let $\mathcal{M}(A)$ denote the union, regarded as a
 normed $*$-algebra. It is convenient to regard the
  elements of $\mathcal{A}$
 as matrices, indexed by the positive integers, with
  only finitely many non-zero entries.
 We also let $\mathcal{P}(A)$ denote the set 
 of all projections (self-adjoint idempotents)
  in $\mathcal{M}(A)$.
  
We consider the category
whose objects are the elements of $\mathcal{P}(A)$.
If $p$  and $q$ are in $\mathcal{P}(A)$, then 
the morphisms from $p$ to $q$ are the elements of  $q\mathcal{M}(A)p$.
Composition of morphisms is given by their product and the 
element $p$ is the identity morphism from $p$ to itself.
We denote this category by $\mathcal{P}(A)$. It is an additive category in 
an obvious sense. Moreover, each set of morphisms is actually a Banach
space in an obvious way and  $\mathcal{P}(A)$ is a quasi-surjective 
Banach category (II.2.1 and II.2.6 of \cite{Ka:book}). Then $K_{0}(A)$ 
is defined to be the $K$-theory of this category, as in II.1 of
\cite{Ka:book}. (Some caution must be used: a homotopy of 
morphisms in this category takes place inside a single
$q\mathcal{M}(A)p$, which is slightly different from a homotopy inside
$\mathcal{M}(A)$.)
 
 We let $\tA$ denote the unitization of $A$.
In the case that the $C^{*}$-algebra $A$ is not unital,  
$K_{0}(A)$ is defined as the kernel of the map induced from the usual
homomorphism from $\tA$ to $\C$. Conveniently, this 
conclusion also holds for unital $C^{*}$-algebras.

Every element of $\tA$ can be written as a sum of a
 complex multiple of the unit 
and an element of $A$. If $a$ is in $\tA$, we let 
$\dot{a}$ denote the 
complex number involved. We extend this notation
 to elements $a$
in matrices over $\tA$, so that $\dot{a}$ is a
 complex matrix of the same size.

We now suppose that $A$ is a $C^{*}$-algebra and 
$A'$ is a $C^{*}$-subalgebra. To define a relative group, we follow
the ideas of \cite{Ka:book}, using 
$\varphi: \mathcal{P}(A') \rightarrow
\mathcal{P}(A)$ being the inclusion map, but make some minor
alterations. First, we would like to include non-unital $A$ and $A'$, 
so we consider the obvious unital inclusion of $\ta $ in 
$\tA$. Second, we will suppress this map in our notation.
We consider triples $(p,q, a)$, where $p$ and $q$ are objects in 
$\mathcal{P}(\ta)$ and $a$ is an invertible morphism
 from $p$ to $q$ in $\mathcal{P}(\tA)$. 
 Specifically, if 
 $p$ is in $\mathcal{M}(\ta)$ and 
 $q$ is in $\mathcal{M}(\ta)$, then $a$ is in  
 $q\mathcal{M}(\tA)p$   and there is $b$ in 
$p\mathcal{M}(\tA)q$ such that $ab=q$ and $ba=p$.
We let $\Gamma(A';A)$ denote the set of all such triples.
Although it is likely to raise a storm of controversy, we
note that $0$ is an invertible morphism from $\{ 0 \}$ to $\{ 0 \}$, so 
$(0,0,0)$ is in   $\Gamma(A';A)$.

We say two such triples $(p,q,a)$ and $(p',q',a')$ are \emph{isomorphic}
if there are isomorphisms  $c$ from $p$ to $p'$ and $d$ from $q$ to $q'$
in
$\mathcal{P}(\ta)$ such that $da=a'c$. In this situation, we also say
they are \emph{isomorphic via} $c, d$. In particular, 
if $(p, q, a)$ is in $\Gamma(A';A)$ and $a$ lies 
in $\mathcal{M}(\ta)$, then $(p, q, a) $ is isomorphic 
to $(q, q, q)$ via the pair $a, q$.

Let us briefly mention that there are some small difficulties
in taking direct sums of elements of $\mathcal{M}(A)$:
if the elements are regarded as $a \in M_{m}(A)$ and $b \in M_{n}(A)$, 
then 
$a \oplus b  \in M_{m+n}(A)$. This is not quite 
consistent with the identification of elements
 of $M_{n}(A)$ with those
 in $M_{n+1}(A)$.
On the other hand, the result $a \oplus b$ is well-defined up to
isomorphism as above and this ambiguity will not cause any confusion.

A triple $(p, q, a)$ is 
\emph{elementary}
 if $p=q$ and $a$ is homotopic to $p$
  within the automorphisms 
of $p$ in $\mathcal{P}(\tA)$. If, in addition, 
 $a$ is actually a unitary in 
$p \mathcal{M}(\tA)p$, then there exists a homotopy  from $p$ to $a$ 
within the unitaries.  We also make the observation that $(p, p, a)$
is elementary if and only if $a$ is obtained
as an invertible element of the $C^{*}$-algebra
\[
\{ f \in C( [0, 1], p \mathcal{M}(\tilde{A})p) \mid f(0) \in \C p \},
\]
when evaluated at $1$.

Finally, we introduce an equivalence relation $\sim$ on 
$\Gamma(A';A)$ as follows. Two triples $(p,q,a) \sim (p',q',a')$ if 
there are elementary triples $(p_{1}, p_{1}, a_{1})$ and 
$(p_{2}, p_{2}, a_{2})$ such that 
\[
(p,q,a) \oplus (p_{1}, p_{1}, a_{1}) = 
(p \oplus p_{1}, q \oplus p_{1}, a \oplus a_{1} ) 
\]
is isomorphic to $(p', q', a') \oplus (p_{2}, p_{2}, a_{2})$.
Clearly, isomorphic triples are equivalent and any elementary triple
is equivalent to $(0,0,0)$.

We define $K_{0}(A';A)$ as the set of equivalence $\sim$-class
of the elements of  $\Gamma(A';A)$ 
by this equivalence relation. We denote the 
equivalence class of $(p, q, a)$ by
$[p, q, a]$.  It is a simple matter to check that 
\[
[p, q, a] + [p', q', a'] = [p + p', q + q', a + a'],
\]
for $(p, q, a), (p', q', a')$ in $\Gamma(A';A)$ with 
$pp'=qq'=0$, 
 is a well-defined
binary operation. 
Alternately, we could define
\[
[p, q, a] + [p', q', a'] = [p \oplus p', q \oplus q', a \oplus a'].
\]
The  element $[0,0,0]$ is the identity and any element 
$[p,q,a]$ has inverse $[q, p, b]$, where $b$ satisfies $ab=q, ba =p$.
Hence, $K_{0}(A';A)$ is a group.

Suppose that 
 $\pi: A \rightarrow B$ is any $*$-homomorphism between 
two $C^{*}$-algebras and $A' \subseteq A, B' \subseteq B$ are 
two subalgebras satisfying $\pi(A') \subseteq B'$, then we may extend 
$\pi$ to a unital map from $\tilde{A}$ to $\tilde{B}$
and to matrices over $\tilde{A}$ and it follows that 
$\pi$ induces a group homomorphism
$\pi_{*}: K_{0}(A';A) \rightarrow K_{0}(B';B)$.

We will not give a proof of the following result, but refer the reader to
\cite{Ha:relK}.

\begin{thm}
\label{2:5}
Let $A$ be a $C^{*}$-algebra and let $A'$ be a subalgebra.
There is an exact sequence
\vspace{.5cm}

\hspace{1cm}
\xymatrix{  K_{1}( A')  \ar[r]^{i_{*}} &
 K_{1}(A) \ar[r]^{\mu} & 
  K_{0}(A';A)  \ar[r]^{\nu} & 
  K_{0}(A')  \ar[r]^{i_{*}}  
     & K_{0}(A)}
    \vspace{.5cm}
     
\noindent where $i: A' \rightarrow A$ denotes the inclusion map,
 $\mu([u]_{1}) = [1_{n}, 1_{n}, u] $, for any unitary $u$ in 
 $M_{n}(\tA)$  and $\nu[p, q, a] = [p]_{0} - [q]_{0}$, for any 
 $(p,q,a)$ in $\Gamma(A'; A)$.
\end{thm}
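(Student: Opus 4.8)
The plan is to establish exactness of the five-term sequence
\[
K_{1}(A') \xrightarrow{i_{*}} K_{1}(A) \xrightarrow{\mu} K_{0}(A';A) \xrightarrow{\nu} K_{0}(A') \xrightarrow{i_{*}} K_{0}(A)
\]
by checking exactness at each of the three interior terms separately, after first verifying that $\mu$ and $\nu$ are well-defined group homomorphisms. For $\nu$, one checks that $[p]_{0}-[q]_{0}$ is unchanged under isomorphism of triples (an isomorphism $c$ from $p$ to $p'$ is in particular a partial isometry implementing a Murray--von Neumann equivalence in $\mathcal{P}(\ta)$, so $[p]_{0}=[p']_{0}$), under adding an elementary triple $(p_{1},p_{1},a_{1})$ (which contributes $[p_{1}]_{0}-[p_{1}]_{0}=0$), and that it is additive on direct sums; here one uses that $\nu$ lands in $K_{0}(A')$ because $p,q$ are genuinely objects of $\mathcal{P}(\ta)$. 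For $\mu$, one checks that the class $[1_{n},1_{n},u]$ depends only on the $K_{1}(A)$-class of the unitary $u$: homotopic unitaries give isomorphic triples, $u\oplus 1$ is handled by the stabilization built into $\sim$, and $\mu([u][v])=\mu([u])+\mu([v])$ follows from the standard fact that $\mathrm{diag}(uv,1)$ is homotopic to $\mathrm{diag}(u,v)$ through unitaries together with additivity of the relative addition.

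Exactness at $K_{0}(A')$ is the most familiar part: $i_{*}\circ\nu[p,q,a]=[p]_{0}-[q]_{0}$ in $K_{0}(A)$, which vanishes because $a$ is an invertible morphism from $p$ to $q$ in $\mathcal{P}(\tA)$, hence $p$ and $q$ are equivalent there. Conversely, if a class in $K_{0}(A')$, represented by a formal difference $[p]_{0}-[q]_{0}$ with $p,q\in\mathcal{P}(\ta)$, maps to $0$ in $K_{0}(A)$, then after stabilizing there is a partial isometry (equivalently an invertible morphism, after a standard adjustment) $a$ from $p$ to $q$ in $\mathcal{P}(\tA)$; the triple $(p,q,a)$ then lies in $\Gamma(A';A)$ and $\nu[p,q,a]=[p]_{0}-[q]_{0}$. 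Exactness at $K_{1}(A)$: $\mu\circ i_{*}[u]_{1}=[1_{n},1_{n},u]$ where $u$ is a unitary already in $M_{n}(\ta)$, so by the remark in the excerpt the triple $(1_{n},1_{n},u)$ is isomorphic to $(1_{n},1_{n},1_{n})$ via $u,1_{n}$, hence is elementary and equals $[0,0,0]$. Conversely if $\mu[u]_{1}=0$, so $(1_{n},1_{n},u)$ becomes elementary after adding an elementary triple and an isomorphism, one unwinds the definition of "elementary" (the characterization via an invertible element of $\{f\in C([0,1],p\mathcal{M}(\tA)p)\mid f(0)\in\mathbb{C}p\}$ evaluated at $1$) to produce a path connecting $u$, up to stabilization, to a unitary with entries in $\ta$, which is exactly the statement that $[u]_{1}$ is in the image of $i_{*}$.

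Exactness at $K_{0}(A';A)$ is where the real work is. For $\nu\circ\mu=0$ one just observes $\nu[1_{n},1_{n},u]=[1_{n}]_{0}-[1_{n}]_{0}=0$. The substantive direction is: if $[p,q,a]\in\ker\nu$, i.e. $[p]_{0}=[q]_{0}$ in $K_{0}(A')$, then $[p,q,a]$ lies in the image of $\mu$. The strategy is to use the equality in $K_{0}(A')$ to find, after stabilizing both sides by adding a common projection $r\in\mathcal{P}(\ta)$, an isomorphism (unitary morphism) $w$ from $p\oplus r$ to $q\oplus r$ lying in $\mathcal{M}(\ta)$; replacing $(p,q,a)$ by the equivalent $(p\oplus r,\, q\oplus r,\, a\oplus r)$ and then applying the isomorphism of triples implemented by $(w,\,\mathrm{id})$ — more precisely conjugating — reduces to the case $p=q$, where the triple is $(p,p,a')$ with $a'$ a unitary in $p\mathcal{M}(\tA)p$. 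Enlarging $p$ to a full matrix identity $1_{n}$ by adding $1_{n}-p$ (an elementary modification, since $1_n - p$ has entries in $\ta$... one must be careful here and instead add the triple $(1_n - p, 1_n - p, 1_n - p)$), one arrives at a triple of the form $(1_{n},1_{n},v)$ with $v$ a unitary in $M_{n}(\tA)$, which is visibly $\mu[v]_{1}$. I expect the bookkeeping in this last reduction — keeping the projections genuinely in $\ta$ while the connecting unitaries live in $\tA$, and managing the two different notions of homotopy (inside a fixed corner versus inside $\mathcal{M}$) flagged in the excerpt — to be the main obstacle, rather than any deep idea; this is the standard but delicate argument identifying the relative group with $K_{1}$ of the mapping cone. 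Since the excerpt explicitly defers the proof to \cite{Ha:relK}, I would present only this outline and cite that reference for the details.
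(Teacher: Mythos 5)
The paper does not actually prove Theorem \ref{2:5}: it explicitly defers the proof to \cite{Ha:relK}, so there is no in-paper argument to compare yours against. Your outline is the standard direct verification (well-definedness of $\mu$ and $\nu$, then exactness at each of the three interior terms), and at the level of detail given it is sound; the two places where real care is required --- unwinding ``elementary'' to show $\ker\mu\subseteq\mathrm{im}\,i_{*}$, and the reduction of a triple with $[p]_{0}=[q]_{0}$ to one of the form $(1_{n},1_{n},v)$ while keeping the implementing isomorphisms in $\mathcal{M}(\ta)$ --- are exactly the ones you flag, and your proposed fixes (stabilizing by an elementary triple $(r,r,r)$, then conjugating by a partial isometry in $\mathcal{M}(\ta)$ realizing the Murray--von Neumann equivalence) are the right moves. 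One small point worth making explicit in the $\ker\nu\subseteq\mathrm{im}\,\mu$ step: equality $[p]_{0}=[q]_{0}$ in $K_{0}(A')$ for non-unital $A'$ only gives $p\oplus r\sim q\oplus r$ in $\mathcal{M}(\ta)$ after adjoining a suitable $r$ with the correct scalar part $\dot{r}$, so the reduction to $p=q$ must be performed after that stabilization, not before; with that noted, the argument closes. Citing \cite{Ha:relK} for the remaining bookkeeping, as the paper itself does, is appropriate.
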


\begin{ex}
\label{2:100}
Let $\mathcal{H}$ be a Hilbert space and let $\mathcal{N}$ 
be a closed subspace with $\mathcal{N}^{\perp}$
its orthogonal complement.
Assume that $\mathcal{N} \neq 0, \mathcal{H}$.
We consider $A = \mathcal{K}(\mathcal{H})$, the $C^{*}$-algebra
of  compact 
operators on $\mathcal{H}$, and 
$A' = \mathcal{K}(\mathcal{N}) \oplus \mathcal{K}(\mathcal{N}^{\perp})$.
 From the short
exact sequence in Theorem \ref{2:5} and the well-known result 
that $K_{0}(\mathcal{K}(\mathcal{H})) \cong \Z$, via the usual trace, and 
$K_{1}(\mathcal{K}(\mathcal{H})) \cong 0$, we see that 
 $K_{0}(A';A) $ is isomorphic to the kernel of the map $i_{*}$, which consists
 of pairs $([p]_{0} - [p']_{0}, [q]_{0} - [q']_{0})$, 
 where $p,p'$ are finite rank 
 projections on subspaces of $\mathcal{N}$, $q, q'$ are
  finite rank projections on 
 subspaces of $\mathcal{N}^{\perp}$ and 
 $Rank(p) - Rank(p') = Rank(q') - Rank(q)$.
 Associating to such an element $Rank(p) - Rank(p')$
  is an isomorphism from this 
 subgroup to $\Z$.

We can give a useful classification of many elements in the relative group
 as follows. Suppose  $S, S'$ are finite rank operators on $\mathcal{H}$ with
$S(\mathcal{N}) \subseteq \mathcal{N}^{\perp}$, 
$ S(\mathcal{N}^{\perp}) =0$ and 
and $S'(\mathcal{N})=0, S'(\mathcal{N}^{\perp})  \subseteq \mathcal{N}$. Define 
$P, Q, P', Q'$ as the 
projections onto the initial space of $S$, the range of $S$, 
the initial space of $S'$ and the range of $S'$, respectively.
Then $(P+P', Q+Q', S+S')$ determines an element of
$\Gamma(A';A)$.
The composition of the map $\nu$ with
 the isomorphism described above sends the class of
this element to
\[
Rank(P) - Rank(P') = Rank(S) - Rank(S').
\]
\end{ex}

\begin{ex}
\label{2:110}
Let $D$ denote any $C^{*}$-algebra. We 
consider $A= M_{2}(D)$, $ A' = D \oplus D$, with 
the obvious inclusion as diagonal matrices.
 Using the exact sequence from Theorem \ref{2:5}
it is easy to check that \newline
 $K_{*}(A) \cong K_{*}(D), K_{*}(A') \cong K_{*}(D) \oplus K_{*}(D)$
and, with these identifications, $i_{*}(x,y) = x+y, x,y \in K_{*}(D)$.
It follows at once that $\ker(i_{*}) \cong K_{*}(D)$.

If $n \geq 1$ and $p, q$ are two 
projections in $M_{n}(\tilde{D})$ with $\dot{p} = \dot{q}$, 
the triple
\[
\left(  \left[ \begin{array}{cc} p & 0 \\ 0 & q  \end{array} \right],
        \left[ \begin{array}{cc} q & 0 \\ 0 & p  \end{array} \right],
     \left[ \begin{array}{cc} 0 & q \\ p & 0  \end{array} \right], \right)
        \]
        is in $\Gamma(A';A)$. Its image 
        under $\nu$ is $([p]_{0} - [q]_{0}, [q]_{0} - [p]_{0})$
        in $ K_{0}(D) \oplus K_{0}(D)$ 
        and $\nu$ is an isomorphism between $K_{0}(A';A)$ and 
        $\ker(i_{*})$. We conclude in this situation that
        \[
        K_{0}(D \oplus D; M_{2}(D))   \cong K_{0}(D).
        \]
        
\end{ex}

\begin{ex}
\label{2:120}
Let $D$ denote any $C^{*}$-algebra. We consider $A=  D \oplus D$ and 
$ A' = \{ (d,d ) \in D \oplus D \mid d \in D \} \cong D$. Here, it is clear that
$K_{*}(A) = K_{*}(D) \oplus K_{*}(D), K_{*}(A') = K_{*}(D)$ and with
 these identifications, $i_{*}(x) = (x,x), x \in K_{*}(D)$. 
 If $n \geq 1$ and 
 $u$ is a unitary in $M_{n}(\tilde{D})$, then the triple
 $(1_{n} \oplus 1_{n}, 1_{n} \oplus 1_{n}, u \oplus 1_{n})$ is
  in $\Gamma(A';A)$ and the map sending $[u]_{1}$
 in $K_{1}(D)$ 
 to $[1_{n} \oplus 1_{n}, 1_{n}\oplus 1_{n}, u \oplus 1_{n}]$ 
 in $K_{0}(A';A)$ is an isomorphism.
  We conclude in this situation that
        \[
        K_{0}(D; D \oplus D)   \cong K_{1}(D).
        \]
\end{ex}

We want to establish a few simple properties of the triples 
under consideration.

\begin{prop}
\label{2:10}
Let $A$ be a $C^{*}$-algebra and $A'$ be a $C^{*}$-subalgebra of $A$.
\begin{enumerate}
\item
If $(p, q, a)$ is in $\Gamma(A';A)$ and $a$ lies 
in $\mathcal{M}(\ta)$, then $[p, q, a]=0 $. 
\item If $a(t), 0 \leq t \leq 1$ is a continuous path 
of invertible elements in $q\mathcal{M}(\tA)p$ with $p, q$ in 
$\mathcal{M}(\ta)$, for all $0 \leq t \leq 1$,
then $[p,q,a(0)] = [p,q,a(1)]$.
\item 
Every element of $\Gamma(A';A)$ 
is equivalent to one of the form \newline
$(1_{m}, q, a)$, where $a$ is a partial isometry with $a^{*}a=1_{m}$,
 $aa^{*} = q$ and $ \dot{a} = \dot{q} = 1_{m}$. 
We refer to such an element as being in standard form. 
 
\item 
For $(1_{m}, q, a)$  in $\Gamma(A';A)$ is in standard form,
 $[1_{m}, q, a] =0$ if and only if
there exists $n \geq 1$, and elementary  triples
$(1_{n}, 1_{n}, a_{1})$ and \newline 
$(1_{m+n}, 1_{m+n}, a_{2})$ 
in standard form 
such that $a_{2}(a \oplus a_{1})^{*}$ is in $\mathcal{M}(\ta)$.

\item 
If $(p,q,a), (p', q', b)$ are in $\Gamma(A';A)$ and satisfy $q=p'$, then 
\[
[p,q,a] + [p', q', b] = [p, q', ba].
\]
\item 
If $(p, q, a)$ is in $\Gamma(A', A)$, then $-[p, q, a] = [q, p, a^{*}]$.
\end{enumerate}
\end{prop}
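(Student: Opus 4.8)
The plan is to obtain (1) immediately, to take (2) (homotopy invariance of the relative class) from Karoubi's general machinery, and to deduce (3)--(6) from (1), (2) and Theorem~\ref{2:5}. For (1): the remark just before the proposition records that, when $a\in\mathcal{M}(\ta)$, the triple $(p,q,a)$ is isomorphic to $(q,q,q)$ via the pair $a,q$; and $(q,q,q)$ is elementary by the constant path, hence equivalent to $(0,0,0)$, so $[p,q,a]=0$. For (2): the homotopies allowed in the $K$-theory of the quasi-surjective Banach category $\mathcal{P}(\tA)$ and of the relative construction are exactly paths inside a single corner $q\mathcal{M}(\tA)p$ --- the caution noted in Section~\ref{2} --- so invariance of $[p,q,a]$ along such a path is part of Karoubi's setup \cite{Ka:book}; equivalently it follows from homotopy invariance of $K_{0}$ of the mapping cone $C(A';A)$ together with the identification of the two pictures recalled in Section~\ref{2}. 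I use (2) freely below.

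For (5): by the definition of the sum (with $p'=q$) one has $[p,q,a]+[p',q',b]=[p\oplus q,\,q\oplus q',\,a\oplus b]$, and adjoining the elementary triple $(q,q,q)$ gives $[p,q',ba]=[p\oplus q,\,q'\oplus q,\,ba\oplus q]$. Apply the isomorphism of triples given by the identity on the source and the scalar permutation $\sigma\colon q\oplus q'\to q'\oplus q$ on the target; this replaces $a\oplus b$ by $\sigma(a\oplus b)$, and a direct computation writes $\sigma(a\oplus b)=(ba\oplus q)\,g$, where $g$ is the invertible endomorphism of $p\oplus q$ with zero diagonal and off-diagonal entries $a^{-1},a$. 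A standard factorization of $g$ into unitriangular factors and a scalar diagonal shows that $g$ lies in the identity component of the invertible morphisms of $p\oplus q$, so $t\mapsto(ba\oplus q)\,g(t)$ is a path of invertible morphisms $p\oplus q\to q'\oplus q$ joining the two representatives, and (2) finishes the argument. For (6): the discussion after the definition of the group already gives $-[p,q,a]=[q,p,b]$ with $b$ the inverse morphism of $a$; in the polar decomposition $a=u|a|$, with $|a|=(a^{*}a)^{1/2}$ positive invertible in $p\mathcal{M}(\tA)p$ and $u$ a partial isometry, $u^{*}u=p$, $uu^{*}=q$, one has $a^{*}=|a|u^{*}$ and $b=|a|^{-1}u^{*}$, so the path $t\mapsto\bigl((1-t)|a|+t|a|^{-1}\bigr)u^{*}$ of invertible morphisms $q\to p$ runs from $a^{*}$ to $b$ and (2) yields $[q,p,a^{*}]=[q,p,b]$.

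For (3): if $p\in M_{k}(\ta)$, adjoin the elementary triple $(1_{k}-p,1_{k}-p,1_{k}-p)$ and use the partial isometry $\left(\begin{smallmatrix}p&0\\1_{k}-p&0\end{smallmatrix}\right)\in M_{2k}(\ta)$, which identifies $1_{k}$ with $p\oplus(1_{k}-p)$, to produce an equivalent triple with source $1_{m}$, $m=k$. Polar-decompose the new morphism as $a'=u'|a'|$ and use $t\mapsto u'\bigl((1-t)|a'|+t\,1_{m}\bigr)$ with (2) to replace it by $u'$, for which $u'^{*}u'=1_{m}$, $u'u'^{*}=q'$. Finally pick a scalar unitary $w$ with $w\dot{u'}=1_{m}$ (possible since $\dot{u'}$ is a complex partial isometry with initial space the range of $1_{m}$) and conjugate the target by $w$; since $w\in\mathcal{M}(\ta)$ this is an isomorphism of triples, and $(1_{m},\,wq'w^{*},\,wu')$ is in standard form. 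The same three moves applied to an arbitrary triple also provide the standard-form elementary triples needed in (4). For (4), direction $(\Leftarrow)$ is a short computation: put $z:=a_{2}(a\oplus a_{1})^{*}\in\mathcal{M}(\ta)$; then $(q\oplus 1_{n},1_{m+n},z)\in\Gamma(A';A)$ has class $0$ by (1), and since $z(a\oplus a_{1})=a_{2}$ and elementary triples are equivalent to $(0,0,0)$, (5) gives $[1_{m},q,a]=[1_{m+n},q\oplus 1_{n},a\oplus a_{1}]=[1_{m+n},1_{m+n},a_{2}]=0$.

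Direction $(\Rightarrow)$ of (4) is the heart of the matter. From $[1_{m},q,a]=0$ and Theorem~\ref{2:5}, $\nu$ gives $[1_{m}]_{0}=[q]_{0}$ in $K_{0}(A')$, so $q\oplus 1_{j}$ is Murray--von Neumann equivalent to $1_{m+j}$ over some matrix algebra over $\ta$; using this together with the normalization of (3), replace $(1_{m},q,a)$ by an equivalent standard-form triple $(1_{m+j},1_{m+j},u)$ with $u=(yw_{0})(a\oplus 1_{j})$ for some $yw_{0}\in\mathcal{M}(\ta)$, still of class $0$. Then $\mu([u]_{1})=[1_{m+j},1_{m+j},u]=0$, so by exactness $[u]_{1}$ lies in the image of $i_{*}\colon K_{1}(A')\to K_{1}(A)$, i.e.\ $u\oplus 1_{s}$ is homotopic, through unitaries over $\tA$, to a unitary $v$ over $\ta$. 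Set $n=j+s$ and $a_{2}:=\dot{v}\,v^{-1}(u\oplus 1_{s})$; a path of unitaries joins $a_{2}$ to $\dot v$ and then to $1_{m+n}$, so $(1_{m+n},1_{m+n},a_{2})$ is elementary and in standard form, $a_{1}:=1_{n}$ serves for the first triple, and $a_{2}(a\oplus a_{1})^{*}=\dot{v}\,v^{-1}\bigl((yw_{0})\oplus 1_{s}\bigr)(q\oplus 1_{n})$ lies in $\mathcal{M}(\ta)$, as required. I expect this direction to be the main obstacle: it requires running the normalization of (3), reading off both the $K_{0}$- and $K_{1}$-level obstructions from the exact sequence, and upgrading a merely null triple to a genuinely elementary one without losing membership in $\mathcal{M}(\ta)$ --- and throughout one must track carefully the scalar-part map $a\mapsto\dot{a}$ and whether each auxiliary element lies in $\ta$ or only in $\tA$.
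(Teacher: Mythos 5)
Your treatment of parts (1), (3), (5), (6) and the ``if'' direction of (4) is correct and stays close in spirit to the paper's proof: (1) is the same observation; your permutation-plus-Whitehead factorization for (5) is exactly what the paper's explicit rotation-matrix computation accomplishes; your path $t\mapsto\bigl((1-t)\vert a\vert+t\vert a\vert^{-1}\bigr)u^{*}$ for (6) is a correct variant of the paper's use of (5) together with the homotopy $(a^{*}a)^{t}$; and your normalization steps in (3) (absorbing $1_{k}-p$, polar decomposition, scalar correction) mirror the paper's, differing only in that you conjugate the target by a scalar unitary where the paper multiplies $a$ on the left by a path of scalar partial isometries. Two caveats. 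First, you take (2) from Karoubi's machinery rather than proving it; since the paper explicitly modifies Karoubi's definition and (2) underlies your proofs of (3), (5) and (6), the paper instead gives a short self-contained argument, exhibiting two explicit elementary triples $(p\oplus q,p\oplus q,b)$ and $(p\oplus q,p\oplus q,c)$ built from a rotation homotopy and an isomorphism between $(p,q,a(0))\oplus(\cdot,\cdot,b)$ and $(p,q,a(1))\oplus(\cdot,\cdot,c)$. You should either reproduce such an argument or make the citation precise.

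The real divergence is the ``only if'' direction of (4). The paper works directly from the definition of $\sim$: it takes the isomorphism $b,c$ implementing the equivalence of $(1_{m},q,a)\oplus(1_{n},1_{n},a_{1})$ with an elementary $(p_{2},p_{2},a_{2})$, replaces $b,c$ by the partial isometries $b(b^{*}b)^{-1/2}$, $c(c^{*}c)^{-1/2}$ (using $(c^{*}c)^{-1/2}=a_{2}^{*}(b^{*}b)^{-1/2}a_{2}$), and checks that $b^{*}a_{2}b(a\oplus a_{1})^{*}=b^{*}c$ lies in $\mathcal{M}(\ta)$. You instead route through Theorem \ref{2:5}, reading off the $K_{0}$-obstruction from $\nu$ and the $K_{1}$-obstruction from exactness at $K_{1}(A)$. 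Your construction does formally produce standard-form elementary triples with $a_{2}(a\oplus a_{1})^{*}\in\mathcal{M}(\ta)$ (once you actually define $y$ --- it must be $\dot{w_{0}}^{*}$ so that $\dot{a_{2}}=1_{m+n}$, which standard form requires), and Theorem \ref{2:5} is stated before the proposition, so the argument is not circular within this paper. But it is a much heavier dependency than the statement warrants: Theorem \ref{2:5} is cited to a reference in preparation, and exactness at $K_{1}(A)$, i.e.\ $\ker\mu\subseteq\operatorname{im}i_{*}$, is essentially the $q=1_{m}$ case of (4), so in any self-contained development your argument runs backwards. The paper's definition-level proof is what makes (4) usable as a tool in the proofs of Theorems \ref{2:70} and \ref{3:70}, and you should be able to give it without invoking the exact sequence.
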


\begin{proof}
The first part follows from the fact that, if $a$ is 
in $\mathcal{M}(\ta)$, then $(p,q,a)$ is 
isomorphic to $(q,q,q)$ via $a, q$ and the latter is elementary.

For the second part,  we consider
\[
\left[ \begin{array}{cc} a(0)^{-1}  & 0 \\ 0 & q \end{array} \right] 
\left[ \begin{array}{cc} \cos(t \pi) q  &  \sin(t \pi) q\\ 
-\sin(t \pi) q  & \cos(t \pi) q   \end{array} \right] 
\left[ \begin{array}{cc} a(t)  & 0 \\ 0 & q \end{array} \right] 
\]
is a continuous path of invertibles in 
$(p \oplus q)  \mathcal{M}(\tA)(p \oplus q)$ from $p \oplus q$ to
\[
 b  = \left[ \begin{array}{cc} 0 & a(0)^{-1}  \\ a(1)  & 0 \end{array} \right],
\]
implying that $(p \oplus q, p \oplus q, b)$ 
is elementary. In a similar way, if we let 
\[
  c= \left[ \begin{array}{cc} 0 & a(0)^{-1}  \\ a(0)  & 0 \end{array} \right],
\]
then $(p \oplus q, p \oplus q, c)$ is also elementary. It is then a 
simple computation to check that 
$(p,q, a(0)) \oplus (p \oplus q, p \oplus q, b)$
is isomorphic to $(p,q, a(1)) \oplus (p \oplus q, p \oplus q, c)$ via
\[
\left[ \begin{array}{ccc} 0 & p & 0 \\ p & 0 & 0 \\ 0 & 0 & q \end{array} \right],
\hspace{1cm}
\left[ \begin{array}{ccc} 0 & 0 & q \\ 0 & p & 0 \\ q & 0 & 0 \end{array} \right]
\]
The conclusion follows.

For the third part, we will first show that we can assume that $p=1_{m}$, for
some $m$. Suppose that $p$ lies in $M_{m}(\ta)$, for some $n$.
Consider the partial isometry
\[
b = \left[ \begin{array}{c} 0 \\ 1_{m}-p   \end{array} \right],
\]
which lies in $M_{2m,m}(\ta)$. It is clear that $b^{*}b= 1_{m}-p$ and 
$[1_{m}-p, bb^{*}, b] = 0$, from part one. We also have 
\[
[p,q,a] + [1_{m}-p, bb^{*}, b]  = [1_{m}, q + bb^{*}, a + b].
\]

We now simply write our element as $(1_{m}, q, a)$ and show that $a$ can be replaced
by a partial isometry. It is a standard fact that $a$ being invertible implies
that $a^{*}a$ is positive and invertible in $p\mathcal{M}(\tA)p$ and so 
$a(a^{*}a)^{2^{-1}(t-1)}, 0 \leq t \leq 1$ is a path of invertibles from
$a$ to $a (a^{*}a)^{-1/2}$ which is a 
 partial isometry in $ \mathcal{M}(\tA)$ and it follows from part 2 
 that $[1_{m}, q, a] = [1_{m}, q, a (a^{*}a)^{-1/2}]$.

Finally, we consider $(1_{m}, q, a)$ where $a$ is a partial 
isometry. It follows that $\dot{a}$ is a partial
 isometry in $\mathcal{M}(\C)$
with $\dot{a}^{*}\dot{a} = 1_{m}$. We
 may find a path $b(t), 0 \leq t \leq 1$, of partial 
isometries in  $\mathcal{M}(\C)$ from  $1_{m}$ to $\dot{a}$. Then 
$b(t)^{*}a$ is a path of partial isometries from $a$ to $c= \dot{a}^{*}a$
which satisfies $\dot{c} = 1_{m}$. The conclusion follows 
from part 2.

For the fourth part, the 'if' statement is clear. 
It follows from the definition that, 
if  $[1_{m},q,a]=0$, then we may find elementary triples
 $(p_{i}, p_{i}, a_{i}), i = 1,2 $, such that 
 $(1_{m},q,a) \oplus (p_{1}, p_{1}, a_{1})$ and 
$(p_{2}, p_{2}, a_{2})$ are isomorphic. Choose $n$ sufficiently large so that 
$p_{1}$ is in $M_{n}(\ta)$. Simply adding the elementary triple
$(1_{n}-p_{1}, 1_{n} -p_{1}, 1_{n}-p_{1})$ to both sides, we have 
 $(1_{m},q,a) \oplus (1_{n}, 1_{n}, a_{1} + 1_{n} - p_{1})$ is 
 isomorphic to an elementary triple, which we will still write as 
 $(p_{2}, p_{2}, a_{2})$. We also write $a_{1} + 1_{n} - p_{1}$ as simply
 $a_{1}$.
 
  Let us say that the isomorphism is via $b,c$.
 This means that $a_{2} b = c (a \oplus a_{1})$. It follows from the fact 
 that $a, a_{1}$ and $a_{2}$ are partial isometries that 
 $c^{*}c = a_{2}^{*}b^{*}b a_{2}$. It follows that 
 $(c^{*}c)^{-1/2} = a_{2}^{*}(b^{*}b )^{-1/2} a_{2}$ and hence
 the pair is also isomorphic via
 $b(b^{*}b )^{-1/2}, c(c^{*}c)^{-1/2} $, which are partial
  isometries. Henceforth, we
 write them as $b,c$ and assume they are partial isometries.
 If we simply consider $(1_{m+n}, 1_{m+n}, b^{*}a_{2}b)$, then we have 
\[
 b^{*}a_{2}b (a \oplus a_{1})^{*} = b^{*} c(a \oplus a_{1})(a \oplus a_{1})^{*}
     =b^{*}c
     \]
     which lies in $\mathcal{M}(\ta)$, as desired.

For the fifth part, 
we assume that $p,p',q,a,b$ are all contained in 
$M_{n}(\tA)$, for some $n \geq 1$. Consider the following product
\[
\left[ \begin{array}{ccc} 
p & 0 & 0 \\ 0 & b^{*} & 0 \\ 0 & 0 & q
\end{array} \right]
\left[ \begin{array}{ccc} 
1 & 0 & 0 \\ 0 & 0 & 1 \\ 0 & 1 & 0
\end{array} \right]
\left[ \begin{array}{ccc} 
p & 0 & 0 \\ 0 & ba & 0 \\ 0 & 0 & q
\end{array} \right]
\left[ \begin{array}{ccc} 
0 & 1 & 0 \\ 1 & 0 & 0 \\ 0 & 0 & 1
\end{array} \right]
\left[ \begin{array}{ccc} 
p & 0 & 0 \\ 0 & a^{*} & 0 \\ 0 & 0 & q
\end{array} \right]
\]
A direct computation shows the product is
\[
\left[ \begin{array}{ccc} 
0 & a^{*} & 0 \\ 0 &  0 & b^{*} \\ ba & 0 & 0
\end{array} \right].
\]
This is an invertible element of 
$(p \oplus p' \oplus q)M_{3n}(\tA)(p \oplus p' \oplus q)$.
On the other hand, the two matrices with complex entries 
are homotopic through such matrices to the identity 
and so this element is 
homotopic through 
invertibles in \newline
$(p \oplus p' \oplus q)M_{3n}(\tA)(p \oplus p' \oplus q) $  to
the product 
 \[
\left[ \begin{array}{ccc} 
p & 0 & 0 \\ 0 & b^{*} & 0 \\ 0 & 0 & q
\end{array} \right]
\left[ \begin{array}{ccc} 
p & 0 & 0 \\ 0 & ba & 0 \\ 0 & 0 & q
\end{array} \right]
\left[ \begin{array}{ccc} 
p & 0 & 0 \\ 0 & a^{*} & 0 \\ 0 & 0 & q
\end{array} \right] = 
\left[ \begin{array}{ccc} 
p & 0 & 0  \\ 0 & b^{*}baa^{*} & 0 \\ 0 & 0 & q
\end{array} \right].
\]
This  is 
 homotopic to $p \oplus p' \oplus q$ by using the homotopy
 $(b^{*}b)^{t}(aa^{*})^{t}, 0 \leq t \leq 1$.
In a similar way, the element 
\[
\left[ \begin{array}{cccc} 
0 & a & 0 & 0 \\ a^{*} & 0 & 0 & 0 \\ 0 & 0 & 0 & b \\ 0 & 0 & b^{*} & 0
\end{array} \right] 
\]
is also elementary.
We add the former to $a \oplus b$ and the
 latter to $ab $ and the results  
 are clearly isomorphic. The conclusion 
 follows.

For the final part, we use part 5 to compute
\[
[p,q,a] + [q,p,a^{*}] = [p,p,a^{*}a]
\]
and $(a^{*}a)^{t}, 0 \leq t \leq 1$ is a homotopy between $a^{*}a$ and $p$, 
so the last triple is elementary.
\end{proof}

As a final item for this section, we need a result that 
links relative groups for a pair
of short exact sequences. This will be used in a key way in the next section
in defining our excision map.

    \begin{lemma}
    \label{2:60}
Let
\vspace{.5cm}

\hspace{3cm}
\xymatrix{ 0 \ar[r]  &  A \ar[r]^{\pi} & C \ar[r]^{\rho}  &  D  \ar[r] &  0 \\
   0 \ar[r]  & A' \ar@{}[u]|-*[@]{\subseteq} \ar[r]^{\pi}   & C' \ar[r]^{\rho}
   \ar@{}[u]|-*[@]{\subseteq}
     & D \ar[r] \ar@{}[u]|-*[@]{=} & 0 }
   \vspace{.5cm}
  
 \noindent    be a commutative diagram with exact rows. 

If $c$ is a partial isometry in $M_{n}(\tilde{C})$ such that 
$p = c^{*}c$ is in $M_{n}(\ta)$,
then there is  a partial isometry $a$ in $M_{2n}(\tA)$
 with $a^{*}a = p$ and  $ac^{*}$ is in $\mathcal{M}(\tilde{C'})$.
 Hence,  $(p, q, c)$ is isomorphic to
 $(p, aa^{*}, a)$ via $p, ac^{*}$ in $\Gamma(C';C)$.
 In particular, if $(1_{m}, q, c)$ is in standard form, 
 then it is isomorphic
 to $(1_{m}, aa^{*}, a)$ which is also
  in standard form, via $1_{m}, ac^{*}$.
\end{lemma}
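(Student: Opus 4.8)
The plan is to lift the partial isometry $c$ to a partial isometry $a$ over $C$ that becomes, after applying $\rho$, a trivial element — i.e. lies in the image of $\widetilde{C'}$ — by correcting $c$ with a unitary pulled back from the quotient $D$. Since $\rho(c)$ is a partial isometry in $M_{n}(\tilde D)$ with $\rho(c)^{*}\rho(c) = \rho(p) = \rho(c^{*}c)$, and since the bottom row has the form $A' \to C' \to D$ with the quotient $D$ the same in both rows, I want a partial isometry $v$ in some $M_{k}(\tilde D)$ with $\rho(c)^{*}v$ (suitably stabilized) trivial, and then lift $v$ back along $\rho: C' \to D$. Concretely: enlarge $c$ to $c \oplus 0$ in $M_{2n}$ so that $\rho(c) \oplus (1_{n}-\rho(cc^{*}))$ becomes a \emph{unitary} $w$ in $M_{2n}(\tilde D)$ whose class in $U(M_{2n}(\tilde D))$ I can hope to kill; then because the surjection $\rho: \widetilde{C'} \to \tilde D$ is onto, lift $w^{*}$ (or a path joining $w$ to $1$, using that we are free to stabilize further) to an element over $\widetilde{C'}$, and multiply. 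The partial isometry $a := (\text{lift from } \widetilde{C'})\cdot (c \oplus 0)$, built to have the same initial projection $p$, will then satisfy $\rho(ac^{*}) \in M_{\cdot}(\tilde D)$ is exactly $\rho$ of something already in $\widetilde{C'}$, forcing $ac^{*} \in \mathcal{M}(\widetilde{C'})$ since $\ker\rho = A' \subseteq C'$.

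First I would record the elementary reduction: replacing $c$ by $c \oplus 0 \in M_{2n}(\tilde C)$ and $p$ by $p \oplus 0$ changes nothing about the conclusion (the $(p,q,c)$ being isomorphic to $(p,aa^{*},a)$ is stable under this), so I may assume there is room to stabilize inside $M_{2n}$. Next, set $w = \rho(c) + (1_{n} - \rho(cc^{*}))$, a unitary in $M_{n}(\tilde D) \subseteq M_{2n}(\tilde D)$; since the bottom row is exact and $\rho: \widetilde{C'} \twoheadrightarrow \tilde D$, pick $v \in M_{2n}(\widetilde{C'})$ with $\rho(v) = w$ — here I should arrange $v$ to be a partial isometry (polar decomposition / functional calculus, exactly as in part (3) of Proposition \ref{2:10}) with $v^{*}v$ a projection lifting $1_{n}$, which after a further elementary enlargement I can take to be $1_{n} \in M_{2n}(\widetilde{C'})$. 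Then put $a = v^{*}(c \oplus 0)$. One checks $a^{*}a = (c\oplus 0)^{*} v v^{*} (c \oplus 0)$; choosing $v$ with $vv^{*} \geq \rho(cc^{*})$-lift appropriately makes $a^{*}a = c^{*}c = p$, so $a$ is a partial isometry with $a^{*}a = p$ and $aa^{*} = v^{*}(cc^{*})v$ a projection. Finally $\rho(ac^{*}) = \rho(v)^{*}\rho(cc^{*}) = w^{*}\rho(cc^{*}) = \rho(cc^{*})$ (since $w$ agrees with $\rho(c)$ on the range of $\rho(cc^{*})$), and $\rho(cc^{*}) = \rho$ of the projection $cc^{*} \in M_{2n}(\tilde C)$ — but I actually want $ac^{*}$ itself in $\mathcal{M}(\widetilde{C'})$, which follows because $ac^{*} = v^{*}(c\oplus 0)c^{*} = v^{*}(cc^{*})$ and $v \in M_{2n}(\widetilde{C'})$, $cc^{*} = $ ... here is the subtlety: $cc^{*}$ need not lie in $\widetilde{C'}$. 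So instead I track that $\rho(ac^{*}) = \rho(cc^{*})$ lies in the image $\rho(\widetilde{C'})$, and then replace $a$ by $va'$ where $a'$ is chosen so that $ac^{*}$ lands in $\widetilde{C'}$ on the nose.

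The main obstacle, and the point requiring the most care, is exactly this last matching: producing $a$ so that $ac^{*}$ — not merely $\rho(ac^{*})$ — lies in $\mathcal{M}(\widetilde{C'})$. Since $\ker \rho = A' \subseteq C'$, any element of $\mathcal{M}(\tilde C)$ whose image under $\rho$ lies in $\rho(\mathcal{M}(\widetilde{C'}))$ differs from a genuine element of $\mathcal{M}(\widetilde{C'})$ by an element of $\mathcal{M}(A') \subseteq \mathcal{M}(\widetilde{C'})$ — hence already lies in $\mathcal{M}(\widetilde{C'})$. So once I know $\rho(ac^{*}) \in \rho(\mathcal{M}(\widetilde{C'}))$, I am done: $ac^{*} = x + k$ with $x \in \mathcal{M}(\widetilde{C'})$, $k \in \mathcal{M}(A') \subseteq \mathcal{M}(\widetilde{C'})$, so $ac^{*} \in \mathcal{M}(\widetilde{C'})$. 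Thus the real content is choosing the lift $v$ to be a partial isometry with the right initial and final projections so that (i) $a = v^{*}(c\oplus0)$ has $a^{*}a = p$ and (ii) $\rho(ac^{*})$ is visibly in $\rho(\widetilde{C'})$; both are arranged by the polar-decomposition trick of Proposition \ref{2:10}(3) applied over $\widetilde{C'}$ together with one elementary stabilization, and then the isomorphism statement $(p,q,c) \cong (p, aa^{*}, a)$ via $p, ac^{*}$ is the definition of isomorphism of triples checked directly: $(ac^{*}) c = a(c^{*}c) = ap = a$, and both $p$ and $ac^{*}$ are isomorphisms in $\mathcal{P}(\widetilde{C'})$ because $ac^{*} \in \mathcal{M}(\widetilde{C'})$ is a partial isometry from $q = cc^{*}$ to $aa^{*}$. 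The standard-form addendum is immediate since $a^{*}a = p = 1_{m}$ and $\dot a = \dot v^{*}\,\dot c = \dot c = 1_{m}$ when one starts in standard form.
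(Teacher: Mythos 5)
Your overall strategy (lift into $\widetilde{C'}$ over the quotient, dilate to fix the support projections, multiply against $c$) is close in spirit to the paper's, but the step you yourself identify as ``the real content'' is resolved incorrectly. You argue that once $\rho(ac^{*})$ lies in $\rho(\mathcal{M}(\widetilde{C'}))$, the element $ac^{*}$ itself lies in $\mathcal{M}(\widetilde{C'})$ because it differs from an element of $\mathcal{M}(\widetilde{C'})$ by something in $\ker\rho = \mathcal{M}(A')$. That is false: the kernel of $\rho$ on $\mathcal{M}(\tilde{C})$ is $\mathcal{M}(A)$, not $\mathcal{M}(A')$; only the restriction of $\rho$ to $C'$ has kernel $A'$. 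Worse, since $\rho(C')=D=\rho(C)$, the condition $\rho(ac^{*})\in\rho(\mathcal{M}(\widetilde{C'}))=\mathcal{M}(\tilde{D})$ is satisfied by \emph{every} element of $\mathcal{M}(\tilde{C})$, so it carries no information. As written, your proof never establishes the one nontrivial conclusion, namely $ac^{*}\in\mathcal{M}(\widetilde{C'})$.

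The repair is to make $ac^{*}$ \emph{literally} a product of elements of $\mathcal{M}(\widetilde{C'})$, which is what the paper does: choose $c'\in\mathcal{M}(\widetilde{C'})$ with $\rho(c')=\rho(c)$ and $\Vert c'\Vert\le 1$, form the row matrix $c''=\bigl[\, qc' \ \ (q-qc'c'^{*}q)^{1/2}\,\bigr]$, which lies in $\mathcal{M}(\widetilde{C'})$ because $q=cc^{*}$ does (the triple $(p,q,c)$ is in $\Gamma(C';C)$ --- your worry that $cc^{*}$ might not lie in $\widetilde{C'}$ is unfounded), and satisfies $c''c''^{*}=q$ and $\rho(c'')=\rho(c)$. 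Setting $a=c''^{*}c$ gives $ac^{*}=c''^{*}q=c''^{*}\in\mathcal{M}(\widetilde{C'})$ on the nose, $a^{*}a=c^{*}qc=p$, and $\rho(a)=\rho(c)^{*}\rho(c)=\dot{p}$, which places $a$ in $M_{2n}(\tA)$. Note that in your own construction the same observation would have saved you: since $q\in\mathcal{M}(\widetilde{C'})$ and $v\in\mathcal{M}(\widetilde{C'})$, your $ac^{*}=v^{*}(q\oplus 0)$ is already in $\mathcal{M}(\widetilde{C'})$ with no quotient argument needed. A secondary gap is that you never actually produce a partial isometry lift $v$ of the unitary $w$ with $vv^{*}(q\oplus 0)=q\oplus 0$ (needed for $a^{*}a=p$); the explicit column/row dilation above is precisely the device that circumvents this.
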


\begin{proof}
As $\rho(C') = D = \rho(C)$, we may find $c'$ in $\mathcal{M}(\tilde{C'})$
such that $\rho(c') = \rho(c)$ and 
$\Vert c' \Vert \leq \Vert c \Vert \leq 1$.
Let 
\[
c'' = 
\left[ \begin{array}{cc} qc' & (q - qc'c'^{*}q)^{1/2} \end{array} \right].
\]
It is immediate that $c''$ is in $\mathcal{M}(\tilde{C'})$, 
$c''c''^{*} = q$ and $\rho(c'') = \rho(c)$. Hence, $a = c''^{*}c$
is a partial isometry and 
\[
\rho(a) = \rho(c'')^{*}\rho(c) = 
\rho(c)^{*} \rho(c) = \rho(c^{*}c) = \rho(p) =\dot{p},
\]
as $p$ is in $\mathcal{M}(\ta)$. This means that
$a$ is in $\mathcal{M}(\tilde{A})$. In addition, we have 
$a^{*}a = c^{*}c''c''^{*}c= c^{*}c=p$.
This completes the proof.
\end{proof}

    \begin{thm}
    \label{2:70}
Let
\vspace{.5cm}

\hspace{3cm}
\xymatrix{ 0 \ar[r]  &  A \ar[r]^{\pi} & C \ar[r]^{\rho}  &  D  \ar[r] &  0 \\
   0 \ar[r]  & A' \ar@{}[u]|-*[@]{\subseteq} \ar[r]^{\pi}   & C' \ar[r]^{\rho}
   \ar@{}[u]|-*[@]{\subseteq}
     & D \ar[r] \ar@{}[u]|-*[@]{=} & 0 }
   \vspace{.5cm}
  
 \noindent    be a commutative diagram with exact rows. Then the natural map
  \newline 
   $\pi_{*} : K_{0}(A';A) \rightarrow K_{0}(C';C)$ is an isomorphism.
   \end{thm}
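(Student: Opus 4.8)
The plan is to show separately that $\pi_{*}$ is surjective and injective, using Proposition \ref{2:10}(3)--(4) to put classes into rigid normal form and Lemma \ref{2:60} as the device that transports triples from the ambient pair $(C';C)$ down to $(A';A)$. Throughout I will use that $A$ is an ideal of $C$ and $A'$ an ideal of $C'$, that $A' = A \cap C'$ (both being $\ker\rho$), and the resulting facts that, inside $\widetilde{C}$, one has $\widetilde{A'} \subseteq \widetilde{C'}$, $\widetilde{A} \subseteq \widetilde{C}$ and $\mathcal{M}(\widetilde{A'}) = \mathcal{M}(\widetilde{A}) \cap \mathcal{M}(\widetilde{C'})$. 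In particular every element of $\Gamma(A';A)$ is, via the inclusions, an element of $\Gamma(C';C)$, and $\pi_{*}$ is induced by this inclusion.

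\emph{Surjectivity.} Represent a given class of $K_{0}(C';C)$ by a standard-form triple $(1_{m}, q, c)$ via Proposition \ref{2:10}(3): $c$ is a partial isometry in $\mathcal{M}(\widetilde{C})$ with $c^{*}c = 1_{m}$, $cc^{*} = q \in \mathcal{M}(\widetilde{C'})$ and $\dot{c} = \dot{q} = 1_{m}$. Since $1_{m} \in M_{m}(\widetilde{A'})$, Lemma \ref{2:60} produces a partial isometry $a \in M_{2m}(\widetilde{A})$ with $a^{*}a = 1_{m}$, with $ac^{*} \in \mathcal{M}(\widetilde{C'})$, and with $(1_{m},q,c)$ isomorphic in $\Gamma(C';C)$ to $(1_{m}, aa^{*}, a)$. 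Now $aa^{*} \in \mathcal{M}(\widetilde{A})$ since $a \in \mathcal{M}(\widetilde{A})$, and $aa^{*} = (ac^{*})(ac^{*})^{*} \in \mathcal{M}(\widetilde{C'})$, so $aa^{*} \in \mathcal{M}(\widetilde{A'})$ and $(1_{m}, aa^{*}, a) \in \Gamma(A';A)$; its image under $\pi_{*}$ is $[1_{m},q,c]$. Hence $\pi_{*}$ is onto.

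\emph{Injectivity.} Let $(1_{m}, q, a) \in \Gamma(A';A)$, in standard form over $\widetilde{A}$ (the reductions in the proof of Proposition \ref{2:10}(3) stay inside $\widetilde{A}$), with $\pi_{*}[1_{m},q,a] = 0$ in $K_{0}(C';C)$. By Proposition \ref{2:10}(4) for the pair $(C';C)$ there are $n\geq 1$ and elementary triples in standard form over $\widetilde{C}$, say $(1_{n},1_{n},a_{1})$ and $(1_{m+n},1_{m+n},a_{2})$, with $a_{2}(a\oplus a_{1})^{*} \in \mathcal{M}(\widetilde{C'})$. Writing $a \oplus a_{1} = (a\oplus 1_{n})(1_{m}\oplus a_{1})$ and setting $\widehat{a}_{2} = a_{2}(1_{m}\oplus a_{1}^{*})$, one obtains a unitary $\widehat{a}_{2}$ in $M_{m+n}(\widetilde{C})$, homotopic to $1_{m+n}$ through unitaries (since $a_{2}\sim 1_{m+n}$, $a_{1}\sim 1_{n}$), with $\dot{\widehat{a}}_{2} = 1_{m+n}$ and $\widehat{a}_{2}(a\oplus 1_{n})^{*} = a_{2}(a\oplus a_{1})^{*} \in \mathcal{M}(\widetilde{C'})$: the defect relation has now been rewritten with the trivial stabiliser $1_{n}$, which already lives over $\widetilde{A}$. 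Applying Lemma \ref{2:60} to the partial isometry $\widehat{a}_{2}$ (whose source projection $1_{m+n}$ lies in $M_{m+n}(\widetilde{A'})$) yields $b_{2}\in M_{2(m+n)}(\widetilde{A})$ with $b_{2}^{*}b_{2} = 1_{m+n}$, $b_{2}\widehat{a}_{2}^{*} \in \mathcal{M}(\widetilde{C'})$, and $(1_{m+n},1_{m+n},\widehat{a}_{2})$ isomorphic in $\Gamma(C';C)$ to $(1_{m+n}, b_{2}b_{2}^{*}, b_{2})$. As in the surjectivity step $b_{2}b_{2}^{*} \in \mathcal{M}(\widetilde{A'})$, and $b_{2}(a\oplus 1_{n})^{*} = (b_{2}\widehat{a}_{2}^{*})\bigl(\widehat{a}_{2}(a\oplus 1_{n})^{*}\bigr)$ lies in $\mathcal{M}(\widetilde{C'}) \cap \mathcal{M}(\widetilde{A}) = \mathcal{M}(\widetilde{A'})$. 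Using $c' = 1_{m+n}$ and $d' = b_{2}(a\oplus 1_{n})^{*}$ as an isomorphism in $\Gamma(A';A)$, together with the triviality of $(1_{n},1_{n},1_{n})$, one finds $[1_{m},q,a] = [1_{m+n}, b_{2}b_{2}^{*}, b_{2}]$ in $K_{0}(A';A)$, so it remains to show this class vanishes.

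I expect this last step to be the crux. The triple $(1_{m+n}, b_{2}b_{2}^{*}, b_{2})$ is isomorphic \emph{over $C'$} to the elementary triple $(1_{m+n},1_{m+n},\widehat{a}_{2})$, but being elementary is not an intrinsic property of its $\Gamma(A';A)$-class. To finish I would normalise $(1_{m+n}, b_{2}b_{2}^{*}, b_{2})$ to standard form over $\widetilde{A}$ — absorbing $1_{2(m+n)} - b_{2}b_{2}^{*}\in M_{2(m+n)}(\widetilde{A'})$ into a trivial summand and conjugating $\dot{b}_{2}$ to a standard partial isometry as in Proposition \ref{2:10}(3) — and then use that $\rho(b_{2}) = \dot{b}_{2}$ is scalar, together with the relation $b_{2}\widehat{a}_{2}^{*}\in\mathcal{M}(\widetilde{C'})$ and a homotopy from $\widehat{a}_{2}$ to $1_{m+n}$, to verify the criterion of Proposition \ref{2:10}(4) for $(A';A)$ after one further stabilisation. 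The real content is the bookkeeping: every isomorphism and homotopy must be realised over $\widetilde{A}$ or over $\widetilde{C'}$, never merely over $\widetilde{C}$. As a conceptual cross-check, one can bypass Karoubi's picture: $C(A';A)$ is an ideal of the mapping cone $C(C';C)$ with quotient $\{f\colon[0,1]\to D \mid f(0)=0\}$, which is contractible, so the six-term sequence shows the inclusion induces an isomorphism on $K$-theory; but the argument above is the one that fits the machinery developed in this section.
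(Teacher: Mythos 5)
Your surjectivity argument is the paper's: standard form via Proposition \ref{2:10}, then Lemma \ref{2:60} to replace the representative by one in $\Gamma(A';A)$. The injectivity argument, however, stops exactly at the step that carries all the content, and the plan you sketch for finishing does not work as stated. After reducing to the triple $(1_{m+n}, b_{2}b_{2}^{*}, b_{2})$ with $b_{2}$ over $\widetilde{A}$, you must show its class vanishes in $K_{0}(A';A)$, which by Proposition \ref{2:10} requires producing elementary triples \emph{in} $\Gamma(A';A)$ --- invertibles homotopic to a scalar through invertibles in $\mathcal{M}(\widetilde{A})$. The only homotopies you have (from $a_{1}$, $a_{2}$, hence from $\widehat{a}_{2}$, to the identity) live in $\mathcal{M}(\widetilde{C})$, and there is no pointwise way to push a path of invertibles from $\widetilde{C}$ down to $\widetilde{A}$ while keeping it continuous and keeping the defect relation over $\widetilde{C'}$. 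Knowing that $\dot{b}_{2}$ is scalar and that $b_{2}\widehat{a}_{2}^{*}\in\mathcal{M}(\widetilde{C'})$ is not enough: a priori $b_{2}$ could represent a nonzero class of $K_{0}(A';A)$ that dies only after inclusion into $\Gamma(C';C)$, and ruling that out is precisely what is being proved.

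The paper closes this gap with a cone argument that is absent from your proposal. The homotopies $c_{1}(t)$, $c_{2}(t)$ from the identity to the elementary $c_{1}$, $c_{2}$ are regarded as single elements of $\mathcal{M}((C_{0}(0,1]\otimes C)^{\sim})$, and Lemma \ref{2:60} is applied to the original diagram \emph{tensored with} $C_{0}(0,1]$ (the rows remain exact and the right-hand column is still an equality). This transports the entire paths, not merely their endpoints, to partial isometries $a_{1}(t)$, $a_{2}(t)$ over $C_{0}(0,1]\otimes A$; evaluating at $t=1$ then yields triples that are elementary in $\Gamma(A';A)$ by construction, because the witnessing homotopy now lives in $\mathcal{M}(\widetilde{A})$. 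That single idea is what your argument is missing. Your mapping-cone cross-check is a legitimate heuristic, but within this paper it is not a proof: the identification of Karoubi's groups with the $K$-theory of the mapping cone is mentioned in Section \ref{2} only as an alternative description, with the equivalence deferred to the references.
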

 
 \begin{proof}
By 
 Proposition \ref{2:10}, every class in $K_{0}(C';C)$ may 
 be represented by a triple in standard form. By Lemma 
 \ref{2:60}, such a triple is isomorphic to one from 
 $\Gamma(A';A)$ and isomorphism implies 
 equivalence and it follows that  $\pi_{*}$ is surjective.

 To prove that $\pi_{*}$ is injective, we begin with 
 $(1_{m}, q, a)$ in $\Gamma(A';A)$, which we may 
 assume is in standard form, and $[1_{m}, q, a]=0$
 in $\Gamma(C';C)$. It follows from Proposition \ref{2:10}
 that we may find $(1_{n},1_{n},c_{1})$ and  \newline
 $(1_{m+n}, 1_{m+n}, c_{2})$ is $\Gamma(C';C)$ which are 
 also in standard form and such that
 $(1_{m}, q, a) \oplus (1_{n},1_{n},c_{1})$ and $(1_{m+n}, 1_{m+n}, c_{2})$
 are isomorphic via $1_{m+n}, c_{2}(a \oplus a_{1})^{*}$.
 
 We consider the homotopies $c_{1}(t), 0 \leq t \leq 1$
  from $1_{m}$ to $c_{1}$ and 
 $c_{2}(t), 0 \leq t \leq 1$
  from $1_{m}$ to $c_{2}$ as elements
   in $\mathcal{M}((C_{0}(0,1] \otimes C)^{\sim})$.
   We can assume that the classes $(1_{n},1_{n}, c_{1}(t))$ and 
   $(1_{m+n}, 1_{m+n}, c_{2}(t))$ are in standard form. Hence, we can 
   apply Lemma \ref{2:60} to the diagram after tensoring with $C_{0}(0,1]$
   to obtain $a_{1}(t)$ and $a_{2}(t)$. Putting everything together, 
   $(1_{m+n}, q\oplus 1_{n}, a \oplus a_{1}(t))$ 
   is isomorphic to $(1_{m+n}, 1_{m+n}, a_{2}(t))$ via 
  $ a_{2}(t)(a \oplus a_{1}(t))^{*}$.  It follows that 
   $(1_{n}, 1_{n},  a_{1}(1))$ and $(1_{m+n}, 1_{m+n}, a_{2}(1))$
   are elementary in $\Gamma(A';A)$  and 
  $(1_{m+n}, q\oplus 1_{n}, a \oplus a_{1}(1))$ 
   is isomorphic to $(1_{m+n}, 1_{m+n}, a_{2}(1))$ 
     Thus we conclude that 
   $[1_{m}, q, a] = 0$ in $K_{0}(A';A)$.    
  \end{proof}     
  
  We remark that this is actually a special case of a more general 
  result that, in the case of a commutative diagram with 
  exact rows,
  \vspace{1cm}
  
  \hspace{3cm}
\xymatrix{ 0 \ar[r]  &  A \ar[r]^{\pi} & C \ar[r]^{\rho}  &  D  \ar[r] &  0 \\
   0 \ar[r]  & A' \ar@{}[u]|-*[@]{\subseteq} \ar[r]^{\pi}   & C' \ar[r]^{\rho}
   \ar@{}[u]|-*[@]{\subseteq}
     & D' \ar[r] \ar@{}[u]|-*[@]{\subseteq} & 0 }
   \vspace{.5cm}
   
   \noindent there is an associated six-term exact
    sequence of relative groups. We refer the reader to \cite{Ha:relK} for
    details, but we included a proof here of the special case
    for completeness.
    
    \begin{rmk}
    \label{2:150}
    Let us make a few concluding remarks on the subject of relative K-theory.
    First of all, there is also a relative group $K_{1}(A';A)$. Without going into
    many details, one considers triples $(p,a,g)$, where $p$ is a 
    projection in $\mathcal{\ta}$, $a$ is 
    an invertible in $p \mathcal{M}(\ta) p$ and $g(t), 0 \leq t \leq 1,$
    is a continuous path of invertibles in 
    $p \mathcal{M}(\tA)p$ with $b(0) = p, b(1) =a$, 
    We refer
    the reader to \cite{Ha:relK} for more details. The only 
    important result for us here is that
    \[
    K_{1}(A';A) \cong K_{0}(C_{0}(\R) \otimes A'; C_{0}(\R) \otimes A),
    \]
    in a natural way so that all of our results here for the relative 
    $K_{0}$-group pass to the relative 
    $K_{1}$-group as well. Of course, the 
    exact sequence of Theorem \ref{2:5} actually becomes a six-term exact
    sequence.
    
    Furthermore, the setting of relative K-theory in \cite{Ha:relK}
    considers a \newline
    $*$-homomorphism $\varphi: A' \rightarrow A$ and defines
      a relative group for the map, $K_{0}(\varphi)$. Our situation 
      is simply the special case of the inclusion map.
    \end{rmk}

\section{Excision: the main result}
\label{3}

We begin with the following result, which
 is a minor variant of a well-known fact. The important part of 
 the set-up is that we do not suppose that either $C^{*}$-algebra is acting
 non-degenerately.

\begin{prop}
\label{3:5}
Let $\mathcal{H}$ be a Hilbert space.
If $A,  B \subseteq \mathcal{B}(\mathcal{H})$ are
 $C^{*}$-algebras and $A B \subseteq A$, then $A + B$ is a $C^{*}$-algebra, 
$A$ is a closed two-sided ideal in $A +B$ and the quotient 
is isomorphic to $B/A \cap B$.
\end{prop}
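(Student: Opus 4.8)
The plan is to verify the three assertions in turn, using the hypothesis $AB\subseteq A$ together with the $*$-closedness of $A$ and $B$. First I would record the consequence of the hypothesis that is actually needed: since $A$ is self-adjoint, $AB\subseteq A$ gives, on taking adjoints, $BA = (AB)^{*}\subseteq A^{*}=A$ as well; so $A$ absorbs $B$ on both sides. In particular $A+B$ is closed under multiplication: a product $(a_1+b_1)(a_2+b_2)$ expands to $a_1a_2 + a_1b_2 + b_1a_2 + b_1b_2$, and the first three terms lie in $A$ while $b_1b_2\in B$. It is also clearly a self-adjoint linear subspace of $\mathcal B(\mathcal H)$. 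The one genuine analytic point is that $A+B$ is \emph{norm-closed}; once that is known, $A+B$ is a $C^{*}$-subalgebra of $\mathcal B(\mathcal H)$.

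For closedness of $A+B$ I would invoke the standard fact that the sum of a closed ideal and a closed $*$-subalgebra in a $C^{*}$-algebra is closed. Concretely: let $D$ be the $C^{*}$-algebra generated by $A$ and $B$ inside $\mathcal B(\mathcal H)$ (or just work in any $C^{*}$-algebra containing $A+B$); the computation above shows $A+B$ is already a (not necessarily closed) $*$-subalgebra, so its closure $\overline{A+B}$ is a $C^{*}$-algebra containing $A$ as a closed two-sided ideal. The quotient $*$-homomorphism $q:\overline{A+B}\to \overline{A+B}/A$ maps $A+B$ onto the $*$-subalgebra $q(B)$, which is the image of the $*$-homomorphism $B\to \overline{A+B}/A$; a $*$-homomorphism between $C^{*}$-algebras has closed range, so $q(B)$ is closed, hence $q^{-1}(q(B)) = A + B$ is closed. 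Therefore $A+B = \overline{A+B}$ is a $C^{*}$-algebra and $A$ is a closed two-sided ideal in it (it is an ideal by the multiplication computation, and closed since it was assumed to be a $C^{*}$-algebra).

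For the quotient, consider the composite $*$-homomorphism $\psi: B \hookrightarrow A+B \xrightarrow{q} (A+B)/A$. It is surjective because every coset $(a+b)+A$ equals $b+A = \psi(b)$. Its kernel is $\{b\in B : b\in A\} = A\cap B$. The first isomorphism theorem for $C^{*}$-algebras then gives $(A+B)/A \cong B/(A\cap B)$, naturally.

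The main obstacle is purely the closedness of $A+B$: this is exactly the classical lemma that a $C^{*}$-subalgebra plus an ideal is closed, and the cleanest route is the one above — pass to a quotient where the ideal dies and use that $*$-homomorphic images are closed — rather than attempting a direct approximation argument with the sum. Everything else is a routine expansion of products and an application of the first isomorphism theorem.
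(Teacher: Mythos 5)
Your proof is correct, and it reaches the conclusion by a genuinely different (and arguably cleaner) route than the paper. You work entirely abstractly: you pass to the norm closure $\overline{A+B}$, observe that $A$ remains a closed two-sided ideal there (by continuity of multiplication, using $AB\subseteq A$ and its adjoint $BA\subseteq A$), and then recover closedness of $A+B$ as $q^{-1}(q(B))$ for the quotient map $q:\overline{A+B}\to\overline{A+B}/A$, invoking the closed-range property of the $*$-homomorphism $B\to\overline{A+B}/A$. The paper instead argues spatially: it decomposes $\mathcal{H}=\mathcal{N}\oplus\mathcal{N}^{\perp}$ with $\mathcal{N}=\overline{A\mathcal{H}}$, notes that $AB\subseteq A$ forces $\mathcal{N}$ to be $B$-invariant and that $b|_{\mathcal{N}}$ lies in the multiplier algebra $M(A)$, and then realizes $A+B$ as the preimage of the closed range of a concrete $*$-homomorphism $\theta$ into $M(A)/A\oplus\mathcal{B}(\mathcal{N}^{\perp})$. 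Both arguments hinge on the same analytic fact (images of $*$-homomorphisms between $C^{*}$-algebras are closed); yours shows the representation on $\mathcal{H}$ is not actually needed and sidesteps the degeneracy issue the paper flags, while the paper's version gives the additional structural information that $B$ acts on $\mathcal{N}$ through $M(A)$, which is in the spirit of the multiplier-algebra considerations used later in the paper. Your identification of the quotient via the first isomorphism theorem applied to $B\to(A+B)/A$ is the standard argument and matches what the paper's statement requires.
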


\begin{proof}
 It is clear that $A + B$ is a $*$-subalgebra  of 
 $\mathcal{B}(\mathcal{H})$
 and that $A$ is 
 a closed, two-sided ideal. We will show that $A + B$ is closed and the 
 proof will be complete.

Let $ \mathcal{N} $ be the closure of $A\mathcal{H}$. The condition that
$AB \subseteq A$ means that $\mathcal{N}$ is invariant under $B$. 
Hence, writing $\mathcal{H} = \mathcal{N} \oplus \mathcal{N}^{\perp}$,
each element $b$ of $B$ may be written as 
$b|_{\mathcal{N}} \oplus b|_{\mathcal{N}^{\perp}}$. As $A$ acts 
non-degenerately on $\mathcal{N}$, $b|_{\mathcal{N}} $ lies in $M(A)$, 
the multiplier algebra of $A$, for every $b$ in $B$.

Let $q: M(A) \rightarrow M(A)/A$ and 
$ \theta: B \rightarrow   M(A)/A \oplus \mathcal{B}(\mathcal{N}^{\perp})$
 be the  map sending $b$ to 
 $q(b|_{\mathcal{N}}) \oplus  b|_{\mathcal{N}^{\perp}} $.
Since this is a $*$-homomorphism, it is continuous and 
$\theta(B)$ is closed. 
We note that $A + B = \theta^{-1}( \theta(B))$ is closed also.
\end{proof}

We will assume throughout this section 
that $A$ and $B$ are related as above and we 
let $i:A \rightarrow A + B$ and $j: B \rightarrow A+B$ denote the 
two inclusion maps.

\begin{thm}
\label{3:10}
Let $A,  B \subseteq \mathcal{B}(\mathcal{H})$ be
 $C^{*}$-algebras with $A B \subseteq A$.

 Let $E$ be a Banach
 $A + B$-bimodule which is 
also a $C^{*}$-algebra and let $\delta: A + B \rightarrow E$
be a  bounded
 $*$-derivation satisfying
  $ \delta(B) \subseteq \delta(A)$.
  
  Then 
 
 \xymatrix{ 0 \ar[r]  &  A \ar[r]^{i} & A+B \ar[r]  &  (A+ B) / A  \ar[r] &  0 \\
   0 \ar[r]  &  \ker(\delta) \cap A \ar@{}[u]|-*[@]{\subseteq} \ar[r]^{i}   & 
   \ker(\delta) \ar[r]
   \ar@{}[u]|-*[@]{\subseteq}
     & (A+ B)/A \ar[r] \ar@{}[u]|-*[@]{=} & 0 }
     
 \noindent     is a commutative diagram with exact rows.      
In consequence, 
\[
i_{*}:K( \ker(\delta) \cap A; A) \rightarrow K(\ker(\delta); A+B)
\]
 is an isomorphism and 
\[
\alpha = (i_{*})^{-1} \circ j_{*}: K(\ker(\delta) \cap B; B) \rightarrow K(\ker(\delta) \cap A; A).
\]
is a homomorphism.
\end{thm}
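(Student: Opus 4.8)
The plan is to verify that the displayed square is a commutative diagram with exact rows, and then invoke Theorem \ref{2:70} to get the isomorphism $i_{*}$, with the homomorphism $\alpha$ following formally. First I would establish the top row: by Proposition \ref{3:5}, applied to $A, B \subseteq \mathcal{B}(\mathcal{H})$ with $AB \subseteq A$, the sum $A+B$ is a $C^{*}$-algebra, $A$ is a closed two-sided ideal in $A+B$, and the quotient $(A+B)/A$ is isomorphic to $B/(A \cap B)$; in particular $0 \to A \xrightarrow{i} A+B \to (A+B)/A \to 0$ is exact. For the bottom row, the key point is that $\ker(\delta)$ is a $C^{*}$-subalgebra of $A+B$ (the kernel of a bounded $*$-derivation is norm-closed since $\delta$ is continuous, and is a $*$-subalgebra by the Leibniz rule and the $*$-compatibility of $\delta$), and likewise $\ker(\delta) \cap A$ is a $C^{*}$-subalgebra of both $\ker(\delta)$ and $A$. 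Exactness of $0 \to \ker(\delta) \cap A \xrightarrow{i} \ker(\delta) \xrightarrow{} (A+B)/A \to 0$ at the left and middle is immediate; the only real content is surjectivity of the composite $\ker(\delta) \hookrightarrow A+B \to (A+B)/A$.

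That surjectivity is where I expect the work to be, and it is exactly where the hypothesis $\delta(B) \subseteq \delta(A)$ is used. Given a class in $(A+B)/A$, represent it by some $b \in B$. Since $\delta(b) \in \delta(B) \subseteq \delta(A)$, choose $a \in A$ with $\delta(a) = \delta(b)$; then $b - a \in \ker(\delta)$, and $b - a$ maps to the same class as $b$ in $(A+B)/A$ because $a \in A$. Hence the composite is onto, establishing exactness of the bottom row. Commutativity of the square is clear: the left square commutes because both ways around send $\ker(\delta) \cap A$ into $A+B$ via the inclusion, and the right square commutes because both composites $\ker(\delta) \to (A+B)/A$ agree (the right-hand vertical is the identity on $(A+B)/A$).

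With the commutative diagram with exact rows in hand, and noting that the right-hand vertical map is the identity (so in particular an isomorphism), Theorem \ref{2:70} applies verbatim with $A' = \ker(\delta) \cap A$, $C = A+B$, $C' = \ker(\delta)$, and $D = D' = (A+B)/A$, yielding that $i_{*} : K_{0}(\ker(\delta) \cap A; A) \to K_{0}(\ker(\delta); A+B)$ is an isomorphism; by Remark \ref{2:150} the same holds for $K_{1}$, hence for $K$. Finally, the inclusion $j : B \to A+B$ carries $\ker(\delta) \cap B$ into $\ker(\delta)$, so it induces $j_{*} : K(\ker(\delta) \cap B; B) \to K(\ker(\delta); A+B)$, and composing with the inverse of the isomorphism $i_{*}$ gives the well-defined homomorphism $\alpha = (i_{*})^{-1} \circ j_{*}$. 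The only subtlety worth a line of care is checking that $\delta(B) \subseteq \delta(A)$ together with $\delta$ being an honest $*$-derivation on all of $A+B$ is consistently used — in particular that $E$ being a $C^{*}$-algebra is what makes $\ker(\delta)$ closed under the algebra operations, while the Banach bimodule structure is what makes $\delta$ bounded — but none of this requires computation beyond the standard Leibniz-rule manipulations.
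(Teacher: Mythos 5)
Your proposal is correct and follows essentially the same route as the paper: exactness of the top row via Proposition \ref{3:5}, surjectivity of $\ker(\delta) \to (A+B)/A$ by lifting $b+A$ to $b-a$ with $\delta(a)=\delta(b)$ using $\delta(B)\subseteq\delta(A)$, and then an appeal to Theorem \ref{2:70} (which the paper mis-cites as \ref{2:100}). No gaps.
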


\begin{proof}
Exactness of the top row follows from Proposition \ref{3:5} and 
commutativity is obvious. Exactness of the bottom row is 
easy except in showing that the map from $\ker(\delta)$ is surjective. 
It is clear that any element of $(A+B)/A$ can 
be represented as $b + A$, with $b$ in $B$.
By hypothesis, there exists $a$ in $A$ with $\delta(a) = \delta(b)$. 
It follows that $b-a$ is in $\ker(\delta)$ and its image 
in $(A+B)/A$ is simply $b + A $.

The rest follows from Theorem \ref{2:100}.
\end{proof}

Let us consider a special case of interest, just
 to see that the hypotheses are quite general and that
the map $\alpha$
is non-trivial. 
Let $B$ be any $C^{*}$-algebra and suppose that 
$(\pi, \mathcal{H}, F)$
is a Fredholm module for $B$: that is, $\mathcal{H}$ is a separable
Hilbert space, 
$\pi: B \rightarrow \mathcal{B}(\mathcal{H})$ is a representation of
$B$ on $\mathcal{H}$ and $F$ is a bounded linear operator 
on $\mathcal{H}$ such that 
\[
\pi(b)(F - F^{*}), \pi(b)(F^{2}-1), [\pi(b), F] = \pi(b)F - F \pi(b),
\]
are all compact, for any $b$ in $B$. 
We assume the slightly stronger conditions
that $F=F^{*}, F^{2}=1$ and, for simplicity, that
$B$ and $\pi$ are unital.
 Let $P = \frac{1}{2}(F+1)$, which is a 
self-adjoint projection.

Such a Fredholm module induces a natural homomorphism
from $K_{1}(B)$ to the integers, which sends $[u]_{1}$, where
 $u$ is 
 a unitary in
  $M_{n}(B)$, to
\[
Ind((1_{n} \otimes P) (id_{n} \otimes \pi)(u) 
\vert_{\C^{n} \otimes P\mathcal{H}}),
\]
where $Ind$  denotes Fredholm index. We
 denote this map by $Ind$ for simplicity. 

We can apply our Theorem \ref{3:10} in this situation
as follows.
Let $A= \mathcal{K}(\mathcal{H})$,
 use $\pi(B)$ as the $C^{*}$-algebra $B$ in \ref{3:10},
 $ E =  \mathcal{B}(\mathcal{H})$, 
which is obviously an $A + \pi(B)$ bimodule 
and define
\[
\delta(x) = i[x, F], x \in A + \pi(B).
\]
It is a simple matter to see that  
\[
\ker(\delta) \cap A = \mathcal{K}(P\mathcal{H}) \oplus 
\mathcal{K}((I-P)\mathcal{H})
\]
and  the relative group
$K_{0}(\ker(\delta) \cap A   ;A) \cong \Z$ has already 
been computed in
Example \ref{2:100}.

\begin{thm}
\label{3:30}
Let  $B$ be a unital 
$C^{*}$-algebra and $(\pi, \mathcal{H}, F)$ be a 
Fredholm module for $B$ with $\mathcal{H}$ separable,
 $F=F^{*}, F^{2}=I, F \neq \pm I$ and  $\pi$ unital.
With $E, \delta, A$ as above, the following 
diagram is commutative 
\vspace{.5cm}
\hspace{2cm}

\xymatrix{
K_{1}(B) \ar[d]^{\mu \circ \pi} \ar[r]_{Ind} & \Z  
\\
 K_{0}(\ker(\delta) \cap \pi(B); \pi(B)) \ar[r]^{\alpha} & 
 K_{0}(\ker(\delta) \cap A; A) \ar[u]^{\cong} }
 
 \vspace{1cm}

\noindent where $\mu$ is as in Theorem \ref{2:5},
  $\alpha$ is as in Theorem \ref{3:10}
  and
 the vertical arrow on the right is the isomorphism
 described in Example \ref{2:100}.
\end{thm}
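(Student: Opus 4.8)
The plan is to chase an arbitrary generator through both legs of the square. Fix a unitary $u$ in $M_{n}(B)$, put $U = (id_{n}\otimes\pi)(u)$, $\mathcal{F} = 1_{n}\otimes F$, $\mathcal{P} = 1_{n}\otimes P = \tfrac{1}{2}(1_{n}+\mathcal{F})$, and write $U_{11} = \mathcal{P}U\mathcal{P}\vert_{\mathbb{C}^{n}\otimes P\mathcal{H}}$ and $U_{22} = (1_{n}-\mathcal{P})U(1_{n}-\mathcal{P})\vert_{\mathbb{C}^{n}\otimes(I-P)\mathcal{H}}$ for the two diagonal compressions of $U$. The Fredholm module axioms make the off-diagonal corners of $U$ compact, so $U_{11}$ and $U_{22}$ are Fredholm (with $Ind(U_{11})+Ind(U_{22})=Ind(U)=0$), and $Ind([u]_{1}) = Ind(U_{11})$ by definition. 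By Theorem \ref{2:5}, $(\mu\circ\pi)([u]_{1}) = [1_{n},1_{n},U]$, so by Theorem \ref{3:10} its image under $\alpha$ is obtained by applying $j_{*}$ (which leaves the triple unchanged) and then inverting $i_{*}$ on $[1_{n},1_{n},U]\in\Gamma(\ker(\delta);A+\pi(B))$.

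I would invert $i_{*}$ with Lemma \ref{2:60}, applied to the diagram of Theorem \ref{3:10} with $c=U$, so that $c^{*}c=1_{n}$. The one free choice there is a contractive lift $c'$ of $\rho(U)$ into $\mathcal{M}(\widetilde{\ker(\delta)})$, and the key idea is to pick $c'$ to be a partial isometry: start from the $\mathcal{F}$-diagonal part $\tfrac{1}{2}(U+\mathcal{F}U\mathcal{F})$, which commutes with $\mathcal{F}$, hence lies in $\ker(\delta)$, and differs from $U$ by the compact $\tfrac{1}{2}[\mathcal{F},U]\mathcal{F}$; then pass to the partial isometry in its polar decomposition, which still commutes with $\mathcal{F}$, still differs from $U$ by a compact, and still lies in $A+\pi(B)$ (the operator is essentially unitary, so its modulus has an isolated finite-rank kernel and the polar part is reached by continuous functional calculus). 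For such a $c'$ the construction in Lemma \ref{2:60} collapses: $1_{n}-c'c'^{*}$ is already a projection, the off-diagonal terms of $c''^{*}c''$ vanish, and one obtains $a$ with $a^{*}a=1_{n}$ and with $aa^{*}$ the block-diagonal projection whose diagonal entries are $1_{n}-p_{0}$ and $p_{1}$, where $p_{0}$ is the projection onto $\ker(c')$ and $p_{1}$ the projection onto $\ker(c'^{*})$. Along the splitting by $\mathcal{P}$ these restrict to the kernel projections of $U_{11},U_{22}$ and of $U_{11}^{*},U_{22}^{*}$ respectively, so $p_{0},p_{1}$ are $\mathcal{F}$-diagonal and of finite rank; hence $(1_{n},aa^{*},a)\in\Gamma(\ker(\delta)\cap A;A)$ and $\alpha((\mu\circ\pi)([u]_{1})) = [1_{n},aa^{*},a]$.

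It remains to apply the isomorphism of Example \ref{2:100}, i.e., to read off the first coordinate of $\nu[1_{n},aa^{*},a] = [1_{n}]_{0}-[aa^{*}]_{0}$ in $K_{0}(\mathcal{K}(P\mathcal{H}))\oplus K_{0}(\mathcal{K}((I-P)\mathcal{H})) = \mathbb{Z}\oplus\mathbb{Z}$. Since $1_{n}$ is scalar, $[1_{n}]_{0}=0$; and since $aa^{*}$ equals $1_{n}\oplus 0$ plus a finite-rank perturbation supported on the ranges of $p_{0}$ and $p_{1}$, a short bookkeeping with ranks gives that its $K_{0}(\mathcal{K}(P\mathcal{H}))$-coordinate is $\dim\ker(U_{11}^{*})-\dim\ker(U_{11})$. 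Thus the first coordinate of $\nu[1_{n},aa^{*},a]$, which is the image of $\alpha((\mu\circ\pi)([u]_{1}))$ under the isomorphism of Example \ref{2:100}, equals $\dim\ker(U_{11})-\dim\ker(U_{11}^{*}) = Ind(U_{11}) = Ind([u]_{1})$, which is exactly commutativity of the square. A general $n$ is handled identically after tensoring everything with $1_{n}$.

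I expect the genuine obstacle to be not any single calculation but the unitization bookkeeping --- keeping straight the adjoined units of $\pi(B)\subseteq A+\pi(B)$ and of $\ker(\delta)\cap A\subseteq\ker(\delta)$ --- together with the temptation to simplify $aa^{*}$ by a Murray--von Neumann equivalence: the partial isometry implementing such an equivalence does not lie in matrices over $\widetilde{\ker(\delta)\cap A}$ (only certain products of it do), so the move is illegitimate, and it is precisely the finite-rank defect of $aa^{*}$ that carries the Fredholm index. A useful sanity check while writing is the Toeplitz Fredholm module on $B=C(\mathbb{T})$: there $U$ is the bilateral shift, $c'$ is essentially the unilateral shift, $aa^{*} = (1-E_{-1})\oplus E_{0}$ for rank-one projections $E_{-1},E_{0}$, and $\nu[1,aa^{*},a] = (-1,1)$, matching $Ind(T_{z})=-1$.
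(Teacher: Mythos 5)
Your argument is correct, but it takes a genuinely different route from the paper's. The paper works directly with the four intersection projections $P_{1},\dots,P_{4}$ of $P\mathcal{H}, (I-P)\mathcal{H}$ against $\pi(u)^{*}P\mathcal{H}, \pi(u)^{*}(I-P)\mathcal{H}$, writes $[I,I,\pi(u)]$ as the sum of $[P_{1}+P_{3},Q_{1}+Q_{3},\pi(u)(P_{1}+P_{3})]$ (which vanishes because $\pi(u)(P_{1}+P_{3})$ commutes with $P$ and hence lies in $\ker(\delta)$) and the finite-rank triple $[P_{2}+P_{4},Q_{2}+Q_{4},\pi(u)(P_{2}+P_{4})]$, and then evaluates the latter via Example \ref{2:100}. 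You instead unwind the definition $\alpha=i_{*}^{-1}\circ j_{*}$ by running Lemma \ref{2:60} with the lift $c'$ taken to be the polar partial isometry of the $F$-diagonal compression $\mathcal{P}U\mathcal{P}+(1-\mathcal{P})U(1-\mathcal{P})$; the index then appears as the rank difference of the kernel projections $p_{0},p_{1}$ sitting inside $aa^{*}$. Both computations land on $\dim\ker(U_{11})-\dim\ker(U_{11}^{*})$ and agree. What your route buys: the identity $P_{1}+P_{2}+P_{3}+P_{4}=I$ used in the paper holds only when the pair of projections $P$ and $\pi(u)^{*}P\pi(u)$ has no generic part, which is not automatic merely from compactness of the difference and strictly speaking needs justification; your polar-decomposition argument sidesteps that entirely (the essential unitarity of the compression guarantees the gap in the spectrum of the modulus, so the polar part is reached by continuous functional calculus inside $\ker(\delta)\cap(A+\pi(B))$), and it exhibits an explicit representative $(1_{n},aa^{*},a)$ in $\Gamma(\ker(\delta)\cap A;A)$ so that the evaluation via $\nu$ and Example \ref{2:100} is mechanical. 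What the paper's route buys is brevity and a picture that makes the vanishing of the ``diagonal'' part of the class transparent. Your closing caution about not simplifying $aa^{*}$ by a Murray--von Neumann equivalence is well taken, and your Toeplitz check is consistent with both computations.
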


\begin{proof}
For simplicity, we consider a unitary $u$ in $B$, instead of 
$M_{n}(B)$, for some $n$. Let $P_{1}, P_{2}, P_{3}, P_{4}$ 
be the orthogonal projections of 
$\mathcal{H}$ onto each of the following four subspaces:
\[
\begin{array}{cc}P\mathcal{H} \cap \pi(u)^{*}P\mathcal{H}, &
P\mathcal{H} \cap \pi(u)^{*}(I-P)\mathcal{H}, \\
(I-P)\mathcal{H} \cap \pi(u)^{*}(I-P)\mathcal{H},  & 
(I-P)\mathcal{H} \cap \pi(u)^{*}P\mathcal{H}. \end{array}
\]
As $\pi(u)$ is unitary, $P_{1} + P_{2} +P_{3} +P_{4} =I$.
The projections $P_{2}$ and $P_{4}$ are both finite rank
from the Fredholm module condition and the index of 
$P \pi(u) \vert_{P\mathcal{H}}$ is simply
$\dim(P_{2}\mathcal{H}) - \dim(P_{4}\mathcal{H})$.
Also let $Q_{i} = \pi(u) P_{i} \pi(u)^{*}, 1 \leq i \leq 4$.

Letting $j_{*}$ be as in Theorem \ref{3:10}, 
$j_{*} \circ \mu \circ \pi[u]_{1}$ is represented by the class of 
$(I, I, \pi(u))$ in $\Gamma(\ker(\delta), \pi(B) +A)$. It 
follows  that 
\begin{eqnarray*}
[I, I, \pi(u)] & = & [ P_{1} + P_{3}, Q_{1} + Q_{3}, \pi(u)(P_{1} + P_{3})] \\
   &   &   +  [ P_{2} + P_{4}, Q_{2} + Q_{4}, \pi(u)(P_{2} + P_{4})].
   \end{eqnarray*}
On the other hand, $ \pi(u)(P_{1} + P_{3})$ commutes with $P$ and, hence,
 is in  $\ker(\delta)$, so 
 $[ P_{1} + P_{3}, Q_{1} + Q_{3}, \pi(b)(P_{1} + P_{3})] = 0$.
 We have 
 \[
 \alpha \circ \mu \circ \pi([u]_{1}) = 
 i_{*}^{-1} \circ j \circ \mu \circ \pi([u]_{1})
   =  [ P_{2} + P_{4}, Q_{2} + Q_{4}, \pi(u)(P_{2} + P_{4})]
   \]
   and  the isomorphism of Example \ref{2:100} (using
   $\mathcal{H}^{+} = P\mathcal{H}$) maps this to
   \[
   rank(\pi(u)P_{2}) - rank(\pi(u)P_{4}) = 
   \dim(P_{2}\mathcal{H}) - \dim(P_{4}\mathcal{H}).
   \]
\end{proof}

We will continue to assume throughout that $A, B, \delta, E$ are 
as in Theorem \ref{3:10}. Our main goal is 
to provide extra conditions under which the  map $\alpha$ is 
actually an isomorphism.

\begin{thm}
\label{3:70}
Let $A, B, E, \delta$ be as in \ref{3:10}. Suppose that $A$ has 
a dense $*$-subalgebra, $\mathcal{A}$ satisfying the following.
  \begin{enumerate}
  \item [C1] There is a constant $K \geq 0$ such that, for every 
  $a$ in $\mathcal{A}$, there is $a' $ in $\ker( \delta) \cap \mathcal{A}$
   such that 
   \[
   \Vert a - a' \Vert \leq K \Vert \delta(a) \Vert.
   \]
   \item [C2] For every  $a_{1}, \ldots, a_{I}$
    in $\mathcal{A}$, there is $0 \leq e \leq 1$ in $M(A)$, 
    the multiplier algebra of $A$ (\cite{Ped:book}) and 
   $b_{1}, \ldots, b_{I}$ in $B$ such that
    such that 
    \begin{enumerate}
    \item $a_{i} = ea_{i} =a_{i}e = eb_{i} = b_{i}e$, 
      and 
    \item $\delta(b_{i}) = \delta(a_{i})$,
    \end{enumerate}
    for all $ 1 \leq i \leq I$.
 \end{enumerate}
  Then $\delta(A) = \delta(B)$ is closed and 
 the map $\alpha$ of Theorem \ref{3:10} is an isomorphism.
\end{thm}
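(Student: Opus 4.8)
The plan is to prove that $\alpha = (i_*)^{-1}\circ j_*$ is an isomorphism by constructing an explicit inverse, using the presentation of relative classes in standard form (Proposition~\ref{2:10}(3)) together with Lemma~\ref{2:60}/Theorem~\ref{2:70}. The role of condition C1 is to let me replace an arbitrary element of $\mathcal{A}$ (hence, by density and a small perturbation argument, of $A$) that lies close to $\ker(\delta)$ by an element actually in $\ker(\delta)\cap A$, with quantitative control; this is what will let me lift projections and partial isometries from the relative picture for $(\ker(\delta)\cap A; A)$ back into the relative picture for $(\ker(\delta)\cap B; B)$. The role of condition C2 is to produce, from finitely many generators appearing in a given triple over $\mathcal{A}$, a single multiplier $e$ that simultaneously ``absorbs'' those generators and matches them with elements $b_i\in B$ having the same $\delta$-image; morally $e$ plays the part of an approximate unit that lives in $M(A)$ and is fixed by the relevant derivation data, so that a triple supported on such generators can be transported between $B$ and $A$ without changing its class.

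First I would establish that $\delta(A)=\delta(B)$ is closed: surjectivity of $\delta(A)\to\delta(B)$ (really the reverse inclusion $\delta(B)\subseteq\delta(A)$) is a hypothesis of Theorem~\ref{3:10}, so the content is closedness, which should follow from C1 --- the estimate $\Vert a-a'\Vert\le K\Vert\delta(a)\Vert$ says $\delta$ restricted to a complement of $\ker(\delta)\cap\mathcal{A}$ is bounded below, hence has closed range on the dense subalgebra, and a standard completion/open-mapping argument upgrades this to closedness of $\delta(A)$ in $E$. Next I would set up the inverse map $\beta: K_0(\ker(\delta)\cap A;A)\to K_0(\ker(\delta)\cap B;B)$. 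Given a class in $K_0(\ker(\delta)\cap A;A)$, represent it (Proposition~\ref{2:10}(3)) by a triple $(1_m,q,a)$ in standard form with $a$ a partial isometry over $\widetilde A$; approximate the finitely many matrix entries of $q$ and $a$ by elements of $\mathcal{A}$, invoke C2 to get a multiplier $e$ and partners $b_i\in B$, and use $e$ to manufacture a triple over $\widetilde B$ whose $j_*$-image returns the original class. Checking well-definedness (independence of the representative and of the approximations) is where C1 does its work: two choices differ by something small in $\ker(\delta)$-distance, and C1 converts that into an honest perturbation inside $\ker(\delta)\cap\mathcal{A}$, which by Proposition~\ref{2:10}(2) (homotopy invariance of classes) does not change the relative class.

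Then I would verify $\alpha\circ\beta=\mathrm{id}$ and $\beta\circ\alpha=\mathrm{id}$. One direction is essentially formal from the construction ($j_*$ followed by $(i_*)^{-1}$ undoes the transport built via $e$), and the other direction is the substantive one: starting from a triple over $\widetilde B$, push it to $A$ via $\alpha$, then back via $\beta$, and show the result is equivalent to the original --- here I expect to need both that any class over $B$ can be represented using entries supported by a single $e$ as in C2, and that the ambiguity introduced when solving $\delta(b)=\delta(a)$ (the solution is unique only up to $\ker(\delta)$) is exactly absorbed by the equivalence relation $\sim$ on $\Gamma$, via C1 and homotopy invariance.

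The main obstacle I anticipate is the bookkeeping in condition C2: a relative class is not supported on a fixed finite set of generators in any canonical way (one must pass to matrices, take direct sums with elementary triples, approximate, and re-standardize), so arranging a single multiplier $e$ that works for a whole representative \emph{and} persists through the equivalences needed to compare two representatives is delicate. Concretely, when I perturb $a$ to $a'\in M_n(\widetilde{\mathcal{A}})$ I lose exact partial-isometry and exact idempotency relations, so I will need the usual functional-calculus clean-up ($a'(a'^*a')^{-1/2}$, polynomial approximations of spectral projections) done \emph{inside} the algebras generated by elements absorbed by $e$, so that C2 still applies after the clean-up; keeping $e$ ``large enough'' through all of these steps, while only ever invoking C2 for finitely many elements at a time, is the technical heart of the argument, and is presumably why the authors defer the proof to a separate section.
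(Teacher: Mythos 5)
Your outline correctly identifies the roles of C1 (quantitative perturbation into $\ker(\delta)$, hence closed range) and C2 (matching elements of $B$ to elements of $\mathcal{A}$ through a common multiplier $e$), and your anticipated obstacle --- keeping the functional-calculus clean-up inside the algebra controlled by a single $e$ --- is indeed where the real work lies (it is the content of Lemma \ref{4:20} and Lemma \ref{4:75} in the paper). But there are two genuine gaps. First, you assert that for $\delta(A)=\delta(B)$ ``the content is closedness,'' the inclusion $\delta(B)\subseteq\delta(A)$ being a hypothesis of Theorem \ref{3:10}. That hypothesis gives only one inclusion; the reverse inclusion $\delta(A)\subseteq\delta(B)$ is \emph{not} hypothesized and does not follow from closedness. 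The paper proves it (Proposition \ref{4:30}) by using C2 together with the functional calculus identities of Lemma \ref{4:20} to solve $\delta(b)=\delta(a)$ with the norm control $\Vert b\Vert\le\Vert a\Vert$, and then summing a series; without some such norm-controlled solvability your argument never produces an element of $B$ hitting a given $\delta(a)$ for general $a\in A$.

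Second, the step ``use $e$ to manufacture a triple over $\widetilde{B}$ whose $j_*$-image returns the original class'' is exactly the point at which a mechanism is needed and none is supplied, and the well-definedness problems you flag for your map $\beta$ are real and hard to resolve head-on. The paper sidesteps them by never defining $\beta$ on representatives: instead, given $(1_m,q,a)$ in standard form over $A$, it constructs (Lemma \ref{4:75}) a partial isometry $b$ over $\widetilde{B}$ with $bb^*=1_m$, $\delta(b^*b)=0$ and $\Vert\delta(ab)\Vert$ arbitrarily small, and then invokes a factorization lemma (Lemma \ref{4:80}): a partial isometry with small $\delta$ splits as an elementary factor times an element of $\ker(\delta)$, so $[p,q,ab]=0$ in $K_0(\ker(\delta);A+B)$ and $\alpha[1_m,p,b^*]=[1_m,q,a]$. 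Injectivity is handled the same way after tensoring with the cone $C_0(0,1]$ to control the homotopies implicit in elementary triples (Proposition \ref{4:60}(3)), a device your sketch does not anticipate. Your appeal to ``C1 plus homotopy invariance'' to absorb the ambiguity in solving $\delta(b)=\delta(a)$ is the right instinct, but it needs to be packaged as the quantitative factorization statement above to actually close the argument.
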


All of our later applications will be to groupoid 
$C^{*}$-algebras. In such cases,  the dense
$*$-subalgebras of continuous compactly-supported
functions on the groupoid will form the
 $*$-subalgebra, $\mathcal{A}$.

 Let us remark that it seems  reasonable  to conjecture
 that one could replace conditions C1 and C2 of Theorem \ref{3:70}
 with the hypothesis that
  $\delta(A) = \delta(B)$ is closed. It certainly makes for a cleaner
 result. On the other hand, if one wants to verify
 this condition 
 in the case of groupoid $C^{*}$-algebras, then employing C1 and C2
 to do this (and using the dense subalgebras of continuous compactly-supported
 functions) is not an unreasonable route.

 We comment that the relation between the statement
  of Theorem \ref{3:70} and its applications  
 is rather similar to that of
 the first isomorphism theorem for groups. That statement is, given 
 a surjective group homomorphism, $\alpha: G \rightarrow H$, the map induces
 an isomorphism between the quotient group $G/ker(\alpha)$ and $H$. 
 In most applications, however, one is not given 
 $\alpha, G$ and $H$, but rather $G$
 and a normal subgroup $N$ and then tries to cook up an $\alpha$ and $H$ 
 such that $N = \ker(\alpha)$, so that one may identify $G/N$ with the 
 (hopefully) more familiar group, $H$.
 Our applications usually involve starting 
 with $B' \subseteq B$ and trying to cook up
 $\delta, A$ and $E$ so as to apply Theorem \ref{3:70} and with
  $B' = B \cap \ker(\delta)$.
  The hope, in this case,
 is that $A \cap \ker(\delta) \subseteq A$ is simpler than 
$B' \subseteq B$.

\section{The proof}
\label{4}

 This section is devoted to a proof of Theorem \ref{3:70}.

We start with expanding on condition C1. 
We remark that, informally, this
 means that \emph{almost} being in $\ker(T)$
  ($Tx$ is small)
implies \emph{nearly} being in  $\ker(T)$ (near an element in 
 $\ker(T)$). The following result is quite standard but we provide a proof 
 for completeness.

\begin{prop}
\label{4:10}
Let $T: X \rightarrow Y$ be a bounded linear map between two Banach spaces.
The following are equivalent.
\begin{enumerate}
\item $T(X)$ is  closed in $Y$.
\item There is a constant $K \geq 1$ such that, for every 
$x$ in $X$, there is $x'$ in $\ker(T)$ such that 
\[
\Vert x - x' \Vert \leq K \Vert Tx \Vert.
\]
\item There is a dense linear subspace  $\mathcal{X} \subseteq X$
and a constant $K \geq 1$ such that, for every 
$x$ in $\mathcal{X}$, there is $x'$ in $\ker(T)$ such that 
\[
\Vert x - x' \Vert \leq K \Vert Tx \Vert.
\]
\end{enumerate}
\end{prop}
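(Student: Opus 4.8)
The plan is to prove the cycle of implications $(1) \Rightarrow (2) \Rightarrow (3) \Rightarrow (1)$. The implication $(2) \Rightarrow (3)$ is immediate, taking $\mathcal{X} = X$. For $(1) \Rightarrow (2)$, the idea is the standard open-mapping argument: pass to the induced injective bounded linear map $\bar{T}: X/\ker(T) \to Y$. Since $T(X)$ is closed in the Banach space $Y$, it is itself a Banach space, and $\bar{T}$ is a bounded linear bijection from the Banach space $X/\ker(T)$ onto it; by the open mapping theorem $\bar{T}^{-1}$ is bounded, say with norm $K$ (and we may take $K \geq 1$). Then for any $x \in X$ we have $\Vert x + \ker(T) \Vert_{X/\ker(T)} \leq K \Vert Tx \Vert$, so by definition of the quotient norm there is $x' \in \ker(T)$ with $\Vert x - x' \Vert \leq K \Vert Tx \Vert$ (up to an arbitrarily small slack, which one absorbs by slightly enlarging $K$, or by noting the infimum is achieved closely enough — concretely, pick $x'$ with $\Vert x - x'\Vert \leq \Vert x + \ker T\Vert + \varepsilon$ and then observe $K$ can be replaced by $K+1$ uniformly, or simply state the estimate with $2K$).

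The substantive implication is $(3) \Rightarrow (1)$, and this is where the work lies. Suppose $\mathcal{X} \subseteq X$ is dense and the estimate holds on $\mathcal{X}$ with constant $K$. I want to show $T(X)$ is closed. Let $y \in \overline{T(X)}$; choose $x_n \in X$ with $Tx_n \to y$. Since $\mathcal{X}$ is dense and $T$ is bounded, I may perturb each $x_n$ slightly to lie in $\mathcal{X}$ while keeping $Tx_n \to y$; so assume $x_n \in \mathcal{X}$. The goal is to modify the sequence $(x_n)$, by subtracting elements of $\ker(T)$ (which does not change $Tx_n$), into a Cauchy sequence. Passing to a subsequence, arrange $\Vert Tx_{n+1} - Tx_n \Vert \leq 2^{-n}$. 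Now $x_{n+1} - x_n \in \mathcal{X}$ and $\Vert T(x_{n+1}-x_n)\Vert \leq 2^{-n}$, so by (3) there is $z_n \in \ker(T)$ with $\Vert (x_{n+1}-x_n) - z_n \Vert \leq K 2^{-n}$. Set $w_n = x_n - \sum_{k=1}^{n-1} z_k$ (with $w_1 = x_1$); then $Tw_n = Tx_n$, and $w_{n+1} - w_n = (x_{n+1}-x_n) - z_n$ has norm at most $K 2^{-n}$, so $(w_n)$ is Cauchy, converging to some $w \in X$. By continuity $Tw = \lim Tw_n = \lim Tx_n = y$, so $y \in T(X)$. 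Hence $T(X)$ is closed.

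The main obstacle is the bookkeeping in $(3) \Rightarrow (1)$: one must be careful that replacing the original sequence by a representative sequence inside the dense subspace $\mathcal{X}$ is legitimate (it is, since $T$ continuous and $\mathcal{X}$ dense let us approximate $x_n$ by $\mathcal{X}$-elements with images as close to $y$ as we like), and that the telescoping correction by $\ker(T)$-elements is applied to the \emph{differences} $x_{n+1}-x_n$, which lie in $\mathcal{X}$ because $\mathcal{X}$ is a linear subspace — this is exactly where hypothesis (3) being stated for a \emph{linear} subspace is used. Once the summable bound $\sum K 2^{-n} < \infty$ is in hand, completeness of $X$ finishes the argument. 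I expect no difficulty with $(1) \Rightarrow (2)$ beyond the minor $\varepsilon$-slack in the quotient norm, which is handled by enlarging the constant.
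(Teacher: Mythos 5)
Your proposal is correct and follows essentially the same route as the paper: the open-mapping/quotient-norm argument for $(1)\Rightarrow(2)$ (with the same device of enlarging the constant to absorb the infimum slack) and the telescoping correction of consecutive differences by elements of $\ker(T)$ for $(3)\Rightarrow(1)$. The only difference is cosmetic bookkeeping — you bound $\Vert Tx_{n+1}-Tx_n\Vert$ directly by $2^{-n}$ where the paper bounds $\Vert Tx_n - y\Vert$ and applies the triangle inequality.
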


\begin{proof}
We give a short sketch of the proof. Assume that $T(X)$ is closed. As 
$T$ is bounded, $\ker(T)$ is a closed subspace and $T$ induces a
bounded linear bijection between $X/ \ker(T)$ and $T(X)$.
 If $T(X)$ is closed, then this map has a continuous inverse (Corollary 
 2.2.5 of \cite{Ped:book}), denoted $S$. If  $x$ is in $X$, we may find
 $x'$ in $\ker(T)$ such that
  \[
  \Vert x - x' \Vert \leq 2 \Vert x + \ker(T) \Vert \leq 
  2 \Vert S \Vert \Vert Tx \Vert.
  \]
  
 The second condition obviously implies the third. Now suppose
  the third is satisfied. Let $x_{n}$ be any sequence in $X$ such that 
 $Tx_{n}$ converges to $y$ in $Y$. As $\mathcal{X}$ is dense in $X$, we 
 can assume without loss of generality that the sequence is actually in 
 $\mathcal{X}$. By passing to a subsequence, we may also assume that 
$ \Vert Tx_{n} - y \Vert < 2^{-n}$. For each $n > 1$, we may find $x'_{n}$
in $\ker(T)$ such that
\[
\Vert x_{n}-x_{n-1} -x'_{n} \Vert \leq \Vert T(x_{n}-x_{n-1}) \Vert
\leq \Vert Tx_{n} - y \Vert + \Vert y - Tx_{n-1} \Vert \leq 2^{2-n}.
\]
It follows easily then that the sequence 
$z_{n} = x_{n} - \sum_{i=1}^{n}x_{n}'$ is Cauchy and hence convergent 
to some $x$  in $X$. Moreover, we have $Tz_{n}= Tx_{n}$ and by 
continuity of $T$, $Tx=y$.
\end{proof}

Observe that our condition C1 is stronger than the third condition above
because C1 requires $a'$ to be in $\ker(\delta) \cap \mathcal{A}$, and 
not simply in $\ker( \delta)$.

An immediate nice consequence is the following.

  \begin{lemma}
  \label{4:15}
  If $\mathcal{A}$ is a dense subalgebra of $A$ satisfying  
   condition C1 of Theorem \ref{3:70} and there is a 
   dense $*$-subalgebra, $\mathcal{B} $ in $B$ 
   such that  
  $\delta(\mathcal{B}) \subseteq \delta(\mathcal{A})$, then 
  we have $\delta(B) \subseteq \delta(A) $.
  \end{lemma}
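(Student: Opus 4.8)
The plan is to reduce the statement to two ingredients that are already available: the closedness of $\delta(A)$ in $E$, and the continuity of $\delta$ together with the density of $\mathcal{B}$ in $B$. First I would invoke Proposition \ref{4:10} applied to the bounded linear map $\delta|_{A} : A \rightarrow E$ with the dense linear subspace $\mathcal{A}$. As remarked just above, condition C1 is a strengthening of hypothesis (3) of that proposition (it additionally demands $a' \in \ker(\delta) \cap \mathcal{A}$, and one may harmlessly replace the constant $K \geq 0$ by $\max(K,1)$), so hypothesis (3) holds and Proposition \ref{4:10} yields that $\delta(A)$ is a closed subspace of $E$.

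Next I would take an arbitrary $b$ in $B$. Using that $\mathcal{B}$ is dense in $B$, choose a sequence $b_{n}$ in $\mathcal{B}$ with $b_{n} \rightarrow b$. Since $\delta$ is bounded, $\delta(b_{n}) \rightarrow \delta(b)$ in $E$. But $\delta(b_{n})$ lies in $\delta(\mathcal{B}) \subseteq \delta(\mathcal{A}) \subseteq \delta(A)$ for every $n$, and $\delta(A)$ is closed, so the limit $\delta(b)$ lies in $\delta(A)$. As $b \in B$ was arbitrary, this gives $\delta(B) \subseteq \delta(A)$, which is the desired conclusion.

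I do not expect any real obstacle here: the substance is entirely contained in Proposition \ref{4:10}, and once $\delta(A)$ is known to be closed the statement is a one-line approximation argument. The only point requiring a moment's care is the observation (already made in the text) that C1 is genuinely stronger than hypothesis (3) of Proposition \ref{4:10}, so that proposition does apply and delivers the closedness of $\delta(A)$ that drives the argument.
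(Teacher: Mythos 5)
Your argument is correct and is exactly the proof given in the paper: both use condition C1 together with Proposition \ref{4:10} to conclude that $\delta(A)$ is closed, and then finish by approximating an arbitrary $b \in B$ by elements of $\mathcal{B}$ and using the boundedness of $\delta$. You have merely spelled out the final approximation step, which the paper leaves as a one-line remark.
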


  \begin{proof}
  It follows from C1 and Proposition \ref{4:10} that the extension
  of $\delta$ to $A$ has closed range. As $\delta$ is bounded and 
 $\mathcal{B}$ is dense in $B$, we are done. 
  \end{proof}

 As we will need to deal with elements of the relative $K$-groups, 
 it will be useful to  have the following, which considers the unitization
 of algebras, matrices over algebras and the cones over algebras.

 \begin{prop}
 \label{4:60}
Suppose that $ A,  B, E, \delta$
  be as in Theorem \ref{3:10} and $\mathcal{A}$
  satisfy conditions C1 and C2 of Theorem \ref{3:70}.
 \begin{enumerate}
 \item
 If we define $\delta: \widetilde{A+B} \rightarrow E$
 by 
 \[
 \delta(\lambda + a + b) = \delta(a+b), a \in A, b \in B, \lambda \in \C,
 \]
 then  $ \delta$ is a bounded $*$-derivation and 
 $\delta(\tilde{B}) \subseteq \delta(\tilde{A})$. 
 \item 
 For any integer $n \geq 1$,  $ M_{n}(A),  M_{n}(B), M_{n}( E), id_{M_{n}} \otimes \delta$
  satisfy the conditions of
  Theorem \ref{3:10} and $M_{n}(\mathcal{A})$
  satisfy conditions C1 (although $K$ may depend on $n$)
  and C2 of Theorem \ref{3:70}. Moreover, the $e$
  of condition C2 
  may be chosen to be of the form $1_{n} \otimes e$, where $e$ is in 
  $M(A)$.
  \item 
    $C_{0}(0,1] \otimes A,  C_{0}(0,1] \otimes B, C_{0}(0,1] \otimes E, id_{C_{0}(0,1]} \otimes \delta$
  satisfy the conditions of
  Theorem \ref{3:10}. In addition, let $C_{c}(0,1] \odot \mathcal{A}$ 
  denote the algebraic tensor product of the continuous, compactly supported functions
  on $(0,1]$ with $\mathcal{A}$. That is, it consists of all functions of the form
  \[
  f(t) = \sum_{i=1}^{I} f_{i}(t) a_{i}, 
  \]
  where $f_{1}. \ldots, f_{I}$ are in $C_{c}(0,1]$ and 
  $a_{1}, \ldots, a_{I}$ are in $\mathcal{A}$.
   Then $C_{c}(0,1] \odot  \mathcal{A}$ is a dense 
   $*$-subalgebra of $C_{0}(0,1] \otimes A$ which 
   satisfies conditions C1 and C2 of  Theorem \ref{3:70}.
 \end{enumerate}
 \end{prop}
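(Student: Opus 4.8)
The plan is to verify the three parts in sequence, each time tracking how conditions C1 and C2 (and the hypotheses of Theorem \ref{3:10}) behave under the relevant functorial construction: unitization, matrix amplification, and cone. In each case the $*$-derivation property and boundedness are routine (one checks the Leibniz rule directly on simple tensors/sums), so the substance is the transport of C1 and C2.

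For part (1), I would first observe that $\widetilde{A+B}$ acts on $\mathcal{H}\oplus\C$ (or one simply works with $A+B$ acting unitally, adjoining a unit that $\delta$ kills), so the bimodule structure on $E$ extends. That $\delta$ so defined is a bounded $*$-derivation is immediate since $\delta$ vanishes on scalars. The inclusion $\delta(\tilde B)\subseteq\delta(\tilde A)$ is trivial because $\delta(\lambda+b)=\delta(b)\in\delta(A)$ by the original hypothesis $\delta(B)\subseteq\delta(A)$ of Theorem \ref{3:10}, and in fact by Lemma \ref{4:15} combined with C1 we have $\delta(B)\subseteq\delta(A)$ with closed range, which is all that is needed here; C1 and C2 themselves are not asserted for the unitization, only the structural conclusions, so there is nothing further to check.

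For part (2), the derivation $id_{M_n}\otimes\delta$ on $M_n(A+B)=M_n(A)+M_n(B)$ is clearly a bounded $*$-derivation into $M_n(E)$, and $M_n(A)M_n(B)\subseteq M_n(A)$ follows entrywise from $AB\subseteq A$; likewise $\delta(M_n(B))\subseteq\delta(M_n(A))$ entrywise. For C1: given $a=(a_{kl})\in M_n(\mathcal{A})$, apply C1 entrywise to get $a'_{kl}\in\ker(\delta)\cap\mathcal{A}$ with $\Vert a_{kl}-a'_{kl}\Vert\le K\Vert\delta(a_{kl})\Vert$; since $\Vert\delta(a_{kl})\Vert\le\Vert(id_{M_n}\otimes\delta)(a)\Vert$ and a matrix norm is comparable to the max of entry norms up to a factor depending only on $n$, the matrix $a'=(a'_{kl})$ lies in $\ker(id_{M_n}\otimes\delta)\cap M_n(\mathcal{A})$ and satisfies the estimate with a new constant $nK$ (or $n^2K$). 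For C2: given finitely many elements of $M_n(\mathcal{A})$, collect all of their entries — finitely many elements of $\mathcal{A}$ — apply C2 in $A$ to obtain a single $e\in M(A)$ with $0\le e\le 1$ and $b$'s in $B$ witnessing (a) and (b) for every entry; then $1_n\otimes e\in M(M_n(A))=M_n(M(A))$ works simultaneously for all the given matrices, with the $b_i$ assembled entrywise, and one checks $a_i=(1_n\otimes e)a_i=a_i(1_n\otimes e)=(1_n\otimes e)b_i=b_i(1_n\otimes e)$ and $\delta(b_i)=\delta(a_i)$ entrywise.

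For part (3), $C_0(0,1]\otimes(A+B)=C_0(0,1]\otimes A+C_0(0,1]\otimes B$ with $(C_0(0,1]\otimes A)(C_0(0,1]\otimes B)\subseteq C_0(0,1]\otimes A$, and $id\otimes\delta$ is a bounded $*$-derivation into $C_0(0,1]\otimes E$ with the required inclusion on ranges, verifying the hypotheses of Theorem \ref{3:10}. Density of $C_c(0,1]\odot\mathcal{A}$ in $C_0(0,1]\otimes A$ is standard (Stone–Weierstrass in the first variable, density of $\mathcal{A}$ in the second). The delicate point — and the step I expect to be the main obstacle — is C1 for $C_c(0,1]\odot\mathcal{A}$: given $f(t)=\sum_{i=1}^I f_i(t)a_i$, one cannot simply apply C1 coefficientwise to the $a_i$, because the decomposition is not unique and the pointwise values $f(t)$ range over a whole finite-dimensional subspace of $A$ spanned by the $a_i$; applying C1 to each $a_i$ separately would not control $\Vert f(t)-f'(t)\Vert$ uniformly in $t$ by $\Vert(id\otimes\delta)(f)\Vert=\sup_t\Vert\delta(f(t))\Vert$. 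The resolution is to linearize: the map $L:\C^I\to A$, $L(\xi)=\sum\xi_i a_i$, has $\delta\circ L:\C^I\to E$ linear on a finite-dimensional space, hence $\ker(\delta\circ L)$ is a complemented subspace of $\C^I$, and on a fixed complement $V$ the restriction of $\delta\circ L$ is injective, so $\Vert L(\xi)-(\text{its }\ker\delta\text{ part})\Vert\le C\Vert(\delta\circ L)(\xi)\Vert$ with $C$ depending only on $a_1,\dots,a_I$; writing $\xi(t)=(f_i(t))_i$ and splitting $\xi(t)=\xi_0(t)+\xi_V(t)$ along $\C^I=\ker(\delta\circ L)\oplus V$, one sets $f'(t)=L(\xi_0(t))$. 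But here I must be careful that $f'$ has the right form and lies in the algebra: $\xi_0(t)$ and $\xi_V(t)$ are continuous compactly supported $\C^I$-valued functions (linear projections of $\xi$), so $f'(t)=\sum_i(\xi_0(t))_i a_i$ is again in $C_c(0,1]\odot\mathcal{A}$ and visibly $\delta(f'(t))=0$ for all $t$, while $\Vert f(t)-f'(t)\Vert=\Vert L(\xi_V(t))\Vert\le C\Vert\delta(L(\xi_V(t)))\Vert=C\Vert\delta(f(t))\Vert\le C\Vert(id\otimes\delta)(f)\Vert$; taking the sup over $t$ gives C1 with constant $C$ (note $C$ genuinely depends on the chosen generators, but C1 only requires a constant for each element, and one may take $C=\max(K,1)$ times a bound coming from the finite-dimensional geometry, or simply observe C1 as stated allows $K$ to be uniform by Proposition \ref{4:10} once we know the range of $id\otimes\delta$ is closed — which follows since $\delta(A)$ is closed by C1 and Proposition \ref{4:10}, and closedness is preserved under tensoring with $C_0(0,1]$). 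Finally C2 for $C_c(0,1]\odot\mathcal{A}$: given $f_1,\dots,f_J$ in it, write each $f_j=\sum_i g_{ji}(t)a_{ji}$, collect all the $a_{ji}\in\mathcal{A}$, apply C2 in $A$ to get $e\in M(A)$ and $b_{ji}\in B$, and set $E$-multiplier $1\otimes e\in M(C_0(0,1]\otimes A)$ and $b_j(t)=\sum_i g_{ji}(t)b_{ji}\in C_0(0,1]\otimes B$; the identities $f_j=(1\otimes e)f_j=f_j(1\otimes e)=(1\otimes e)b_j=b_j(1\otimes e)$ and $(id\otimes\delta)(b_j)=(id\otimes\delta)(f_j)$ follow pointwise in $t$ from the corresponding identities in $A$. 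This completes the verification.
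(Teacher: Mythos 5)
Your treatments of parts (1) and (2), and of condition C2 in part (3), are correct and essentially the same as the paper's (collect the matrix entries, respectively collect the $a_j$'s, apply C2 in $A$ once, and cut off with $1_n\otimes e$, respectively with $g(t)e$ for a suitable $g\in C_c(0,1]$). The gap is in your verification of C1 for $C_c(0,1]\odot\mathcal{A}$. Condition C1 demands a \emph{single} constant working for \emph{every} element of the dense subalgebra, but your linearization argument produces a constant $C=\Vert L\Vert\cdot\Vert(\delta\circ L\vert_V)^{-1}\Vert$ that depends on the chosen generators $a_1,\dots,a_I$ and is not controlled by the $K$ of C1 for $\mathcal{A}$. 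Already for $I=1$ it fails: if $a_1=a_1'+\eta$ with $a_1'\in\ker(\delta)\cap\mathcal{A}$ of large norm and $\eta$ small with $\delta(\eta)\neq 0$, then $\ker(\delta\circ L)=0$, your construction returns $f'=0$, and $\Vert f-f'\Vert/\Vert(1\otimes\delta)(f)\Vert=\Vert a_1\Vert/\Vert\delta(a_1)\Vert$ is arbitrarily large, whereas $f'(t)=f_1(t)a_1'$ would have done the job with constant $K$. Your projection lives entirely in the coefficient space $\C^I$ and never invokes C1 for $\mathcal{A}$, so it cannot inherit the uniform constant. Neither of your patches repairs this: C1 is a uniform statement, not a per-element one; and closedness of the range of $1\otimes\delta$ together with Proposition \ref{4:10} only yields approximants in $\ker(1\otimes\delta)$, not in $\ker(1\otimes\delta)\cap C_c(0,1]\odot\mathcal{A}$ --- exactly the distinction the paper flags immediately after Proposition \ref{4:10}.

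The paper's argument discretizes in $t$ instead: choose $N$ so that $f$ vanishes on $[0,N^{-1}]$ and oscillates by less than $\epsilon=\Vert(1\otimes\delta)(f)\Vert$ over intervals of length $N^{-1}$, replace $f$ by $g(t)=\sum_n g_n(t)f(n/N)$ for a partition of unity $(g_n)$ subordinate to that grid, and apply C1 for $\mathcal{A}$ to each value $f(n/N)\in\mathcal{A}$ (noting $\Vert\delta(f(n/N))\Vert\leq\Vert(1\otimes\delta)(f)\Vert$) to obtain $a_n'\in\ker(\delta)\cap\mathcal{A}$; then $f'=\sum_n g_n a_n'$ lies in $\ker(1\otimes\delta)\cap C_c(0,1]\odot\mathcal{A}$ and $\Vert f-f'\Vert\leq(K+1)\Vert(1\otimes\delta)(f)\Vert$, with the uniform constant $K+1$. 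You should replace your linearization step with this (or an equivalent) argument.
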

 
 \begin{proof}
 The first part is trivial and we omit the proof. For
  the second part and third parts, the proofs of C1 are both trivial.
  
  Let us sketch the proof of C2 for the second part. If we are given
  a collection of $m$ elements of $M_{n}(\mathcal{A})$, their 
  individual entries provide $n^{2}m$ elements of $\mathcal{A}$. If we select
  an appropriate $e$ and $n^{2}m$ elements of $B$, it is fairly easy to check
  that $1_{n} \otimes e$ and the corresponding $m$ elements of 
  $M_{n}(B)$ satisfy the desired conclusion.
  
 We now consider the third part, beginning with C1. Let 
 \[
  f(t) = \sum_{i=1}^{I} f_{i}(t) a_{i}, 
  \]
  be in $C_{c}(0,1] \odot \mathcal{A}$. If $(1 \otimes \delta)(f) = 0$,
   we are done.
  Otherwise, let $0 < \epsilon = \Vert  (1 \otimes \delta)(f) \Vert$.
   We may choose
  $N$ sufficiently large so that  $\Vert f(s) - f(t) \Vert < \epsilon$, whenever
  $\vert s- t \vert < N^{-1}$. Also choose $N$ sufficiently large
  so that $f(t) = 0$, for all $0 \leq t \leq N^{-1}$. For each $1 \leq n \leq N$, 
  let $g_{n}$ be the function which is $0$ on $[0, (n-1)/N] \cup [(n+1)N, 1]$,
  $1$ at $n/N$, and linear on $((n-1)/N, n/N)$ and $(n/N, (n+1)/N)$.
  Let \[
  g(t) = \sum_{n=1}^{N} g_{n}(t) f(n/N), t \in [0,1].
  \]
 It is an easy computation to see that
 $\Vert g - f \Vert < \epsilon$.  For $2 \leq n \leq N$, 
 we may find $a_{n}'$ in 
 $\mathcal{A} \cap \ker(\delta)$ with 
 \[
 \Vert a_{n}' - f(n/N) \Vert \leq K \Vert \delta(f(n/N) ) \Vert \leq K \Vert 1 \otimes \delta(f) \Vert , 1 \leq n \leq N.
 \]
 We note that $f(1/N) = 0$ and so set $a_{1}'=0$.
 We then define $f'(t) = \sum_{n=1}^{N} g_{n}(t) a_{n}'$. It is clear that
 $f'$ is in $\ker( 1 \otimes \delta)$ and  also
 \begin{eqnarray*}
 \Vert f - f' \Vert & \leq & \Vert f - g \Vert + \Vert g - f' \Vert \\
    &  \leq &  \Vert  (1 \otimes \delta)(f) \Vert + \sum_{n=1}^{N} g_{n}(t) \Vert f(n/N) - a_{n}' \Vert \\
     &  \leq &  \Vert  (1 \otimes \delta)(f) \Vert + \sum_{n=1}^{N} g_{n}(t)K  \Vert 1 \otimes \delta (f) \Vert \\
     & \leq & (K+1) \Vert  (1 \otimes \delta)(f) \Vert.
     \end{eqnarray*}
 This completes the proof of C1. 
 
 For C2, if we have a finite collection of functions, $f^{(1)}, \ldots, f^{(I)}$ 
 in $C_{c}(0,1] \odot \mathcal{A}$, we may 
 write all of them in the form \newline
  $f^{(i)}(t) = \sum_{j=1}^{J} f^{(i)}_{j}(t) a_{j}$,
 for all $1 \leq i \leq I$, simply by using all the possible elements of $\mathcal{A}$
 involved in each and using $f^{(i)}_{j}=0$ as needed.
 
 We then choose $b_{j}, 1 \leq j \leq J, e$ as in C2 for $\mathcal{A}$.
 As each $f_{j}^{(i)}$ is compactly supported, we may find
 $0 < \epsilon$ such that each is zero on $[0, \epsilon]$. Let 
 $g(t)$ be the continuous function that is zero on $[0, \epsilon/2]$, $1$ on 
 $[\epsilon, 1]$ linear on $[\epsilon/2, \epsilon]$. It is now easy to that the 
 set of elements $\sum_{j=1}^{J} f_{j}^{(i)} b_{j}$ and $e(t) = g(t)e$ 
 satisfies the desired conclusion.
 \end{proof}

 \begin{lemma}
 \label{4:20}
Suppose $a, b, e$ are elements of a $C^{*}$-algebra satisfying
$e^{*}=e$,
\[
a= ae =ea = be = eb
\]
and $\delta$ is a $*$-derivation on that $C^{*}$-algebra with 
$\delta(a) = \delta(b)$.
 It follows that 
 \begin{enumerate}
 \item $ab= a^{2} =ba, ab^{*} = aa^{*}, b^{*}a= a^{*}a$,
 \item 
\begin{eqnarray*}
 \delta(a) b & =  &  \delta(a) a,  \\
 \delta(a) b^{*} & =  &  \delta(a) a^{*},  \\
 b \delta(a) & =  &  a\delta(a), \\
 b^{*} \delta(a) & =  &  a^{*} \delta(a),
 \end{eqnarray*}
\item 
 for any continuous function $f$ on the positive
  real numbers and complex number $\lambda$, we have
  \begin{eqnarray*}
  a f((\lambda +b)^{*}(\lambda+b)) & = & af((\lambda +a)^{*}(\lambda +a)), \\
   f((\lambda +b)^{*}(\lambda+b))  a  & = &  
   f((\lambda  +a)^{*}(\lambda +a))a, \\
   b f(a^{*}a) & = & a f(a^{*}a), \\
      f(a^{*}a) b & = &  f(a^{*}a) a, \\
  \delta(a)  f((\lambda +b)^{*}(\lambda+b))  & =  
   & \delta(a)f((\lambda +a)^{*}(\lambda +a)) ,  \\
   f((\lambda +b)^{*}(\lambda+b))  \delta(a)& = &  
    f((\lambda +a)^{*}(\lambda +a)) \delta(a),
  \end{eqnarray*}
\item 
 for any continuous function $f$ on the positive
  real numbers and complex numbers $\lambda, \mu$, we have 
  \begin{eqnarray*}
  \delta((\lambda+a) f((\mu +a)^{*}(\mu +a)) )
 &  =  & \delta( (\lambda + a) ( f((\mu +b)^{*}(\mu+b)) ) \\
    &  =  & \delta( (\lambda + b) ( f((\mu +b)^{*}(\mu+b)) ). 
    \end{eqnarray*}
    The same result holds replacing 
   $ (\mu +b)^{*}(\mu+b)$ by $(\mu +b)(\mu+b)^{*}$.
  \end{enumerate}
 \end{lemma}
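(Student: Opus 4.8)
The plan is to set $c=a-b$ and reduce the whole lemma to one observation: $c$ and $c^{*}$ are annihilated, on both sides, by each of $a,a^{*},e,\delta(a),\delta(a)^{*}$, and $c$ lies in $\ker\delta$. Part~1 is then one-line algebra. From $a=ae$ and $a=eb$, $\ ab=(ae)b=a(eb)=a^{2}$; from $a=ea$ and $a=be$, $\ ba=b(ea)=(be)a=a^{2}$; and since $e=e^{*}$ we have $eb^{*}=(be)^{*}=a^{*}$ and $b^{*}e=(eb)^{*}=a^{*}$, so $ab^{*}=(ae)b^{*}=a(eb^{*})=aa^{*}$ and $b^{*}a=b^{*}(ea)=(b^{*}e)a=a^{*}a$. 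Subtracting, $ac=a(a-b)=a^{2}-ab=0$, and likewise $ca=ac^{*}=c^{*}a=0$; taking adjoints gives the asserted annihilation by $a$ and $a^{*}$. The hypothesis gives $ec=e(a-b)=a-a=0$, hence $ce=0$ and $ec^{*}=c^{*}e=0$. Finally $\delta(c)=\delta(a)-\delta(b)=0$, so $c\in\ker\delta$; as $\delta$ is bounded, $\ker\delta$ is a closed $*$-subalgebra, so the $C^{*}$-subalgebra $C^{*}(c)$ generated by $c$ lies in $\ker\delta$ and $\delta$ vanishes on it. Consequently any word in $c,c^{*}$ of length $\ge1$ is killed, on the appropriate side, by $a$, $a^{*}$, or $e$.

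For Part~2 I would apply $\delta$ to the four identities of Part~1, using the Leibniz rule, $\delta(b)=\delta(a)$, and $\delta(x^{*})=\delta(x)^{*}$: e.g.\ $\delta(a)b+a\delta(a)=\delta(ab)=\delta(a^{2})=\delta(a)a+a\delta(a)$ gives $\delta(a)b=\delta(a)a$, and the remaining three are identical in form. As above this yields $\delta(a)c=\delta(a)c^{*}=c\delta(a)=c^{*}\delta(a)=0$, and by adjoints the same for $\delta(a)^{*}$, so $\delta(a)$ and $\delta(a)^{*}$ also kill every word in $c,c^{*}$ from the appropriate side.

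For Part~3, set $u=\lambda+a$, $v=\lambda+b=u-c$, $Y=u^{*}u$, $X=v^{*}v$. The crucial point is that, because $a^{*}c=0=c^{*}a$, the cross terms in $v^{*}v-u^{*}u$ collapse to scalar multiples of $c$ and $c^{*}$, so $X=Y+R$ with $R=-\bar\lambda c-\lambda c^{*}+c^{*}c$, a word-sum in $c,c^{*}$ of length $\ge1$; moreover $Yc=|\lambda|^{2}c$ and $Yc^{*}=|\lambda|^{2}c^{*}$ (again by $ac=a^{*}c=0$, etc.), so $Y$ commutes with $R$ and $Y^{j}R^{k}=|\lambda|^{2j}R^{k}$ for $k\ge1$. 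The binomial theorem then gives $X^{n}=Y^{n}+\bigl((|\lambda|^{2}+R)^{n}-|\lambda|^{2n}\bigr)$, where the bracket is a word-sum in $c,c^{*}$ of length $\ge1$. Hence $aX^{n}=aY^{n}$, $X^{n}a=Y^{n}a$, $\delta(a)X^{n}=\delta(a)Y^{n}$ and $X^{n}\delta(a)=Y^{n}\delta(a)$ for every $n$; applying this to polynomials in $X$ (resp.\ $Y$) and then approximating $f$ uniformly on $\mathrm{sp}(X)\cup\mathrm{sp}(Y)$ by Stone--Weierstrass yields the first, second, fifth and sixth identities of Part~3. The two identities involving $f(a^{*}a)$ are the case $\lambda=0$ together with $cf(a^{*}a)=0$, which follows from $c(a^{*}a)^{n}=(ca^{*})a(a^{*}a)^{n-1}=0$ for $n\ge1$ (and symmetrically on the other side).

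For Part~4, put $X_{\mu}=(\mu+b)^{*}(\mu+b)$, $Y_{\mu}=(\mu+a)^{*}(\mu+a)$, $w=f(X_{\mu})-f(Y_{\mu})$. The expansion from Part~3 (with $\mu$ for $\lambda$) shows $p(X_{\mu})-p(Y_{\mu})$ is a word-sum in $c,c^{*}$ for every polynomial $p$, hence $w\in C^{*}(c)\subseteq\ker\delta$. Then $aw=0$, so $(\lambda+a)w=\lambda w\in C^{*}(c)$ and $\delta\bigl((\lambda+a)w\bigr)=0$, i.e.\ $\delta\bigl((\lambda+a)f(X_{\mu})\bigr)=\delta\bigl((\lambda+a)f(Y_{\mu})\bigr)$; and $(\lambda+a)f(X_{\mu})-(\lambda+b)f(X_{\mu})=cf(X_{\mu})=f(|\mu|^{2})c+cw\in C^{*}(c)$ (using $cf(Y_{\mu})=f(|\mu|^{2})c$), so $\delta\bigl((\lambda+a)f(X_{\mu})\bigr)=\delta\bigl((\lambda+b)f(X_{\mu})\bigr)$. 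The statement with $(\mu+b)(\mu+b)^{*}$ is the identical computation with $zz^{*}$ in place of $z^{*}z$. The only step requiring real care — and the one carrying the argument — is the collapse of $v^{*}v-u^{*}u$ into $C^{*}(c)$ together with $u^{*}u$ acting there as the scalar $|\lambda|^{2}$, which legitimises the binomial expansion and makes every error term orthogonal to $a$, $e$ and $\delta(a)$; everything else is routine bookkeeping with the continuous functional calculus (the usual care over non-unital algebras and the value of $f$ at $0$ being the only nuisance).
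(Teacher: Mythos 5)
Your proof is correct. Parts 1 and 2 are the same computations as the paper's. For parts 3 and 4, however, you take a genuinely different route: the paper argues by word replacement --- in any word in $a,a^{*},b,b^{*}$ (and, after part 2, also in $\delta(a),\delta(a)^{*}$), the presence of a single $a$-type letter allows every $b$ to be replaced by $a$, and this is applied monomial by monomial to polynomials before passing to limits; for part 4 it first expands $\delta(\cdot)$ by the Leibniz rule and substitutes $\delta(a)$ for $\delta(b)$ in each term. You instead isolate $c=a-b$, check that the $*$-algebra it generates is annihilated on both sides by $a$, $a^{*}$, $e$, $\delta(a)$, $\delta(a)^{*}$ and lies in $\ker(\delta)$, and then use the exact decomposition $X=Y+R$ with $YR=RY=\vert\lambda\vert^{2}R$ to exhibit $X^{n}-Y^{n}$, hence $f(X)-f(Y)$, as an explicit element of $C^{*}(c)$. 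That single algebraic identity replaces the paper's term-by-term substitution and makes part 4 mechanical, since $w=f(X_{\mu})-f(Y_{\mu})$ and $cf(X_{\mu})$ land in $C^{*}(c)\subseteq\ker(\delta)$ and are killed by $a$ on the left. Your version is tighter and easier to audit; it costs nothing in generality. Two caveats, which apply equally to the paper's own proof: the limit steps require continuity of $\delta$ (not stated in the lemma, but true in every application in the paper), and for $f(0)\neq 0$ the elements $f(a^{*}a)$ and $f((\lambda+b)^{*}(\lambda+b))$ must be read in a unitization, where all the differences you form have cancelling constant terms except $cf(a^{*}a)=f(0)c$ --- so the identities $bf(a^{*}a)=af(a^{*}a)$ really need $f(0)=0$. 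That is an imprecision in the statement itself, which you flag and handle at the same level of care as the paper.
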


 \begin{proof}
 For the first part, we compute
 \[
 ab = (ae)b =  a(eb) =a^{2}.
 \]
 The rest is done in a similar way (using $e^{*}=e$).
 
 For the second part, we have 
 \[
  \delta(a)b = \delta(ab) - a \delta(b) 
      = \delta(a^{2}) - a \delta(a)  =  \delta(a)a.
\]     
 The other statements are done in a similar way.
 
 For the third part, observe that if we have any 
 word in $a, a^{*}, b$ and $b^{*}$, the presence of a 
 single $a$ or $a^{*}$ means that the element is the same 
 if we replace all of the $b, b^{*}$ terms with $a, a^{*}$, from
 repeated application of the first part.
  The first two conditions then follow for any polynomial $f$ and 
 subsequently, by realizing any continuous function 
 as a limit of polynomials.
 
 The second part implies that this same observation holds if we
 include terms $\delta(a)$ or $\delta(a)^{*}$ in our words: the presence 
 of a single $a, a^{*}, \delta(a)$ or $\delta(a)^{*}$ means we
 can replace all $b$'s with $a$'s.
 
For the last part, we first consider the case that $f$ is a polynomial.
We expand $\delta(f((\lambda+b)^{*}(\lambda+b)))$ 
using the Leibnitz rule. Each 
term is a word containing $\delta(b)$ or $\delta(b)^{*}$ and 
we use our hypothesis
to replace $\delta(b)$ with $\delta(a)$. The remainder
 of the proof follows 
exactly as in the second part.
  \end{proof}

 \begin{prop}
 \label{4:30}
 If $A, B, \delta, E$ satisfy conditions C1 and C2 
 of Theorem \ref{3:70}, then $\delta(B) = \delta(A)$ is closed.
 \end{prop}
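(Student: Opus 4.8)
The plan is as follows. Of the content of the proposition, the closedness of $\delta(A)$ is immediate: condition C1 implies hypothesis (3) of Proposition \ref{4:10} for the bounded map $\delta|_{A}$, with $\mathcal{A}$ as the dense subspace, so $\delta(A)$ is closed; and the inclusion $\delta(B) \subseteq \delta(A)$ is part of the standing hypothesis inherited from Theorem \ref{3:10}. Hence everything reduces to proving $\delta(A) \subseteq \delta(B)$: given $a \in A$, I must produce $b \in B$ with $\delta(b) = \delta(a)$.

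I would construct $b$ as the sum of a rapidly convergent series in $B$. First choose $a_{n} \in \mathcal{A}$ with $a_{n} \to a$ and, after passing to a subsequence, $\Vert a_{n+1} - a_{n} \Vert < 2^{-n}$ and $\Vert \delta(a_{n+1}) - \delta(a_{n}) \Vert < 2^{-n}$ for all $n$. Condition C1 applied to each increment $a_{n+1} - a_{n} \in \mathcal{A}$ gives $w_{n} \in \ker(\delta) \cap \mathcal{A}$ with $\Vert (a_{n+1} - a_{n}) - w_{n} \Vert \leq K \Vert \delta(a_{n+1}) - \delta(a_{n}) \Vert < K 2^{-n}$; set $c_{n} = (a_{n+1} - a_{n}) - w_{n} \in \mathcal{A}$, so that $\delta(c_{n}) = \delta(a_{n+1}) - \delta(a_{n})$ and $\Vert c_{n} \Vert < K 2^{-n}$. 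Since $\Vert w_{n} \Vert < (K+1)2^{-n}$, the series $\sum_{n} w_{n}$ converges in $A$ to an element of the closed subspace $\ker(\delta)$, and telescoping yields $\sum_{n} c_{n} = (a - a_{1}) - w$ for some $w \in \ker(\delta)$; in particular, by continuity of $\delta$, $\delta\bigl( \sum_{n} c_{n} \bigr) = \delta(a) - \delta(a_{1})$. It then remains only to lift $\sum_{n} c_{n}$, and $a_{1}$ separately, into $B$; the latter is a single application of condition C2.

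The main obstacle is lifting $\sum_{n} c_{n}$. Applying condition C2 to the single element $c_{n}$ produces $\beta_{n} \in B$ and $0 \leq e_{n} \leq 1$ in $M(A)$ with $c_{n} = e_{n}c_{n} = c_{n}e_{n} = e_{n}\beta_{n} = \beta_{n}e_{n}$ and $\delta(\beta_{n}) = \delta(c_{n})$ --- but C2 gives no bound at all on $\Vert \beta_{n} \Vert$, so there is no reason for $\sum_{n} \beta_{n}$ to converge in $B$. The remedy, and presumably the reason Lemma \ref{4:20}(4) is stated for arbitrary continuous functions, is to truncate $\beta_{n}$ by functional calculus. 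Let $f_{n} \colon [0, \infty) \to [0,1]$ be continuous with $f_{n} \equiv 1$ on $[0, \Vert c_{n} \Vert^{2}]$ and $f_{n} \equiv 0$ on $[2\Vert c_{n}\Vert^{2}, \infty)$, and put $\tilde{\beta}_{n} = \beta_{n} f_{n}(\beta_{n}^{*}\beta_{n}) \in B$. On the one hand $\Vert \tilde{\beta}_{n} \Vert^{2} = \sup\{ t f_{n}(t)^{2} : t \in \mathrm{spec}(\beta_{n}^{*}\beta_{n}) \} \leq 2 \Vert c_{n} \Vert^{2}$, so $\sum_{n} \tilde{\beta}_{n}$ converges absolutely in $B$. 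On the other hand, Lemma \ref{4:20}, applied to the triple $c_{n}, \beta_{n}, e_{n}$ (part (4) with $\lambda = \mu = 0$, passing to unitizations via Proposition \ref{4:60} if needed), gives $\delta(\tilde{\beta}_{n}) = \delta\bigl( c_{n} f_{n}(c_{n}^{*}c_{n}) \bigr) = \delta(c_{n})$, the last equality because $f_{n} \equiv 1$ on $\mathrm{spec}(c_{n}^{*}c_{n}) \subseteq [0, \Vert c_{n} \Vert^{2}]$ forces $c_{n} f_{n}(c_{n}^{*}c_{n}) = c_{n}$. Hence $\beta := \sum_{n} \tilde{\beta}_{n} \in B$ satisfies $\delta(\beta) = \sum_{n} \delta(c_{n}) = \delta(a) - \delta(a_{1})$, and adding the C2-lift of $a_{1}$ gives $b \in B$ with $\delta(b) = \delta(a)$. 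The one routine point to keep an eye on is that the products $e_{n}\beta_{n}$, and the hypotheses of Lemma \ref{4:20}, are to be read inside a common $C^{*}$-algebra containing $A$, $B$ and the multipliers $e_{n}$ --- e.g. inside $\mathcal{B}(\mathcal{H})$ --- with $\delta$ extended as in Proposition \ref{4:60}.
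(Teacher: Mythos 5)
Your proposal is correct and is essentially the paper's argument: the key step in both is to lift a small element of $\mathcal{A}$ to $B$ via condition C2 and then gain norm control by truncating with functional calculus, using Lemma \ref{4:20}(4) to see that the truncation does not change the image under $\delta$, after which one sums a rapidly convergent series in $B$. The only differences are cosmetic — you telescope increments of an approximating sequence (and invoke C1 to modify them, which is harmless but unnecessary since the increments are already small in norm), whereas the paper inductively approximates the running remainder $a - a_{0} - \cdots - a_{n-1}$.
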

 
 \begin{proof}
 First, we claim  that if $a$ is any element of $\mathcal{A}$, then there is
 $b$ in $B$ with $\delta(b) = \delta(a)$ and
  $\Vert b \Vert \leq \Vert a \Vert$. To see this, first select $b$ 
  in $B$ as in condition C2. Let $f: [0, \infty) \rightarrow \R$
   be the continuous
  function
  \[
  f(t) = \left\{ \begin{array}{cl}
       1 & 0 \leq t \leq \Vert a \Vert \\
       t^{-1/2} \Vert a \Vert^{-1/2} & \Vert a \Vert < t. \end{array} \right.
       \]
       Then 
 \[
 \Vert b f(b^{*}b) \Vert \leq  \Vert f(b^{*}b)b^{*} b f(b^{*}b) \Vert^{1/2}
    \leq \Vert t f(t)^{2} \Vert_{\infty} \leq \Vert a \Vert.
    \]
    and 
    \[
    \delta(b f(b^{*}b)) = \delta(a f(a^{*}a)) = \delta(a),
    \]
    by the last part of Lemma \ref{4:20}.
    
    We know that $\delta(B) \subseteq \delta(A)$. To prove the 
    reverse inclusion, let $a$ be in $A$. With $a_{0} =0$, 
    inductively choose $a_{n}, n \geq 1,$ in $\mathcal{A}$ such that 
    \[
    \Vert (a - a_{0} - \cdots -a_{n-1}) - a_{n} \Vert 
    \leq 2^{-n}\Vert a \Vert
    \]
    and 
    \[
    \Vert a_{n} \Vert \leq \Vert a - a_{0} - \cdots -a_{n-1} \Vert. 
    \]
    For each $n$, we may find $b_{n}$ in $B$ with
     $\delta(b_{n}) = \delta(a_{n})$ and 
     $\Vert b_{n} \Vert \leq \Vert a_{n} \Vert$. The series
     $b = \sum_{n} b_{n}$ is then convergent in $B$ and 
     $\delta(b) = \delta(a)$ by continuity of $\delta$.
 \end{proof}
 
 As a final preliminary step, we observe the following important consequence
 of the fact that $\delta(A)$ is closed.

\begin{lemma}
\label{4:80} Suppose that $\delta: A \rightarrow E$
 is a bounded $*$-derivation with closed range. 
For each  $n \geq 1$, there is $\epsilon_{n} > 0$ such that
if $a$ is a partial isometry in $M_{n}(\tilde{A})$, $a^{*}a=p, 
aa^{*} =q$ with 
$\delta(p) = \delta(q) =0$ and 
$\Vert \delta(a) \Vert < \epsilon_{n}$, then 
there are partial isometries $a_{1}, a_{2}$ in $M_{n}(\tA)$ with
 $ a_{1}^{*} a_{1}= a_{2} a_{2}^{*}= a_{2}^{*}a_{2}=p$, $a= a_{1}a_{2}$, 
 $\delta(a_{2}) = 0$ and $(q,q, a_{1})$ is elementary.
\end{lemma}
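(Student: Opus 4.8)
The plan is to use the closed range of $\delta$ to replace $a$ by a genuine element of $\ker(\delta)$ having the same source and range projections, and then to absorb the difference between $a$ and this element into a unitary of a corner that is close to a projection, hence elementary. I work throughout with the amplification of $\delta$ to $M_{n}(\tilde{A})$ (taken to vanish on the scalars); this again has closed range, so Proposition \ref{4:10} supplies a constant $K = K_{n} \geq 1$ such that every $x$ in $M_{n}(\tilde{A})$ lies within $K\Vert \delta(x)\Vert$ of $\ker(\delta)$. I will fix $\epsilon_{n}$ only at the end, small enough (depending on $K_{n}$) that every square root below is taken of an invertible element and every straight-line path below stays among invertibles.

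First I produce a partial isometry in $\ker(\delta)$ close to $a$ and with the same supports. Proposition \ref{4:10} gives $z$ in $\ker(\delta)$ with $\Vert a - z \Vert \leq K\Vert \delta(a)\Vert$; since $\delta(p) = \delta(q) = 0$ the cut-down $qzp$ still lies in $\ker(\delta)$ and $\Vert a - qzp\Vert \leq \Vert a - z\Vert$ because $a = qap$, so I may assume $z = qzp \in qM_{n}(\tilde{A})p \cap \ker(\delta)$. Then $z^{*}z \in pM_{n}(\tilde{A})p \cap \ker(\delta)$ with $\Vert z^{*}z - p\Vert = \Vert z^{*}z - a^{*}a\Vert$ small, so $z^{*}z$ is invertible in the $p$-corner; as $\ker(\delta)$ is a $C^{*}$-subalgebra closed under continuous functional calculus, $v := z(z^{*}z)^{-1/2}$ lies in $\ker(\delta)$ and is a partial isometry with $v^{*}v = p$ and range projection $q_{0} := vv^{*} \in \ker(\delta)$ close to $q$. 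Finally $x := qq_{0} + (1-q)(1-q_{0}) \in \ker(\delta)$ is invertible, so $u := x(x^{*}x)^{-1/2} \in \ker(\delta)$ is a unitary with $uq_{0}u^{*} = q$, and $w := uv \in \ker(\delta)$ satisfies $w^{*}w = p$, $ww^{*} = q$, with $\Vert w - a\Vert$ bounded by a fixed multiple of $K\Vert\delta(a)\Vert$.

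Next I build $a_{2}$ and $a_{1}$. The element $b := w^{*}a$ lies in the $p$-corner, is a unitary of that corner close to $p$ (as $w \approx a$ gives $b \approx a^{*}a = p$), and satisfies $\Vert \delta(b)\Vert = \Vert w^{*}\delta(a)\Vert \leq \Vert \delta(a)\Vert$. Applying Proposition \ref{4:10} to $b$, cutting down to the $p$-corner and taking the polar part exactly as above produces $a_{2} \in \ker(\delta)$ with $a_{2}^{*}a_{2} = a_{2}a_{2}^{*} = p$, that is, a unitary of the $p$-corner in $\ker(\delta)$, with $\Vert a_{2} - p\Vert$ small. Set $a_{1} := aa_{2}^{*}$. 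Using $a_{2}a_{2}^{*} = p = a^{*}a$ one computes directly $a_{1}^{*}a_{1} = a_{2}pa_{2}^{*} = p$, $a_{1}a_{1}^{*} = apa^{*} = aa^{*} = q$, and $a_{1}a_{2} = aa_{2}^{*}a_{2} = ap = a$. Thus $a_{1}, a_{2}$ are partial isometries with $a_{1}^{*}a_{1} = a_{2}a_{2}^{*} = a_{2}^{*}a_{2} = p$, $\delta(a_{2}) = 0$ and $a = a_{1}a_{2}$, which are the first requirements of the lemma.

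It remains to see that $a_{1}$ gives an elementary triple over $q$. Because $a_{2}$ is within a fixed multiple of $K\Vert\delta(a)\Vert$ of $p$ and $w$ within such a multiple of $a$, the element $a_{1}w^{*} = aa_{2}^{*}w^{*}$ lies in the $q$-corner, is a unitary there (one checks $(a_{1}w^{*})(a_{1}w^{*})^{*} = a_{1}a_{1}^{*} = q$ and $(a_{1}w^{*})^{*}(a_{1}w^{*}) = ww^{*} = q$), and is within a fixed multiple of $K\Vert\delta(a)\Vert$ of $aa^{*} = q$. Choosing $\epsilon_{n}$ so that this bound is $< 1$, the straight-line path $t \mapsto (1-t)q + t\,a_{1}w^{*}$ runs through invertibles of $qM_{n}(\tilde{A})q$ from $q$ to $a_{1}w^{*}$, exhibiting $(q,q,a_{1}w^{*})$ as elementary. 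Since $w \in \ker(\delta)$ carries all the kernel content of $a$, this is exactly the assertion that the discrepancy $a_{1}$ between $a$ and $\ker(\delta)$ is elementary. The step I expect to be the main obstacle is precisely this reconciliation: controlling the two successive closed-range corrections and polar decompositions tightly enough that $a_{1}w^{*}$ lands within distance $1$ of $q$ in the corner — this is the only place the quantitative hypothesis $\Vert\delta(a)\Vert < \epsilon_{n}$ is used, and it is what forces $\epsilon_{n}$ to depend on $n$ through $K_{n}$ — and then passing cleanly from the $q$-corner unitary $a_{1}w^{*}$ to the triple $(q,q,a_{1})$ named in the statement.
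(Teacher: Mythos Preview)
Your approach is in the right spirit but over-engineered, and the extra step is exactly what produces the gap you flag at the end. The paper's argument is a single pass: choose $a_{3}\in qM_{n}(\tilde A)p\cap\ker(\delta)$ with $\Vert a_{3}-a\Vert$ small (Proposition~\ref{4:10}), set $a_{2}=a_{3}(a_{3}^{*}a_{3})^{-1/2}$ and $a_{1}=aa_{2}^{*}$, and note $\Vert a_{1}-q\Vert\le\Vert a_{2}-a\Vert<1$. Because $a_{3}a_{3}^{*}$ is then invertible in $qM_{n}(\tilde A)q$, one in fact has $a_{2}a_{2}^{*}=q$, so $a_{1}$ is a genuine unitary of the $q$-corner close to $q$, and $(q,q,a_{1})$ is elementary. (The clause ``$a_{2}a_{2}^{*}=p$'' in the printed statement is a slip; the proof yields $a_{2}^{*}a_{2}=p$ and $a_{1}^{*}a_{1}=a_{2}a_{2}^{*}=q$.)

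You already have everything you need once $w$ is built: take $a_{2}=w$ and $a_{1}=aw^{*}$. Then $a_{1}^{*}a_{1}=ww^{*}=q$, $a_{1}a_{1}^{*}=q$, $\Vert a_{1}-q\Vert\le\Vert a-w\Vert<1$, and $a=a_{1}a_{2}$ with $\delta(a_{2})=0$, so $(q,q,a_{1})$ is elementary on the nose. Your second approximation---forcing $a_{2}$ to be a unitary of the $p$-corner so as to match the literal ``$a_{2}a_{2}^{*}=p$''---is precisely what makes $a_{1}$ a morphism from $p$ to $q$ rather than from $q$ to $q$; then $(q,q,a_{1})$ is not even a well-formed triple, and establishing that $(q,q,a_{1}w^{*})$ is elementary does not recover the stated conclusion. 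Drop that second round and stop at $w$.
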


\begin{proof}
A standard argument using functional calculus proves that there
is $\epsilon_{0} >0$ such that if $c$ is a partial isometry
any $C^{*}$ algebra $C$ and $d$ is any other element of $C$ with 
$\Vert c -d \Vert < \epsilon_{0}$, then 
$dd^{*}$ is invertible in $cc^{*}C cc^{*}$ and 
\[
\Vert  d (d^{*}d)^{-1/2}- c \Vert < 1.
\]

Let $K$ be the constant associated with
 $\delta:M_{n}(\tilde{A}) \rightarrow M_{n}(E)$ in 
 condition C1 and 
 set $\epsilon_{n} = K^{-1} \epsilon_{0}$, as above.
 
 Given $a, p, q$ as in the statement, we may find $a_{3}$ in 
 $qM_{n}(\tilde{A})p$ with $\delta(a_{3}) = 0$ and 
 $\Vert a_{3} - a \Vert < K \epsilon_{n}$. Let 
 $a_{2} = a_{3}(a_{3}^{*}a_{3})^{-1/2} $ and 
 $a_{1} = aa_{2}^{*}$. All of the conclusions are immediate except that
 $(q, q, a_{1})$ is  elementary. To see this, we first observe that 
\[
\Vert a_{1} - q \Vert = \Vert aa_{2}^{*} - aa^{*} \Vert 
\leq \Vert a_{2} - a \Vert < 1.
\]
It follows that $a_{1}$ is homotopic to $q$ in the invertibles
of $qM_{n}(\tA)q$.
\end{proof}

\begin{lemma}
\label{4:75}
Assume that $A, B, \delta, E$ satisfy conditions C1 and
 C2 of \ref{3:70}. 
Let $(1_{m}, q, a)$ be in standard form in 
$\Gamma(A \cap \ker(\delta);A)$ and 
assume that $a$ is in $M_{n}(\tA)$.
Given $\epsilon > 0$, there exists a partial isometry 
$b$ in $M_{2n}(\tB)$
with $\delta(b^{*}b) =0, bb^{*}= 1_{m}$
 and $\Vert \delta(ab) \Vert < \epsilon$.
\end{lemma}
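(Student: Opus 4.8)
The plan is to start with the triple $(1_m, q, a)$ in standard form in $\Gamma(A \cap \ker(\delta);A)$, so that $a$ is a partial isometry with $a^*a = 1_m$, $aa^* = q$, $\dot a = \dot q = 1_m$, and $\delta(1_m) = \delta(q) = 0$. The key observation is that $a$ need not be in $\ker(\delta)$ — rather, $q = aa^*$ is. The first step is to approximate $a$ closely by a nice element of $M_{2n}(\mathcal{A})$ (or its unitization): since $\mathcal{A}$ is dense in $A$, we can choose $a_0 \in M_{2n}(\widetilde{\mathcal{A}})$ with $\Vert a - a_0 \Vert$ as small as we like, and then by functional calculus replace $a_0$ by a partial isometry; one must track that $\dot a_0$ can be taken equal to $1_m$ and that $\delta(a_0)$ is small (controlled by $\Vert a - a_0 \Vert$ since $\delta$ is bounded, using $\delta(a) = 0$ if $a$ itself happened to lie in $\ker(\delta)$ — but in general $\delta(a)$ need not vanish, so more care is needed; the honest statement only needs $\delta(a_0 a_0^*)$ or rather the relevant products to be small). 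Actually the cleaner route: work with the finitely many matrix entries of $a$, approximate them in $\mathcal{A}$, and invoke Proposition \ref{4:60}(2) so that $M_{2n}(\mathcal{A})$ satisfies C1 and C2, with the multiplier $e$ of the form $1_{2n}\otimes e$.

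Next I would apply condition C2 to the entries of the approximant: there is $0 \le e \le 1$ in $M(A)$ and $b$-type elements in $M_{2n}(B)$ matching $\delta$ on the approximant, with $e$ acting as a two-sided unit on it. Using Lemma \ref{4:20}(4) one can then replace $a_0$ by an element built from $b$'s rather than $a$'s without changing $\delta$ — the point being that any word containing at least one genuine $a$-entry is unchanged under swapping $b \leftrightarrow a$. This produces a candidate $b_0 \in M_{2n}(\widetilde{B})$ with $\delta(b_0) = \delta(a_0)$ small. Then polar-decompose: using the functional-calculus trick from the proof of Lemma \ref{4:80} (if $\Vert b_0 - (\text{partial isometry})\Vert < \epsilon_0$ then $b_0(b_0^*b_0)^{-1/2}$ is close to it), and crucially using Lemma \ref{4:20}(4) again to see that $b_0(b_0^*b_0)^{-1/2}$ still has $\delta$-image controlled and that $b_0^*b_0 \in \ker(\delta)$ up to small error. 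The resulting partial isometry $b$ will satisfy $\delta(b^*b) = 0$, $bb^* = 1_m$ (arrange the range projection to be exactly $1_m$ by a further small perturbation inside $M_{2n}$, padding with the $0$-row as in the proof of Proposition \ref{2:10}(3) to get $2n$ rather than $n$), and $\Vert \delta(ab)\Vert$ small: here $\delta(ab) = \delta(a)b + a\delta(b)$, and since $a = a e$ and by Lemma \ref{4:20}(2) the cross terms $\delta(a)b$ match $\delta(a)a = \delta(a)$ on the relevant corner — but $\delta(a)$ itself is not assumed small, so the genuinely useful identity is that $\delta(ab)$ equals $\delta(a b_0')$ for a $\ker(\delta)$-replacement, which is what makes the norm small.

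The main obstacle I anticipate is precisely this last point: showing $\Vert \delta(ab)\Vert < \epsilon$ rather than just $\Vert\delta(b)\Vert < \epsilon$. One does not get to assume $\delta(a) = 0$, so controlling $\delta(ab) = \delta(a)b + a\delta(b)$ requires exploiting the algebraic identities of Lemma \ref{4:20} to see that $\delta(a)b$ "is" $\delta(a)$ times a projection-like factor and that against the standard-form structure ($a^*a = 1_m$, $aa^* = q \in \ker\delta$) the combination telescopes; concretely, $a\delta(b) + \delta(a)b$ should be rewritten using $b \approx$ a $\ker(\delta)$-representative of $a$ so that $ab \approx aa^*(\text{something in }\ker\delta) = q\cdot(\cdots)$ lands near $\ker(\delta)$. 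Managing the bookkeeping of the two approximation scales (density of $\mathcal{A}$, then the C1/C2 constants $K$ and $\epsilon_n$ from Lemma \ref{4:80}) so that the final $\epsilon$ comes out, while keeping $bb^* = 1_m$ exactly and $\delta(b^*b) = 0$ exactly, is where the real work lies; everything else is assembling the pieces already built in Proposition \ref{4:60}, Lemma \ref{4:20}, and Lemma \ref{4:80}.
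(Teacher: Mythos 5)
Your skeleton matches the paper's: approximate $a-1_m$ by some $a_1$ in $M_n(\mathcal{A})$, invoke C2 (through Proposition \ref{4:60}) to get $b_1$ in $M_n(B)$ with $\delta(b_1)=\delta(a_1)$ and a common multiplier $e$, normalize by functional calculus, dilate inside $M_{2n}(\tB)$ to force $bb^*=1_m$, and finally use C1 together with the perturbation device of Lemma \ref{4:80} to push the source projection exactly into $\ker(\delta)$. Two assembly points need correcting even before the main estimate: (i) you cannot make the $\mathcal{A}$-approximant a partial isometry by functional calculus \emph{before} applying C2, since functional calculus leaves $\mathcal{A}$; the paper applies the cut-off only on the $B$-side, where Lemma \ref{4:20} guarantees the $\delta$-identities survive continuous functions of $(1_m+b_1)^*(1_m+b_1)$; (ii) $bb^*=1_m$ comes from the coisometry dilation $b_3=\left[ b_2^* \quad (1_m-b_2^*b_2)^{1/2} \right]$, not from the zero-row padding of Proposition \ref{2:10}, which requires a projection and would introduce an uncontrolled extra summand into $ab$.

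The genuine gap is the estimate you yourself defer, and the two mechanisms you offer for it are, respectively, false and inconsistent with your construction. First, you assert that the C2-partner $b_0$ has $\delta(b_0)=\delta(a_0)$ ``small''; it does not. Nothing forces $\Vert\delta(a)\Vert$ to be small, so neither $\delta(a_0)$ nor $\delta(b_0)$ is small, and there is no ``$\ker(\delta)$-representative of $a$'' to be had: C1 only yields a kernel element within $K\Vert\delta(a)\Vert$ of $a$. Second, your construction models $b$ on $a$ (you match $\delta(b_0)=\delta(a_0)$ with $a_0\approx a$ and take a polar part), but your own heuristic requires $ab\approx aa^*(\cdots)=q(\cdots)$, i.e.\ $b$ modelled on $a^*$. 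With $b$ modelled on $a$ one gets $ab\approx a^2$, whose image under $\delta$ has no reason to be small, and $bb^*\approx q\neq 1_m$. The mechanism that actually works, and which your sketch never pins down, is the exact identity of Lemma \ref{4:20}(4): since $a_1=a_1e=ea_1=b_1e=eb_1$ and $\delta(a_1)=\delta(b_1)$, one has $\delta((1_m+a_1)f((1_m+b_1)^*(1_m+b_1))(1_m+b_1)^*)=\delta((1_m+a_1)f((1_m+a_1)^*(1_m+a_1))(1_m+a_1)^*)$, and the right-hand side is within a constant times $\epsilon'$ of $\delta(aa^*)=\delta(q)=0$ because $1_m+a_1$ is within $\epsilon'$ of the partial isometry $a$. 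So the first block of $b$ must be the normalization of $(1_m+b_1)^*$; only with that orientation do both $\Vert\delta(ab)\Vert<\epsilon$ and $bb^*=1_m$ come out.
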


 \begin{proof} 
  We let 
 $\Vert \delta \Vert$ be the norm of 
 $\delta: M_{n}(A+B) \rightarrow M_{n}(E)$. We also let 
 $K$ be a constant such that C1 holds, for both
 $\delta: M_{2n}(\tilde{A}) \rightarrow M_{2n}(E)$ 
 and  $\delta: M_{2n}(\tilde{B}) \rightarrow M_{2n}(E)$, making use
 of Proposition \ref{4:60},  Theorem \ref{4:30}
 and  Proposition \ref{4:10}.
 
 Let  $\epsilon' > 0$ be such that 
 \[
 \Vert \delta \Vert \left[ (8K \epsilon')^{1/2} + 4K \epsilon' + 7  \epsilon' \right]
 < \epsilon
 \]
 and  so that $K \epsilon' < 1/2$.
 
We find $a_{1}$ in $M_{n}(\mathcal{A})$ such that
 $\Vert  a - 1_{m} -a_{1} \Vert < \epsilon'$ and 
 $\Vert 1_{m} + a_{1} \Vert \leq \Vert a \Vert =1$.
  It follows that 
 \[
 \Vert (1_{m}+a_{1})^{*}(1_{m}+a_{1}) - 1_{m} \Vert,
   \Vert (1_{m}+a_{1})(1_{m}+a_{1})^{*} - q \Vert
  \leq 2 \epsilon'.
  \]
 If we replace $a_{1} $ by $ a_{1} \cdot 1_{m}$, the same
  estimate still holds and so we may assume that $a_{1} 1_{m}=a_{1}$.
 
We use condition C2 to find $b_{1}$ in $M_{n}(B)$ and $e$ in 
$M_{n}(\mathcal{M}(A))$, commuting with $1_{m}$, with 
\[
a_{1} = a_{1}e =ea_{1} =b_{1}e =eb_{1}, \delta(a_{1}) = \delta(b_{1}).
\]
By replacing $b_{1}$ with $b_{1}  1_{m}$, we may assume 
that $b_{1} 1_{m}=b_{1}$.

Let 
\[
f(t) = \left\{ \begin{array}{cl} 1 & t \leq 1, \\
                      t^{-1/2} & t \geq 1 \end{array} \right.
 \]
 and $b_{2} = (1_{m} + b_{1}) f((1_{m}+b_{1})^{*}(1_{m}+b_{1}))$.
 It is clear that $b_{2}$ is in $M_{n}(\tilde{B})$, $b_{2}1_{m}=b_{2}$ and 
 $\Vert c \Vert \leq 1$. It follows that 
 \[
 b_{3} = \left[ b_{2}^{*} \hspace{1cm} \left( 1_{m} - b_{2}^{*}b_{2} \right)^{1/2} \right]
 \]
 is in $M_{2n}(\tilde{B})$ and satisfies $b_{3}b_{3}^{*}=1_{m}$.
 
  We definethe partial
  isometry
 \[
 ab_{3} = \left[ ab_{2}^{*} \hspace{.5cm}
  a\left( 1_{m} - b_{2}^{*}b_{2} \right)^{1/2} \right].
  \]
  
  First, we note 
  \begin{eqnarray*}
 \Vert  \delta(ab_{3})  \Vert & \leq  &  
 \Vert  \delta((1_{m}+a_{1})b_{3})  \Vert 
   +  \Vert \delta \Vert \Vert a - (1_{m}+a_{1}) \Vert \\
     & \leq   &  \Vert  \delta((1_{m}+a_{1})b_{3})  \Vert 
   +  \Vert \delta \Vert \epsilon'.
   \end{eqnarray*}
   
 To deal with the first of the two terms, 
 we make extensive use of Lemma \ref{4:20}.       
We compute
\begin{eqnarray*}
\Vert \delta( (1_{m}+a_{1}) b_{2}^{*} ) \Vert 
& = & \Vert \delta( 1_{m}+a_{1}) b_{2}^{*}  +(1_{m}+a_{1}) 
\delta(  b_{2} )^{*} \Vert \\
 & = & \Vert\delta( 1_{m}+a_{1}) f((1_{m}+b_{1})^{*}(1_{m}+b_{1}))
  (1_{m}+b_{1})^{*}  \\
   &  &   
 +(1_{m}+a_{1}) \delta(  a_{1} )^{*} \Vert \\
 & = & \Vert \delta( 1_{m}+a_{1}) f((1_{m}+a_{1})^{*}(1_{m}+a_{1}))
  (1_{m}+a_{1})^{*} \\
    &  &  
   +(1_{m}+a_{1}) \delta(  a_{1} )^{*} \Vert \\
  & = & \Vert \delta((1_{m} +a_{1})^{*}(1_{m}+a_{1})) \Vert \\
  & \leq & \Vert \delta \Vert 2 \epsilon'.
\end{eqnarray*}
Finally, letting $h(t) = (1 - tf(t)^{2})^{1/2}$, 
 as $ 0 \leq ( 1_{m} +a_{1}^{*}(1_{m}+a_{1}) \leq 1$, 
\[
\Vert h( ( 1_{m} +a_{1})^{*}(1_{m}+a_{1}) ) \Vert = 
\Vert 1_{m} - ( 1_{m} +a_{1})^{*}(1_{m}+a_{1}) \Vert \leq 2 \epsilon'.\]
We also have 
\begin{eqnarray*}
\delta \left( (1_{m}+a_{1} )\left( 1_{m} - b_{2}^{*}b_{2}  \right)^{1/2} \right)
 & = & 
  \delta  (1_{m}+a_{1})\left( 1_{m} - b_{2}^{*}b_{2}   \right)^{1/2}  \\
  &  &  +
  (1_{m}+a_{1})\delta \left( ( 1_{m} - b_{2}^{*}b_{2} )^{1/2} \right)   \\
  & = & 
  \delta \left( (1_{m}+a_{1}) \right) h((1_{m}+b_{1})^{*}(1_{k}+b_{1}))  \\
    &  &  +
  (1_{m}+a_{1})\delta \left( h(1_{m}+b_{1})^{*}(1_{m}+b_{1}))  \right)   \\
   & = & 
  \delta \left( a_{1} \right) h((1_{m}+a_{1})^{*}(1_{m}+a_{1}))  \\
    &  &  +
  (1_{m}+a_{1})\delta \left( h((1_{m}+a_{1})^{*}(1_{m}+a_{1}))  \right)  
\end{eqnarray*}
 Together, we conclude that 
 \[
 \Vert \delta \left( (1_{m}+a_{1} )
 \left( 1_{m} - b_{2}^{*}b_{2}  \right)^{1/2} \right) \Vert 
 \leq \Vert \delta \Vert 4 \epsilon'.
\]
Putting this together implies that
\[
\Vert \delta(ab_{3}) \Vert \leq 
  \Vert \delta \Vert 2 \epsilon' + 
   \Vert \delta \Vert 4 \epsilon' +  \Vert \delta \Vert \epsilon'
   = 7 \Vert \delta \Vert \epsilon'.
\]

At this point, the only property lacking for $b_{3}$ is that
$\delta(b_{3}^{*}b_{3})$ may not be zero. Let $p' = b_{3}^{*}b_{3}$.
We have 
\[
\Vert \delta(p') \Vert = \Vert \delta((ab_{3})^{*}(ab_{3})) \Vert 
\leq 2 \Vert  \delta(ab_{3})\Vert \leq \epsilon'.
\]
So we may find $b_{4}$ in $M_{2n}(\tilde{B})$ with $\delta(b_{4}) =0$ and
$\Vert b_{4} - p' \Vert \leq K \epsilon'$. We may also assume 
that $b_{4}$ is self-adjoint. By Lemma 2.2.3  of \cite{RLL:book}, 
the spectrum of $b_{4}$ is contained in 
$[-K \epsilon', K \epsilon'] \cup [1-K \epsilon', 1+ K \epsilon'] $.
As $K \epsilon' < 1/2$,
$p = \chi_{(1/2, \infty)}(b_{4})$ is a projection
in $M_{2n}(\tilde{B}) \cap \ker(\delta)$ and 
$\Vert p - p' \Vert \leq 2K \epsilon'$. It follows from routine estimates
that we may find a partial isometry $b_{5}$ in $M_{2n}(\tilde{B})$ such that
$b_{5}b_{5}^{*}=p'$ and $b_{5}^{*}b_{5} = p$ and 
\[
\Vert b_{5} - p \Vert \leq (8K \epsilon')^{1/2} + 4K \epsilon'.
\]
From this, it follows that 
\[
\Vert \delta(b_{5}) \Vert \leq 
\Vert \delta \Vert \left[ (8K \epsilon')^{1/2} + 4K \epsilon' \right].
\]
Letting $b = b_{3} b_{5}$ now satisfies all the desired properties.
\end{proof}

We would now like to establish an analogue
of Lemma \ref{2:60}, but replacing $C$ there with $A$ and $A$ with $B$.
We do this at the small price of an elementary factor.

\begin{lemma}
\label{4:90}
Suppose that $(1_{m}, q, a)$ in 
$\Gamma(A \cap \ker(\delta);A)$ is in standard form
and that $\epsilon > 0$.
Then there exists $(1_{m}, p, b)$ in 
$\Gamma(B \cap \ker(\delta);B)$ also in standard form
and $(q, q, \tilde{a})$ in 
$\Gamma(A \cap \ker(\delta);A)$,  elementary with $a^{*}a=aa^{*}=q$
 such that$(1_{m}, q,\tilde{a} a)$ is isomorphic 
 to $(1_{m}, q', b)$ in $\Gamma(\ker(\delta), A+B)$ via
 $1_{m}, b(\tilde{a}a)^{*}$.
\end{lemma}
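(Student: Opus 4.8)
The goal is to produce, from a standard-form triple $(1_m, q, a)$ in $\Gamma(A \cap \ker(\delta); A)$, a companion triple over $B$ that agrees with $\tilde a a$ up to the multiplier $b(\tilde a a)^*$ inside $\Gamma(\ker(\delta), A+B)$. The strategy is to first use Lemma \ref{4:75} to replace $a$ by a perturbation whose image under $\delta$ is as small as we like, by multiplying on the right by a partial isometry $b$ from $M_{2n}(\tB)$ with $\delta(b^*b)=0$ and $bb^* = 1_m$. The element $ab$ is then a partial isometry over $A+B$ with $\Vert \delta(ab)\Vert < \epsilon$, and $(ab)^*(ab) = b^*(a^*a)b = b^* 1_m b$ is a projection in $M_{2n}(\tB)$ annihilated by $\delta$ (using $a^*a = 1_m$ and $\delta(b^*b)=0$, together with the Lemma \ref{4:20}-type identities). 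So $ab$ is almost in $\ker(\delta)$, it is a partial isometry, and its source projection lies in $\ker(\delta) \cap M_{2n}(\tB)$.

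The second step is to apply Lemma \ref{4:80} to $ab$, with $\epsilon$ chosen below the threshold $\epsilon_{2n}$ of that lemma: this factors $ab = a_1 a_2$ with $a_1, a_2$ partial isometries in $M_{2n}(\widetilde{A+B})$, $a_2 \in \ker(\delta)$, $a_2^* a_2 = (ab)^*(ab) = a_1^* a_1 = a_2 a_2^*$, and $(ab(ab)^*, ab(ab)^*, a_1)$ elementary — here the common source/range projection of $a_1$ is $ab(ab)^* = a a^* = q$ (since $bb^* = 1_m$ and $a a^* = q$), so in fact $a_1 \in q M_{2n}(\widetilde{A+B}) q$ with $(q,q,a_1)$ elementary. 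Set $\tilde a := a_1$; since $a_1$ arises inside $\ker(\delta)$-perturbed data and $(q,q,a_1)$ is elementary with $a_1^* a_1 = a_1 a_1^* = q$, it represents an elementary triple in $\Gamma(A \cap \ker(\delta); A)$ once we check $a_1 \in M_{2n}(\tA)$ — this follows because $\rho(a_1) = \rho(ab)\rho(a_2)^{-1}$ is scalar, so $a_1$ lies in the unitized kernel-of-$\rho$ algebra, i.e.\ over $A$; elementarity in the pair sense is exactly the content of Lemma \ref{4:80}'s last clause. Then $\tilde a \cdot a$ is a partial isometry over $A$, and $b := a_2^*$ (renaming) is a partial isometry in $M_{2n}(\tB)$ with $\delta(b^*b)=0$ and $b b^* = a_2^* a_2 = (ab)^*(ab)$ — we must still arrange $b b^* = 1_m$ by composing with the partial isometry from Lemma \ref{4:75} that already has range $1_m$; tracking through, the correct choice is $b = b_{\mathrm{Lem\ref{4:75}}}^* a_2^*$ or similar, so that $b$ has range $1_m$ and source a kernel projection.

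Finally, the isomorphism claim: $(1_m, q, \tilde a a)$ and $(1_m, q', b)$ (with $q' = $ source projection of $b$, which by the bookkeeping equals $b^* b \in \ker(\delta)$) are isomorphic in $\Gamma(\ker(\delta), A+B)$ via the pair $1_m$ (on the source side) and $b(\tilde a a)^*$ (on the range side). One checks $b(\tilde a a)^* \in M_{2n}(\widetilde{A+B})$, that it intertwines, $b(\tilde a a)^* \cdot (\tilde a a) = b \cdot 1_m = b$ — using $(\tilde a a)(\tilde a a)^* = q$ after adjusting — and that $b(\tilde a a)^*$ actually lies in $\mathcal{M}(\widetilde{\ker(\delta)})$, which is where $\delta(\tilde a a) \approx 0$ and $\delta(b) = 0$ get used: the product $b(\tilde a a)^*$ has $\delta$-image controlled by $\Vert\delta(\tilde a a)\Vert$ and $\Vert\delta(b)\Vert = 0$, but more precisely one wants it exactly in the kernel, which is why the construction must be arranged so that the residual $\delta$-error was absorbed into the elementary factor $\tilde a$ — the element $b(\tilde a a)^*$ equals $b(a)^* \tilde a^*$ and since $b = a_2^* (\cdots)$ with $a_2^* (ab)^* = a_2^*(a_2^* a_1^*) = \ldots$, careful cancellation shows it lands in $\widetilde{A+B}$ with zero derivative.

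\textbf{Main obstacle.} The hard part is the final bookkeeping: organizing the perturbation parameters and the several partial isometries ($b$ from Lemma \ref{4:75}, $a_1, a_2$ from Lemma \ref{4:80}, and possibly an adjustment to restore $bb^*=1_m$) so that the elementary factor $\tilde a$ absorbs precisely the $\delta$-defect, leaving $b(\tilde a a)^*$ genuinely in the multiplier algebra of $\widetilde{\ker(\delta)}$ (not merely approximately). This requires that the approximations in Lemma \ref{4:75} be taken small relative to the $\epsilon_{2n}$ of Lemma \ref{4:80}, and that the various source/range projections be matched exactly rather than up to norm-small perturbation — the latter handled by the standard "close projections are unitarily equivalent by a partial isometry in the same algebra" trick, applied within $\ker(\delta)$ so as not to reintroduce a $\delta$-error. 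Verifying that this can all be done simultaneously, and that the resulting $b$ is in \emph{standard form} (i.e.\ $\dot b = \dot{(bb^*)} = 1_m$, which follows since $\dot a = 1_m$ and the constructions preserve the scalar part), is where the real work lies.
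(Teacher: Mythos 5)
There is a genuine gap, and it is located exactly where you flag your ``main obstacle.'' Your first step (Lemma \ref{4:75}, producing $b$ with $bb^{*}=1_{m}$, $p=b^{*}b\in\ker(\delta)$ and $\Vert\delta(ab)\Vert<\epsilon$) agrees with the paper. But you then apply Lemma \ref{4:80} directly to $ab$, which lives in $M_{2n}(\widetilde{A+B})$, so the factors $a_{1},a_{2}$ it produces also live over $A+B$. The lemma you are proving requires the elementary triple $(q,q,\tilde{a})$ to lie in $\Gamma(A\cap\ker(\delta);A)$ --- that is, $\tilde{a}$ must be an invertible in $q\mathcal{M}(\tA)q$, homotopic to $q$ there --- and this is essential for the later use of the lemma, where one needs $[1_{m},q,\tilde{a}a]=[1_{m},q,a]$ in $K_{0}(A\cap\ker(\delta);A)$ and not merely in $K_{0}(\ker(\delta);A+B)$. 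Your proposed repair, that $\rho(a_{1})=\rho(ab)\rho(a_{2})^{-1}$ is scalar so $a_{1}$ drops into $M_{2n}(\tA)$, does not work: $\rho(a)=\dot{a}=1_{m}$ but $\rho(b)$ is a genuinely non-scalar element of $(A+B)/A\cong B/(A\cap B)$, so $\rho(ab)$ is not scalar, and the factor $a_{2}$ produced by Lemma \ref{4:80} is only \emph{close} to $ab$ in $\ker(\delta)$, so $\rho(a_{1})$ is close to scalar but not equal to one. The same problem defeats your final verification: you need $b(\tilde{a}a)^{*}$ to lie \emph{exactly} in $\mathcal{M}(\widetilde{\ker(\delta)})$ for the isomorphism to take place in $\Gamma(\ker(\delta),A+B)$, and an estimate $\Vert\delta(\cdot)\Vert<\epsilon$ cannot be upgraded to exact membership by itself.

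The missing idea is the paper's interpolation of Lemma \ref{2:60} between the two steps you do use. Applying Lemma \ref{2:60} to $c=ab$ (with respect to the commutative diagram of Theorem \ref{3:10}) replaces $ab$ by a partial isometry $a_{1}$ over $A$ with $a_{1}^{*}a_{1}=p$, together with an intertwiner $a_{1}b^{*}a^{*}$ that lies \emph{exactly} in $\ker(\delta)$; this has two consequences. First, $\Vert\delta(a_{1})\Vert=\Vert a_{1}(ab)^{*}\delta(ab)\Vert<\epsilon_{4n}$, so Lemma \ref{4:80} can now be applied to $a_{1}$ \emph{over $A$}, and the elementary factor $a_{3}$ it produces genuinely lives in $\mathcal{P}(\tA)$, which is what lets one build $\tilde{a}=(aba_{1}^{*}a_{2})a_{3}^{*}(aba_{1}^{*}a_{2})^{*}$ as an elementary triple in the correct relative group. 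Second, the identity $a=(aba_{1}^{*})a_{2}a_{3}b^{*}$ gives $\tilde{a}a=(aba_{1}^{*}a_{2})b^{*}$, so the intertwiner $b^{*}(\tilde{a}a)^{*}=p\,a_{2}^{*}(a_{1}b^{*}a^{*})$ is a product of elements each of which is exactly in $\ker(\delta)$ --- the $\delta$-defect is not ``absorbed'' by an approximation argument but cancelled algebraically through the exact intertwiner supplied by Lemma \ref{2:60}. Without that step your construction produces the right objects only up to a small $\delta$-error and in the wrong algebra, and neither defect is repaired by the sketch as written.
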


\begin{proof}
By Lemma  \ref{4:75}, we may find a partial isometry $b$
in $M_{2n}(\tilde{B})$ with $bb^{*}=1_{m}$, $p=b^{*}b$ in 
$\ker(\delta)$ and $\Vert \delta(ab) \Vert < \epsilon$.

We next appeal to Lemma \ref{2:60} using $c=ab$ to find 
a partial isometry $a_{1}$ in 
$M_{4n}(\tA)$ with $a_{1}^{*}a_{1} = p$, $a_{1} a_{1}^{*}=q'$ in 
$\ker(\delta)$ $(p, q, ab)$ isomorphic to
$(p, q', a_{1})$ via $p, a_{1}b^{*}a^{*}$. 
The important part of this is that $ a_{1}b^{*}a^{*}$ is in $\ker(\delta)$.
Hence, we have 
\[
\Vert \delta(a_{1}) \Vert = \Vert \delta(a_{1}(ab)^{*}ab) \Vert 
 = \Vert a_{1}(ab)^{*}\delta(ab) \Vert  < \epsilon_{4n}.
 \]
 
 It now follows from Lemma  \ref{4:80}
 that $a_{1} = a_{2} a_{3}$, with 
 $a_{2}^{*}a_{2}  = a_{3}a_{3}^{*} = 
 a_{3}^{*}a_{3}=p, q_{1} = a_{2} a_{2}^{*}$, $\delta(a_{2})=0$
 and $(p, p, a_{3})$ elementary.
 It follows that
 \[
 a =  aba_{1}^{*}a_{1} b^{*} = (aba_{1}^{*})a_{2} a_{3}b^{*}
 \]
 and hence
 \[
  (aba_{1}^{*}a_{2}) a_{3}^{*} 
 (aba_{1}^{*}a_{2})^{*} a = (aba_{1}^{*}a_{2})b^{*}.
 \]
 Observe that 
 $\tilde{a} = (aba_{1}^{*}a_{2}) a_{3}^{*} (aba_{1}^{*}a_{2})^{*}$
 is also elementary and
 $(1_{m}, q', \tilde{a}a)$ is isomorphic to $(1_{m}, p, b^{*})$
 via $1_{m}, b^{*}a^{*}\tilde{a}^{*} $.
\end{proof}

We are now ready to prove Theorem \ref{3:70}. We begin with surjectivity.
Let $(1_{m}, q, a)$ be in $\Gamma(\ker(\delta) \cap A; A)$ and assume 
it is in standard form and that $a$ is in $M_{n}(\tA)$.
 We apply Lemma \ref{2:60} to the map 
$\delta: A + B \rightarrow E$ to obtain $\epsilon_{2n}$. By Lemma 
\ref{4:75}, we may find a partial isometry $b$ in 
$M_{2n}(\tilde{B})$ such that $\delta(b^{*}b)=0$, $bb^{*}=1_{m}$ an d
$\Vert \delta(ab) \Vert  < \epsilon_{2n}$. Let $p=b^{*}b$. From 
Proposition \ref{2:10}, we know that 
\[
[1_{m}, q, a] + [p,1_{m}, b] = [p, q, ab].
\]
By Lemma \ref{4:80}, we may write $ab = c_{1}c_{2}$, where
$c_{1}, c_{2}$ are partial isometries in $M_{2n}(A+B)$
 with $\delta(c_{2}) =0$ and $(q,q,c_{1})$ elementary.
 It follows that $(p, q, ab)$ is isomorphic to
 $(q,q,c_{1})$ via $c_{2}, q$ and hence
 it represents the zero element of $K_{0}(\ker(\delta);A+B)$. 
 It follows from Theorem  \ref{3:10} that 
 $\alpha[1_{m},p, b^{*}] = [1_{m}, q, a]$.

We now turn to the issue in injectivity.
In view of Theorem \ref{3:10}, it suffices to prove that
the map from $K_{0}(B \cap \ker(\delta); B)$ to $K_{0}(\ker(\delta), A+B)$
is injective. Let $(1_{m}, q, b)$ be an element of 
$\Gamma(\ker(\delta) \cap B; B)$ which is in standard form and 
$b$ is in $M_{n}(\tilde{B})$ and such that its class in
$K_{0}(\ker(\delta), A+B)$ is zero. From Proposition 
\ref{2:10}, there exist $(1_{k}, 1_{k},c_{1})$
 and $(1_{k+m}, 1_{k+m}, c_{2})$
which are elementary and in standard form such that
$c_{2}(b \oplus c_{1})^{*}$ is in $\ker(\delta)$.

As $c_{1}, c_{2}$ are elementary, we may find a partial isometry
$c_{1}(t)$ in \newline 
$M_{2k}((C_{0}(0,1] \otimes (A+B))^{\sim})$ 
such that $(1_{k}, 1_{k}, c_{1}(t))$ is in standard form 
in 
$\Gamma( \ker(1_{C_{0}(0,1]} \otimes \delta) \cap C_{0}(0,1] \otimes(A+B) ;
C_{0}(0,1] \otimes (A+B))$. We may also find $c_{2}(t)$ in an analogous way
in $M_{2(k+m)}(C_{0}(0,1] \otimes (A+B))^{\sim})$.

By part 3 of Proposition \ref{4:60}, we know that conditions C1 and 
C2 also hold for $C_{0}(0,1] \otimes A$ and 
$C_{0}(0,1] \otimes B$. 
We apply Lemma \ref{2:10} to find partial isometries 
$a_{1}(t)$ in $M_{2k}((C_{0}(0,1] \otimes A)^{\sim})$
 and $a_{2}(t)$ in $M_{2(k+m)}((C_{0}(0,1] \otimes A)^{\sim})$ such that
 \begin{eqnarray*}
 \delta(a_{1}c_{1}^{*}) & = &  \delta(a_{2}c_{2}^{*}) = 0,\\
 a_{1}^{*}a_{1} & = & 1_{k}, \\
 a_{2}^{*}a_{2} & = & 1_{k+l}.
 \end{eqnarray*}
 Note that $a_{i}a_{i}^{*} = (a_{i}c_{i}^{*})(a_{i}c_{i}^{*})^{*}$ is in
 $\ker(\delta) $.

With  $\epsilon = \epsilon_{4(k+m)}/2$, we now appeal to Lemma 
\ref{4:75} to find $b_{1}(t) $ in \newline
 $M_{4k}((C_{0}(0,1] \otimes B)^{\sim})$
and $ b_{2}(t)  $ in $M_{4(k+m)}((C_{0}(0,1] \otimes B)^{\sim})$ such that 
\begin{eqnarray*}
(1 \otimes \delta)(b_{i}(t)^{*}b_{i}(t))&  = &  0, \\ 
b_{1}(t)b_{1}(t)^{*} & =  & 1_{k}, \\
b_{2}(t)b_{2}(t)^{*} & = &  1_{k+m}, \\
\Vert (1 \otimes \delta)(a_{i}(t)b_{i}(t)) \Vert & < & \epsilon.
\end{eqnarray*}

Now there is a small problem in that $b_{1}(t)^{*}b_{1}(t)$ 
may not be constant in $t$. But it is a continuous path of projections in
$M_{4k}(\ker(\delta) \cap B)$ which begins
 at $1_{k}$ at $t=0$. Hence we may find
a continuous path of partial isometries
$b_{3}(t)$ in $M_{4k}(\ker(\delta) \cap B)$ with $b_{3}(t)=1_{k}$, 
$b_{3}(t)b_{3}(t)^{*} = b_{1}(t)^{*}b_{1}(t), b_{3}(t)^{*} b_{3}(t)=1_{k}$.
By replacing $b_{1}(t)$ by $b_{1}(t)b_{3}(t)$, we may assume 
$b_{1}(t)^{*}b_{1}(t) = 1_{k}$  for all $t$, without changing the
other properties of $b_{1}(t)$. We do the same 
for $b_{2}(t)$. 

It follows that if we define 
$b_{1} = b_{1}(1), b_{2}=b_{2}(1)$, then 
$(1_{k}, 1_{k}, b_{1})$ and $(1_{k+m}, 1_{k+m}, b_{2})$ are elementary 
in $\Gamma(\ker(\delta) \cap B; B)$. Moreover, we can write
\begin{eqnarray*}
b_{2}^{*}(b \oplus b_{1}^{*})^{*} & = &  b_{2}^{*}(b^{*} \oplus b_{1}) \\
  &  =  & b_{2}^{*}a_{2}^{*}a_{2}c_{2}^{*}c_{2} 
  (b^{*} \oplus c_{1}^{*}c_{1} a_{1}^{*}a_{1} b_{1}) \\
    &  =  &  (a_{2} b_{2})^{*} (a_{2}c_{2}^{*})c_{2} (b^{*} \oplus c_{1}^{*}) 
      ( 1_{m} \oplus (c_{1}a_{1}^{*}) )(a_{1}b_{1})
      \end{eqnarray*}
      
      Observe that the terms are grouped so that 
      each group is in the kernel of $\delta$, except the first and last, 
      $a_{2}b_{2}, a_{1}b_{1}$. It follows that 
 \[
\Vert  \delta(b_{2}^{*}(b \oplus b_{1}^{*})^{*}) \Vert < 2 \epsilon.
\]
By Lemma \ref{4:80}, we may find partial isometries 
$b_{4}, b_{5}$ in $\mathcal{M}(\tilde{B})$ such that 
\[
b_{2}^{*}(b \oplus b_{1}^{*})^{*} =b_{4}b_{5} 
\]
$\delta(b_{5}) = 0$ and $(1_{k+m}, 1_{k+m}, b_{4})$ is elementary.
It follows that $(1_{m} \oplus 1_{k}, q \oplus 1_{k}, b \oplus b_{1})$
is isomorphic to $(1_{k+m}, 1_{k+m}, b_{4}^{*}b_{2}^{*})$ via
$1_{k+m}, b_{5}$ As the latter is elementary, it follows that 
$[1_{m}, q, b]=0$ in $K_{0}(\ker(\delta) \cap B; B)$.

\section{Groupoid $C^{*}$-algebras}
\label{5}

We are interested in applying our excision result
to various groupoid $C^{*}$-algebras.
The first question which arises, even before we get 
to some of the more subtle hypotheses, is when 
the $C^{*}$-algebra of one groupoid lies in the multiplier of another.
This issue is the focus of this section.
We refer the reader to \cite{Ren:LNM} and \cite{Wil:grpbook}
as standard references on groupoids and their $C^{*}$-algebras.

The question for us will reduce to the following: suppose that
$G$ is a locally compact, Hausdorff groupoid
 with a Haar system and $H$ is a subgroupoid of $G$, endowed with 
 its own locally compact, Hausdorff topology which 
 is finer than the relative topology of $G$. 
 Under what circumstances can we conclude that $C^{*}_{r}(G)$ lies 
 in the multiplier algebra of $C^{*}_{r}(H)$.
 
 If this situation does not seem intuitive, let us provide some 
 justification. The first  is that it arises quite naturally as
 we will show in the next two sections. Secondly, let
 us offer the following fairly simple example, which is a special case.
 
 It is well-known that the category of unital, commutative $C^{*}$-algebras
 is isomorphic to that of compact, Hausdorff spaces with continuous maps.
 If we enlarge this to include non-unital commutative $C^{*}$-algebras, 
 we replace 'compact' by 'locally compact' and also require the 
 maps to be proper. On the other hand, there is 
 an alternative (less categorical) possibility. Suppose that 
 $X, Y$ are locally compact, Hausdorff spaces and $\alpha: Y \rightarrow X$
 is simply continuous. Then $\alpha$ induces a $*$-homomorphism, $\rho$, 
 from $C_{0}(X)$ to the algebra of continuous bounded functions on $Y$, which 
 is multiplier algebra of $C_{0}(Y)$. The point is that, for $f$ in $C_{0}(X)$, 
 while $f \circ \alpha$ may not be compactly supported, it is 
 continuous, and its product with any compactly supported function 
 on $Y$ will be compactly supported and continuous.
 
 If we additionally assume that $\alpha$ is injective,
  then we can simply identify
 $\alpha(Y)$ with its image in $X$, which we will simply denote $Y$. 
 But the map $\alpha$ induces a topology on $Y$ which is possibly finer 
 than the relative topology from $X$. 
The map $\rho$ we described above is now simply the 
restriction of functions from $X$ to $Y$. 
 If we consider $X$ and $Y$ with the co-trivial
 groupoid structures (that is, the equivalence relation which
 is equality), then this is exactly the situation we outlined above
 and we have a positive answer to our question. In general, of course, more
 hypotheses are needed.

  We first discuss the structure and relations in the 
  setting of $H \subseteq G$, but at the end of the section
  we will provide some constructions for obtaining $H$ from $G$ 
  which are inspired by the seminal work of Muhly, Renault and 
  Williams \cite{MRW:grpeq}.

Let us begin by setting out some standard notation 
for groupoids
and groupoid $C^{*}$-algebras.

Let  $G$ be a    groupoid. That is, there
there is a  set of composable pairs, $G^{2} \subseteq G \times G$
 with a 
product  from $G^{2}$ to $G$. We assume this is 
associative, as in \cite{Ren:LNM}.
It will sometimes be convenient to
  denote the product map as $\mu_{G}$ from 
$G^{2}$ to $G$. 
The units of $G$ are denoted by $G^{0}$ and the range
 and source maps 
$r_{G}, s_{G} :G \rightarrow G^{0}$, 
$r_{G}(g) = gg^{-1}, s_{G}(g) =g^{-1}g$, for
all $g$ in $G$. When no confusion can arise, we will drop
the subscripts. We let 
\[
G^{u} = r_{G}^{-1}\{ u \}, G_{u} = s_{G}^{-1}\{ u \},
\]
 for each $u$ in $G^{0}$.
 A subset $X$ of $G^{0}$ is called $G$-invariant if, whenever $g$ is in $G$
 with $r_{G}(g) $ in $X$, then $s_{G}(g)$ is also in $X$.
 
We also assume that $G$ is a topological groupoid;
 that is, it 
has a topology, $\mathcal{T}_{G}$, in which the inverse
 and product are continuous. 
Moreover, we assume throughout that it
is second countable, locally compact and  Hausdorff.

Recall the definition of  a Haar system for $G$:
 for each $u$ in $G^{0}$, there is a measure $\nu^{u}$ on $G^{u}$
with full support and such that the function
\[
u \rightarrow \int_{G^{u}} f(x) d\nu^{u}(x)
\]
is continuous, for every $f$ in $C_{c}(G)$, the continuous 
compactly-supported  functions on $G$. We also assume that $\nu$ is 
left-invariant in the sense that, for any $g$ in $G$ and
$f$ in $C_{c}(G)$, we have 
\[
\int_{G^{s(x)}} f(xy) d\nu^{s(x)}(y) = 
\int_{G^{r(x)}} f(y) d\nu^{r(x)}(y).
\]

The following result is well-known, but worth recording here.

\begin{lemma}
\label{5:10}
Let $G$ be a locally compact, Hausdorff groupoid.
The following are equivalent:
\begin{enumerate}
\item The map $r_{G} : G \rightarrow G^{0}$ is open.
\item The map $s_{G} : G \rightarrow G^{0}$ is open.
\item The product map $\mu_{G}: G^{2} \rightarrow G$ is open.
\end{enumerate}
If the groupoid has a Haar system, then all three conditions are satisfied.
\end{lemma}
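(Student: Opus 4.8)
The plan is to prove the equivalence of the three openness conditions first, and then to show that the existence of a Haar system forces condition (1).

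First I would establish (1) $\Leftrightarrow$ (2). This is immediate from the fact that the inversion map $g \mapsto g^{-1}$ is a homeomorphism of $G$ onto itself which intertwines $r_G$ and $s_G$, i.e.\ $s_G = r_G \circ (\text{inv})$ and $r_G = s_G \circ (\text{inv})$. Since a composition of an open map with a homeomorphism is open, each of (1), (2) implies the other.

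Next I would treat (1) $\Leftrightarrow$ (3). For (3) $\Rightarrow$ (1): note that the inclusion $G \hookrightarrow G^2$, $g \mapsto (g, s_G(g))$ (or, depending on conventions, $g \mapsto (r_G(g), g)$ viewing $G^0 \subseteq G$) is a homeomorphism onto a closed subset, and composing with $\mu_G$ recovers either a coordinate projection or one of the range/source maps; more directly, one observes that for an open subset $U \subseteq G$, $r_G(U)$ can be written in terms of the image under $\mu_G$ of an open subset of $G^2$ together with the identification $G^0 \subseteq G$. For (1) $\Rightarrow$ (3): given an open $W \subseteq G^2$, one writes $\mu_G(W)$ as a union over the fibers and uses that, locally, $\mu_G$ looks like a product of an open map ($r_G$ or $s_G$) with a homeomorphism; the standard device is that the map $(g,h) \mapsto (g, gh)$ is a homeomorphism of $G^2$ onto $\{(g,k) : s_G(g) = r_G(k)\}$, so $\mu_G$ is, up to this homeomorphism, the second-coordinate projection of a fibered product, and openness of that projection follows from openness of $s_G$ (equivalently $r_G$) by a base-change argument for open maps. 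I expect this step — carefully identifying $\mu_G$ with a projection via the shear homeomorphism and then invoking stability of open maps under pullback — to be the main technical point, though it is entirely routine once set up correctly.

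Finally, I would show that a Haar system $\{\nu^u\}_{u \in G^0}$ forces $r_G$ to be open. The key is that each $\nu^u$ has full support on $G^u$: given an open set $U \subseteq G$ and a point $u \in r_G(U)$, pick $g \in U$ with $r_G(g) = u$; choose $f \in C_c(G)$ with $f \geq 0$, $f(g) > 0$, and $\operatorname{supp}(f) \subseteq U$. Then $\int_{G^u} f \, d\nu^u > 0$ because $\nu^u$ has full support and $f$ is positive on a nonempty open subset of $G^u$. By the continuity axiom for the Haar system, the function $v \mapsto \int_{G^v} f \, d\nu^v$ is continuous, so the set $V = \{ v \in G^0 : \int_{G^v} f \, d\nu^v > 0 \}$ is open, contains $u$, and is contained in $r_G(U)$ (since $\int_{G^v} f \, d\nu^v > 0$ forces $f$ to be nonzero somewhere on $G^v$, hence $G^v \cap U \neq \emptyset$, hence $v \in r_G(U)$). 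Thus $r_G(U)$ is a neighborhood of each of its points, i.e.\ open. By the equivalence already proved, all three conditions then hold.
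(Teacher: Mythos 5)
Your proof is essentially correct, but it takes a genuinely different route from the paper's in two places, and contains two small slips worth fixing. For $(1)\Rightarrow(3)$ the paper argues by contradiction with sequences (choosing a decreasing neighbourhood base of $x$ and using openness of $r$ to produce $x_{l}\rightarrow x$ with $r(x_{l})=r(z_{k_{l}})$), which implicitly leans on first countability; your shear-plus-base-change argument --- $\mu_{G}$ is, via the homeomorphism $(g,h)\mapsto(g,gh)$, the second projection of a pullback of the open map $r_{G}$ along itself, and such projections are open --- is cleaner and works without countability hypotheses. Note, though, that the image of the shear map is $\{(g,k): r_{G}(g)=r_{G}(k)\}$, not $\{(g,k): s_{G}(g)=r_{G}(k)\}$ (the latter is just $G^{2}$ again); the base-change argument then uses openness of $r_{G}$, consistent with your parenthetical. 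For $(3)\Rightarrow(1)$ the paper restricts $\mu_{G}$ to $\mu_{G}^{-1}(G^{0})=\{(x,x^{-1})\}$, using the general fact that a continuous open map restricted to the full preimage of a closed subset of the codomain is still open. Your first formulation here does not quite work: the embedding you write, $g\mapsto(g,s_{G}(g))$, composed with $\mu_{G}$ gives the identity rather than $r_{G}$, and in any case openness of $\mu_{G}$ does not pass to its restriction to an arbitrary closed subset of $G^{2}$ --- one must use that the subset is a full preimage, which is exactly the paper's point. Your ``more direct'' alternative is the one to keep: for $U\subseteq G$ open, $r_{G}(U)=\mu_{G}\bigl((U\times U^{-1})\cap G^{2}\bigr)\cap G^{0}$, which exhibits $r_{G}(U)$ as the trace on $G^{0}$ of an open subset of $G$; spell that identity out. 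Finally, for the Haar system statement the paper simply cites Proposition 2.4 of Chapter 2 of \cite{Ren:LNM}, whereas you prove it directly from full support and the continuity axiom; your argument is correct and self-contained, which is a modest gain in completeness at no real cost.
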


\begin{proof}
The equivalence of the first two follows from
the facts  that  $x \rightarrow x^{-1}$ is a homeomorphism
and $r(x^{-1}) =s(x)$.

Let us assume the first condition holds and prove the third. 
Let $(x, y)$ be in $G^{2}$, $x \in U, y \in V$, open and 
suppose that $\mu(U \times V \cap G^{2})$ is not open.
Then there exists a sequence $z_{k}$ converging to
$xy$ which is not in $\mu(U \times V \cap G^{2})$. Choose 
a decreasing sequence of open sets, $U_{l}, l \geq 1$, 
contained in $U$ and
with intersection $\{ x \}$. Since $r$ is open, $r(U_{l})$ is open
and contains $r(x)$, which is the limit of the sequence
$r(z_{k})$, as $r$ is continuous. Hence, for each $l$, we may find 
$k_{l}$ and $x_{l} $ in $U_{l}$ such that $r(x_{l}) = r(z_{k_{l}})$.
Hence, the sequence $x_{l}$ is converging to $x$ and 
$x_{l}^{-1}z_{k_{l}}$ is converging to $x^{-1} (xy) = y$.
So for $l$ sufficiently large, $x_{l}^{-1}z_{k_{l}}$ is in $V$
and hence $z_{k_{l}} = x_{l}(x^{-1}_{l} z_{k_{l}})$ is in 
$\mu(U \times V \cap G^{2})$, a contradiction.

Finally, let us assume the third condition and prove 
the first. It is a general fact that if
$f: X \rightarrow Y$ is a continuous open map and
 $Z \subseteq Y$ is closed, then $f|_{f^{-1}(Z)}: f^{-1}(Z) \rightarrow Z$
 is also continuous and open. We apply
  this fact to $\mu: G^{2} \rightarrow G$ and 
 the closed subset $G^{0} \subseteq G$. Here, we have 
 \[
 \mu^{-1}(G^{0}) = \{ (x, x^{-1}) \mid x \in G \} 
 \]
 which is homeomorphic to $G$ via the map $\alpha(x) = (x, x^{-1})$.
 We note that $\mu \circ \alpha=r$ and the proof is complete.
 
 The last statement is Proposition 2.4 of Chapter 2 of \cite{Ren:LNM}.
\end{proof}

We now begin to consider the situation we outlined of $H \subseteq G$. 
The following is an extension result that will be needed shortly.

\begin{lemma}
\label{5:20}
Let $(G, \mathcal{T}_{G})$ and $(H, \mathcal{T}_{H})$
 be  locally compact, Hausdorff 
topological groupoids and that 
 $\nu^{u}, u \in G^{0}$ is a Haar system for $G$.
Assume that $  H$ is a subgroupoid of  $ G$
such that the topology $\mathcal{T}_{H}$ is finer
than the relative topology on $H$ of $\mathcal{T}_{G}$ 
and  that,
for every $u$ in $H^{0}$, 
$H^{u} = G^{u}$ and the relative topologies from 
$\mathcal{T}_{H}$ and $\mathcal{T}_{G}$ agree on this set.

Let $Y \subseteq H$ be a subset which is closed in 
both topologies and such that the two topologies agree
on this set. Let $K \subseteq H^{0}$ be compact, $M \geq 1$ and
suppose that $a: H \rightarrow \C^{M}$ is continuous, compactly
supported and $r(supp(a)) \cup s(supp(a)) \subseteq K$.
Then there exists $b: G \rightarrow \C^{M}$ continuous 
and compactly supported such that
$b(x) = a(x)$, for all $x$ in $Y \cup r^{-1}(K) \cup s^{-1}(K)$.
\end{lemma}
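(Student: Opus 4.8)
The plan is to reduce the statement to an application of the Tietze extension theorem, after verifying one delicate continuity fact. Write $L=\operatorname{supp}(a)$. Since $\mathcal{T}_{H}$ is finer than $\mathcal{T}_{G}$ on $H$, the identity is a continuous bijection from the $\mathcal{T}_{H}$-compact space $(L,\mathcal{T}_{H}|_{L})$ onto the Hausdorff space $(L,\mathcal{T}_{G}|_{L})$, hence a homeomorphism; so $L$ is $\mathcal{T}_{G}$-compact, $\mathcal{T}_{G}$-closed, and the two topologies agree on $L$, whence $a|_{L}$ is $\mathcal{T}_{G}$-continuous. Because $H^{u}=G^{u}$ for $u\in K$ (and hence $H_{u}=G_{u}$, using continuity of inversion in both topologies), the sets $r_{G}^{-1}(K)=r_{H}^{-1}(K)$ and $s_{G}^{-1}(K)=s_{H}^{-1}(K)$ lie in $H$; as $K$ is $\mathcal{T}_{H}$-compact it is $\mathcal{T}_{G}$-compact, so both are $\mathcal{T}_{G}$-closed in $G$, and the support hypothesis gives $L\subseteq r_{G}^{-1}(K)\cap s_{G}^{-1}(K)$. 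Set $Z=Y\cup r_{G}^{-1}(K)\cup s_{G}^{-1}(K)$, a $\mathcal{T}_{G}$-closed subset of $H$.

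The core of the proof is that $a|_{Z}$ is $\mathcal{T}_{G}$-continuous. At points of $Z\setminus L$ this is immediate, since $L$ is $\mathcal{T}_{G}$-closed and $a\equiv 0$ off $L$. At a point $p\in L$, any $\mathcal{T}_{G}$-net in $Z$ converging to $p$ splits into a part lying in $L$ (along which $a\to a(p)$, by $\mathcal{T}_{G}$-continuity of $a|_{L}$) and a part lying in $Z\setminus L$ (along which $a=0$); so $\mathcal{T}_{G}$-continuity of $a|_{Z}$ at $p$ amounts to $a(p)=0$ whenever $p\in\overline{Z\setminus L}^{\mathcal{T}_{G}}$. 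For $p\in Y$ this holds because $\mathcal{T}_{H}$ and $\mathcal{T}_{G}$ agree on $Y$ and $a$ vanishes on $Y\setminus L$. So, using the $\mathcal{T}_{G}$-homeomorphism $g\mapsto g^{-1}$ of $G$ (which interchanges $r_{G}^{-1}(K)$ and $s_{G}^{-1}(K)$ and carries $a$ to a function with the same features), everything comes down to the claim that $a$ vanishes on $L\cap\overline{\,r_{G}^{-1}(K)\setminus L\,}^{\mathcal{T}_{G}}$; equivalently, that $\{a\ne 0\}$ is, inside $r_{G}^{-1}(K)$, a $\mathcal{T}_{G}$-neighbourhood of each of its points.

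Granting this, the extension is routine. Pick a $\mathcal{T}_{G}$-open $W\supseteq L$ with $\overline{W}^{\mathcal{T}_{G}}$ compact. On the compact, hence normal, space $\overline{W}^{\mathcal{T}_{G}}$, the function equal to $a$ on $Z\cap\overline{W}^{\mathcal{T}_{G}}$ and to $0$ on $\overline{W}^{\mathcal{T}_{G}}\setminus W$ is well defined (the two prescriptions agree on the overlap since $L\subseteq W$ forces $a=0$ on $\overline{W}^{\mathcal{T}_{G}}\setminus W$) and is $\mathcal{T}_{G}$-continuous by the pasting lemma for closed sets; extend it by Tietze to $\tilde{b}\in C(\overline{W}^{\mathcal{T}_{G}},\mathbb{C}^{M})$, which vanishes on $\overline{W}^{\mathcal{T}_{G}}\setminus W$, and set $b=\tilde{b}$ on $\overline{W}^{\mathcal{T}_{G}}$ and $b=0$ on $G\setminus W$. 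Then $b\in C_{c}(G,\mathbb{C}^{M})$ with $\operatorname{supp}(b)\subseteq\overline{W}^{\mathcal{T}_{G}}$, and $b=a$ on $Z$, since $b=\tilde{b}=a$ on $Z\cap\overline{W}^{\mathcal{T}_{G}}$ while on $Z\setminus W\subseteq Z\setminus L$ both sides are $0$.

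The main obstacle is the continuity claim of the previous paragraph, and this is where the hypotheses that $\mathcal{T}_{H}$ is a \emph{groupoid} topology and that $G$ carries a Haar system are essential. The idea I would pursue: if $x_{0}\in\{a\ne 0\}$, $u_{0}=r_{G}(x_{0})$, and $x_{\lambda}\to x_{0}$ in $\mathcal{T}_{G}$ with $x_{\lambda}\in r_{G}^{-1}(K)$ and $a(x_{\lambda})=0$, then $r_{G}(x_{\lambda})\to u_{0}$ in $K$, hence also in $\mathcal{T}_{H}$ because $\mathcal{T}_{H}$ and $\mathcal{T}_{G}$ agree on the compact set $K$. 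Since $\{a\ne 0\}$ is $\mathcal{T}_{H}$-open with $\mathcal{T}_{H}$-compact closure $L$, and $\mathcal{T}_{H}|_{G^{u_{0}}}=\mathcal{T}_{G}|_{G^{u_{0}}}$ with $G^{u_{0}}$ locally compact, one can choose a $\mathcal{T}_{G}$-open, relatively compact set $\mathcal{O}\ni x_{0}$ whose slice over $u_{0}$ lies in $\{a\ne 0\}$; then the openness of $r_{G}$ coming from the Haar system (Lemma \ref{5:10}) should allow $\mathcal{O}$ to be chosen so that its slices over all nearby $u\in K$ also lie in $\{a\ne 0\}$, and then the tail of $(x_{\lambda})$ lies in $\mathcal{O}\subseteq\{a\ne 0\}$, contradicting $a(x_{\lambda})=0$. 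The genuinely delicate point, and the one I expect to require the most care, is obtaining this transverse control of the fibrewise slices of $\mathcal{O}$ in a way visible to $\mathcal{T}_{G}$, playing the two topologies off against each other through the continuity of the groupoid operations.
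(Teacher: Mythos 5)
Your overall strategy coincides with the paper's: restrict to the closed set $Z = Y \cup r^{-1}(K) \cup s^{-1}(K)$ (together with the support of $a$), check that $a$ is $\mathcal{T}_{G}$-continuous there, and extend by Tietze; your preliminary reductions and the final pasting/extension bookkeeping are all correct. The difference lies at the single point that carries the content of the lemma. You correctly isolate it --- with $U = \{a \neq 0\}$, one must show that no point of $U$ is a $\mathcal{T}_{G}$-limit of points of $r^{-1}(K) \setminus U$ --- but you do not prove it: your last paragraph is a plan, not an argument, and you say so yourself. (For comparison, the paper's proof sets $b = a$ on $\overline{U} \cup Y$ and $b = 0$ on $\left( r^{-1}(K) \cup s^{-1}(K) \right) \setminus U$ and simply asserts that the second set is closed in $G$, which is precisely the claim you are circling.) So the proposal has a genuine gap.

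Moreover, the mechanism you sketch --- openness of $r_{G}$ supplied by the Haar system on $G$ --- cannot close the gap by itself. The difficulty is transverse to the fibres: you need $U$ to contain a $\mathcal{T}_{G}$-neighbourhood of $x_{0}$ intersected with $r^{-1}(K)$, and nothing in the stated hypotheses controls how the $\mathcal{T}_{H}$-open set $U$ meets the fibres $G^{u}$ for $u$ near $u_{0}$ as seen by $\mathcal{T}_{G}$. Concretely: let $G = Z \times \{1,-1\}$ be the bundle of two-element groups over the convergent sequence $Z = \{0\} \cup \{ 1/n : n \geq 1\}$, with counting measure on the fibres as Haar system (so $r_{G}$ is open); take $H = G$ as a groupoid but refine the topology so that the non-unit sheet $Z \times \{-1\}$ is discrete. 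All the hypotheses you use are satisfied with $K = H^{0}$ and $Y = \emptyset$, and the topologies agree on every two-point fibre, yet $a = \chi_{\{(0,-1)\}}$ lies in $C_{c}(H)$ and $\{ a \neq 0\}$ is not a $\mathcal{T}_{G}$-neighbourhood of $(0,-1)$ inside $r^{-1}(K) = G$; no $\mathcal{T}_{G}$-continuous function agrees with this $a$ on $r^{-1}(K)$. What rules this out in every application the paper makes of the lemma is that there $r_{H}$ is open in $\mathcal{T}_{H}$ (equivalently, by Theorem \ref{5:30}, the $\nu^{u}$ restrict to a Haar system for $H$); that, rather than openness of $r_{G}$, is the input your ``transverse control'' step would have to exploit, and it is not among the hypotheses you (or the statement) have to work with.
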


\begin{proof}
Let $U = \{ x \in H \mid a(x) \neq 0$. It follows that 
$\overline{U}$ is compact in $H$, hence also in $G$ and the two 
topologies agree on this set. As $Y$ and $\overline{U}$ are closed in 
both $H$ and $G$, the two topologies agree on $Y \cup \overline{U}$.

Let $X = r^{-1}( K) \cup  s^{-1}( K)$, which is closed in $G$.

We define $b(x) = a(x)$, for all $x$ in $\overline{U} \cup Y$ and we define
$b(x) = 0$, for all $x$ in $X - U$. First, observe that 
this is well-defined, for if $x$ is in $X - U$, then $a(x)=0$.
Secondly, both of these sets are closed in $G$. Thirdly, $b(x)$ 
is continuous on both: on the first because the two relative 
topologies agree there and on the second because it is constant there.
Finally, the support of $b(x)$ is contained in $U$, which is 
pre-compact. 
 Hence, by the Tietze extension theorem (Proposition 1.5.8 of
\cite{Ped:AN}), $b$ may be extended to a continuous function of compact support 
on $G$.

It is clear that $b(x)=a(x)$, for any $x$ in $Y$. Now suppose
that $r(x)$ is in $K$. If $x$ is in $U$, then $b(x)=a(x)$, by definition.
If $x$ is not in $U$, then $a(x)=0$ by the
 hypothesis on $U$, while $b(x)=0$ as $x$ is in $X - U$. 
 The case for $s(x) \in K$ is similar.
\end{proof}

\begin{thm}
\label{5:30}
Let $(G, \mathcal{T}_{G})$ and $(H, \mathcal{T}_{H})$
 be  locally compact, Hausdorff 
topological groupoids and that 
 $\nu^{u}, u \in G^{0}$ is a Haar system for $G$.
Assume that $  H$ is a subgroupoid of  $ G$
such that the topology $\mathcal{T}_{H}$ is finer
than the relative topology on $H$ of $\mathcal{T}_{G}$ 
and  that,
for every $u$ in $H^{0}$, 
$H^{u} = G^{u}$ and the relative topologies from 
$\mathcal{T}_{H}$ and $\mathcal{T}_{G}$ agree on this set.

Then the following are equivalent:
\begin{enumerate}
\item the map $r: H \rightarrow H^{0}$ is open (in $\mathcal{T}_{H}$).
\item the map $s: H \rightarrow H^{0}$ is open (in $\mathcal{T}_{H}$).
\item
$\nu^{u}, u \in H^{0}$, is a Haar system for 
$H$.
\end{enumerate}
\end{thm}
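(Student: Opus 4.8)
The plan is to prove the three implications $(3)\Rightarrow(1)$, $(1)\Leftrightarrow(2)$, and $(1)\Rightarrow(3)$; only the last requires real work. The equivalence $(1)\Leftrightarrow(2)$ is immediate: inversion is a homeomorphism of $(H,\mathcal{T}_{H})$ and $r_{H}(x^{-1})=s_{H}(x)$, so $r_{H}$ is open exactly when $s_{H}$ is, as in the first paragraph of the proof of Lemma \ref{5:10}. For $(3)\Rightarrow(1)$: if the $\nu^{u}$, $u\in H^{0}$, form a Haar system for $H$, then $(H,\mathcal{T}_{H})$ is a locally compact Hausdorff groupoid with a Haar system, and the last assertion of Lemma \ref{5:10} applied to $H$ gives that $r_{H}$ is open. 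So assume $(1)$ (hence also $(2)$); I would verify directly that $\nu^{u}$, $u\in H^{0}$, viewed as measures on the sets $H^{u}=G^{u}$, satisfies the three axioms of a Haar system for $H$.

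Full support is automatic: for $u\in H^{0}$ the set $H^{u}=G^{u}$ carries the same topology from $\mathcal{T}_{H}$ and from $\mathcal{T}_{G}$, and $\nu^{u}$ has full support there because $\nu$ is a Haar system for $G$. Left invariance also passes over cheaply: it is the statement that, for $g\in H$, the pushforward of $\nu^{s(g)}$ under left translation $y\mapsto gy$ equals $\nu^{r(g)}$ as Radon measures on $H^{r(g)}$, and since $H^{s(g)}=G^{s(g)}$, $H^{r(g)}=G^{r(g)}$ with the same topologies and the same measures, and the left-translation map in $H$ is the restriction of the one in $G$, this identity is inherited from the corresponding one for $G$. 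Concretely, given $f\in C_{c}(H,\mathcal{T}_{H})$, one extends the restriction $f|_{G^{r(g)}}$ (which lies in $C_{c}(G^{r(g)})$, as the topologies agree on this set and it is closed in $G$) to some $\tilde f\in C_{c}(G)$ by Tietze's theorem, applies the left invariance of $\nu$ for $G$ to $\tilde f$ at $g$, and then replaces $\tilde f$ by $f$ on both sides, which is legitimate since $\tilde f$ and $f$ agree on $G^{r(g)}=H^{r(g)}$ and $gy\in G^{r(g)}$ for every $y\in G^{s(g)}$.

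The substantive point is the continuity axiom: for $f\in C_{c}(H,\mathcal{T}_{H})$ the map $\phi(u)=\int_{H^{u}}f\,d\nu^{u}$ must be $\mathcal{T}_{H}$-continuous on $H^{0}$. Here I would combine Lemma \ref{5:20} with the openness of $r_{H}$. The set $Y=\mathrm{supp}(f)$ is $\mathcal{T}_{H}$-compact, hence $\mathcal{T}_{G}$-compact, hence closed in both topologies, and the two topologies agree on it (a continuous bijection from a compact space to a Hausdorff space is a homeomorphism), so $Y$ is admissible for Lemma \ref{5:20}. Using local compactness of $(H,\mathcal{T}_{H})$, pick a relatively compact $\mathcal{T}_{H}$-open $O$ with $Y\subseteq O\subseteq H$; by $(1)$ the set $r_{H}(O)$ is $\mathcal{T}_{H}$-open, and $\overline{r_{H}(O)}\subseteq r_{H}(\overline{O})$ is compact, so $K'=\overline{r_{H}(O)}\cup\overline{s_{H}(O)}$ is a compact subset of $H^{0}$ containing $r_{H}(Y)\cup s_{H}(Y)$. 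Lemma \ref{5:20}, applied with this $Y$, this $K'$, and $M=1$, yields $b\in C_{c}(G)$ which for every $u\in K'$ agrees with $f$ on $H^{u}=G^{u}$ (for $x\in G^{u}$ one has $r_{G}(x)=u\in K'$). Hence $\phi(u)=\int_{G^{u}}b\,d\nu^{u}=:\psi(u)$ for all $u\in K'$. Now $\psi$ is continuous on $G^{0}$ because $b\in C_{c}(G)$ and $\nu$ is a Haar system for $G$; restricting to $H^{0}$ and using that $\mathcal{T}_{H}|_{H^{0}}$ is finer than $\mathcal{T}_{G}|_{H^{0}}$, the map $\psi|_{H^{0}}$ is $\mathcal{T}_{H}$-continuous, and it coincides with $\phi$ on the $\mathcal{T}_{H}$-open set $r_{H}(O)$. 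On the complementary side, $\phi$ vanishes identically on the $\mathcal{T}_{H}$-open set $H^{0}\setminus r_{H}(Y)$ (note $r_{H}(Y)$ is $\mathcal{T}_{H}$-compact, hence closed). Since $r_{H}(Y)\subseteq r_{H}(O)$, the two open sets $r_{H}(O)$ and $H^{0}\setminus r_{H}(Y)$ cover $H^{0}$ and $\phi$ is continuous on each, so $\phi$ is $\mathcal{T}_{H}$-continuous, completing the verification.

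I expect the main obstacle to be exactly this gluing: Lemma \ref{5:20} only lets one replace $f$ by a genuine $C_{c}(G)$-function after restricting to $G$-points lying over the compact set $K'$, so to control $\phi$ on an \emph{open} set one must first fatten $\mathrm{supp}(f)$ to an open relatively compact $O$ and exploit precisely that $r_{H}$ is open to turn $r_{H}(O)$ into an open neighborhood of $r_{H}(\mathrm{supp}f)$ that still sits inside $K'$. This is the only place where hypothesis $(1)$, as opposed to mere local compactness of $\mathcal{T}_{H}$, enters, which is also why the conclusion is an equivalence rather than a one-way implication.
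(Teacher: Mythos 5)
Your proof is correct and takes essentially the same route as the paper: the equivalence of (1) and (2) and the implication (3)$\Rightarrow$(1) are handled identically, and the continuity of $u\mapsto\int_{H^{u}}f\,d\nu^{u}$ is obtained, exactly as in the paper, by using Lemma \ref{5:20} to replace $f$ by a function in $C_{c}(G)$ over a compact set of units and then pasting over a two-set cover of $H^{0}$. The only (cosmetic) difference is that you paste over the open cover $\{r_{H}(O),\,H^{0}\setminus r_{H}(\mathrm{supp}\,f)\}$, using openness of $r_{H}$ to make the first set open, whereas the paper pastes over the closed cover $\{r(\overline{U}),\,H^{0}\setminus r(U)\}$, using openness of $r_{H}$ to make the second set closed.
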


 \begin{proof}
 The first two conditions are clearly equivalent since $r(x^{-1})=s(x)$ and 
 $x \rightarrow x^{-1}$ is a homeomorphism. The third condition
 implies the first from  Lemma \ref{5:10} or 
 Proposition  2.4, Chapter 2 of \cite{Ren:LNM}. 
 
 It remains for us to prove that the first condition implies the third.
 First, it is clear from the fact that the two topologies agree
 on all sets $H^{u}=G^{u}$ that the measures are well-defined,
 have the desired support and are left-invariant. It remains for us to verify 
 the continuity property.
 
 Let $h$ be any continuous function of compact support on $H$.
 Let $U = \{ x \in H \mid h(x) \neq 0\}$.
 Its closure $\bar{U}$ is compact in $H$ and
  so is $K =  r(\bar{U}) \cup s(\bar{U})$. We apply Lemma \ref{5:20}
  to find $g$ in $C_{c}(G)$ such that 
  $g(x) = h(x)$ for all $x$ with $r(x)$ or $s(x)$ in $K$.
  
  We consider the function $I(u) = \int_{H^{u}} h(x) d\nu^{u}(x)$ defined
  on $H^{0}$ and show it is continuous. We appeal to the same
  general topology result as in the last proof.
   First, on the set $r(\bar{U})$, it agrees with the function
  $J(u) = \int_{G^{u}}  g(x) d\nu^{u}(x)$. As $\nu^{u}$ is a Haar system,
   this function is continuous on $G^{0}$ in the topology $\mathcal{T}_{G}$ 
   and so its restriction to 
   $r(\bar{U})$ is continuous in the topology $\mathcal{T}_{H}$. 
   Secondly, on the set $H^{0} \setminus r(U)$, it is clearly $0$, 
   which is continuous in any topology.  These two sets cover $H^{0}$. 
   The first is closed as we observed above that it is compact.
   The second is closed since $U$ is open and our hypothesis is that $r$ is an 
   open map.
 \end{proof}

We consider the left regular representation of $G$, $\lambda_{G}$.
For each unit, $u$ in $G^{0}$,  we define the measure $\nu_{u}$ on 
$G_{u}$ by $\nu_{u}(E) = \nu^{u}(E^{-1})$, for any Borel set
$E$ in $G_{u}$. We let $L^{2}(G_{u}, \nu_{u})$ be the corresponding 
Hilbert space and define (with a slight abuse of notation)
$L^{2}(G, \nu) = \oplus_{u \in G^{0}} L^{2}(G_{u}, \nu_{u})$.
It is worth noting that the elements can be seen as functions on $G$.
For each $f$ in $C_{c}(G)$ and $u$ in $G^{0}$, we define the
 operator $\lambda_{u}(f)$
on $L^{2}(G_{u}, \nu_{u})$
\[
\left( \lambda_{u}(f) \xi \right)(x) = \int_{y \in G_{u}}
  f(xy^{-1}) \xi(y) \sigma(xy^{-1},y) d\nu_{u}(y),
  \]
  for $f$ in $C_{c}(G)$, $\xi$ in $L^{2}(G, \nu)$ and $x$ in  $G_{u}$.
  We also let $\lambda_{G}(f)$ be $\oplus_{u \in G^{0}} \lambda_{u}(f) $.
  
We  let $\lambda_{H}$ denote the left regular representation
of $H$ on $L^{2}(H, \nu)$. From the fact that, for every unit $u$,
 $G_{u} = H_{u}$ and assuming they
have the same topology $G$ and in $H$, the Hilbert space
$L^{2}(H, \nu)$ is a closed subspace of $L^{2}(G, \nu)$. We use this inclusion
implicitly.  Furthermore, we can regard the operators 
$\lambda_{H}(f), f \in C_{c}(H),$
and also those in the operator-norm closure of these as being defined
on $L^{2}(G, \nu)$ by setting them to be zero on the orthogonal complement
of $L^{2}(H, \nu)$.

\begin{thm}
\label{5:50}
Let $(G, \mathcal{T}_{G})$ be a locally compact, 
Hausdorff topological \newline 
 groupoid
with Haar system, $\nu^{u}, u \in G^{0}$,
 and $2$-cocycle $\sigma$.
Let
$H \subseteq G$ be  a subgroupoid with a topology, $\mathcal{T}_{H}$,
 in which it is also
locally compact and Hausdorff. Suppose that
\begin{enumerate}
\item 
$H^{0} \subseteq G^{0}$ is $G$-invariant,
\item 
the topology $\mathcal{T}_{H}$ is finer than the relative
topology of $\mathcal{T}_{G}$ on $H$, 
\item 
For every $u$ in $H^{0}$, the topologies 
$\mathcal{T}_{G}$ and $\mathcal{T}_{H}$ agree on 
$H^{u} = G^{u}$ \newline 
and $G_{u} = H_{u}$.
\item 
$\nu^{u}, u \in H^{0}$ is a Haar system for 
$H$.
\end{enumerate}

Then the following hold.
\begin{enumerate}
\item
For each $f$ in $C_{c}(G)$ and $g$ in $C_{c}(H)$, the functions
\begin{eqnarray*}
(fg)(x) & = &  \int_{H^{u}} f(y) g(y^{-1}x) \sigma(y, y^{-1}x)  d\nu^{u}(y) \\
(gf)(x) & = &  \int_{H^{u}} g(y) f(y^{-1}x) \sigma(y, y^{-1}x) d\nu^{u}(y)
\end{eqnarray*}
are well-defined and  in $C_{c}(H)$. 
\item For each $f, g$ as above we have 
\begin{eqnarray*}
\lambda_{G}(f) \lambda_{H}(g)   & =  & \lambda_{H}(fg),  \\
\lambda_{H}(g) \lambda_{G}(f) & =  & \lambda_{H}(gf), 
\end{eqnarray*}
\item 
This defines a  map sending $f$ in $C_{c}(G)$ to $\rho(f)$ in $M(C_{c}(H))$
which
extends to a $*$-homomorphism from $C^{*}_{r}(G, \sigma)$ to \newline
$M(C^{*}_{r}(H, \sigma))$ which is injective
 if and only if $H^{0}$ is dense
in $G^{0}$.
\end{enumerate}
\end{thm}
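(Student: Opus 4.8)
The plan is to establish the three assertions in turn, using Lemma \ref{5:20} and the left regular representations.

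\emph{Part (1).} Fix $f \in C_{c}(G)$ and $g \in C_{c}(H)$. For $x \in H$ with $u = r_{H}(x) = r_{G}(x) \in H^{0}$, hypothesis (3) gives $H^{u} = G^{u}$ with the same topology, and hypotheses (1) and (3) together show $G^{u} \subseteq H$; hence $y \mapsto f(y)g(y^{-1}x)\sigma(y,y^{-1}x)$ is continuous on $G^{u}$ and supported in the compact set $\mathrm{supp}(f) \cap G^{u}$, so $(fg)(x)$ is well defined, and likewise $(gf)(x)$. To see $(fg) \in C_{c}(H)$, extend $g$ to $\tilde{g} \in C_{c}(G)$ by Lemma \ref{5:20}, taking the compact set there to be a $\mathcal{T}_{H}$-neighbourhood $K$ of $r_{H}(\mathrm{supp}_{H}g) \cup s_{H}(\mathrm{supp}_{H}g)$; since $g(y^{-1}x) \neq 0$ forces $s_{H}(x) \in s_{H}(\mathrm{supp}_{H}g)$, one checks that $(fg)$ agrees with the $G$-convolution $f \ast_{G} \tilde{g}$ on the $\mathcal{T}_{H}$-open set $s_{H}^{-1}(\mathrm{int}\,K)$ and vanishes off $s_{H}^{-1}(\mathrm{supp}_{H}g)$, so $(fg)$ is $\mathcal{T}_{H}$-continuous; and its support is contained in $\mathrm{supp}(f) \cdot \mathrm{supp}_{H}(g)$, which one argues is $\mathcal{T}_{H}$-compact using hypothesis (3) (equality of the two topologies on each fibre) together with the fact that $\mathcal{T}_{H}$ and $\mathcal{T}_{G}$ coincide on any $\mathcal{T}_{H}$-compact subset, $\mathcal{T}_{H}$ being finer and $\mathcal{T}_{G}$ Hausdorff. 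The same argument handles $(gf)$.

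\emph{Part (2).} Work on the summand $L^{2}(H_{u},\nu_{u}) = L^{2}(G_{u},\nu_{u})$ for $u \in H^{0}$; since $\lambda_{G}(f) = \bigoplus_{w \in G^{0}} \lambda_{w}(f)$ it preserves $L^{2}(H,\nu) = \bigoplus_{u \in H^{0}} L^{2}(G_{u},\nu_{u})$ and restricts there to $\bigoplus_{u \in H^{0}} \lambda_{u}(f)$. Applying $\lambda_{u}(f)$ to $(\lambda_{H}(g)\xi)|_{G_{u}}$ for $\xi \in C_{c}(H_{u})$ produces a double integral over $G_{u} \times H_{u} = G_{u} \times G_{u}$; Fubini applies since the integrand is jointly continuous and compactly supported, and after the substitution $y' = xy^{-1}$ and an appeal to left-invariance of the Haar system to carry $\nu_{u}$ on the inner fibre onto $\nu^{r(x)}$ on $G^{r(x)} = H^{r(x)}$, the product of $\sigma$-factors collapses, via the $2$-cocycle identity $\sigma(a,b)\sigma(ab,c) = \sigma(b,c)\sigma(a,bc)$, to precisely the integrand defining $(\lambda_{H}(fg)\xi)(x)$ (using Part (1) to know $fg \in C_{c}(H)$). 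This gives $\lambda_{G}(f)\lambda_{H}(g) = \lambda_{H}(fg)$; the identity $\lambda_{H}(g)\lambda_{G}(f) = \lambda_{H}(gf)$ is symmetric.

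\emph{Part (3).} By Part (1) and associativity of convolution, $g \mapsto fg$ and $g \mapsto gf$ define a multiplier $\rho(f)$ of $C_{c}(H)$. By Part (2), $\rho(f)$ acts on $L^{2}(H,\nu)$ as the fixed bounded operator $\lambda_{G}(f)|_{L^{2}(H,\nu)}$, which normalizes $\lambda_{H}(C_{c}(H))$ and hence $\lambda_{H}(C^{*}_{r}(H,\sigma))$; as $\lambda_{H}$ is faithful and nondegenerate, $\rho(f)$ is a multiplier of $C^{*}_{r}(H,\sigma)$ with $\Vert \rho(f) \Vert = \Vert \lambda_{G}(f)|_{L^{2}(H,\nu)} \Vert = \sup_{u \in H^{0}} \Vert \lambda_{u}(f) \Vert \leq \Vert f \Vert_{C^{*}_{r}(G,\sigma)}$, so $\rho$ extends to a $*$-homomorphism $C^{*}_{r}(G,\sigma) \to M(C^{*}_{r}(H,\sigma))$ (multiplicativity and the $*$-property follow from Part (2) and faithfulness of $\lambda_{H}$, using $\lambda_{G}(f^{*}) = \lambda_{G}(f)^{*}$). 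For injectivity one must compare $\sup_{u \in H^{0}} \Vert \lambda_{u}(f) \Vert$ with $\Vert f \Vert_{C^{*}_{r}(G,\sigma)} = \sup_{w \in G^{0}} \Vert \lambda_{w}(f) \Vert$. If $H^{0}$ is dense in $G^{0}$: for $\xi \in C_{c}(G)$ the functions $w \mapsto \Vert \xi|_{G_{w}} \Vert^{2}$ and $w \mapsto \Vert \lambda_{w}(f)\xi|_{G_{w}} \Vert^{2} = \langle \lambda_{w}(f^{*} \ast f)\xi, \xi \rangle$ are $\mathcal{T}_{G}$-continuous by the Haar system axioms, so choosing $w_{0}$ nearly attaining $\Vert f \Vert_{C^{*}_{r}(G,\sigma)}$ and $\xi$ nearly attaining $\Vert \lambda_{w_{0}}(f) \Vert$, the resulting estimate on $\Vert \lambda_{w}(f) \Vert$ persists on a $\mathcal{T}_{G}$-neighbourhood of $w_{0}$, which meets $H^{0}$; hence $\sup_{u \in H^{0}} \Vert \lambda_{u}(f) \Vert = \Vert f \Vert_{C^{*}_{r}(G,\sigma)}$, so $\rho$ is isometric and injective. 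If $H^{0}$ is not dense, pick a nonempty $\mathcal{T}_{G}$-open $V \subseteq G^{0}$ with $V \cap H^{0} = \emptyset$; since $G$-invariance of $H^{0}$ and $H^{u} = G^{u}$ force $H = \{ k \in G : r_{G}(k) \in H^{0} \}$ as a set, we get $r_{G}^{-1}(V) \cap H = \emptyset$, so any nonzero $f \in C_{c}(G)$ supported in $r_{G}^{-1}(V)$ has $f(xy^{-1}) = 0$ whenever $x,y \in G_{u}$, $u \in H^{0}$ (then $xy^{-1} \in H$), whence $\lambda_{u}(f) = 0$ for all $u \in H^{0}$ and $\rho(f) = 0$, while $f \neq 0$ in $C^{*}_{r}(G,\sigma)$.

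The step I expect to be the main obstacle is the $C_{c}(H)$-assertion in Part (1): one must carefully reconcile the finer topology $\mathcal{T}_{H}$ with $\mathcal{T}_{G}$, and in particular verify that the ``mixed'' product $\mathrm{supp}(f) \cdot \mathrm{supp}_{H}(g)$ — a product of a $\mathcal{T}_{G}$-compact set with a $\mathcal{T}_{H}$-compact one along composable pairs — is $\mathcal{T}_{H}$-compact. This is where hypothesis (3) and the rigidity of compact Hausdorff topologies do the work; the computations in Parts (2) and (3) are then careful but routine manipulations with Haar systems and cocycles.
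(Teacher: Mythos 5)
The paper itself does not supply a proof of Theorem \ref{5:50} (it is declared ``really quite straightforward'' and omitted), so I can only judge your argument on its own terms. Parts (2) and (3) are essentially correct and routine: the convolution--cocycle computation on each summand $L^{2}(G_{u},\nu_{u})$, $u \in H^{0}$, the multiplier norm estimate $\Vert \rho(f) \Vert \leq \Vert f \Vert_{r}$, and both directions of the injectivity criterion are carried out properly. The continuity half of Part (1) is also sound: comparing $fg$ with $f \ast_{G} \tilde{g}$ on $s_{H}^{-1}(\mathrm{int}\,K)$ via Lemma \ref{5:20}, and covering $H$ by that open set together with the complement of the closed set $s_{H}^{-1}(s_{H}(\mathrm{supp}_{H}g))$, does yield $\mathcal{T}_{H}$-continuity.

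The gap is exactly where you flagged it, and your proposed justification does not close it. To get $\mathcal{T}_{H}$-compactness of $\mathrm{supp}(f)\cdot\mathrm{supp}_{H}(g)$ you invoke hypothesis (3) together with the fact that $\mathcal{T}_{H}$ and $\mathcal{T}_{G}$ coincide on any $\mathcal{T}_{H}$-compact set. That fact is true but circular here: it presupposes the $\mathcal{T}_{H}$-compactness you are trying to prove, and a $\mathcal{T}_{G}$-compact subset of $H$ need not be $\mathcal{T}_{H}$-compact precisely because $\mathcal{T}_{H}$ is finer. More tellingly, your Part (1) never uses hypothesis (4), and hypothesis (4) is genuinely necessary for the conclusion. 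Take $G = [0,1] \times \Z_{2}$, the trivial bundle of two-element groups with the product topology and counting-measure Haar system; let $H = G$ as a groupoid, with $\mathcal{T}_{H}$ the topology in which $[0,1] \times \{ 0 \}$ keeps its usual topology and $[0,1] \times \{ 1 \}$ is discrete (both clopen). Then $H$ is a locally compact Hausdorff topological groupoid, hypotheses (1)--(3) hold (each fibre is a two-point set, discrete in both topologies), but (4) fails (equivalently, $r_{H}$ is not open). With $f$ the indicator of $[0,1]\times\{1\}$ in $C_{c}(G)$ and $g$ the indicator of $[0,1]\times\{0\}$ in $C_{c}(H)$, one computes $fg$ to be the indicator of $[0,1]\times\{1\}$, which is continuous but not compactly supported in $\mathcal{T}_{H}$. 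So any correct proof of the compact-support assertion must bring in hypothesis (4) --- for instance to show that a set of the form $s_{H}^{-1}(L) \cap K$, with $L \subseteq H^{0}$ $\mathcal{T}_{H}$-compact and $K \subseteq G$ $\mathcal{T}_{G}$-compact, is $\mathcal{T}_{H}$-compact, or equivalently that the product map is continuous from $\mathcal{T}_{G} \times \mathcal{T}_{H}$ to $\mathcal{T}_{H}$ on composable pairs whose second entry lies in $H$. Neither statement is established, or even reduced to, in your write-up; this is the one step that still needs a real argument.
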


We will not give the proof which is really quite straightforward.

It is observed in Remark (iii) on page 59 and Proposition 1.14
 of \cite{Ren:LNM} 
that there is a natural way that $C_{c}(H^{0})$ acts as 
multipliers of $C_{c}(H)$:
\[
(ef)(x) = e(r(x))f(x), (fe)(x) = f(x)e(s(x)),
\]
for each $f$ in $C_{c}(H)$, $e$ in $C_{c}(H^{0})$ and $x$ in $H$.
In our case here, this representation of $C_{c}(H)$ interacts with
$C_{c}(G)$ in a particularly nice way.
Observe that the two formulae above make sense equally well if
 $e$ is in $C_{c}(H^{0})$ and $f$ is in $C_{c}(G)$, the results being
 functions on $H$ and it is easy to see that they are both continuous
 and have compact support.
 
 There remains one technical issue in this construction. Suppose that 
 $f$ is a continuous function of compact support on $H$. Let us suppose 
 for the moment that $H$ is closed in $G$ (which is rarely the case).
 Then the Tietze
 Extension Theorem guarantees the existence  of a 
 continuous function $\tilde{f}$ on $G$ such that $\tilde{f}|_{H} = f$.
 It is a more subtle question to ask if $\tilde{f}$ 
 may be chosen so that the norm, $\Vert \tilde{f} \Vert_{r}$,
 can be controlled in some way by  $\Vert f\Vert_{r}$, 
 independently of $f$.

 This is  true if we
 replace the reduced $C^{*}$-norm by the uniform norm. 
The proof is quite standard but it will be helpful to examine it, as
 we will do in a moment.

As we noted $H$ itself is usually not closed in $G$ and it is necessary for
us  to restrict our attention
to  subsets of $H$ which \emph{are}
 closed in $G$, which usually rules out $H$ itself.

\begin{defn}
\label{5:60}
Let $(G, \mathcal{T}_{G})$ be a locally compact, 
Hausdorff topological \newline 
groupoid
with Haar system, $\nu^{u}, u \in G^{0}$,
 and $2$-cocycle $\sigma$.
Let
$H \subseteq G$ be  a subgroupoid with a topology, $\mathcal{T}_{H}$,
 in which it is also
locally compact and Hausdorff. 

For any $C \geq 1$, we
 say that a set $X \subseteq H$ which is closed in $G$ (and hence also in $H$)
has the \emph{$C$-extension property}
if,
 for any $f$ in $C_{c}(H)$ with support in $X$, there exists 
 $\tilde{f}$ in $C_{c}(G)$ such that $\tilde{f}|_{X} = f|_{X}$
 and 
 \[
 \Vert \tilde{f} \Vert_{r} \leq C \Vert f \Vert_{r}.
 \]
\end{defn} 

Let us just check that the property holds (with $C=1$)
if we use the uniform norm
instead of the  reduced $C^{*}$-norm.

Let $X$ be a subset of $H$ which is closed in $G$. 
Let $f$ be in $C_{c}(H)$ with support in $X$. Consider $X \cup \{ \infty\}$
as a closed subset of $G \cup \{ \infty\}$, the one-point
 compactification of $G$. Extending $f|_{X}$ to be zero
  at $\infty$, this function is continuous on $X$,
   with the topology from $G$ and 
 we may apply the Tietze Extension Theorem to find $\tilde{f}$, a 
 continuous function of $G \cup \{ \infty\}$ which agrees with $f$ on $X$
 and is zero at $\infty$. It is a simple matter to check that this 
 can be modified so that $\tilde{f}$ is actually compactly supported.
 
 It remains to worry about the norm of the extension.
Define the function
$h(t)$ to be $1$ for $t$ in the interval $[0, \Vert f \Vert^{2}]$, and 
$\Vert f \Vert^{-1}t^{-1/2}$ for $t > \Vert f \Vert^{2}$. It is
an easy exercise to check that  $\tilde{f} h(  \tilde{f}^{*} \tilde{f})$
satisfies all the desired properties. Notice that
 $h(  \tilde{f}^{*} \tilde{f})$ exists in the
  unitization $C(G \cup \{ \infty \})$.

It is worth asking why this same argument does not suffice for
general groupoids. The answer is that $h(  \tilde{f}^{*} \tilde{f})$ 
exists in the unitization of $C^{*}_{r}(G)$, but not 
necessarily in the unitization of $C_{c}(G)$ and the element
$\tilde{f} h(  \tilde{f}^{*} \tilde{f})$ exists in $C^{*}_{r}(G)$, 
but not necessarily in $C_{c}(G)$.

Now we could replace the function $h$ above by some polynomial which 
approximates our given $h$ and our final element would indeed lie in 
$C_{c}(G)$, but then we cannot be sure of the condition 
$\tilde{f}|_{X} = f|_{X}$.

 It follows
 then that in the case that $H$ and $G$
are co-trivial, $G= G^{0}, H=H^{0}$, where the two norms agree, this holds.
In generality, it seem to be a subtle issue, although it does hold in many
cases of interest.

Let us just observe the following positive result in a very special case.

 \begin{prop}
 \label{5:70}
 Let $G,H$ be as in Definition \ref{5:60} and assume that 
 $H$ is closed in $G$. Then $G - H$ is also a locally 
 compact groupoid with Haar system. Assume we have a short exact sequence
 \[
 0 \rightarrow C_{r}^{*}(G - H) \rightarrow C_{r}^{*}(G) 
 \rightarrow C_{r}^{*}(H) \rightarrow 0.
 \]
 (See \cite{Ren:LNM}, \cite{AD:exact} for further discussion.)
 Then $H$ has the $C$-extension property, for any $C > 1$.
 \end{prop}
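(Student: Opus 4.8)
The plan is to deduce the $C$-extension property from the exactness of the assumed sequence by a standard two-step lifting argument in the quotient $C^{*}$-algebra, arranged so that the lift stays compactly supported. (The first assertion, that $G - H$ is again a locally compact Hausdorff groupoid with a Haar system, is standard: in the relevant situation $H = G|_{H^{0}}$ for a closed invariant $H^{0} \subseteq G^{0}$, so that $G - H$ is the reduction of $G$ to the complementary open invariant set of units and inherits the restriction of $\nu$; see \cite{Ren:LNM}, \cite{AD:exact}. I concentrate on the extension property.)

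Fix $C > 1$ and $\epsilon > 0$ with $1 + 2\epsilon \le C$. Let $q \colon C^{*}_{r}(G) \to C^{*}_{r}(H)$ be the surjection of the exact sequence; on $C_{c}(G)$ it is restriction of functions to $H$, its kernel is the closed ideal $C^{*}_{r}(G - H)$, and the image of $C_{c}(G - H)$ (the functions in $C_{c}(G)$ vanishing off $G - H$) is dense in that ideal. Given $f \in C_{c}(H)$ with $\mathrm{supp}(f) \subseteq X$, the first step is to produce \emph{some} $f_{0} \in C_{c}(G)$ with $f_{0}|_{H} = f$, with no control on its norm: this is precisely the Tietze-type extension discussed after Definition \ref{5:60} (extend $f$ over a precompact open neighbourhood of $\mathrm{supp}(f)$ in $G$ and multiply by an Urysohn cutoff to recover compact support; since $f$ vanishes off its support, the restriction to $H$ is unchanged), so that $q(f_{0}) = f$. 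The second step corrects $f_{0}$ inside the ideal: since the sequence is exact, $q$ induces an isometric isomorphism $C^{*}_{r}(G)/C^{*}_{r}(G - H) \cong C^{*}_{r}(H)$, whence $\inf\{ \Vert f_{0} - x \Vert_{r} : x \in C^{*}_{r}(G - H) \} = \Vert f \Vert_{r}$; choose $x \in C^{*}_{r}(G - H)$ with $\Vert f_{0} - x \Vert_{r} < (1 + \epsilon) \Vert f \Vert_{r}$, then $y \in C_{c}(G - H)$ with $\Vert x - y \Vert_{r} < \epsilon \Vert f \Vert_{r}$ by density, and set $\tilde{f} = f_{0} - y$. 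Then $\tilde{f} \in C_{c}(G)$; since $y$ vanishes on $H$ we get $\tilde{f}|_{H} = f$, hence $\tilde{f}|_{X} = f|_{X}$; and $\Vert \tilde{f} \Vert_{r} \le \Vert f_{0} - x \Vert_{r} + \Vert x - y \Vert_{r} < (1 + 2\epsilon) \Vert f \Vert_{r} \le C \Vert f \Vert_{r}$, which is the $C$-extension property.

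The point that needs care — and the reason for splitting the lift into two steps — is that the quotient-norm estimate produces an element only of $C^{*}_{r}(G)$, whereas the property requires an element of $C_{c}(G)$; so I first fix a rough compactly supported extension $f_{0}$ and then absorb the norm correction into a small element $y$ of $C_{c}(G - H)$, which does not disturb the restriction to $H$. Apart from that the computation is routine, and the genuinely essential input is the assumed exactness: it is exactly what forces the quotient norm of $f$ in $C^{*}_{r}(G)/C^{*}_{r}(G-H)$ to equal $\Vert f \Vert_{r}$, and with only the inclusion $C^{*}_{r}(G - H) \subseteq \ker q$ the estimate would fail.
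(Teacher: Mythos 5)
Your argument is correct and is essentially the paper's own proof: a Tietze-type extension to obtain some compactly supported lift $f_{0}$ with $f_{0}|_{H}=f$, the exactness-derived identity $\Vert f \Vert_{r} = \inf\{\Vert f_{0}+b\Vert_{r} : b \in C^{*}_{r}(G-H)\}$, and density of $C_{c}(G-H)$ in the ideal to absorb the norm correction while keeping the lift in $C_{c}(G)$ and unchanged on $H$. The paper states this more tersely, but the decomposition into the same two steps (rough compactly supported lift, then correction by a small element of $C_{c}(G-H)$) is exactly what it intends.
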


\begin{proof}
Let the quotient map from $C^{*}_{r}(G)$ to $C^{*}_{r}(H)$
be  denoted by $\rho$.
Let $f$ be in $C_{c}(H)$. It is a consequence of the 
Tietze Extension Theorem that we may find $g$ in $C_{c}(G)$ 
with $\rho(g) = g|_{H} = f$. It follows that 
\[
\Vert f \Vert_{r} = \inf\{ \Vert f + b \Vert_{r} \mid b \in 
C_{r}^{*}(G \setminus H) \}.
\]
The desired conclusion follows for any $C > 1$  from the fact
that $C_{c}(G \setminus H)$ is dense in $C_{r}^{*}(G \setminus H)$.
\end{proof}

I do not know if the converse holds (the $C$-extension property
implies exactness), but this does suggest
that the property is linked with amenability/exactness
 in some way.

Our next task is to provide
 a fairly general way of constructing 
groupoids $ H \subseteq G$ from $G$. This follows ideas of 
Muhly, Renault and Williams \cite{MRW:grpeq}.

Let $(G, \mathcal{T}_{G})$ be a locally compact, 
Hausdorff topological groupoid
with Haar system, $\nu^{u}, u \in G^{0}$,
 and $2$-cocycle $\sigma$.
Suppose that $Y $ is a closed subset of $G^{0}$. 
We can form
\[
G_{Y}^{Y} = \{ g \in G \mid r(g), s(g) \in Y \}
\] 
which is obviously a closed subgroupoid of $G$ with unit
space $Y$. It also acts on the left 
of 
\[
G^{Y} = \{ g \in G \mid r(g) \in Y \}.
\]
Observe  that this action is \emph{free} in the sense that 
$g \cdot x = x$ for $g $ in $G^{Y}_{Y}$ and $x$ in $G^{Y}$
with $s(g) = r(x)$ 
only if $g$ is a unit. It is
 also proper in the sense that 
the map sending $(g,x)$ in $G_{Y}^{Y} \times G^{Y}$ with $s(g) = r(x)$
to $(gx, x)$ in $G^{Y} \times G^{Y}$ is a proper map.

We define 
\[
\tilde{H} = \{ (x, y) \in G^{Y} \times G^{Y} \mid r(x)=r(y) \}
\]
which is equipped with an action of $G^{Y}_{Y}$ by 
$g(x,y) = (gx, gy)$, for $(x,y)$ in $H$ and
$g$ in $G_{Y}^{Y}$ with $s(g) = r(x)=r(y)$. This action is 
also free and proper.

\begin{thm}
\label{5:100}
Let $(G, \mathcal{T}_{G})$ be a locally compact, 
Hausdorff topological \newline 
groupoid
with Haar system, $\nu^{u}, u \in G^{0}$, and 
 a $2$-cocycle, $\sigma$.
Suppose that $Y $ is a closed subset of $G^{0}$
such that 
\[
r, s: G^{Y}_{Y} \rightarrow Y
\]
are open. If we define 
\[
H = \{ x^{-1}y \mid (x,y) \in \tilde{H} \},
\]
and endow it with the quotient topology from the map
sending 
$(x,y) $ in $\tilde{H}$ to $x^{-1}y$, then it satisfies 
the hypotheses of Theorem \ref{5:50}. In addition, 
$G^{Y}$ is a $G^{Y}_{Y}-H$-equivalence bimodule.
\end{thm}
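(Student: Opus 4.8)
The plan is to realize $H$ as the imprimitivity groupoid attached to the free, proper action of $G^{Y}_{Y}$ on $G^{Y}$, following \cite{MRW:grpeq}, and then to verify the four hypotheses of Theorem \ref{5:50} one at a time. First I would pin down the algebraic structure: for $(x,y),(w,z)\in\tilde{H}$ the elements $x^{-1}y$ and $w^{-1}z$ are composable in $G$ exactly when $s(y)=s(w)$, and then $(x^{-1}y)(w^{-1}z)=x^{-1}(yw^{-1})z=\bigl((yw^{-1})^{-1}x\bigr)^{-1}z$ with $yw^{-1}\in G^{Y}_{Y}$, while $(x^{-1}y)^{-1}=y^{-1}x$; so $H$ is a subgroupoid of $G$ with $H^{2}=G^{2}\cap(H\times H)$, unit space $H^{0}=s(G^{Y})$, and $r_{H}(x^{-1}y)=s(x)$, $s_{H}(x^{-1}y)=s(y)$. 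That $H^{0}$ is $G$-invariant is then immediate (if $u=s(x)$ with $x\in G^{Y}$ and $r(g)=u$, then $xg\in G^{Y}$ and $s(g)=s(xg)\in H^{0}$), giving hypothesis (1) of Theorem \ref{5:50}.

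Next I would treat the topology. The map $q\colon\tilde{H}\to G$, $q(x,y)=x^{-1}y$, is continuous, is constant on the orbits of the diagonal action of $G^{Y}_{Y}$ on $\tilde{H}$, and separates distinct orbits, since $x^{-1}y=w^{-1}z$ forces $(x,y)=(xw^{-1})\cdot(w,z)$ with $xw^{-1}\in G^{Y}_{Y}$; hence as a set $H=\tilde{H}/G^{Y}_{Y}$. Since $\tilde{H}$ is a closed subset of $G\times G$ and carries a free, proper $G^{Y}_{Y}$-action, the standard theory of such actions (this is where the hypothesis that $r,s\colon G^{Y}_{Y}\to Y$ be open enters, via Lemma \ref{5:10}) shows that the orbit space, equipped with the quotient topology $\mathcal{T}_{H}$, is second countable, locally compact and Hausdorff, and that $q$ is open. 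Writing $q=\iota\circ q_{0}$ with $q_{0}\colon\tilde{H}\to H$ the quotient map and $\iota\colon H\hookrightarrow G$ the inclusion, continuity of $q$ and the universal property of the quotient make $\iota$ continuous, so $\mathcal{T}_{H}$ is finer than the relative $\mathcal{T}_{G}$-topology on $H$ --- hypothesis (2) --- and the $2$-cocycle $\sigma$, being $\mathcal{T}_{G}$-continuous, is a fortiori $\mathcal{T}_{H}$-continuous on $H^{2}$.

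For hypothesis (3), fix $u\in H^{0}$ and $x_{0}\in G^{Y}$ with $s(x_{0})=u$. The assignment $g\mapsto(x_{0},x_{0}g)$ is continuous from $G^{u}$ into $\tilde{H}$, so $g\mapsto x_{0}^{-1}(x_{0}g)=g$ realizes the identity of the common set $G^{u}=H^{u}$ as a continuous bijection $(G^{u},\mathcal{T}_{G})\to(H^{u},\mathcal{T}_{H})$ whose inverse is the continuous restriction of $\iota$; hence the two topologies on $G^{u}=H^{u}$ coincide, and the same argument with $g\mapsto y_{0}g^{-1}$ handles $G_{u}=H_{u}$. For hypothesis (4), Theorem \ref{5:30} reduces the assertion that $\nu$ restricts to a Haar system for $H$ to openness of $r_{H}\colon H\to H^{0}$; I would get this from the identity $r_{H}\circ q=(s|_{G^{Y}})\circ\pi_{1}$, where $\pi_{1}\colon\tilde{H}\to G^{Y}$ is the first projection. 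Since $\pi_{1}$ is an open surjection (being the pullback of the open map $r\colon G^{Y}\to Y$, itself the restriction of the range map $r_{G}$, which is open because $G$ has a Haar system) and $q$ is an open surjection, it suffices that $s\colon G^{Y}\to H^{0}$ be open. This last point is the one I expect to be the main obstacle: it amounts to showing that the subspace topology $H^{0}$ inherits from $(H,\mathcal{T}_{H})$ agrees with the orbit-space topology of $G^{Y}_{Y}\backslash G^{Y}$. The key remark is that $\{(x,x):x\in G^{Y}\}$ is a closed, $G^{Y}_{Y}$-invariant subset of $\tilde{H}$, so for the free proper action its image in $H$ carries simultaneously the subspace topology and the quotient topology; under the latter, $s|_{G^{Y}}$ is the orbit map, hence open.

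It remains to prove the bimodule statement, which is then largely bookkeeping. The left $G^{Y}_{Y}$-action on $G^{Y}$ is free and proper with moment map $r\colon G^{Y}\to Y$. I would define a right $H$-action on $G^{Y}$ by $x\cdot(w^{-1}z)=xw^{-1}z$ when $s(x)=s(w)$; this is well defined because changing the representative $(w,z)$ of $w^{-1}z$ by an element of $G^{Y}_{Y}$ leaves $xw^{-1}z$ unchanged, its moment map is $s\colon G^{Y}\to H^{0}$, and it commutes with the left action by associativity in $G$. The map $(x,h)\mapsto(x\cdot h,\,x)$ is a homeomorphism of $G^{Y}\ast_{s}H$ onto the closed set $\{(x',x)\in G^{Y}\times G^{Y}:r(x')=r(x)\}$, with inverse $(x',x)\mapsto(x,\,x^{-1}x')$; being a homeomorphism onto a closed subspace it is proper, so the right action is free and proper. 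Finally the moment maps induce continuous bijections $G^{Y}/H\to Y$ and $G^{Y}_{Y}\backslash G^{Y}\to H^{0}$, and since $r\colon G^{Y}\to Y$ and $s\colon G^{Y}\to H^{0}$ are open and the actions are proper, these bijections are homeomorphisms. Hence $G^{Y}$ is a $G^{Y}_{Y}$--$H$ equivalence bimodule, completing the proof.
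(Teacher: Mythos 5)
The paper gives no proof of Theorem \ref{5:100}, saying only that it follows the techniques of \cite{MRW:grpeq}; your argument is a correct fleshing-out of exactly that route, realizing $H$ as the imprimitivity groupoid of the free, proper $G^{Y}_{Y}$-action on $G^{Y}$ (with the openness of $r,s\colon G^{Y}_{Y}\to Y$ feeding in, via the Fell criterion, to make the orbit maps $\tilde{H}\to H$ and $G^{Y}\to G^{Y}_{Y}\backslash G^{Y}$ open) and then checking the hypotheses of Theorem \ref{5:50} one by one, including the genuinely delicate point that the subspace topology on $H^{0}\subseteq H$ is the orbit-space topology of $s|_{G^{Y}}$. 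The only item left implicit is that the groupoid operations on $H$ are continuous for $\mathcal{T}_{H}$, which is needed before Theorem \ref{5:30} can be invoked, but this follows by the same saturated-open-set arguments you already deploy.
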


We will not give a proof of this result. Most of it follows the
techniques of \cite{MRW:grpeq} or is quite straightforward.
We would like to examine a special case where 
$G$ is a 
transformation groupoid.

Suppose that $X$ is a locally compact Hausdorff space and 
$\Gamma$ is a locally compact Hausdorff topological group
which acts on $X$ by homeomorphisms on the right: that is there is a map
sending $(x, \gamma)$ in $X \times \Gamma$ to $x\gamma$ in $X$
which is continuous and satisfies $(x\gamma)\gamma' = x(\gamma\gamma')$,
for all $x$ in $X$, $\gamma, \gamma'$ in $\Gamma$.

 Let  $G$ be the associated 
transformation groupoid:
\[
G = X \rtimes \Gamma,
\]
which is simply $X \times \Gamma$ as a set 
with product given by 
$(x_{1}, \gamma_{1}) (x_{2}, \gamma_{2}) = ( x_{1}, \gamma_{1}\gamma_{2})$
 if 
$x_{1} \cdot \gamma_{1} = x_{2}$ and inverse
$(x, \gamma)^{-1} = (x \cdot \gamma, \gamma^{-1})$. The unit space is
 $G^{0} = X \times \{ e_{\Gamma} \}$. For notational 
 convenience, we will usually write this as simply $X$. It is given the 
product topology. It has a Haar system by transferring the Haar measure
from $\Gamma$ to $G^{(x,e)} = \{ x \} \times \Gamma$ in the obvious way, 
for any $x$ in $X$.

We remark that in this example, the reduced groupoid $C^{*}$-algebra, $C^{*}_{r}(G)$, 
coincides with the reduced crossed product algebra, $C_{0}(X) \rtimes_{r} \Gamma$.

\begin{defn}
\label{5:105}
Suppose that $X$ is a locally compact Hausdorff space and 
$\Gamma$ is a locally compact, Hausdorff topological group
which acts on $X$ by homeomorphisms
If $Y$ is a closed subset of $X$, we say that $Y$ is 
\emph{$\Gamma$-semi-invariant}  if, for every $\gamma$ in $\Gamma$
either $Y \gamma = Y$ or $Y \gamma \cap Y$ is empty.
In this case, we
denote by $\Gamma_{Y}$ the set of $\gamma$ for
 which the first condition holds.
\end{defn}

Notice that this includes the case, $Y \gamma \cap Y = \emptyset$, for all
$\gamma \neq e$.
Observe that $\Gamma_{Y}$ 
 is clearly a subgroup of $\Gamma$. Also notice that for any $\gamma$ 
in $\Gamma$,  the set $Y\gamma$ depends only on the right coset
$\Gamma_{Y}\gamma$.

The following is an easy consequence of the definitions and we will not
give a proof.

\begin{thm}
\label{5:110}
Suppose that $X$ is a locally compact Hausdorff space and 
$\Gamma$ is a locally compact Hausdorf topological group
which acts on $X$ by homeomorphisms. Suppose that $Y$ is a closed 
$\Gamma$-semi-invariant subset such that $\Gamma_{Y} \backslash \Gamma$
is discrete. Then $\Gamma_{Y}$ is open in $\Gamma$.
Moreover,  $G^{Y}_{Y}  \cong Y \rtimes \Gamma_{Y}$, 
where the latter  is regarded as a transformation groupoid and 
the maps $r, s: G^{Y}_{Y} \rightarrow Y$ are open. If $H$ is groupoid
given in \ref{5:100},  then the unit space of $H$ is 
\[
H^{0} = \cup_{\Gamma_{Y} \gamma \in \Gamma_{Y} \backslash \Gamma} Y \gamma
\]
and has the inductive limit topology. With this identification, 
$H$ is isomorphic to 
the transformation group
\[
\left( \cup_{\Gamma_{Y} \gamma \in \Gamma_{Y} \backslash \Gamma} 
Y \gamma \right) \rtimes \Gamma.
\]
\end{thm}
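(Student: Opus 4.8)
The plan is to verify each assertion in Theorem \ref{5:110} in sequence, reducing everything to the definitions of $\Gamma$-semi-invariance and of the construction in Theorem \ref{5:100}. First I would establish that $\Gamma_Y$ is open in $\Gamma$. By hypothesis $\Gamma_Y \backslash \Gamma$ is discrete, so the coset $\Gamma_Y e = \Gamma_Y$ is open in the quotient; since the quotient map $q: \Gamma \to \Gamma_Y \backslash \Gamma$ is continuous (and open, as quotient maps by subgroup actions always are), $\Gamma_Y = q^{-1}(\{\Gamma_Y\})$ is open in $\Gamma$. Being an open subgroup, it is automatically closed as well, so $Y \rtimes \Gamma_Y$ inherits a locally compact Hausdorff topology from $Y \times \Gamma_Y$.

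Next I would identify $G^Y_Y$. By definition $G^Y_Y = \{(x,\gamma) \in X \rtimes \Gamma \mid x \in Y,\ x\gamma \in Y\}$. The condition $x\gamma \in Y$ together with $x \in Y$ means $Y\gamma \cap Y \neq \emptyset$, which by $\Gamma$-semi-invariance forces $Y\gamma = Y$, i.e. $\gamma \in \Gamma_Y$. Conversely if $\gamma \in \Gamma_Y$ and $x \in Y$ then $x\gamma \in Y\gamma = Y$. Hence $G^Y_Y = Y \times \Gamma_Y = Y \rtimes \Gamma_Y$ as sets, and the product and inverse manifestly agree, so this is an isomorphism of topological groupoids (the topologies both being the relevant product topologies, and $\Gamma_Y$ open in $\Gamma$). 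Openness of $r, s : G^Y_Y \to Y$ then follows from Lemma \ref{5:10} applied to the transformation groupoid $Y \rtimes \Gamma_Y$, which has the Haar system obtained by restricting Haar measure on $\Gamma$ to the open subgroup $\Gamma_Y$; this also provides the hypothesis ``$r,s$ open'' needed to invoke Theorem \ref{5:100}.

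Then I would compute $H^0$. Recall $\tilde H = \{(x,y) \in G^Y \times G^Y \mid r(x) = r(y)\}$ and $H = \{x^{-1}y \mid (x,y) \in \tilde H\}$, with $H^0$ its unit space. Writing elements of $G^Y$ as $(u, \gamma)$ with $u \in Y$, a pair in $\tilde H$ has the form $((u,\gamma),(u,\gamma'))$, and $(u,\gamma)^{-1}(u,\gamma') = (u\gamma, \gamma^{-1}\gamma')$; this is a unit precisely when $\gamma^{-1}\gamma' \in \Gamma_Y$ restricted suitably — more to the point the source/range of $x^{-1}y$ is $u\gamma$ for $u \in Y$, so the set of units is $\bigcup_{\gamma \in \Gamma} Y\gamma$, and $Y\gamma$ depends only on the coset $\Gamma_Y\gamma$, giving $H^0 = \bigcup_{\Gamma_Y\gamma \in \Gamma_Y\backslash\Gamma} Y\gamma$. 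The quotient topology from $\tilde H \to H$, when restricted to $H^0$, is the disjoint-union (inductive limit) topology over the cosets, since the $Y\gamma$ are pairwise disjoint for distinct cosets by semi-invariance and each is a homeomorphic copy of $Y$. Finally, to identify $H$ itself with the transformation groupoid $\left(\bigcup_{\Gamma_Y\gamma} Y\gamma\right) \rtimes \Gamma$, I would exhibit the map sending $x^{-1}y = ((u,\gamma)^{-1}(u,\gamma'))$ to $(u\gamma, \gamma^{-1}\gamma') \in H^0 \times \Gamma$ and check it is a well-defined groupoid isomorphism respecting the quotient topology; well-definedness and injectivity come from freeness of the $G^Y_Y$-action on $\tilde H$, and the topological statement is that the quotient map $\tilde H \to H$ factors through the evident identification.

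The main obstacle I expect is the topological bookkeeping in the last step: checking that the quotient topology on $H$ coming from $\tilde H$ coincides with the transformation-groupoid (product) topology on $\left(\bigcup_{\Gamma_Y\gamma} Y\gamma\right) \rtimes \Gamma$, in particular that it is genuinely finer than the relative topology from $G = X \rtimes \Gamma$ and that $r : H \to H^0$ is open — this is exactly the content needed to land in the hypotheses of Theorem \ref{5:50}, and it hinges on the inductive-limit structure of $H^0$ together with openness of $r,s$ on $G^Y_Y$. Since the excerpt explicitly says ``we will not give a proof,'' a complete treatment would lean on the equivalence-bimodule machinery of \cite{MRW:grpeq} and Theorem \ref{5:100} to transport the requisite topological properties, rather than re-deriving them by hand.
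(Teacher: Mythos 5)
The paper offers no proof of this theorem (it is introduced as ``an easy consequence of the definitions''), so the only benchmark is the construction in Theorem \ref{5:100} itself; your direct verification from the definitions is the intended route and is essentially correct. Two remarks. First, the aside ``this is a unit precisely when $\gamma^{-1}\gamma' \in \Gamma_Y$ restricted suitably'' is a slip: $(u\gamma,\gamma^{-1}\gamma')$ is a unit precisely when $\gamma'=\gamma$; your subsequent computation of $H^{0}$ via ranges is nevertheless right, and the pairwise disjointness of the $Y\gamma$ over distinct cosets does follow from semi-invariance since $Y\gamma_{1}\cap Y\gamma_{2}\neq\emptyset$ forces $\gamma_{1}\gamma_{2}^{-1}\in\Gamma_{Y}$. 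Second, the ``topological bookkeeping'' you defer to the machinery of \cite{MRW:grpeq} can be closed directly: identify $\tilde{H}$ with $Y\times\Gamma\times\Gamma$ and consider $\phi(u,\gamma,\gamma')=(u\gamma,\gamma^{-1}\gamma')$ mapping onto $\left(\bigcup_{\Gamma_{Y}\gamma}Y\gamma\right)\rtimes\Gamma$ with the inductive-limit-times-product topology. This $\phi$ is continuous (the preimage of the clopen piece $Y\gamma_{0}\times\Gamma$ is $Y\times\Gamma_{Y}\gamma_{0}\times\Gamma$, open because $\Gamma_{Y}$ is open, and on that piece continuity is just continuity of the action), and it is open, since the image of a basic open set $W\times O\times O'$ is $\bigcup_{\gamma\in O}\left(W\gamma\right)\times\left(\gamma^{-1}O'\right)$, a union of open sets. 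A continuous open surjection is a quotient map, so the quotient topology inherited from $\tilde{H}$ coincides with the transformation-groupoid topology; combined with your observation that the fibres of $\tilde{H}\to H$ are exactly the orbits of the free diagonal $G^{Y}_{Y}$-action (the element $g=x_{1}x^{-1}$ carries $(x,y)$ to $(x_{1},y_{1})$ whenever $x^{-1}y=x_{1}^{-1}y_{1}$), this completes the proof without any appeal to the equivalence-bimodule results.
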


These examples seem to be more accessible for the extension property
 mentioned earlier. While the following deals with a
  couple of specific closed subsets of $H$,
   it would seem the techniques of proof could be applied more generally
   and we will do so in future sections.
   
\begin{thm}
\label{5:120}
Let  $X$ be a locally compact Hausdorff space and 
$\Gamma$ be a
 locally compact Hausdorff topological group
acting  on $X$ by homeomorphisms. Suppose that $Y$ is a closed
$\Gamma$-semi-invariant subset of $X$. Suppose that
 $\Gamma_{Y}\backslash \Gamma$ is discrete. Let $H$ be as
  in  \ref{5:100}. 
  
 If there is a short exact sequence
 \[
0 \rightarrow C_{0}(X - Y) \rtimes_{r} \Gamma_{Y} 
  \rightarrow C_{0}(X ) \rtimes_{r} \Gamma_{Y} 
   \rightarrow C_{0}( Y) \rtimes_{r} \Gamma_{Y} 
   \rightarrow 0
 \]
 then  the closed sets $G_{Y}, G^{Y} \subseteq H$ have 
  the extension property
  of Definition \ref{5:60} .
\end{thm}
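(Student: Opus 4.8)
\emph{Plan.} I would reduce everything to the open subgroupoid $G'=X\rtimes\Gamma_{Y}\subseteq G$ together with its reduction $G^{Y}_{Y}=G'|_{Y}\cong Y\rtimes\Gamma_{Y}$ to the closed \emph{invariant} set $Y\subseteq X=(G')^{0}$, where the hypothesised exact sequence is exactly the statement that the restriction $C^{*}_{r}(G')\to C^{*}_{r}(G^{Y}_{Y})$ is onto with kernel $C^{*}_{r}(G'|_{X\setminus Y})$. First I would record two standing facts. Since $\Gamma_{Y}\backslash\Gamma$ is discrete, $\Gamma_{Y}$ is open in $\Gamma$ (Theorem \ref{5:110}), so $G'$ is an open subgroupoid of $G$ with $(G')^{0}=G^{0}$; hence $C_{c}(G')\subseteq C_{c}(G)$ via extension by zero, this inclusion is a $*$-homomorphism, and it extends to an \emph{isometric} embedding $C^{*}_{r}(G')=C_{0}(X)\rtimes_{r}\Gamma_{Y}\hookrightarrow C_{0}(X)\rtimes_{r}\Gamma=C^{*}_{r}(G)$ (restricting the left regular representation of $\Gamma$ to $C_{c}(G')$ gives a multiple of that of $\Gamma_{Y}$). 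Also, $Y$ is clopen in $Z=H^{0}$ and the pieces $Y\gamma$ of $Z$ are pairwise disjoint (Theorem \ref{5:110}), so $C^{*}_{r}(G^{Y}_{Y})=1_{Y}C^{*}_{r}(H)1_{Y}$ is the hereditary corner of $C^{*}_{r}(H)=C_{0}(Z)\rtimes_{r}\Gamma$ cut by $1_{Y}$. Finally, the involution $f\mapsto f^{*}$ is isometric and interchanges functions supported on $G^{Y}=r_{H}^{-1}(Y)$ with those supported on $G_{Y}=s_{H}^{-1}(Y)$, so it suffices to treat $G^{Y}$; note that as a subset of $G$ it is $r_{G}^{-1}(Y)=Y\times\Gamma$, closed in $G$, with the topologies from $H$ and from $G$ agreeing on it.

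\emph{Step 1: decomposing $f$.} Given $f\in C_{c}(H)$ with $\mathrm{supp}(f)\subseteq G^{Y}$, I would use that each coset $\Gamma_{Y}\delta$ is clopen in $\Gamma$ to write $f=\sum_{i=1}^{N}g_{i}u_{\delta_{i}}$, a finite sum, where $\delta_{1},\dots,\delta_{N}$ represent the cosets meeting $\mathrm{supp}(f)$, $u_{\delta}$ is the unitary multiplier implementing $\delta\in\Gamma$, and $g_{i}:=(f\cdot 1_{Z\times\Gamma_{Y}\delta_{i}})u_{\delta_{i}}^{-1}$ is supported on $Y\times\Gamma_{Y}=G^{Y}_{Y}$, hence lies in $C_{c}(G^{Y}_{Y})\subseteq C^{*}_{r}(G^{Y}_{Y})=1_{Y}C^{*}_{r}(H)1_{Y}$. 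Since $1_{Y}u_{\gamma}1_{Y}=0$ for $\gamma\notin\Gamma_{Y}$ (the pieces $Y$ and $Y\gamma$ of $Z$ are disjoint) and $g_{i}=1_{Y}g_{i}1_{Y}$, the cross terms in $ff^{*}$ vanish and one obtains
\[
\Vert f\Vert_{C^{*}_{r}(H)}^{2}=\Bigl\Vert \sum_{i=1}^{N}g_{i}g_{i}^{*}\Bigr\Vert_{C^{*}_{r}(G^{Y}_{Y})}=\bigl\Vert (g_{1},\dots,g_{N})\bigr\Vert_{M_{1,N}(C^{*}_{r}(G^{Y}_{Y}))}^{2}.
\]

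\emph{Step 2: a simultaneous lift.} Let $q\colon C^{*}_{r}(G')\to C^{*}_{r}(G^{Y}_{Y})$ be the restriction map, so $q(\phi)=\phi|_{G^{Y}_{Y}}$ for $\phi\in C_{c}(G')$; by hypothesis $\ker q=C^{*}_{r}(G'|_{X\setminus Y})$, which contains $C_{c}(G'|_{X\setminus Y})$ densely, and $q$ is onto already at the $C_{c}$-level. Then $M_{1,N}(q)$ is a metric quotient map, so, starting from any $C_{c}(G')$-valued lift of $(g_{i})_{i}$ and correcting by elements of $C_{c}(G'|_{X\setminus Y})$, for each $\epsilon>0$ I can find $\phi_{i}\in C_{c}(G')$ with $\phi_{i}|_{G^{Y}_{Y}}=g_{i}$ and $\Vert \sum_{i}\phi_{i}\phi_{i}^{*}\Vert_{C^{*}_{r}(G')}^{1/2}\le(1+\epsilon)\Vert f\Vert_{C^{*}_{r}(H)}$. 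I would then shrink supports: choose $\chi_{i}\in C_{c}(X)$ with $0\le\chi_{i}\le 1$, equal to $1$ on $s(\mathrm{supp}\,g_{i})\subseteq Y$ and with the translates $(\mathrm{supp}\,\chi_{i})\delta_{i}$ contained in pairwise disjoint open sets $W_{i}$ — possible because the compact sets $s(\mathrm{supp}\,g_{i})\delta_{i}\subseteq Y\delta_{i}$ are pairwise disjoint — and replace $\phi_{i}$ by $\phi_{i}'':=\phi_{i}\cdot(\chi_{i}\circ s)$. This preserves $\phi_{i}''|_{G^{Y}_{Y}}=g_{i}$ and does not increase $\Vert \sum_{i}\phi_{i}''\phi_{i}''^{*}\Vert$, since $\phi_{i}(\chi_{i}\circ s)^{2}\phi_{i}^{*}\le\phi_{i}\phi_{i}^{*}$; moreover $\mathrm{supp}(\phi_{i}'')\subseteq s_{G}^{-1}(W_{i}\delta_{i}^{-1})$.

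\emph{Step 3: assembly.} Put $\tilde f:=\sum_{i=1}^{N}\tilde g_{i}u_{\delta_{i}}\in C_{c}(G)$, where $\tilde g_{i}\in C_{c}(G)$ is the extension by zero of $\phi_{i}''$. Because $\phi_{i}''$ vanishes off $G'=X\times\Gamma_{Y}$, a direct check gives $\tilde f|_{G^{Y}}=f|_{G^{Y}}$. Because $\mathrm{supp}(\tilde g_{i}u_{\delta_{i}})\subseteq s_{G}^{-1}(W_{i})$ and the $W_{i}$ are disjoint, the cross terms in $\tilde f\tilde f^{*}$ vanish, so $\tilde f\tilde f^{*}=\sum_{i}\tilde g_{i}\tilde g_{i}^{*}$ is the extension by zero of $\sum_{i}\phi_{i}''\phi_{i}''^{*}\in C_{c}(G')$, and the isometry $C^{*}_{r}(G')\hookrightarrow C^{*}_{r}(G)$ gives $\Vert \tilde f\Vert_{C^{*}_{r}(G)}\le(1+\epsilon)\Vert f\Vert_{C^{*}_{r}(H)}$. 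This yields the $C$-extension property of $G^{Y}$ for every $C>1$, and applying it to $f^{*}$ and taking adjoints yields it for $G_{Y}$. The step I expect to be the real obstacle is Step 2: the individual extensions $\phi_{i}$ only control the separate norms $\Vert \phi_{i}\Vert$, whereas what is needed is control of $\Vert \sum_{i}\phi_{i}''\phi_{i}''^{*}\Vert$; this is precisely why one must lift the whole row $(g_{i})_{i}$ at once and where the \emph{reduced} exactness of the $\Gamma_{Y}$-action on $X$ (rather than mere density of $C_{c}$-functions) is used.
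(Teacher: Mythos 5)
Your argument is correct. Note that the paper never actually proves Theorem \ref{5:120}: it explicitly declines to, remarking only that the proof is ``very similar to the one given for the last item in Theorem \ref{6:250}.'' Measured against that proof, your strategy is essentially the same: both exploit the pairwise disjointness of the translates $Y\gamma$ to turn the relevant piece of $C^{*}_{r}(H)$ into a matrix amplification of $C_{0}(Y)\rtimes_{r}\Gamma_{Y}$, use the hypothesised exactness to lift with norm control arbitrarily close to $1$, and then localize supports to reassemble an element of $C_{c}(G)$ lying in (the isometric image of) $C_{0}(X)\rtimes_{r}\Gamma_{Y}$. The one structural difference is that you work with a row over $C_{0}(Y)\rtimes_{r}\Gamma_{Y}$ together with the canonical unitaries $u_{\delta_{i}}$ --- which is available here precisely because $G^{Y}=r^{-1}(Y)$ is one-sided, so $ff^{*}$ already lands in the corner $1_{Y}C^{*}_{r}(H)1_{Y}$ --- whereas the proof of Theorem \ref{6:250} must handle the two-sided set $\Delta$ and therefore passes through a full matrix algebra $C^{*}_{r}(G(V))\cong M_{2n}(C^{*}_{r}(L^{U}_{U}))$ built from an open neighbourhood $U\supseteq Y$ with disjoint translates, cutting back down by a hereditary subalgebra at the end. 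Your closing diagnosis of where reduced exactness is genuinely needed (to control the norm of the whole row, not the individual lifts) matches the role it plays in the paper's argument.
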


We will not give a proof. First, we will not need the result. Secondly, the proof 
is very similar to the one given for the last item in Theorem
\ref{6:250}.

\section{Application to subgroupoids}
\label{6}

In this section, we consider a groupoid $G$ and open subgroupoid $G'$
and give conditions under which our excision theorem can 
be applied to the reduced groupoid $C^{*}$-algebra, $B= C^{*}_{r}(G)$, 
and a $C^{*}$-subalgebra, $B' = C^{*}_{r}(G')$.
 
We assume that $G$ is a locally compact, Hausdorff, second countable
 groupoid
with Haar system $\nu^{u}, u \in G^{0},$ and $2$-cocycle $\sigma$.
We suppose that $G^{0} \subseteq G' \subseteq G$ is a 
subgroupoid (using the same 
algebraic operations) and is open in $G$. This means
 that the unit space of $G'$
coincides with that of $G$.
The following is an easy result and we omit the proof. 

\begin{thm}
\label{6:10}
Let $G$ be a locally compact, Hausdorff topological groupoid
with Haar system, $\nu^{u}, u \in G^{0}$ and 
suppose
$G^{0} \subseteq G' \subseteq G$ is an open  subgroupoid. 
\begin{enumerate}
\item 
The system of measures, 
$\nu^{u}\vert_{G^{u} \cap G'}, u \in G^{0}$, is a Haar system 
for $G'$.
\item 
The inclusion $C_{c}(G') \subseteq C_{c}(G)$ obtained by extending
functions to be zero on $G - G'$ extends to an 
inclusion of $C^{*}$-algebras, 
$C_{r}^{*}(G', \sigma) \subseteq C_{r}^{*}(G, \sigma)$.
\end{enumerate}
\end{thm}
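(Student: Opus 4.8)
The plan is to prove the two parts in order, since the second relies on the first. For part (1), I would first observe that openness of $G'$ in $G$ ensures $G'^{0} = G^{0}$ and that, for each $u \in G^{0}$, the set $G^{u} \cap G'$ is open in $G^{u}$. Restricting a measure $\nu^{u}$ (which has full support on $G^u$) to an open subset gives a measure of full support on $G^{u} \cap G' = G'^{u}$, so the support condition is immediate; left-invariance for $G'$ is inherited verbatim from left-invariance of $\nu$ for $G$, since composition in $G'$ agrees with composition in $G$. The one genuine point is continuity of $u \mapsto \int_{G'^{u}} f \, d\nu^{u}$ for $f \in C_{c}(G')$. Here I would extend $f$ by zero to a function $\tilde f$ on $G$: because $G'$ is open, $\tilde f$ is continuous on $G$ (it is continuous on the open set $G'$ and on the open set $G \setminus \overline{\operatorname{supp}_{G'}(f)}$, and these cover $G$ provided $\operatorname{supp}_{G'}(f)$ is already closed in $G$ — which it is, being compact), and it is compactly supported. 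Then $\int_{G'^{u}} f\, d\nu^{u} = \int_{G^{u}} \tilde f \, d\nu^{u}$, which is continuous in $u$ since $\nu$ is a Haar system for $G$. The main obstacle here — and it is a small one — is checking that the compact support of $f$ in $G'$ remains closed in the coarser topology of $G$; this follows because a compact subset of the Hausdorff space $G$ is closed, and the inclusion $G' \hookrightarrow G$ is continuous so the image of a $C_{c}(G')$-support is $G$-compact.

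For part (2), I would first check that extension-by-zero $C_{c}(G') \to C_{c}(G)$ is a $*$-homomorphism for the twisted convolution products. This is a direct computation: for $f, g \in C_{c}(G')$ and $x \in G$, the convolution integral $\int_{G^{r(x)}} \tilde f(y)\, \tilde g(y^{-1}x)\, \sigma(y, y^{-1}x)\, d\nu^{r(x)}(y)$ reduces, because $\tilde f$ and $\tilde g$ vanish off $G'$, to an integral over $G'^{r(x)}$; moreover the integrand forces $y \in G'$ and $y^{-1} x \in G'$, hence $x \in G'$ (as $G'$ is a subgroupoid), so the convolution $\widetilde{f} * \widetilde{g}$ vanishes off $G'$ and restricts on $G'$ to the $C_{c}(G')$-convolution computed with the Haar system of part (1). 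The involution is obviously preserved. So we have an injective $*$-homomorphism $\iota: C_{c}(G') \to C_{c}(G)$.

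The crux is that $\iota$ is isometric for the reduced norms, so that it extends to an inclusion $C^{*}_{r}(G',\sigma) \hookrightarrow C^{*}_{r}(G,\sigma)$. I would argue this by comparing left regular representations. As in the discussion preceding Theorem~\ref{5:50}, for each $u \in G^{0}$ we have $G'_{u} \subseteq G_{u}$, and since $G'$ is open and carries the relative topology, $L^{2}(G'_{u}, \nu_{u})$ sits as a closed subspace of $L^{2}(G_{u}, \nu_{u})$; write $P_u$ for the orthogonal projection onto it. A direct check shows that for $f \in C_{c}(G')$ the operator $\lambda^{G}_{u}(\tilde f)$ leaves $L^{2}(G'_{u}, \nu_{u})$ invariant (the convolution kernel is supported in $G'$) and commutes with $P_u$, and that $\lambda^{G}_{u}(\tilde f)\, P_u = P_u\, \lambda^{G}_{u}(\tilde f)$ agrees with $\lambda^{G'}_{u}(f)$ on the range of $P_u$ and vanishes on its complement. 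Consequently $\lambda^{G}_{u}(\tilde f) = \lambda^{G'}_{u}(f) \oplus 0$ on $L^2(G_u,\nu_u) = L^2(G'_u,\nu_u) \oplus L^2(G'_u,\nu_u)^{\perp}$, whence $\|\lambda^{G}(\tilde f)\| = \|\lambda^{G'}(f)\|$, i.e. $\|\tilde f\|_{r} = \|f\|_{r}$. Since $C_{c}(G')$ is dense in $C^{*}_{r}(G',\sigma)$, the map $\iota$ extends to an isometric $*$-homomorphism, hence an inclusion of $C^{*}$-algebras, completing the proof. The delicate point to get right is the identification $\lambda^{G}_{u}(\tilde f)|_{L^2(G'_u)} = \lambda^{G'}_{u}(f)$ together with vanishing on the orthogonal complement; both come down to the observation that the convolution kernel $\tilde f(xy^{-1})$, as a function of $(x,y) \in G_u \times G_u$, is supported where $xy^{-1} \in G'$, which — since $G'$ is a subgroupoid containing $G^{0}$ — forces $x \in G'_u$ exactly when $y \in G'_u$.
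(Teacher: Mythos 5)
Your part (1) and the convolution computation in part (2) are fine: extension by zero of a compactly supported continuous function on the open set $G'$ is continuous on $G$, and the support condition $y, y^{-1}x \in G'$ forcing $x \in G'$ correctly shows that $C_{c}(G')$ is a $*$-subalgebra of $C_{c}(G)$. The problem is the norm comparison at the end. The identity $\lambda^{G}_{u}(\tilde f) = \lambda^{G'}_{u}(f) \oplus 0$ on $L^{2}(G_{u},\nu_{u}) = L^{2}(G'_{u},\nu_{u}) \oplus L^{2}(G'_{u},\nu_{u})^{\perp}$ is false. The kernel $\tilde f(xy^{-1})$ is supported where $xy^{-1} \in G'$, and this set contains many pairs with $x,y \in G_{u} \setminus G'_{u}$; what your observation actually shows is that both $L^{2}(G'_{u},\nu_{u})$ \emph{and} its orthogonal complement are invariant, not that the operator vanishes on the complement. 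A minimal counterexample: let $G$ be the pair groupoid on two points and $G' = G^{0}$ (open since $G$ is discrete), and let $f$ be the characteristic function of $G^{0}$. Then $\tilde f$ is the unit of $C_{c}(G)$ and $\lambda^{G}_{u}(\tilde f)$ is the identity on all of $L^{2}(G_{u})$. You appear to have passed from the (trivially true) statement that $\lambda^{G}_{u}(\tilde f)P_{u}$ vanishes on $\ker P_{u}$ to the same statement for $\lambda^{G}_{u}(\tilde f)$ itself. This matters: the reducing subspace $L^{2}(G'_{u},\nu_{u})$ only gives the inequality $\Vert \tilde f \Vert_{r} \geq \Vert f \Vert_{r}$, whereas the inequality you actually need for the map to extend to $C^{*}_{r}(G',\sigma)$ at all is $\Vert \tilde f \Vert_{r} \leq \Vert f \Vert_{r}$, and your argument does not establish it.

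The standard repair is a coset decomposition. Since $G^{0} \subseteq G'$ and $G'$ is closed under products and inverses, the relation $x \sim y \iff xy^{-1} \in G'$ is an equivalence relation on $G_{u}$, with classes $G'x = G'_{r(x)}x$. Each class is invariant under $\lambda^{G}_{u}(\tilde f)$, and right translation by $x^{-1}$ is a measure-preserving bijection from $G'_{r(x)}x$ onto $G'_{r(x)}$ (using left invariance of the Haar system in the form $\nu_{u}(Fx) = \nu_{r(x)}(F)$), which conjugates the restriction of $\lambda^{G}_{u}(\tilde f)$ to $L^{2}(G'_{r(x)}x)$ onto $\lambda^{G'}_{r(x)}(f)$ (up to a diagonal phase coming from $\sigma$, which does not affect norms). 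Hence
$\Vert \lambda^{G}_{u}(\tilde f) \Vert = \sup_{x \in G_{u}} \Vert \lambda^{G'}_{r(x)}(f) \Vert \leq \Vert f \Vert_{r}$,
and combining with the reverse inequality from the class of the unit itself gives $\Vert \tilde f \Vert_{r} = \Vert f \Vert_{r}$. With that substitution your proof goes through; as written, the key step is not just unproved but based on a false identity.
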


Let us define a notational convention: 
if $A \subseteq G$ is any subset of a groupoid $G$, 
$A^{2} = A \times A \cap G^{2}$ and $A^{n}, n \geq 3$ 
is defined in an analogous way.

We state the  following result for convenience.
 The proof is trivial and we omit it.

\begin{lemma}
\label{6:20}
For an open subgroupoid $G^{0} \subseteq G' \subseteq G$, we have 
\begin{enumerate}
\item 
 $\Delta = G - G'$ is closed in $G$,
 \item   $ \Delta = \Delta^{-1}$,
 \item $\Delta G', G'\Delta \subseteq \Delta$,
\item 
$\Delta^{2} = \left( \Delta \times \Delta \right) \cap G^{2}$
 is closed in $G^2$, 
\item 
$\Delta^{2} \cap \mu^{-1}(G')$
is open in $\Delta^{2}$. 
\end{enumerate}
\end{lemma}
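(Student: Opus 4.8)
The plan is to check each of the five assertions directly against the definitions; the whole argument is bookkeeping, and there is no substantive obstacle, which is presumably why the statement is recorded without proof in the text.

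First, (1) is immediate: $G'$ is open in $G$ by hypothesis, so its complement $\Delta = G - G'$ is closed. For (2), I would use that inversion $g \mapsto g^{-1}$ is a self-homeomorphism of $G$ which carries $G'$ onto $G'$ (a subgroupoid is closed under inversion), and hence carries the complement $\Delta$ onto $\Delta$; thus $\Delta^{-1} = \Delta$. For (3), the key observation is that if $x \in \Delta$, $y \in G'$ with $s(x) = r(y)$, and the product $xy$ happened to lie in $G'$, then, since $(xy, y^{-1})$ is a composable pair and $G'$ is closed under products and inverses, we would obtain $x = (xy)y^{-1} \in G'$, a contradiction. Hence $\Delta G' \subseteq \Delta$; the inclusion $G'\Delta \subseteq \Delta$ is proved symmetrically (or deduced from (2) via $(\Delta G')^{-1} = (G')^{-1}\Delta^{-1} = G'\Delta$).

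For (4), since $\Delta$ is closed in $G$ by (1), the set $\Delta \times \Delta$ is closed in $G \times G$, so $\Delta^2 = (\Delta \times \Delta) \cap G^2$ is closed in the subspace topology on $G^2$ (no Hausdorffness of $G^0$ or closedness of $G^2$ is needed for this). For (5), the multiplication map $\mu_G : G^2 \to G$ is continuous and $G'$ is open in $G$, so $\mu_G^{-1}(G')$ is open in $G^2$, and therefore its intersection $\Delta^2 \cap \mu_G^{-1}(G')$ with $\Delta^2$ is open in $\Delta^2$. The only point that requires a moment's care is the use in (3) of the fact that $G'$, being a subgroupoid with the inherited operations, is closed under the partially defined product and under inversion — but that is precisely what is meant by the phrase ``subgroupoid (using the same algebraic operations)'' in the hypothesis, so even that step is immediate.
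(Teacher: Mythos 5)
Your proof is correct in every part, and since the paper explicitly omits the proof as trivial, your direct verification (openness of $G'$ for (1), closure of $G'$ under inversion for (2), the cancellation argument $x=(xy)y^{-1}$ for (3), and elementary point-set topology for (4) and (5)) is precisely the routine argument the author had in mind. Nothing further is needed.
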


We now want to construct a new pair of groupoids, $H' \subseteq H$. 
If $u$ is any unit
of $G$, the set $s(G^{u})$ is $G$-invariant and the restriction
of $G$ to this set is a transitive groupoid (see 1.1 of \cite{Ren:LNM}).
Of course, this restriction  may be disastrous, topologically.
Our new groupoids will be the restrictions of
$G$ and $G'$ to all sets $s(G^{u})$ where they differ. In other words, 
all $s(G^{u})$ where $u $ is in $r(\Delta)$. At least intuitively, 
any relative theory of $G' \subseteq G$ should be the same as
that for $H' \subseteq H$. The first difficulty lies in the issue 
of putting  suitable topologies on $H'$ and $H$. In fact, there
is a canonical way to do this, but we will need certain 
technical conditions to proceed further.

To facilitate this, let us name some of the topologies
involved. We let $\mathcal{S}$ be the topology on $G$ and 
$\mathcal{S}_{A}$ be the relative topology on any subset 
$A $ of $G$.

\begin{defn}
\label{6:30}
We say the inclusion  $G^{0} \subseteq G' \subseteq G$ is
 \emph{regular} if 
the map $r: \Delta \rightarrow r(\Delta)$ is open, when the image is
given the quotient topology. We let $\mathcal{R}$ be the quotient
topology on $r(\Delta)$.
\end{defn}

\begin{lemma}
\label{6:35}
Let $G^{0} \subseteq G' \subseteq G$ be a open
subgroupoid.   If the inclusion is regular, then 
 $\Delta^{2} \cap \mu^{-1}(G')$
is closed in $\Delta^{2}$.
\end{lemma}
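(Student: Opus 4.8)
The plan is to prove closedness sequentially, using that $G$, hence $\Delta^{2}$, is second countable (so closed $=$ sequentially closed). First I would take a sequence $(x_{n},y_{n})$ in $\Delta^{2}\cap\mu^{-1}(G')$ with $(x_{n},y_{n})\to(x,y)\in\Delta^{2}$, set $g_{n}=x_{n}y_{n}\in G'$ and $d=xy$; continuity of the product gives $g_{n}\to d$, and since $G=G'\sqcup\Delta$ it suffices to rule out $d\in\Delta$. So I would assume, for contradiction, that $d\in\Delta$.

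The one place regularity enters is a lifting step. Put $v_{n}=r(x_{n})=r(g_{n})$ and $v=r(x)=r(d)$; all of these lie in $r(\Delta)$, and since $x_{n}\to x$ in $\Delta$ and $r\colon\Delta\to(r(\Delta),\mathcal{R})$ is continuous (by the very definition of the quotient topology $\mathcal{R}$), we get $v_{n}\to v$ in $\mathcal{R}$. By Definition \ref{6:30} the map $r\colon\Delta\to(r(\Delta),\mathcal{R})$ is a continuous open surjection, and $\Delta$ is first countable, so the standard lifting lemma for open maps yields $h_{n}\in\Delta$ with $r(h_{n})=v_{n}$ and $h_{n}\to d$ (build $h_{n}$ inside a shrinking neighbourhood basis of $d$, using that the $r$-images of those neighbourhoods are $\mathcal{R}$-open and eventually contain $v_{n}$). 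Now I would invoke Lemma \ref{6:20}: since $\Delta=\Delta^{-1}$ we have $h_{n}^{-1}\in\Delta$; as $s(h_{n}^{-1})=r(h_{n})=v_{n}=r(g_{n})$ the products $h_{n}^{-1}g_{n}$ are defined, and since $\Delta G'\subseteq\Delta$ they lie in $\Delta$. By continuity of inversion and multiplication, $h_{n}^{-1}g_{n}\to d^{-1}d=s(d)$. As $\Delta$ is closed this forces $s(d)\in\Delta$, which is absurd because $s(d)\in G^{0}\subseteq G'$ and $\Delta\cap G'=\emptyset$. This contradiction shows $d\in G'$, i.e.\ $(x,y)\in\Delta^{2}\cap\mu^{-1}(G')$, so that set is closed in $\Delta^{2}$.

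The hard part is locating this argument rather than carrying it out: the naive attempt collapses immediately, since $g_{n}\in G'$ and $g_{n}\to d$ does \emph{not} give $d\in G'$ ($G'$ is only open, not closed), and nothing about the factors $x_{n},y_{n}$ individually lying in $\Delta$ conflicts with $xy\in\Delta$. The trick is to push the difficulty onto the \emph{unit} $s(d)\in G^{0}$ by multiplying the stray element $g_{n}\in G'$ against a $\Delta$-element $h_{n}$ of the same range approaching $d$; producing such $h_{n}$ is precisely what openness of $r|_{\Delta}$ provides, after which closedness of $\Delta$ does the rest. The remaining ingredients — the lifting lemma (a routine neighbourhood-basis argument), continuity of the groupoid operations, and the elementary properties of $\Delta$ recorded in Lemma \ref{6:20} — are all routine.
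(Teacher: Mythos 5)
Your proof is correct and follows essentially the same route as the paper: assume $xy\in\Delta$, use openness of $r\colon\Delta\to(r(\Delta),\mathcal{R})$ to produce elements $h_{n}$ (the paper's $z_{n}$) of $\Delta$ converging to $xy$ with $r(h_{n})=r(x_{n}y_{n})$, and then observe that $h_{n}^{-1}x_{n}y_{n}\in\Delta G'\subseteq\Delta$ converges to the unit $s(xy)\in G'$, contradicting the closedness of $\Delta$. The only cosmetic difference is that the paper passes to a subsequence $x_{i_{n}}$ while you lift along the whole sequence; the argument is the same.
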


\begin{proof}
Suppose that $(x_{i}, y_{i}), i \geq 1$ is a sequence in 
$\Delta^{2} \cap \mu^{-1}(G')$ converging to
$(x,y)$, which is in $\Delta^{2}$, as $\Delta$ is closed.
Suppose that $xy$ is in $\Delta$. We may choose a decreasing 
sequence
of open subsets in $G$, $U_{n}, n \geq 1$, which intersect
to $xy$.
Then $U_{n} \cap \Delta$ is an open set in $\Delta$ and, as 
our inclusion is regular, $r(U_{n} \cap \Delta)$ is open in 
$\mathcal{R}$. As $x_{i} \in \Delta, i \geq 1$ is converging to $x$, 
$r(x_{i}), i \geq 1$ in the topology $\mathcal{R}$. So 
for each $n \geq 1$, we  may find $x_{i_{n}}$ such that 
$r(x_{i_{n}})$ is in $r(U_{n} \cap \Delta)$. So we may find 
$z_{n}$ in $U_{n} \cap \Delta$ with $r(z_{n}) = r(x_{i_{n}})$.
The sequence $z_{n}^{-1} x_{i_{n}} y_{i_{n}}$ is in $\Delta G' = \Delta$
and converges to $(xy)^{-1}xy$ which is in $G^{0} \subseteq G'$. 
This contradicts $G'$ being open.
\end{proof}

\begin{defn}
\label{6:38}
Assume that $G^{0} \subseteq G' \subseteq G$ is a open
subgroupoid and that the inclusion is regular. Define 
\[
H' = \mu(\Delta^{2} \cap \mu^{-1}(G')) \subseteq G'.
\]
We endow $H'$ with the quotient topology from the map
\[
\mu: \Delta^{2} \cap  \mu^{-1}(G') \rightarrow H'
\]
which we denote by $\mathcal{T}'$.
\end{defn}

We observe the following for future purposes.
 
\begin{lemma}
\label{6:39}
With $H'$ as defined in Definition \ref{6:38}, we 
have
\begin{enumerate}
\item $x$ in $G'$ is in $H'$ if and only if 
$r(x)$ is in $r(\Delta)$,
\item $H'$ is a subgroupoid of $G'$, 
\item 
the unit space of $H'$ is $r(\Delta)$.
\end{enumerate}
 Moreover, the topology
on the unit space given in Definition \ref{6:38}
agrees with the quotient topology of 
$r: \Delta \rightarrow r(\Delta)$.
That is, we have $\mathcal{T}'_{(H')^{0}} = \mathcal{R}$. 
\end{lemma}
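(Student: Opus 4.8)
The plan is to establish the set-theoretic characterization in part~(1) first, deduce parts~(2) and~(3) from it, and treat the statement about topologies last. For the ``only if'' direction of~(1), observe that if $x\in H'$ then, by Definition~\ref{6:38}, $x=\mu(a,b)=ab$ for some $(a,b)\in\Delta^{2}\cap\mu^{-1}(G')$, so $r(x)=r(a)\in r(\Delta)$. For the ``if'' direction, suppose $x\in G'$ with $r(x)\in r(\Delta)$ and choose $a\in\Delta$ with $r(a)=r(x)$; put $b=a^{-1}x$. This product is defined because $s(a^{-1})=r(a)=r(x)$, and $r(b)=s(a)$, so $(a,b)\in G^{2}$ and $ab=x$. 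By Lemma~\ref{6:20} we have $a^{-1}\in\Delta$, hence $b=a^{-1}x\in\Delta G'\subseteq\Delta$, while $ab=x\in G'$; thus $(a,b)\in\Delta^{2}\cap\mu^{-1}(G')$ and $x=\mu(a,b)\in H'$.

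Granting~(1), parts~(2) and~(3) are formal. For $x\in H'$, write $x=ab$ with $a,b\in\Delta$ as above; then $r(x)=r(a)\in r(\Delta)$ and $s(x)=s(b)=r(b^{-1})\in r(\Delta)$, using $b^{-1}\in\Delta$. Hence $r(x^{-1})=s(x)\in r(\Delta)$, so $x^{-1}\in H'$ by~(1); and if $x,y\in H'$ are composable, then $r(xy)=r(x)\in r(\Delta)$, so $xy\in H'$ by~(1). Since $H'\subseteq G'$ carries the algebraic operations of $G'$, this shows $H'$ is a subgroupoid of $G'$. Its unit space is $H'\cap G^{0}$; by~(1) a unit $u\in G^{0}$ lies in $H'$ iff $r(u)=u\in r(\Delta)$, so $(H')^{0}=H'\cap G^{0}=r(\Delta)$.

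For the topological statement, write $q=\mu\colon\Delta^{2}\cap\mu^{-1}(G')\to H'$ for the quotient map defining $\mathcal{T}'$. First I would identify the saturated set $q^{-1}\bigl((H')^{0}\bigr)$: if $(a,b)$ lies in it then $ab\in G^{0}$, which forces $b=a^{-1}$ and $ab=r(a)$; conversely, for every $a\in\Delta$ the pair $(a,a^{-1})$ lies in $\Delta^{2}\cap\mu^{-1}(G')$ (since $a^{-1}\in\Delta$ and $r(a)\in G^{0}\subseteq G'$), with $q(a,a^{-1})=r(a)$. Hence
\[
q^{-1}\bigl((H')^{0}\bigr)=\{(a,a^{-1})\mid a\in\Delta\}=\mu^{-1}(G^{0})\cap\bigl(\Delta^{2}\cap\mu^{-1}(G')\bigr),
\]
which is closed in $\Delta^{2}\cap\mu^{-1}(G')$, because $\mu$ is continuous and $G^{0}$ is closed in $G$ (as $G$ is Hausdorff). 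Since the restriction of a quotient map to a saturated closed subset is again a quotient map, $\mathcal{T}'_{(H')^{0}}$ is exactly the quotient topology induced by $q$ on $\{(a,a^{-1})\mid a\in\Delta\}$. Finally, $a\mapsto(a,a^{-1})$ is a homeomorphism of $\Delta$ onto $\{(a,a^{-1})\mid a\in\Delta\}$ — it is continuous because inversion is, and its inverse is the restriction of the first-coordinate projection — and it intertwines $q$ with $r\colon\Delta\to r(\Delta)$; therefore the quotient topology on $(H')^{0}=r(\Delta)$ coincides with $\mathcal{R}$.

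The only genuinely delicate point is the last paragraph: one needs the point-set fact that a quotient map restricts to a quotient map over the preimage of a closed set, and one must check that the subspace topology on $\{(a,a^{-1})\mid a\in\Delta\}$ inherited from $G\times G$, from $\Delta^{2}$, and from $\Delta^{2}\cap\mu^{-1}(G')$ all agree (they do, being successive subspace topologies). Everything else is bookkeeping; we note in passing that the present argument does not use regularity of the inclusion (Definition~\ref{6:30}), which enters elsewhere — through Lemma~\ref{6:35} — only to make $H'$ a topological groupoid.
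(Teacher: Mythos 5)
Your proof is correct and follows essentially the same route as the paper: the decomposition $x = a(a^{-1}x)$ with $a^{-1}x \in \Delta G' \subseteq \Delta$ for part (1), and the identification of $\mu^{-1}((H')^{0})$ with $\{(a,a^{-1}) \mid a \in \Delta\} \cong \Delta$ for the topological statement. You are in fact somewhat more careful than the paper at the one delicate point: the paper invokes the claim that a quotient map restricts to a quotient map over \emph{any} subset of the target, which is false in that generality, whereas you correctly supply the needed hypothesis that the saturated preimage is closed.
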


\begin{proof}
For the first part, it is clear that
\[
r(H') = r \circ \mu( \Delta^{2} \cap \mu^{-1}(G') ) 
\subseteq r \circ \mu(\Delta^{2}) \subseteq r(\Delta).
\]
On the other hand, if $x$ is in $\Delta$,, then $(x,x^{-1})$ is in 
$\Delta^{2}$ and also $\mu^{-1}(G')$ and 
$r(x) = \mu(x,x^{-1})$.
The second and third statements follow easily from the first.

The final part relies on the
 following basic topological fact: if $X$ is a topological space, 
 $f:X \rightarrow Y$ is a surjection and $Y$ is given the quotient topology, 
 then for any $Z \subseteq Y$, the relative topology from $Y$ agrees
 with the quotient topology from $f: f^{-1}(Z) \rightarrow Z$.
 In our case, we use $X = \Delta ^{2} \cap \mu^{-1}(G')$,  $f= \mu$,
 $H' = Y$ and $Z = (H')^{0}$ along with the observation that the map 
 sending $x$ in $\Delta$ to $(x, x^{-1})$ is a homeomorphism from $\Delta$ to 
 $\mu^{-1}((H')^{0}$.
\end{proof}

\begin{thm}
\label{6:40}
Assume that $G^{0} \subseteq G' \subseteq G$ is a open
subgroupoid and that the inclusion is regular. 
Let $H', \mathcal{T}'$ be as in Definition \ref{6:38}.
\begin{enumerate}
\item The topology $\mathcal{T}'$ on $H'$ is finer
than $\mathcal{S}_{G'}$, the relative topology from $G$.
\item 
$H'$ is topological groupoid.
\item  
$H'$ is locally compact.
\item  
$H'$ is Hausdorff.
\item For each $u$ in $(H')^{0} = H' \cap G^{0}$, we have 
\[
G^{u} \cap H' = G^{u} \cap G', \hspace{.5cm} G_{u} \cap H' = G_{u} \cap G'
\]
and the relative topologies from  $\mathcal{T}'$and $\mathcal{S}$  are the
same on each of these
sets.
\item 
The system of measures,
 $\nu^{u}\vert_{G^{u} \cap G'}, u \in (H')^{0}$, is a Haar system 
for $H'$.
\end{enumerate}
\end{thm}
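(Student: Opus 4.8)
The plan is to realize $H'$ as the orbit space of a free, proper action of a locally compact Hausdorff groupoid, so that parts (2)--(4) become a standard quotient argument, with the regularity hypothesis entering at exactly one point. First I would reformulate the topology $\mathcal{T}'$. The assignment $(a,b)\mapsto(a,ab)$ is a homeomorphism of $\Delta^{2}\cap\mu^{-1}(G')$ onto $P:=\{(x,z)\in\Delta\times G'\mid r(x)=r(z)\}$, with inverse $(x,z)\mapsto(x,x^{-1}z)$ (note $x^{-1}z\in\Delta G'\subseteq\Delta$ by Lemma~\ref{6:20}), and it carries $\mu$ to the second--coordinate map $p_{2}\colon P\to H'$; hence $\mathcal{T}'$ is the quotient topology of $p_{2}$. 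Since $\Delta$ is closed in $G$ and $G'$ is open in $G$, both are locally compact Hausdorff, and $P$ is the $(r,r)$--preimage of the diagonal of $G^{0}\times G^{0}$ inside $\Delta\times G'$, hence a closed, locally compact Hausdorff subspace. The groupoid $\mathcal{G}:=\{(x,x')\in\Delta\times\Delta\mid r(x)=r(x')\}$, with unit space $\Delta$, range $r(x,x')=x$, source $s(x,x')=x'$ and product $(x,x')(x',x'')=(x,x'')$, is closed in $\Delta\times\Delta$, hence locally compact Hausdorff, and it acts on $P$ by $(x,x')\cdot(x',z)=(x,z)$; this action is visibly free, its orbits are the fibres of $p_{2}$, and $H'=P/\mathcal{G}$ with $p_{2}$ the orbit map. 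Part (1) is then immediate: $\mu$ is a restriction of the continuous $\mu_{G}\colon G^{2}\to G$, hence is continuous into $(H',\mathcal{S}_{G'})$, and since the quotient topology $\mathcal{T}'$ is the finest making $p_{2}$ continuous, it is finer than $\mathcal{S}_{G'}$.

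For (2)--(4) the crucial point is that $p_{2}$ is open. The defining property of regularity is exactly that $r\colon\Delta\to(r(\Delta),\mathcal{R})$ is an open map; since the range and source maps of $\mathcal{G}$ are pullbacks of this map along itself, and pullbacks of open maps are open, the range and source maps of $\mathcal{G}$ are open. The standard argument for groupoid actions then shows $p_{2}$ is open: the projection from the fibred product $\mathcal{G}\ast_{s}P$ to $P$ is a pullback of the open source map of $\mathcal{G}$, hence open, and for open $W\subseteq P$ one has $p_{2}^{-1}(p_{2}(W))=\mathcal{G}\cdot W$, which is the image under that projection of the open set $\mathrm{act}^{-1}(W)$. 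Moreover the orbit equivalence relation $R\subseteq P\times P$ is the preimage of the diagonal of $G'\times G'$ under $((x,z),(x',z'))\mapsto(z,z')$, hence closed because $G'$ is Hausdorff; equivalently, the $\mathcal{G}$--action is proper. With $P$ locally compact Hausdorff, $p_{2}$ an open quotient map, and $R$ closed, the standard facts about quotients by free, proper actions of locally compact Hausdorff groupoids (as used for Theorem~\ref{5:100}, following \cite{MRW:grpeq}) give that $H'$ is locally compact and Hausdorff. That $H'$ is a topological groupoid then follows: it is a subgroupoid of $G'$ by Lemma~\ref{6:39}, and the multiplication and inversion of $G'$ lift to continuous maps on the appropriate fibred products of $P$; since a product of open quotient maps is again an open quotient map and restricts to one over any subspace, these descend to continuous operations on $H'$ for $\mathcal{T}'$.

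For (5), Lemma~\ref{6:39}(1) says an element of $G'$ lies in $H'$ iff its range lies in $r(\Delta)$, so for $u\in(H')^{0}=r(\Delta)$ we get $G^{u}\cap G'\subseteq H'$, whence $G^{u}\cap H'=G^{u}\cap G'$; applying Lemma~\ref{6:39}(1) to $x^{-1}$ and using that $H'$ is a subgroupoid gives $G_{u}\cap H'=G_{u}\cap G'$. By (1), $\mathcal{T}'$ restricted to $G^{u}\cap H'$ is finer than $\mathcal{S}$ restricted there; conversely, for $U\in\mathcal{T}'$ one has $U\cap(G')^{u}=\mathrm{pr}_{2}\bigl(p_{2}^{-1}(U)\cap(\Delta^{u}\times(G')^{u})\bigr)$, where $\Delta^{u}=\{x\in\Delta\mid r(x)=u\}\ne\emptyset$, and this set is $\mathcal{S}$--open since $p_{2}$ is continuous and the projection of a product is open; so the two relative topologies agree on $G^{u}\cap H'$, and the case of $G_{u}\cap H'$ follows by applying inversion. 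For (6), I would first observe that $r\colon H'\to(H')^{0}$ is $\mathcal{T}'$--open: the first projection $\mathrm{pr}_{1}\colon P\to\Delta$ is the pullback of the open map $r\colon G'\to G^{0}$ (open by Lemma~\ref{5:10}, since $G'$ has a Haar system by Theorem~\ref{6:10}) along $r\colon\Delta\to G^{0}$, hence open, so $r\circ\mathrm{pr}_{1}=r_{H'}\circ p_{2}$ is open, and as $p_{2}$ is a quotient map $r_{H'}$ is open. Then I would invoke Theorem~\ref{5:30} with ambient groupoid $G'$, carrying the Haar system $\nu^{u}|_{G^{u}\cap G'}$ of Theorem~\ref{6:10}(1), and finer subgroupoid $H'$: the finer--topology hypothesis is part (1) and the fibre hypotheses are part (5), so, $r\colon H'\to(H')^{0}$ being open, Theorem~\ref{5:30} yields that $\nu^{u}|_{G^{u}\cap G'}$, $u\in(H')^{0}$, is a Haar system for $H'$.

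The main obstacle is the openness of the orbit map $p_{2}$, equivalently the properness and openness of the $\mathcal{G}$--action; this is precisely where regularity is indispensable, and it enters only through the observation that $\mathcal{R}$ is designed so that $r\colon\Delta\to(r(\Delta),\mathcal{R})$ is open, together with the elementary fact that pullbacks of open maps are open. The one other point requiring care, verifying that the groupoid operations of $H'$ are continuous for $\mathcal{T}'$, is routine once $p_{2}$ is known to be an open quotient map.
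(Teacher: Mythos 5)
Your proof is correct, but it is organized quite differently from the paper's. The paper proves parts (2)--(6) by direct, hands-on arguments: continuity of the product via an explicit commutative diagram involving $\Delta^{4} \cap \mu^{-1}(G') \times \mu^{-1}(G')$ and the map $\mu^{2} \times id$; local compactness by a sequence argument that leans on Lemma \ref{6:35} (closedness of $\Delta^{2} \cap \mu^{-1}(G')$ in $\Delta^{2}$); Hausdorffness by the one-line observation that a topology finer than a Hausdorff one is Hausdorff; agreement of the fibre topologies in (5) by writing $g_{n} = h(h^{-1}g_{n})$ for a fixed $h \in \Delta$; and openness of $r_{H'}$ in (6) by an explicit construction of open sets. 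You instead isolate the single place where regularity enters --- the openness of the quotient map $p_{2} \colon P \to H'$, proved via the free action of $\mathcal{G} = \Delta \times_{r(\Delta)} \Delta$ and pullback-stability of open maps --- and then derive everything else from general facts about open quotient maps: Hausdorffness from closedness of the graph of the orbit relation, local compactness from the open image of a compact neighbourhood (note your route does not even need Lemma \ref{6:35}, since $P$ is closed in $\Delta \times G'$), continuity of the operations from products of open quotients, and parts (5) and (6) from openness of coordinate projections. This buys a cleaner logical structure and in fact supplies a detail the paper elides (the assertion in its local-compactness argument that $\mu(U \times U' \cap \Delta^{2})$ is open in $H'$ is exactly an instance of your openness of $p_{2}$); the cost is reliance on the unproved general machinery behind Theorem \ref{5:100}, where the paper stays elementary. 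Two small points: in (6) you should cite the last part of Lemma \ref{6:39} to identify $(\,(H')^{0}, \mathcal{T}'|_{(H')^{0}})$ with $(r(\Delta), \mathcal{R})$ before concluding that $r \circ \mathrm{pr}_{1} = r_{H'} \circ p_{2}$ is open into the right space; and the aside that closedness of $R$ is ``equivalently'' properness of the action is not literally an equivalence, though harmless since you only use closedness.
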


\begin{proof}
The first statement follows immediately from the fact
that the map $\mu$ is continuous and the definition
of the quotient topology.

Let us consider the set $\Delta^{4} \cap \mu^{-1}(G') \times \mu^{-1}(G')$
and the map $(\mu^{2} \times id)(w,x,y,z) = (wxy,z)$ defined on this set. 
First, we observe that $wx$ is in $G'$ and so $wxy$ is again in $\Delta$.
We also note that $wxyz= (wx)(yz)$ is in $G'$ and so 
we have 
\[
\mu^{2} \times id : \Delta^{4} \cap \mu^{-1}(G') \times \mu^{-1}(G')
\rightarrow \Delta^{2} \cap \mu^{-1}(G').
\]
Moreover, we have a commutative diagram:

\xymatrix{ \Delta^{4} \cap \mu^{-1}(G') \times \mu^{-1}(G')
 \ar^{\mu \times \mu}[d] \ar_{\mu^{2} \times id}[r]  & 
  \Delta^{2} \cap \mu^{-1}(G') \ar^{\mu }[d] \\
  (H' )^{2} \ar^{\mu_{H'}}[r] &  H'}

\noindent To verify $\mu_{H'}$ is continuous, we must take an open 
set $U$ in $H'$ and see that $\mu_{H'}^{-1}(U)$ is open in $(H')^{2}$. 
To see that, we must see
that $(\mu \times \mu)^{-1}(\mu_{H'})^{-1}(U))$ is open. 
From the commutativity of the diagram, we have 
\[
(\mu \times \mu)^{-1}(\mu_{H'})^{-1}(U)) =
 (\mu^{2} \times id)^{-1}(\mu^{-1}(U)).
\]
The set $\mu^{-1}(U)$ is open due to the definition of the 
topology  on $H'$ and  $(\mu^{2} \times id)^{-1}(\mu^{-1}(U))$
 is open because $\mu^{2} \times id$ is 
  continuous. The continuity of the inverse
 in the new topology is obvious
from the fact that $\mu(gh)^{-1} = \mu(h^{-1}, g^{-1})$.

Let us prove  $H'$ is locally compact.
Let $h$ be in $H'$, so it is in $G'$ we may write
$h=gg'$ with $g, g'$ in $\Delta$.
As $\mu$ is open and $G'$ is open, we may find 
open sets $g \subseteq U$, $g' \subseteq U'$ , 
each with compact closure,
such that $\mu( U \times U' \cap G^{2}) $ is 
an open set in $G'$. It follows fairly easily that
$\mu( U \times U' \cap \Delta^{2}) $ is an open set
in $H'$. Let us verify its closure is compact. Let
$h_{n}, n \geq 1$ be any sequence. It follows that, for 
every $n$,
we can find $g_{n}$ in $U \cap \Delta $ and $g_{n}'$ in 
$U' \cap \Delta$ with $g_{n} g'_{n} = h_{n}$. 
As $U$ and $u'$ have compact closures, we may pass to a 
subsequence such that $(g_{n_{i}}, g'_{n_{i}})$ is converging.
From Lemma \ref{6:35}, the limit
of this subsequence must also lie in $\mu^{-1}(G') \cap \Delta^{2}$.
It follows then that $h_{n}$ has a subsequence converging in $H'$.

The topology on $H'$ is Hausdorff since it
 is finer than the usual topology, which is Hausdorff.

 For the fifth part, the containment 
 $G^{u} \cap H' \subseteq G^{u} \cap G'$ 
 is obvious since $H' \subseteq G'$. For the reverse
  containment, let
 $g$ be in $G^{u} \cap G'$. The fact that $u$ is 
 in $H'$ means that
 $u = hh^{-1}$, for some $ h$ in $\Delta$.
  We have 
 $g = ug = (hkh^{-1})g = h(h^{-1}g)$, which is in $H'$ since $h^{-1}g$ 
 is in $\Delta G' \subseteq \Delta$.
 
 As we know that the topology for $H'$ is finer
  than that for $G'$, to see they
 are equal it suffices for us to take a sequence
 $g_{n}$ in $(G')^{u}$ converging to
 $g$ in the topology of $G'$ and show it also
  converges in the topology for $H'$.
 We simply write $g_{n} = ug_{n} = h (h^{-1} g_{n})$, 
 with $h$ as above. 
 It suffices now to observe that $h,  h^{-1}g_{n}$ 
 and $ h^{-1}g$
  are in $\Delta$, and $(h,  h^{-1}g_{n})$
   converges to $(h, h^{-1}g)$ in 
  $G^{2}$. This implies their images under
   $\mu$ converge in the quotient topology
  as desired.

   For the last part, by using Theorem \ref{5:30}, it suffices to 
  prove that $r_{H'}:H' \rightarrow (H')^{0}$ is open. 
 Let $x$ be in $H'$ and $U$ be set in $\mathcal{T}'$
  containing $x$.
  This means $x$ is in $G'$ while
 $x = yz$, with $y, z$ is $\Delta$. It also means that 
 $\mu^{-1}(U) \cap \Delta^2 \cap \mu^{-1}(G')= V \cap \Delta^2 
 \cap \mu^{-1}(G')$, for 
 some open set $V$ in $G^2$. 
 We have $(x,y)$ is in $V \cap \mu^{-1}(G')$
 and so we may find open sets $W,Z$ in $G$ with $s(W) = r(Z)$ 
 such that 
 $W \times Z \cap G^2 \subseteq V \cap \mu^{-1}(G')$. 
 It follows 
 from regularity and Lemma \ref{6:20} 
 that $r(W \cap \Delta)$ is an open subset
 of $(H')^{0}$. We claim that it is contained in 
 $r(U)$. If $u = r(w)$ for some $w$ in $W \cap \Delta$, then 
 $r(w) = r(wz)$, for some $(w,z)$ in
  $G^2 \cap W \times Z \subseteq V$
 The fact that $w$ is in $\Delta$ while $wz$ is in $G'$ means that 
 $z$ is in $\Delta$ also. Hence $(w,z)$ is in 
 $\Delta^2 \cap V = \mu^{-1}(U) \cap \Delta^2 \cap \mu^{-1}(G')$,
 $wz$ is in $U$ and $u= r(w) = r(wz) \in r(U)$, as desired. 
\end{proof}

\begin{thm}
\label{6:50}
Assume that $G^{0} \subseteq G' \subseteq G$ is a open
subgroupoid and that the inclusion is regular. 
 With $H'$ as in Definition \ref{6:38}, we define 
\[
H = H' \cup \Delta.
\]
We endow $H$ with the disjoint union topology; that is, 
both $H'$ and $\Delta$ are clopen. We denote
 this topology by $\mathcal{T}$, 
so that $\mathcal{T}_{H'} = \mathcal{T}'$ and 
$\mathcal{T}_{\Delta} = \mathcal{S}_{\Delta}$.
\begin{enumerate}
\item The topology  $\mathcal{T}$ on $H$ is finer
than the relative topology from $G$.
\item  
$H$ is topological groupoid.
\item 
$H$ is locally compact.
\item 
$H$ is  Hausdorff
\item For each $u$ in $H^{0} = H \cap G^{0}$, we have 
\[
G^{u} \cap H = G^{u} , \hspace{.5cm} G_{u} \cap H = G_{u} 
\]
and  the relative topologies from $H$ and $G$  are the
same on each of these
sets.
\item 
The the systems of measures,
 $\nu^{u}, u \in H^{0}$, is a Haar system 
for $H$.
\end{enumerate}
\end{thm}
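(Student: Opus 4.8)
The plan is to exploit the disjoint-union decomposition $H = H' \sqcup \Delta$ at every step, reducing each assertion either to the corresponding fact about $H'$ proved in Theorem \ref{6:40} on the one clopen piece, or to an elementary property of the closed subgroupoid $\Delta = G - G'$ on the other. First I would verify that $H$ is an algebraic subgroupoid of $G$: the identity $H = H^{-1}$ is immediate from $H' = (H')^{-1}$ (Lemma \ref{6:39}) and $\Delta = \Delta^{-1}$ (Lemma \ref{6:20}), while closure under the product is a four-case check on a composable pair $(x,y)$ --- if both lie in $H'$ use that $H'$ is a subgroupoid; if exactly one lies in $\Delta$ use $\Delta G',\ G'\Delta \subseteq \Delta$ (Lemma \ref{6:20}); if both lie in $\Delta$ then $xy$ lies in $\Delta$ or in $G'$, and in the latter case $(x,y) \in \Delta^{2} \cap \mu^{-1}(G')$, whence $xy \in H'$ by the very definition of $H'$ (Definition \ref{6:38}). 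Since $G^{0} \cap \Delta = \emptyset$, the unit space is $H^{0} = H' \cap G^{0} = (H')^{0} = r(\Delta)$. Parts (1), (3) and (4) are then quick: a set open in the relative topology of $G$ on $H$ meets $H'$ in a set open in $\mathcal{S}_{G'}$, hence (Theorem \ref{6:40}(1)) open in $\mathcal{T}'$, and meets $\Delta$ in a set open in $\mathcal{S}_{\Delta} = \mathcal{T}_{\Delta}$, so it is open in $\mathcal{T}$; and a topological disjoint union of locally compact (resp.\ Hausdorff) spaces is again locally compact (resp.\ Hausdorff), $H'$ being so by Theorem \ref{6:40}(3),(4) and $\Delta$ by being a closed subspace of the locally compact Hausdorff space $G$.

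For part (2) I would establish continuity of the structure maps piece by piece. Inversion carries the clopen set $H'$ into itself, where it is continuous by Theorem \ref{6:40}(2), and carries the clopen set $\Delta$ into itself, where it is the restriction of inversion on $G$; so it is continuous on $H$. For the product, $H^{2}$ breaks into four clopen pieces along $H \times H = (H' \times H') \sqcup (H' \times \Delta) \sqcup (\Delta \times H') \sqcup (\Delta \times \Delta)$, and a map continuous on each is continuous. On $(H')^{2}$ use $\mu_{H'}$ (Theorem \ref{6:40}(2)); on $H^{2} \cap (H' \times \Delta)$ and $H^{2} \cap (\Delta \times H')$ the product lands in $\Delta$ by Lemma \ref{6:20}, and since $\mathcal{T}'$ refines $\mathcal{S}_{G'}$ the inclusion of the domain into $(G' \times \Delta) \cap G^{2}$ is continuous, so composing with $\mu_{G}$ gives continuity; on $\Delta^{2}$ one splits once more, using that $\Delta^{2} \cap \mu^{-1}(G')$ is open in $\Delta^{2}$ (Lemma \ref{6:20}) and closed in $\Delta^{2}$ (Lemma \ref{6:35}), hence clopen, and on this subset $\mu$ is precisely the quotient map onto $H'$ of Definition \ref{6:38}, hence continuous, while on the complementary clopen subset $\mu$ is the restriction of $\mu_{G}$ and takes values in $\Delta$.

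Part (5) is where I expect the genuine difficulty. Fix $u \in H^{0} = r(\Delta)$ and choose $h \in \Delta$ with $r(h) = u$. For $g \in G^{u} \cap G'$ we have $g = h(h^{-1}g)$ with $h,\ h^{-1}g \in \Delta$ and $\mu(h, h^{-1}g) = g \in G'$, so $g \in H'$; combined with $G^{u} \cap \Delta \subseteq \Delta$ this shows $G^{u} \subseteq H$, hence $G^{u} \cap H = G^{u}$ (and $G_{u} \cap H = G_{u}$ by applying inversion, using $s(\Delta) = r(\Delta)$). The delicate point is that $\mathcal{T}$ and $\mathcal{S}$ induce the same topology on $G^{u}$: they agree on $G^{u} \cap H' = G^{u} \cap G'$ by Theorem \ref{6:40}(5) and on $G^{u} \cap \Delta$ trivially, and both sets are clopen in $(G^{u}, \mathcal{T})$, so it remains only to see that $G^{u} \cap G'$ --- a priori merely open in $(G^{u}, \mathcal{S})$ --- is also closed there. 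For this I would take $g_{n} \in G^{u} \cap G'$ with $g_{n} \to g$ in $G$ and $r(g) = u$; then $h^{-1}g_{n} \in \Delta$ converges to $h^{-1}g \in \Delta$, the pairs $(h, h^{-1}g_{n})$ lie in $\Delta^{2} \cap \mu^{-1}(G')$ and converge in $\Delta^{2}$ to $(h, h^{-1}g)$, so by the closedness in Lemma \ref{6:35} the limit lies in $\mu^{-1}(G')$, i.e.\ $g = \mu(h, h^{-1}g) \in G'$, as required; the statement for $G_{u}$ follows via the homeomorphism $g \mapsto g^{-1}$ from $G^{u}$ to $G_{u}$.

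Finally, part (6) follows from Theorem \ref{5:30} --- whose hypotheses are supplied by parts (1) and (5) together with $H$ being locally compact Hausdorff --- once $r_{H} : H \to H^{0}$ is shown to be open; and this holds because $r_{H}$ restricted to the clopen piece $H'$ is open (established in the course of proving Theorem \ref{6:40}(6), via Theorem \ref{5:30}) and restricted to the clopen piece $\Delta$ is open by regularity (Definition \ref{6:30}), both maps landing in $r(\Delta)$ with its quotient topology $\mathcal{R}$. The main obstacle is thus the closedness of $G^{u} \cap G'$ in $G^{u}$ in part (5): this is the one place where the regularity hypothesis genuinely enters (through Lemma \ref{6:35}), whereas everything else amounts to bookkeeping of clopen pieces on top of Theorem \ref{6:40}.
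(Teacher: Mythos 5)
Your proof is correct and follows the same strategy as the paper: parts (1), (3), (4) are read off from Theorem \ref{6:40} on the clopen piece $H'$ and from elementary properties of the closed set $\Delta$, part (2) is the same four-case (and then two-subcase, via Lemma \ref{6:35}) decomposition of the product, and part (6) reduces to openness of $r$ on each clopen piece. The one place you go beyond the paper is part (5): the paper disposes of it with ``write $G^{u} = (G')^{u} \cup (\Delta \cap G^{u})$ and the result follows from Theorem \ref{6:40},'' which only gives agreement of $\mathcal{T}$ and $\mathcal{S}$ on each piece separately; since agreement on the pieces does not by itself force agreement on the union unless the pieces are clopen in \emph{both} topologies, one really does need to know that $G^{u} \cap G'$ is closed in $(G^{u}, \mathcal{S})$. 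Your sequential argument --- pushing a convergent sequence $g_{n} \in G^{u}\cap G'$ into $\Delta^{2}\cap\mu^{-1}(G')$ via $g_{n} = h(h^{-1}g_{n})$ and invoking the closedness from Lemma \ref{6:35} --- supplies exactly this missing step, and correctly locates it as the point where regularity enters; this is a genuine improvement in completeness over the published argument, not a detour from it.
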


\begin{proof}
The first, third and fourth parts follow easily from their
counterparts in Theorem \ref{6:40}. For the second part, the
continuity of inverses is clear. As $\Delta$ is clopen, it suffices
to check the continuity of products on $H' \times H', H' \times \Delta, 
\Delta \times H'$ and $\Delta \times \Delta$ separately.
The first is done. The second and third both have range in $\Delta$
and the continuity follows from that of the product in 
$G$, together with the fact the 
the topology on $H'$ is finer than that of $G$. For the last case, 
we use the fact that Lemma \ref{6:35} implies that 
\[
\Delta^{2} = \Delta^{2} \cap \mu^{-1}(G') \cup 
( \Delta^{2} - \Delta^{2} \cap \mu^{-1}(G') )
\]
are clopen. On the first set, continuity follows from 
Theorem \ref{6:40}, the second just 
 uses the continuity in $G$.
 
 For the proof of the fifth part, we can write 
 $G^{u} = (G')^{u} \cup ( \Delta \cap G^{u})$ and the result follows from 
 Theorem \ref{6:40}.
 
 The last part follows from the last part of Theorem 
 \ref{6:40} and Lemmas \ref{6:35} and \ref{6:39}.
\end{proof}

Let us remark that the groupoids $(G, \mathcal{S})$ and 
$(H, \mathcal{T})$ satisfy all hypotheses of Theorem \ref{5:50}.

  We turn now to the regular representations of $C^{*}_{r}(G, \sigma)$
  and $C^{*}_{r}(H, \sigma)$ on $L^{2}(G, \nu)$.
  We let $p$ be the projection operator on 
  $L^{2}(G, \nu)$ whose range is $L^{2}(G', \nu)$. That is, for 
  $\xi$ in $L^{2}(G, \nu)$ and $g$ in $G$, $p\xi(g) = \xi(g)$ for 
  $g$ in $G'$ and  $p\xi(g) = 0$  for $g$ in $\Delta$.

We let $E$ denote the $C^{*}$-algebra  
$\mathcal{B}(L^{2}(G, \nu))$
 and  define $\delta: A + B \rightarrow E$ by 
\[
\delta(x) = i[x, p] = i(xp - px),
\]
for $x$ in $A + B$.
It is clear that $\delta$ is a completely contractive, $*$-derivation.

We let $\chi_{\Delta}$ be the function on $G$ which is $1$ on $\Delta$
and $0$ on $G'$. It is neither continuous, nor compactly
supported, but will be useful, as follows.

\begin{lemma}
\label{6:120}
\begin{enumerate}
\item 
For any function $a$ in $C_{c}(H)$, the function $\chi_{\Delta}a$ (meaning the 
pointwise product) is in $C_{c}(H)$ and $ (1 - \chi_{\Delta})a$ is in 
$C_{c}(H')$.    
\item 
For any function $a$ in $C_{c}(H)$ , we have 
\begin{eqnarray*}
pa(1-p) & =  & p \left(  \chi_{\Delta}a \right)  \\
(1-p)ap & =  & \left(  \chi_{\Delta}a \right)p   \\
\delta(a) & = & \delta(\chi_{\Delta}a) \\
pap & =  & p\left( (1 -\chi_{\Delta})a \right) \\
pap & =  & \left( (1 -\chi_{\Delta})a \right)p .
\end{eqnarray*}
\item 
For any function $b$ in $C_{c}(G)$, 
   the function $\chi_{\Delta}b$ (meaning the 
pointwise product) is in $C_{c}(H)$.   
\item 
For any function $b$ in $C_{c}(G)$,  
  we have 
\begin{eqnarray*}
pb(1-p) & =  & p \left(  \chi_{\Delta}b \right)  \\
(1-p)bp & =  & \left(  \chi_{\Delta}b \right)p  \\
\delta(b) & = & \delta(\chi_{\Delta}b).
\end{eqnarray*} 
\end{enumerate}
\end{lemma}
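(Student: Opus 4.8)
The plan is to reduce everything to the explicit convolution formula for the regular representation together with the elementary set-theoretic facts about $\Delta = G-G'$ recorded in Lemma \ref{6:20}. The basic observation I would use throughout is that $p$ is just pointwise multiplication by $\chi_{G'} = 1-\chi_\Delta$ on $L^2(G,\nu) = \oplus_{u\in G^0}L^2(G_u,\nu_u)$, so that $p$, and each of $\lambda_G(f)$ and $\lambda_H(g)$, respects the decomposition into the fibres $L^2(G_u,\nu_u)$; hence every identity below can be checked one fibre at a time.

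For parts (1) and (3) only point-set topology is needed. By Theorem \ref{6:50}, $H$ carries the disjoint union topology with $H'$ and $\Delta$ clopen and with $\mathcal{T}_\Delta = \mathcal{S}_\Delta$. So for $a\in C_c(H)$ the function $\chi_\Delta a$ is $a|_\Delta$ extended by $0$ across the clopen set $H'$; it is continuous, with support contained in $\mathrm{supp}(a)\cap\Delta$, a closed subset of the compact set $\mathrm{supp}(a)$, hence compact, so $\chi_\Delta a\in C_c(H)$, while $(1-\chi_\Delta)a = a|_{H'}$ lies in $C_c(H')\subseteq C_c(H)$. For (3), given $b\in C_c(G)$, the set $\mathrm{supp}(b)\cap\Delta$ is compact because $\Delta$ is closed in $G$ (Lemma \ref{6:20}), and $b|_\Delta$ is continuous for $\mathcal{T}_\Delta = \mathcal{S}_\Delta$; extending by $0$ across $H'$ produces an element of $C_c(H)$.

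For parts (2) and (4) I would fix a unit $u$. If $u\notin r(\Delta)$ then $G_u\cap\Delta=\emptyset$: any $y$ there would give $y^{-1}\in\Delta$ with $r(y^{-1})=s(y)=u$, forcing $u\in r(\Delta)$. So on that fibre $p$ acts as the identity, $L^2(H_u,\nu_u)=0$, and every operator coming from a function in $C_c(H)$ vanishes, so all the asserted identities hold trivially there. If $u\in r(\Delta)=H^0$, then $H_u=G_u$ with the same measure (Theorem \ref{6:50}), so $\lambda_u^H$ is the integral operator with the same kernel formula as $\lambda_u^G$. I would then write out each of $p\lambda(f)(1-p)$, $(1-p)\lambda(f)p$ and $p\lambda(f)p$ on a vector $\xi$, evaluated at a point $x$, and split according to whether $x$ and the integration variable $y$ lie in $G'$ or in $\Delta$. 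The point is that only certain products $xy^{-1}$ occur: if $\xi$ is supported in $\Delta$ and $x\in G'$ then $xy^{-1}\in G'\Delta\subseteq\Delta$; if $\xi$ is supported in $G'$ and $x\in\Delta$ then $xy^{-1}\in\Delta G'\subseteq\Delta$; and $G'$ being a subgroupoid ($G'G'\subseteq G'$) annihilates the remaining corners (for instance $p\lambda(\chi_\Delta a)p=0$ and $p\lambda((1-\chi_\Delta)a)(1-p)=0$). This yields $p\lambda(a)(1-p)=p\lambda(\chi_\Delta a)(1-p)=p\lambda(\chi_\Delta a)$, $(1-p)\lambda(a)p=\lambda(\chi_\Delta a)p$, and $p\lambda(a)p=p\lambda((1-\chi_\Delta)a)=\lambda((1-\chi_\Delta)a)p$; and since the two $p\lambda(a)p$ terms in $\lambda(a)p-p\lambda(a)$ cancel, $\delta(a)=i[\lambda(a),p]=i[\lambda(\chi_\Delta a),p]=\delta(\chi_\Delta a)$. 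Part (4) is identical, using that $\lambda_G(b)$ is genuinely defined on all of $L^2(G,\nu)$ and that $\chi_\Delta b\in C_c(H)$ by part (3).

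I expect the only real point requiring care to be the bookkeeping in the last step: one must be sure that the a priori distinct operators $\lambda_H(\chi_\Delta a)$ and $\lambda_H(\chi_\Delta b)$ — defined through the regular representation of $H$ on $L^2(H,\nu)$ and extended by $0$ to $L^2(G,\nu)$ — really do agree with the relevant corners of the operators on $L^2(G,\nu)$. The fibrewise analysis above is exactly what settles this: trivially on the fibres over $G^0\setminus r(\Delta)$, and on the fibres over $r(\Delta)$ because there $H$ and $G$ have the same source fibres and the same measures, so the two convolution kernels literally coincide on the region singled out by the support constraints of Lemma \ref{6:20}.
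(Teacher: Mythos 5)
Your proposal is correct and follows essentially the same route as the paper: parts (1) and (3) from $\Delta$ being clopen in $H$ with $\mathcal{T}_\Delta=\mathcal{S}_\Delta$, and parts (2) and (4) by evaluating the convolution kernel and using $G'\Delta,\ \Delta G'\subseteq\Delta$ together with $G'$ being a subgroupoid to see which corner each piece of the support contributes to, with $\delta(a)=\delta(\chi_\Delta a)$ then following by cancellation of the diagonal corners. The only cosmetic difference is that you organize the computation fibre by fibre and dispose of the fibres over $G^0\setminus r(\Delta)$ separately, whereas the paper runs the same kernel computation once on $L^2(G,\nu)$ and obtains the fifth and sixth identities by subtracting the first from $pa$.
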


\begin{proof}
The first part follows from the fact that 
$\Delta$ is both closed and open in $H$ and $H'$ is the complement of
$\Delta $ in $H$.

For the first equation of the second part, let 
$\xi$ be a function in $L^{2}(G, \nu)$.  From the definition
of 
$p$,  $(pa(1-p)\xi)(g) = \left(p (\chi_{\Delta})a)  \xi\right)(g) =0$, 
 if $g$ is in $\Delta$. Now, let 
$g$ be in $G'$ and
we compute
\begin{eqnarray*}
(pa(1-p)\xi)(g) & =  & (ap\xi)(g)  \\
 &  =  &  \int_{h \in G^{r(g)}} a(gh^{-1}) 
 ((1-p) \xi)(h)\sigma(gh^{-1}, h)
    d\nu_{s(g)}(h).
\end{eqnarray*}
Now $((1-p)\xi)(h)$ is non-zero 
only if $h$ is in $\Delta$. As $g$ is 
in $G'$, this means that $gh^{-1}$ must be in $\Delta$ as well.
For such values of $gh^{-1}$, $a(gh^{-1}) = \left( \chi_{\Delta}a\right)(gh^{-1}) $ and 
we have 
\begin{eqnarray*}
(pa(1-p)\xi)(g) & = & 
 \int_{h \in G_{s(g)}} a(gh^{-1}) ((1-p) \xi)(h) \\
     &   & \hspace{2cm}  \sigma(gh^{-1}, h)
  d\nu_{s(g)}(h) \\
 & = & 
 \int_{h \in G_{s(g)} \cap \Delta} a(gh^{-1}) ((1-p) \xi)(h) \\
   &  &   \hspace{2cm}  \sigma(gh^{-1}, h)
  d\nu_{s(g)}(h) \\
   &  =  & \int_{h \in G_{s(g)} \cap \Delta }
    (\chi_{\Delta} a)(gh^{-1}) \xi
   (h)\sigma(gh^{-1},h) d\nu_{s(g)}(h) \\
    &  =  & \int_{h \in G_{s(g)} }
    (\chi_{\Delta} a)(gh^{-1}) \xi
   (h) \sigma(gh^{-1}, h) d\nu_{s(g)}(h) \\
     &  =  &  \left( (\chi_{\Delta} a) \xi \right)(g).
     \end{eqnarray*}

The second equation is obtained by taking adjoints of both 
sides of the first
(with $a^{*}$ replacing $a$). For the third equation, we compute, 
using the first two equations, 
\begin{eqnarray*}
\delta( \chi_{\Delta}a ) & = &i\left(  \chi_{\Delta}a  p -  p \chi_{\Delta}a
\right) \\
  & =  & i \left((1-p)ap  - pa(1-p) \right) \\
    & = &i \left( ap-pa \right) \\
  & = & \delta(a).
  \end{eqnarray*}
  
  For the fifth equation, it follows from the first 
  that 
  \[
  pap = pa  - p \left(  \chi_{\Delta}a \right) 
  = p(\left( (1- \chi_{\Delta})a \right).
  \]
  as desired. The sixth is done similarly.
  
The third part follows from the fact
that $\Delta$ has the same topology in $H$ as it does in $G$, and 
that $\Delta$ is a clopen subset of $H$. So the function 
$\chi_{\Delta}b$ is a continuous function of compact support on $H$.

The proofs of the last part are exactly the same as the first 
three parts of the second part 
and we omit the details.
\end{proof}

  \begin{lemma}
  \label{6:130}
  If $b$ is in $C_{c}(G, \sigma)$, then 
  $\delta(b) \in \delta(C_{c}(H, \sigma))$.
  \end{lemma}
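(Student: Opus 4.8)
The plan is to show that for $b \in C_c(G,\sigma)$ the element $\delta(b)$ already lies in $\delta(C_c(H,\sigma))$, by producing an explicit element of $C_c(H,\sigma)$ that has the same "off-diagonal part" as $b$ relative to the projection $p$. By part (4) of Lemma \ref{6:120}, $\delta(b) = \delta(\chi_\Delta b)$, and by part (3) the function $\chi_\Delta b$ is already a continuous function of compact support on $H$ (supported on $\Delta$, which is clopen in $H$ and carries the same topology as in $G$). So the natural candidate is simply $a = \chi_\Delta b$.

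First I would verify that $a = \chi_\Delta b \in C_c(H,\sigma)$: this is precisely part (3) of Lemma \ref{6:120}. Then I would compute $\delta(a)$. Since $a = \chi_\Delta a$ (as $a$ is supported on $\Delta$), part (2) of Lemma \ref{6:120} gives $\delta(a) = \delta(\chi_\Delta a) = \delta(a)$ trivially, but more to the point part (4) applied to $b$ gives $\delta(b) = \delta(\chi_\Delta b) = \delta(a)$. Hence $\delta(b) = \delta(a)$ with $a \in C_c(H,\sigma)$, which is exactly the assertion $\delta(b) \in \delta(C_c(H,\sigma))$.

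The only subtlety worth spelling out — and what I'd expect to be the "main obstacle," though it is really a bookkeeping point rather than a genuine difficulty — is making sure that the operator $\delta(b) = i[b,p]$ computed with $b$ acting on $L^2(G,\nu)$ agrees with $\delta(a) = i[a,p]$ computed with $a$ acting on $L^2(G,\nu)$, given that $a$ is a priori an element of $C_c(H,\sigma)$ rather than $C_c(G,\sigma)$. This is handled by Theorem \ref{5:50}(2)–(3): the representation $\lambda_H$ of $C_c(H,\sigma)$ on $L^2(G,\nu)$ and the representation $\lambda_G$ of $C_c(G,\sigma)$ are compatible in the sense that multiplication by $\chi_\Delta$ (viewed as cutting a function down to its $\Delta$-part) intertwines the relevant actions, so the commutator $[b,p]$ literally equals $[\chi_\Delta b, p]$ as operators — which is the content of the first three displayed equations in part (2) (and part (4)) of Lemma \ref{6:120}. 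Thus no separate argument is needed; the lemma reduces to chaining together parts (3) and (4) of Lemma \ref{6:120}.

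In short: given $b \in C_c(G,\sigma)$, set $a = \chi_\Delta b$. By Lemma \ref{6:120}(3), $a \in C_c(H,\sigma)$. By Lemma \ref{6:120}(4), $\delta(b) = \delta(\chi_\Delta b) = \delta(a)$. Therefore $\delta(b) \in \delta(C_c(H,\sigma))$, completing the proof.
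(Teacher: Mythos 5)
Your proof is correct and follows exactly the paper's argument: the paper likewise sets $a=\chi_{\Delta}b$, cites part (3) of Lemma \ref{6:120} to place it in $C_{c}(H,\sigma)$, and cites the last equation of part (4) to conclude $\delta(b)=\delta(\chi_{\Delta}b)$. The extra remarks about compatibility of the representations are fine but not needed, since that compatibility is already built into the statement of Lemma \ref{6:120}.
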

  
\begin{proof}
This follows at once from part 3 and 
the last equation in part 4 of Lemma \ref{6:120}.
\end{proof}

   Finally, we need a reasonable extension of the notion
of finite index from groups to groupoids to obtain condition C1
of Theorem \ref{3:70}.

\begin{defn}
\label{6:140}
Let $G^{0} \subseteq G' \subseteq G$ be an open subgroupoid.
We say it has \emph{finite index} if there is a 
constant $K \geq 1$ such that, for any $u$ in $G^{0}$, 
there is a finite subset $F \subseteq G_{u}$ with 
$\# F \leq K$ 
and $G_{u} =  G'F$.
\end{defn}

We remark that if this holds for $G$, then it also holds for $H$
as it is a purely algebraic condition.

 \begin{lemma}
 \label{6:145}
 Let $H^{0} \subseteq H' \subseteq H$ be an open subgroupoid
 of finite index.  If $K$ is as in the definition and $p$ 
 is the projection
 of $L^{2}(H, \nu)$ onto $L^{2}(H', \nu)$, 
 \ref{6:140}, then for every $a$ in $C_{r}^{*}(H, \sigma)$ then 
 $\Vert a \Vert \leq K \Vert pa \Vert$.
 \end{lemma}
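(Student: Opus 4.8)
The plan is as follows. For each unit $u\in H^{0}$ the finite index hypothesis provides at most $K$ right translations which, suitably twisted, become intertwining unitaries from certain fibre representations $\lambda_{v_{f}}$ onto $\lambda_{u}$ and which carry the ranges of the corresponding fibres of $p$ onto subspaces covering $L^{2}(H_{u},\nu_{u})$; an orthogonality estimate over such a covering then bounds $\Vert\lambda_{u}(a)\Vert$ by $K^{1/2}\Vert pa\Vert$, and taking the supremum over $u$ finishes the proof. Concretely, write $L^{2}(H,\nu)=\bigoplus_{u\in H^{0}}L^{2}(H_{u},\nu_{u})$, so that $\Vert a\Vert=\sup_{u\in H^{0}}\Vert\lambda_{u}(a)\Vert$ and $p=\bigoplus_{u\in H^{0}}p_{u}$, where $p_{u}$ is multiplication by $\chi_{(H')_{u}}$ on $L^{2}(H_{u},\nu_{u})$ and $(H')_{u}=H'\cap H_{u}$ is a Borel (indeed relatively open) subset of $H_{u}$. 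Fix $u$ and, by the finite index hypothesis, a finite set $F_{u}\subseteq H_{u}$ with $\#F_{u}\le K$ and $H_{u}=H'F_{u}=\bigcup_{f\in F_{u}}(H')_{v_{f}}f$, where $v_{f}=r(f)\in H^{0}$.

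For each $f\in F_{u}$ the map $z\mapsto zf$ is a homeomorphism of $H_{v_{f}}$ onto $H_{u}$, with inverse $x\mapsto xf^{-1}$, and it carries $\nu_{v_{f}}$ to $\nu_{u}$; this last point follows from left invariance of the Haar system applied to $x=f^{-1}$ together with the defining relation $\nu_{w}(E)=\nu^{w}(E^{-1})$. Twisting by the unimodular function $x\mapsto\sigma(xf^{-1},f)$ therefore defines a unitary
\[
W_{f}\colon L^{2}(H_{v_{f}},\nu_{v_{f}})\longrightarrow L^{2}(H_{u},\nu_{u}),\qquad (W_{f}\eta)(x)=\sigma(xf^{-1},f)\,\eta(xf^{-1}),
\]
which maps the range of $p_{v_{f}}$ onto $L^{2}((H')_{v_{f}}f,\nu_{u})\subseteq L^{2}(H_{u},\nu_{u})$. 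A direct computation on $C_{c}(H,\sigma)$ — substitute $y=zf$ in the convolution formula defining $\lambda_{u}(a)$ and apply the $2$-cocycle identity $\sigma(g,h)\sigma(gh,k)=\sigma(h,k)\sigma(g,hk)$ with $g=xf^{-1}z^{-1}$, $h=z$, $k=f$ — shows $W_{f}\lambda_{v_{f}}(a)=\lambda_{u}(a)W_{f}$; by norm continuity this persists for every $a\in C^{*}_{r}(H,\sigma)$, and taking adjoints gives $W_{f}^{*}\lambda_{u}(a)=\lambda_{v_{f}}(a)W_{f}^{*}$.

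Next, fix an ordering of $F_{u}$ and put $A_{f}=(H')_{v_{f}}f\setminus\bigcup_{f'<f}(H')_{v_{f'}}f'$, so that $H_{u}=\bigsqcup_{f\in F_{u}}A_{f}$ is a Borel partition with $A_{f}\subseteq(H')_{v_{f}}f$. Let $q_{f}$ be multiplication by $\chi_{A_{f}}$; these are pairwise orthogonal projections with $\sum_{f}q_{f}=1$, and since $q_{f}\le W_{f}p_{v_{f}}W_{f}^{*}$ we have $q_{f}=q_{f}W_{f}p_{v_{f}}W_{f}^{*}$. For $a\in C^{*}_{r}(H,\sigma)$ and $\xi\in L^{2}(H_{u},\nu_{u})$ the intertwining relation yields $q_{f}\lambda_{u}(a)\xi=q_{f}W_{f}\bigl(p_{v_{f}}\lambda_{v_{f}}(a)\bigr)W_{f}^{*}\xi$, hence
\[
\Vert q_{f}\lambda_{u}(a)\xi\Vert\le\Vert p_{v_{f}}\lambda_{v_{f}}(a)\Vert\,\Vert\xi\Vert\le\Vert pa\Vert\,\Vert\xi\Vert,
\]
the last inequality because $p_{v_{f}}\lambda_{v_{f}}(a)$ is a direct summand of $p\lambda_{H}(a)=pa$. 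As $\lambda_{u}(a)\xi=\sum_{f\in F_{u}}q_{f}\lambda_{u}(a)\xi$ is an orthogonal decomposition, $\Vert\lambda_{u}(a)\xi\Vert^{2}=\sum_{f\in F_{u}}\Vert q_{f}\lambda_{u}(a)\xi\Vert^{2}\le(\#F_{u})\,\Vert pa\Vert^{2}\Vert\xi\Vert^{2}\le K\Vert pa\Vert^{2}\Vert\xi\Vert^{2}$. Taking the supremum over unit vectors $\xi$ and then over $u\in H^{0}$ gives $\Vert a\Vert\le K^{1/2}\Vert pa\Vert\le K\Vert pa\Vert$.

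The construction of the partition and the closing orthogonality estimate are routine. The main obstacle is the first step: verifying that the twisted right translations $W_{f}$ are genuinely unitary — this is precisely where left invariance of the Haar system is used — and that they intertwine the fibre representations, which requires keeping careful track of the composability constraints and a single, somewhat delicate application of the $2$-cocycle identity. Both are standard facts about the regular representation of a locally compact groupoid with $2$-cocycle, so once they are established the remainder of the argument is formal.
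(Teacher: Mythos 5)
Your proof is correct and follows essentially the same route as the paper: both use the finite-index covering $H_{u}=H'F$ together with the (cocycle-twisted) right-translation unitaries to transport $p$ into a family of projections whose ranges cover $L^{2}(H_{u},\nu_{u})$, and then estimate $\Vert\lambda_{u}(a)\Vert$ fibrewise. The only differences are cosmetic refinements — you disjointify the cover by ordering where the paper assumes $F$ minimal so that the conjugated projections are already orthogonal and sum to the identity, and your Pythagorean estimate yields the slightly sharper constant $K^{1/2}$ where the paper's triangle-inequality argument gives $K$.
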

 
 \begin{proof}
 Let us fix a unit $u$ in $H^{0}$ and restrict our attention to
 $L^{2}(G_{u}, \nu_{u})$. Let $F \subseteq H^{u}$ be a 
 subset of at most $K$ elements such 
 that $F H' = H^{u}$. We may assume that this set is minimal, so that
 $f_{1} f_{2}^{-1}$ is not in $H'$, for $f_{1} \neq f_{2}$ in $F$. 
 For each $f$ in $F$, the map sending $h$ in $H_{u}$ to $hf^{-1}$
 in $H_{r(f)}$ induces a
 unitary operator $ U_{f}: L^{2}(H_{r(f)}, \nu_{r(f)}) 
 \rightarrow L^{2}(H_{u}, \nu_{u})$. This unitary operator 
 conjugates the part of the left regular
  representation of $C_{r}^{*}(H, \sigma)$ on 
  $L^{2}(H_{u}, \nu_{u})$
 onto that on $L^{2}(H_{r(f)}, \nu_{r(f)})$.
 For convenience, we denote these two Hilbert spaces by
 $\mathcal{H}_{u}$ and  $\mathcal{H}_{r(f)}$, respectively. 
   Define $p_{f} = U_{f} p U_{f}^{*}$.
 It is an easy computation to check that the condition that
 $ H'F = H_{u}$ implies that 
 $\sum_{f \in F} p_{f}$ is the identity operator on 
 $L^{2}(H_{u}, \nu_{u})$. 
  Hence we have 
 \begin{eqnarray*}
 \Vert a|_{\mathcal{H}_{u}}\Vert & = &  \Vert \sum_{f \in F} 
 p_{f}a|_{\mathcal{H}_{u}} \Vert  \\
  &  \leq  &  \sum_{f \in F} \Vert p_{f}a|_{\mathcal{H}_{u}} \Vert \\
  & = & \sum_{f \in F} \Vert U_{f} p U_{f}^{*} a|_{\mathcal{H}_{u}} \Vert \\
   & = & \sum_{f \in F} \Vert  p U_{f}^{*} a U_{f}|_{\mathcal{H}_{s(f)}} \Vert \\
   & = & \sum_{f \in F} \Vert  p a|_{\mathcal{H}_{s(f)}} \Vert \\
   & \leq  &  K \Vert p a \Vert.
 \end{eqnarray*} 
 Taking the supremum over all $u$ completes the proof.
 \end{proof}

   \begin{cor}
  \label{6:150}
 If $G^{0} \subseteq G' \subseteq G$ is an open subgroupoid
 and the inclusion is  regular and finite index, 
  then
  the $*$-algebra $C_{c}(H)$ satisfies 
  condition C1 of Theorem \ref{3:70}. 
  \end{cor}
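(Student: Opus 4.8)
The plan is to verify condition C1 of Theorem \ref{3:70} directly, with $K$ the finite index constant, taking as approximating element the restriction of the given function to $G'$.

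Given $a \in C_{c}(H) = \mathcal{A}$, I would set $a' = (1 - \chi_{\Delta})a$. By Lemma \ref{6:120}(1) this lies in $C_{c}(H') \subseteq C_{c}(H)$; and since $\delta$ is linear while $\delta(a) = \delta(\chi_{\Delta}a)$ by Lemma \ref{6:120}(2), one gets $\delta(a') = \delta(a) - \delta(\chi_{\Delta}a) = 0$, so $a' \in \ker(\delta) \cap \mathcal{A}$. It then suffices to estimate $\|a - a'\| = \|c\|$ with $c := \chi_{\Delta}a \in C_{c}(H)$, noting $\delta(c) = \delta(a)$.

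The first step would be the identity $pcp = 0$. One computes on $L^{2}(H,\nu)$, which $\lambda_{H}(c)$ leaves invariant and on which $p$ restricts to the projection onto $L^{2}(H',\nu)$: for $\xi$ supported on $H'$ the convolution defining $\lambda_{H}(c)\xi$ involves only values $c(gh^{-1})$ with $g, h \in H' \subseteq G'$, so $gh^{-1} \in G'$ and $c(gh^{-1}) = 0$, as $c$ is supported on $\Delta$. Granting $pcp = 0$, we have $pc = pc(1-p)$ and $cp = (1-p)cp$ — these are the off-diagonal blocks of $c$ with respect to $\mathrm{ran}(p) \oplus \mathrm{ran}(1-p)$ — and a short computation with $\delta(c)\delta(c)^{*}$ gives $\|pc\| = \|pc(1-p)\| \leq \|\delta(c)\|$.

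The second step is to apply Lemma \ref{6:145} to the open subgroupoid $H^{0} \subseteq H' \subseteq H$, which is again of finite index since that condition is purely algebraic; since $\lambda_{H}(c)$ preserves $L^{2}(H,\nu)$ and $p$ restricts there to the projection onto $L^{2}(H',\nu)$, Lemma \ref{6:145} gives $\|c\| \leq K\|pc\|$. Combining,
\[
\|a - a'\| = \|c\| \leq K\|pc\| \leq K\|\delta(c)\| = K\|\delta(a)\|,
\]
which is precisely condition C1 with constant $K$. The one genuinely delicate point is the bookkeeping in the first step: reconciling the projection $p$ used to define $\delta$ (on $L^{2}(G,\nu)$, range $L^{2}(G',\nu)$) with the one in Lemma \ref{6:145} (on $L^{2}(H,\nu)$, range $L^{2}(H',\nu)$). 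This comes down to checking that $L^{2}(H,\nu)$ is invariant under both $p$ and $\lambda_{H}(C_{c}(H))$ and that $p$ restricts there to the correct projection, using $H^{0} = (H')^{0} = r(\Delta)$ and parts (5) of Theorems \ref{6:40} and \ref{6:50}; everything else is the two short estimates above.
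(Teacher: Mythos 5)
Your proposal is correct and follows essentially the same route as the paper: the same choice $a' = (1-\chi_{\Delta})a$, the same off-diagonal block decomposition of $\delta(\chi_{\Delta}a)$ to get $\Vert p(\chi_{\Delta}a) \Vert \leq \Vert \delta(a) \Vert$, and the same appeal to the finite-index estimate of Lemma \ref{6:145} to recover $\Vert \chi_{\Delta}a \Vert \leq K \Vert p(\chi_{\Delta}a) \Vert$. The only difference is presentational: you make explicit the identification of the projection on $L^{2}(G,\nu)$ with the one in Lemma \ref{6:145} on $L^{2}(H,\nu)$, and you derive $p(\chi_{\Delta}a)p = 0$ directly rather than quoting the corresponding identities from Lemma \ref{6:120}.
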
 
  
  \begin{proof}
  Let $a$ be in $C_{c}(H)$ and define $a' = (1-\chi_{\Delta})a$
  which is in $C_{c}(H')$ by Lemma \ref{6:120}. 
We have 
\[
\delta(a) = i(ap-pa) = i\left( (1-p)ap - pa(1-p) \right).
\]
The first of these terms maps $pL^{2}(G, \nu)$ to  $(1-p)L^{2}(G, \nu)$
and the second the other way. It follows then that 
\begin{eqnarray*}
K \Vert \delta(a) \Vert & = & 
K \max\{ \Vert (1-p)ap \Vert, \Vert  pa(1-p)\Vert \} \\
    & = &  \max\{ K \Vert (\chi_{\Delta}a)p \Vert,
     K \Vert  p(\chi_{\Delta}a) \Vert \} \\
  & \geq  &  \Vert \chi_{\Delta}a \Vert  \\
   & =  &  \Vert a - a' \Vert. 
   \end{eqnarray*}
   \end{proof}

    \begin{lemma}
  \label{6:170}
  If $G^{0} \subseteq G' \subseteq G$ is an open subgroupoid
 and the inclusion is  regular, then
  the $*$-algebra $C_{c}(H)$ satisfies
   condition C2 of Theorem \ref{3:70}.
  \end{lemma}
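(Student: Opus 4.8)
The plan is to verify condition C2 of Theorem \ref{3:70} directly: given $a_1,\dots,a_I \in C_c(H)$, one must produce $0 \le e \le 1$ in $M(C^{*}_{r}(H,\sigma))$ and $b_1,\dots,b_I \in C^{*}_{r}(G,\sigma)$ with $a_i = ea_i = a_ie = eb_i = b_ie$ and $\delta(b_i) = \delta(a_i)$. The $b_i$ will come from a Tietze extension and $e$ will be a local unit drawn from $C_c(H^0)$, with $H^0 = r(\Delta)$ carrying the quotient topology $\mathcal R$. The key first observation is that, by Lemma \ref{6:120}, only the ``boundary part'' of each $a_i$ is seen by $\delta$: writing $a_i = (1-\chi_\Delta)a_i + \chi_\Delta a_i$ with $(1-\chi_\Delta)a_i \in C_c(H')$ and $\chi_\Delta a_i \in C_c(H)$ supported on $\Delta$, one has $\delta(a_i) = \delta(\chi_\Delta a_i)$.

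Since $\Delta$ is clopen in $H$ and carries there its relative topology from $G$, while $\Delta$ is closed in $G$, the function $a_i|_\Delta$ is continuous and compactly supported on the closed set $\Delta \subseteq G$; as in the discussion after Definition \ref{5:60} (extend inside the one-point compactification of $G$, then truncate) it has an extension $b_i \in C_c(G)$ with $b_i|_\Delta = a_i|_\Delta$, whence $\chi_\Delta b_i = \chi_\Delta a_i$ in $C_c(H)$ and Lemma \ref{6:120} gives $\delta(b_i) = \delta(\chi_\Delta b_i) = \delta(\chi_\Delta a_i) = \delta(a_i)$. For $e$, put $C_i = \mathrm{supp}_{\mathcal T}(a_i)$. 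Because $C_i$ is $\mathcal T$-compact and $(H,\mathcal T) \hookrightarrow (G,\mathcal S)$ is a continuous injection into a Hausdorff space, $\mathcal T$ and $\mathcal S$ agree on $C_i$, $C_i$ is $\mathcal S$-compact, and (since $r$ is continuous on $\Delta$ and on $H'$ with values in $(H^0,\mathcal R)$) the sets $r(C_i)$ and $s(C_i)$ are $\mathcal R$-compact. Let $K$ be their union over $i$; as $(H^0,\mathcal R)$ is locally compact Hausdorff (Theorems \ref{6:40}, \ref{6:50}) choose $e_0 \in C_c(H^0)$ with $0 \le e_0 \le 1$ and $e_0 \equiv 1$ on $K$, and set $e = \lambda(e_0) \in M(C^{*}_{r}(H,\sigma))$, so $0 \le e \le 1$. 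Since $e$ acts on $L^2(H,\nu)$ by $(e\xi)(g) = e_0(r(g))\xi(g)$ (the $C_c(H^0)$-multiplier action of the remark after Theorem \ref{5:50}), the relations $ea_i = a_ie = a_i$ follow from $e_0 \equiv 1$ on $K$.

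It remains to secure $eb_i = b_ie = a_i$, and this is where I expect the real work to lie. The same computation gives $eb_i = \lambda_H(e_0 \cdot b_i|_H)$ with $(e_0 \cdot b_i|_H)(g) = e_0(r(g))b_i(g)$, a function already equal to $a_i$ on $\Delta$; so one needs the extension $b_i$ chosen so that in addition $b_i = a_i$ on $C_i \cap H'$ and $b_i = 0$ on $\big(H' \cap r^{-1}(\mathrm{supp}(e_0))\big) \setminus C_i$. The obstacle is that $\mathcal T'$ is in general strictly finer than $\mathcal S$ on $H'$, so $a_i|_{H'}$ is not $\mathcal S$-continuous and has no naive extension to $G$. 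I would overcome this by performing the Tietze extension not over $\Delta$ alone but over $D = \Delta \cup \big(H' \cap r^{-1}(\mathrm{supp}(e_0))\big)$: using Theorem \ref{6:40}(5) one has $H' \cap r^{-1}(\mathrm{supp}(e_0)) = G' \cap r^{-1}(\mathrm{supp}(e_0))$, and since $\mathrm{supp}(e_0)$ is $\mathcal R$-compact, hence $\mathcal S$-compact in $G^0$, the set $D = \Delta \cup r_G^{-1}(\mathrm{supp}(e_0))$ is $\mathcal S$-closed in $G$. On $D$ the prescribed function is $a_i$, which vanishes off the compact set $C_i$, where $\mathcal T$ and $\mathcal S$ coincide; choosing $\mathrm{supp}(e_0)$ small enough — this is where regularity of the inclusion enters, via Lemma \ref{6:35} and the openness of $r$ on $\Delta$, to keep $C_i$ $\mathcal S$-separated from the rest of $D$ — this function is $\mathcal S$-continuous on $D$, and Tietze produces $b_i \in C_c(G)$ with the required behaviour, so that $eb_i = a_i$. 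The relation $b_ie = a_i$ is the mirror statement in terms of $s$, proved in the same way (or by taking adjoints), and $e, b_1,\dots,b_I$ then witness condition C2.
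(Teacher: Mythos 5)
Your overall strategy is the paper's: the same multiplier $e$ (a function in $C_{c}(H^{0})$ that is identically one on $r(\mathrm{supp}\,a)\cup s(\mathrm{supp}\,a)$ and supported in a compact set $K$), and the same idea of producing $b_{i}\in C_{c}(G)$ by a Tietze-type extension that agrees with $a_{i}$ on $\Delta$ and on the preimage of $K$ under the range and source maps, so that $\delta(b_{i})=\delta(\chi_{\Delta}b_{i})=\delta(\chi_{\Delta}a_{i})=\delta(a_{i})$ by Lemma \ref{6:120} while $eb_{i}=ea_{i}=a_{i}$. The paper obtains the required extension in one stroke from Lemma \ref{5:20}, applied with $Y=\Delta$ and this $K$; your third paragraph is in effect an attempt to reprove that lemma inline, and that is where the problems lie.

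Two concrete gaps. First, your extension domain $D=\Delta\cup r^{-1}(\mathrm{supp}(e_{0}))$ omits $s^{-1}(\mathrm{supp}(e_{0}))$. The relations $eb_{i}=a_{i}$ and $b_{i}e=a_{i}$ must be witnessed by the \emph{same} $b_{i}$, so the extension has to be performed once over $\Delta\cup r^{-1}(K)\cup s^{-1}(K)$ (as in Lemma \ref{5:20}); running ``the mirror argument in terms of $s$'' produces a different extension, and taking adjoints replaces $b_{i}$ by $b_{i}^{*}$. This is fixable but must be said. Second, and more seriously, the $\mathcal{S}$-continuity of the prescribed function on $D$ --- which you correctly identify as the real work --- is not established. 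Your proposed mechanism, shrinking $\mathrm{supp}(e_{0})$ so as to keep $C_{i}$ ``$\mathcal{S}$-separated from the rest of $D$,'' cannot succeed as stated: however small you take it, $\mathrm{supp}(e_{0})$ must contain $r(C_{i})\cup s(C_{i})$, so $D$ always contains all of $H'\cap r^{-1}(r(C_{i}))$, and points of that set outside $C_{i}$ are exactly the ones that may $\mathcal{S}$-accumulate on $C_{i}$ (this is the phenomenon that makes $\mathcal{T}'$ strictly finer than $\mathcal{S}$ on $H'$); no choice of $\mathrm{supp}(e_{0})$ separates them, and neither Lemma \ref{6:35} nor openness of $r$ on $\Delta$ is addressed to this point. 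The paper's Lemma \ref{5:20} handles the continuity without any smallness or regularity hypothesis, by writing the prescribed function as $a$ on the $G$-closed set $\overline{U}\cup\Delta$ (where the two topologies agree because $\overline{U}$ is compact in the finer topology) and as the constant $0$ on $\bigl(r^{-1}(K)\cup s^{-1}(K)\bigr)\setminus U$, and then pasting. You should either invoke Lemma \ref{5:20} directly or reproduce that pasting argument; the separation argument you sketch is not a substitute for it.
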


  \begin{proof}
  Let $a_{1}, \ldots, a_{M}$  be in $C_{c}(H)$. 
  Define $a= (a_{1}, \ldots, a_{M})$ 
   Let $U = \{ x \in H \mid a(x) \neq 0 \}$ so that 
   $\overline{U}$ is compact in $H$. It follows that
   $r(\overline{U}) \cup s(\overline{U})$ is compact in $H^{0}$. 
We can find $e: H^{0} \rightarrow [0,1]$ which is continuous, identically
one on    $r(\overline{U}) \cup s(\overline{U})$ and has support contained 
in the compact set $K \subseteq H^{0}$.

   We apply Lemma \ref{5:20} with $K, a$ as above and $Y= \Delta$ to each 
  to obtain $b = (b_{1}, \ldots, b_{M})$.
   
   Then we have 
   \[
   \delta(b_{m}) = \chi_{\delta}b_{m} = \chi_{\delta}a_{m} = \delta(a_{m}), 
   \]
   from parts 2 and 4 of Lemma \ref{6:120}, for $1 \leq m \leq M$.

   We claim that $a_{m}= ea_{m}$, for all $ 1 \leq m \leq M$.
   For any $x$ in $H$, we have $(ea_{m})(x) = e(r(x))a_{m}(x)$. If $x$ is 
   not in $U$, then $a_{m}(x) = 0 = e(r(x))a_{m}(x)$. On the other hand, if
   $x$ is in $U$, then $r(x) $ is in $r(supp(a)) $ and so 
   $e(r(x))=1$. 
   
   Finally, we claim that $ea_{m}(x) = eb_{m}(x)$, for all $x$ in $H$. 
   If $r(x)$ is in $K$, this follows from the fact that 
   $a_{m}(x) = b_{m}(x)$ for such $x$. On the other hand, 
   if $r(x) $ is not in $K$, then $e_{m}(x) = 0$.
   
   The proof that $a_{m} = a_{m}e = b_{m}e$ is done in a similar way.  
  \end{proof}
  
  What remains at this point is to verify that $ C^{*}_{r}(H, \sigma) \cap \ker(\delta) = C^{*}_{r}(H', \sigma)$ and 
   $ C^{*}_{r}(G, \sigma) \cap \ker(\delta)  =   C^{*}_{r}(G', \sigma) $.
   The first is relatively simple.

  \begin{thm}
  \label{6:180}
  If $G^{0} \subseteq G' \subseteq G$ is an open subgroupoid
 and the inclusion is  regular and finite index, 
  then 
\[
   C^{*}_{r}(H, \sigma) \cap \ker(\delta)  =   C^{*}_{r}(H', \sigma). 
\]
  \end{thm}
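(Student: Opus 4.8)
The plan is to establish the two inclusions separately; the substantive input is Lemma \ref{6:120} together with the finite--index estimate packaged in Corollary \ref{6:150}, and the rest is a routine density argument.

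For the inclusion $C^{*}_{r}(H', \sigma) \subseteq C^{*}_{r}(H, \sigma) \cap \ker(\delta)$, the containment $C^{*}_{r}(H', \sigma) \subseteq C^{*}_{r}(H, \sigma)$ is part (2) of Theorem \ref{6:10} applied to the open subgroupoid $H^{0} \subseteq H' \subseteq H$ (which is an open subgroupoid since $H'$ is clopen in $H$ by construction of $\mathcal{T}$ in Theorem \ref{6:50}). To see that $\delta$ vanishes on it, I would first take $h$ in $C_{c}(H')$; since $h$ is supported in $H' = H \setminus \Delta$ we have $\chi_{\Delta}h = 0$, so the third identity in part (2) of Lemma \ref{6:120} gives $\delta(h) = \delta(\chi_{\Delta}h) = \delta(0) = 0$. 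As $\delta$ is completely contractive, hence norm continuous, and $C_{c}(H')$ is dense in $C^{*}_{r}(H', \sigma)$, it follows that $\delta$ is zero on all of $C^{*}_{r}(H', \sigma)$.

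For the reverse inclusion, let $a$ be in $C^{*}_{r}(H, \sigma)$ with $\delta(a) = 0$ and choose $f_{n}$ in $C_{c}(H)$ with $\lambda_{H}(f_{n}) \to a$ in norm. Put $a_{n} = (1 - \chi_{\Delta})f_{n}$; by part (1) of Lemma \ref{6:120} this lies in $C_{c}(H')$, so $\lambda_{H}(a_{n})$ belongs to $C^{*}_{r}(H', \sigma)$. Now $\lambda_{H}(f_{n}) - \lambda_{H}(a_{n}) = \lambda_{H}(\chi_{\Delta}f_{n})$, and the proof of Corollary \ref{6:150} shows that $C_{c}(H)$ satisfies condition C1 of Theorem \ref{3:70} with the explicit choice $a' = (1 - \chi_{\Delta})a$; hence there is a constant $K$ with
\[
\Vert \lambda_{H}(f_{n}) - \lambda_{H}(a_{n}) \Vert = \Vert \chi_{\Delta}f_{n} \Vert \leq K \Vert \delta(f_{n}) \Vert .
\]
Since $\delta$ is continuous and $\lambda_{H}(f_{n}) \to a$, the right--hand side converges to $K \Vert \delta(a) \Vert = 0$; hence $\lambda_{H}(a_{n}) = \lambda_{H}(f_{n}) - \lambda_{H}(\chi_{\Delta}f_{n})$ converges to $a$, and since $C^{*}_{r}(H', \sigma)$ is closed, $a$ lies in $C^{*}_{r}(H', \sigma)$.

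The only points needing genuine care, rather than routine verification, are making sure the pointwise identities of Lemma \ref{6:120} are invoked for elements of $C_{c}(H)$ in the representation $\lambda_{H}$ (not $\lambda_{G}$), and noting that the finite--index hypothesis enters exactly once — through Lemma \ref{6:145}, and hence Corollary \ref{6:150} — which is precisely what makes the displayed norm estimate available; everything else reduces to continuity of $\delta$ and density of $C_{c}$.
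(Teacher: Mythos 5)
Your proof is correct and follows essentially the same route as the paper's: the forward containment via part 2 of Lemma \ref{6:120} and continuity of $\delta$, and the reverse containment by approximating $a$ with $f_{n}$ in $C_{c}(H)$ and invoking the C1 estimate of Corollary \ref{6:150} (whose proof indeed uses the explicit choice $a' = (1-\chi_{\Delta})a$) to show the corrected elements in $C_{c}(H')$ still converge to $a$. Your added remarks on where the finite-index hypothesis enters are accurate but do not change the argument.
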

  
  \begin{proof}
  The containment $ C^{*}_{r}(H, \sigma) \cap \ker(\delta)
   \supseteq C^{*}_{r}(H', \sigma)$
  follows at once from part 2 of  Lemma \ref{6:120} and the continuity of
  $\delta$. For the reverse inclusion, let $a$ be any element 
  of $ C^{*}_{r}(H, \sigma) \cap \ker(\delta)$. 
   We may
   select a sequence 
  $a_{n}, n \geq 1$ in $C_{c}(H)$.
  As $\delta(a) = 0$, 
  we know $\delta(a_{n}), n \geq 1$ converges to $0$. By Corollary 
  \ref{6:150}, for each $n$, we may find $a_{n}'$ in $C_{c}(H')$
  such that $\Vert a_{n} - a_{n}'\Vert \leq K \Vert \delta(a_{n}) \Vert$, 
  so $a_{n}'$ also converges to $a$.
  \end{proof}
  
  The second equality is more subtle and we prove it only under 
  some additional hypotheses. It would follow if we knew that 
  $\mathcal{A} = C_{c}(G)$ satisfied condition C1 of 
  \ref{3:70}, using exactly the same argument as above.
  
 \begin{lemma}
 \label{6:148}
   Assume that $G$ is a locally compact, Hausdorff 
   groupoid with Haar system
  $\nu^{u}, u \in G^{0}$, $\sigma$ is a $2$-cocycle on $G$
   and that $G^{0} \subseteq G' \subset G$ is 
  a regular, open subgroupoid of finite index with 
  $K$  as in the definition.
  If $a$ is in $C_{c}(\Delta) \subseteq C_{c}(H)$, then
  \[
  \Vert a \Vert \leq K \Vert \delta(a) \Vert.
  \]
 \end{lemma}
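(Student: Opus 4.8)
The plan is to recognize this as the special case of the inequality hidden inside the proof of Corollary~\ref{6:150}, applied to functions supported entirely on $\Delta$. Since $a$ is in $C_{c}(\Delta) \subseteq C_{c}(H)$, its pointwise products satisfy $\chi_{\Delta}a = a$ and $(1-\chi_{\Delta})a = 0$. Feeding this into Lemma~\ref{6:120} (with $a$ regarded as an element of $C_{c}(H)$), one gets $pap = p((1-\chi_{\Delta})a) = 0$, and hence $pa = pa(1-p)$ and $ap = (1-p)ap$. Consequently
\[
\delta(a) = i(ap - pa) = i\bigl((1-p)ap - pa(1-p)\bigr)
\]
is off-diagonal for the decomposition $L^{2}(G,\nu) = pL^{2}(G,\nu) \oplus (1-p)L^{2}(G,\nu)$, so that $\Vert \delta(a) \Vert = \max\{\Vert ap \Vert, \Vert pa \Vert\}$.

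Next I would invoke Lemma~\ref{6:145}. Since $G^{0} \subseteq G' \subseteq G$ has finite index with constant $K$, so does $H^{0} \subseteq H' \subseteq H$ with the same constant, and Lemma~\ref{6:145} gives $\Vert c \Vert \leq K \Vert pc \Vert$ for every $c$ in $C^{*}_{r}(H,\sigma)$; taking adjoints (and using $p = p^{*}$) also $\Vert c \Vert \leq K \Vert cp \Vert$. Here one should note that, because $H^{0} = (H')^{0} = r(\Delta)$ (there are no units in $\Delta$, as $G^{0} \subseteq G'$) and $G_{u} \cap H = G_{u}$, $G_{u} \cap H' = G_{u} \cap G'$ carry the ambient topology (Theorems~\ref{6:40} and \ref{6:50}), the operator $p$ on $L^{2}(G,\nu)$ restricts on $L^{2}(H,\nu)$ to exactly the projection onto $L^{2}(H',\nu)$ appearing in Lemma~\ref{6:145}, so the two readings of $\Vert pa \Vert$ agree. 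Applying this to $c = a$ then yields $\Vert a \Vert \leq K \Vert pa \Vert$ and $\Vert a \Vert \leq K \Vert ap \Vert$, whence $\Vert a \Vert \leq K \max\{\Vert ap \Vert, \Vert pa \Vert\} = K \Vert \delta(a) \Vert$.

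I expect no serious obstacle here: all the content is already packaged in Lemmas~\ref{6:120} and \ref{6:145}, and the only point requiring a line of care is the identification of the commutator $i[\lambda_{H}(a), p]$ computed on $L^{2}(G,\nu)$ with the one on $L^{2}(H,\nu)$, which is immediate once one observes that $\lambda_{H}(a)$ vanishes on the orthogonal complement of $L^{2}(H,\nu)$ and $p$ leaves $L^{2}(H,\nu)$ invariant.
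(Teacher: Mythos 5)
Your proof is correct and follows essentially the same route as the paper's: use part 2 of Lemma \ref{6:120} to see that $\delta(a)$ is off-diagonal for $p$, so $\Vert\delta(a)\Vert=\max\{\Vert ap\Vert,\Vert pa\Vert\}$, and then apply Lemma \ref{6:145} (to $a$ and $a^{*}$) to bound $\Vert a\Vert$ by $K$ times each of these. Your added remark identifying the projection of $L^{2}(G,\nu)$ onto $L^{2}(G',\nu)$ with the projection appearing in Lemma \ref{6:145} is a small point of care the paper leaves implicit, and you also state the final inequality in the correct direction, whereas the paper's displayed chain of estimates has the inequality written the wrong way around (an evident typo).
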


 \begin{proof}
 It follows from part 2 of Lemma \ref{6:120} that 
 $pa = p (\chi_{\Delta }a) = p a (1-p)$ and that 
 $ap = (\chi_{\Delta} a) p = (1-p)ap$. We have 
 \[
 \delta(a) = iap - ipa = (1-p)iap - pia(1-p).
 \]
 Thus $\delta(a)$ is the sum of two operators, one 
 which is zero on $(1-p)\mathcal{H}$ and range in 
 $(1-p)\mathcal{H}$ and  the other which is zero on 
 $p\mathcal{H}$ and range in $p\mathcal{H}$. It follows that 
 \begin{eqnarray*}
 \Vert \delta(a) \Vert & = & 
 \max\{ \Vert (1-p)iap \Vert, \Vert pia(1-p) \Vert \} \\
     & = & \max\{ \Vert ap \Vert, \Vert pa \Vert \} \\
     & = & \max\{ \Vert pa^{*} \Vert, \Vert pa \Vert \} \\
      &  \leq & \max\{ K\Vert a^{*} \Vert,K \Vert a \Vert \} \\
        &  =  &  K \Vert a \Vert.
        \end{eqnarray*}
 \end{proof}
 
    \begin{thm}
  \label{6:240}
   Assume that $G$ is a locally compact, Hausdorff 
  \newline 
   groupoid with Haar system
  $\nu^{u}, u \in G^{0}$, $\sigma$ is a $2$-cocycle on $G$
   and that $G^{0} \subseteq G' \subset G$ is 
  a regular, open subgroupoid of finite index.
   If the closed set $\Delta = G-G'$ has the $C$-extension
   property of Definition \ref{5:60} for some $C \geq 1$, then
\[
   C^{*}_{r}(G, \sigma) \cap \ker(\delta)  =   C^{*}_{r}(G', \sigma). 
\]
  \end{thm}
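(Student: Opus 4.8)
The plan is to show that, under the additional $C$-extension hypothesis, the dense $*$-subalgebra $C_{c}(G)$ of $C^{*}_{r}(G,\sigma)$ satisfies condition C1 of Theorem \ref{3:70} with respect to $\delta$ (with $\ker(\delta)\cap C_{c}(G) = C_{c}(G')$), and then to repeat the approximation argument from the proof of Theorem \ref{6:180}. The inclusion $C^{*}_{r}(G',\sigma) \subseteq C^{*}_{r}(G,\sigma) \cap \ker(\delta)$ is immediate: for $b$ in $C_{c}(G')$ the pointwise product $\chi_{\Delta}b$ vanishes identically, so part 4 of Lemma \ref{6:120} gives $\delta(b) = \delta(\chi_{\Delta}b) = 0$, and continuity of $\delta$ together with density of $C_{c}(G')$ in $C^{*}_{r}(G',\sigma)$ finishes this direction.

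For the reverse inclusion, the key quantitative claim is that there is a constant $L \geq 1$ such that every $a$ in $C_{c}(G)$ admits $a'$ in $C_{c}(G')$ with $\Vert a - a'\Vert_{r} \leq L\Vert \delta(a)\Vert$. Given $a$ in $C_{c}(G)$, I would set $f = \chi_{\Delta}a$, which by part 3 of Lemma \ref{6:120} is a continuous, compactly supported function on $H$ with support in the clopen subset $\Delta$; thus $f$ lies in $C_{c}(\Delta) \subseteq C_{c}(H)$. Invoking the $C$-extension property of $\Delta$ (Definition \ref{5:60}) yields $\widetilde{f}$ in $C_{c}(G)$ with $\widetilde{f}|_{\Delta} = f|_{\Delta} = a|_{\Delta}$ and $\Vert \widetilde{f}\Vert_{r} \leq C\Vert f\Vert_{r}$. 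Put $a' = a - \widetilde{f}$. Then $a'$ vanishes on $\Delta$, hence is supported in $G'$, so $a' \in C_{c}(G')$; and combining Lemma \ref{6:148} (applied to $f \in C_{c}(\Delta)$) with the identity $\delta(f) = \delta(\chi_{\Delta}a) = \delta(a)$ from Lemma \ref{6:120},
\[
\Vert a - a'\Vert_{r} = \Vert \widetilde{f}\Vert_{r} \leq C\Vert f\Vert_{r} \leq CK\Vert \delta(f)\Vert = CK\Vert \delta(a)\Vert,
\]
where $K$ is the finite-index constant. So $L = CK$ works, and in particular $\delta(a) = 0$ forces $a = a' \in C_{c}(G')$, i.e. $\ker(\delta) \cap C_{c}(G) = C_{c}(G')$.

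To conclude, let $a \in C^{*}_{r}(G,\sigma) \cap \ker(\delta)$ and choose $a_{n} \in C_{c}(G)$ with $a_{n} \to a$. Since $\delta$ is continuous, $\delta(a_{n}) \to \delta(a) = 0$; by the preceding paragraph there are $a_{n}' \in C_{c}(G')$ with $\Vert a_{n} - a_{n}'\Vert_{r} \leq CK\Vert \delta(a_{n})\Vert \to 0$, whence $a_{n}' \to a$. As $C^{*}_{r}(G',\sigma)$ is closed and contains each $a_{n}'$, it contains $a$.

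I expect the genuinely nontrivial input to be the $C$-extension property itself, which is precisely what allows one to modify $a$ so as to annihilate it on $\Delta$ while keeping it continuous on all of $G$ with norm controlled by $\Vert a|_{\Delta}\Vert_{r}$; the naive replacement $(1-\chi_{\Delta})a$ is, in general, not continuous on $G$, which is why the unconditional statement is unavailable. Within the argument the only delicate bookkeeping is verifying that $\chi_{\Delta}a$ is genuinely an element of $C_{c}(\Delta) \subseteq C_{c}(H)$ — this uses that $\Delta$ is clopen in $H$ and carries the same topology as in $G$ (Theorem \ref{6:50}) — and keeping straight the several norms written $\Vert\cdot\Vert_{r}$, namely those of $C^{*}_{r}(G,\sigma)$ and $C^{*}_{r}(H,\sigma)$, together with the operator norm in $E$ used for $\delta$.
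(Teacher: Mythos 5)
Your argument is essentially the paper's own proof: extract $\chi_{\Delta}a$, which lies in $C_{c}(\Delta)\subseteq C_{c}(H)$ and has the same image under $\delta$, control its norm by $K\Vert\delta(a)\Vert$ via Lemma \ref{6:148}, extend it back to $C_{c}(G)$ with the $C$-extension property, and subtract; the only cosmetic difference is that you package the estimate as a C1-type inequality before running the approximation, whereas the paper does the same computation inline along the sequence $b_{n}$. One small imprecision: a function in $C_{c}(G)$ that vanishes on $\Delta$ need not have compact support \emph{as a function on the open set} $G'$, so $a'=a-\widetilde{f}$ is not literally an element of $C_{c}(G')$; it does, however, lie in $C^{*}_{r}(G',\sigma)$, since truncating it below any level $\epsilon>0$ produces elements of $C_{c}(G')$ converging to it in the reduced norm --- this is exactly the extra approximation step the paper inserts at the end of its proof, and your argument goes through once it is added.
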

  
    \begin{proof}
  The containment 
  $ C^{*}_{r}(G, \sigma) \cap \ker(\delta) \supseteq C^{*}_{r}(G', \sigma)$
  follows at once from part 4  of  Lemma \ref{6:120} and the continuity of
  $\delta$. For the reverse inclusion, let $b$ be any element 
  of $ C^{*}_{r}(G, \sigma) \cap \ker(\delta)$. 
   We may
   select a sequence 
  $b_{n}, n \geq 1$ in $C_{c}(G)$. 
  As $\delta(b) = 0$, 
  we know $\delta(b_{n}), n \geq 1$ converges to $0$. From part 2 of Lemma
  \ref{6:130} above, $\chi_{\Delta}b_{n}$ lies in 
  $C_{c}(\Delta) \subseteq C_{c}(H)$ and, from part 4 of the same Lemma, 
 $\delta(\chi_{\Delta}b_{n}) = \delta(b_{n})$.
  As $\Delta$ satisfies the $C$-extension property, 
   for every $n$, we may find $c_{n}$ in 
  $C_{c}(G)$ with 
  \[
  c_{n}|_{\Delta } = \chi_{\Delta}b_{n}|_{\Delta} = b_{n}|_{\Delta}
  \]
  and 
  \[
  \Vert c_{n} \Vert \leq C \Vert  \chi_{\Delta}b_{n} \Vert 
  \leq C K \Vert \delta( \chi_{\Delta}b_{n}) \Vert 
  \leq CK \Vert \delta(b_{n}) \Vert.
  \]
  It follows that $\Vert c_{n} \Vert$ tends to zero as $n$ tends to infinity
  so $b_{n} - c_{n}$ also converges to $b$. These functions 
  are in $C_{c}(G)$ and vanish on $\Delta$, so they can be approximated by 
  functions in $C_{c}(G')$. 
  \end{proof}
  
  We finish this section with  a specific construction which generalizes
  the \emph{orbit splitting} groupoids, first introduced in
  \cite{Put:ZCan}. The group of integers still plays
  an important role. This has seen a number of applications.
  
  Suppose that the locally compact Hausdorf space $X$ has an action
  by homeomorphisms of the locally compact Hausdorff group $\Gamma$. 
  Also suppose that $Y \subseteq X$ is semi-invariant in the sense
  of Definition \ref{5:105}. We further suppose that $\Gamma_{Y}$
  is a normal subgroup of $\Gamma$ and that the quotient group
  $\Gamma/ \Gamma_{Y}$ is isomorphic to $\Z$. Let us denote the quotient
  map from $\Gamma$ to $\Z$ by $\zeta$.
  
  We notice that
  Theorem \ref{5:110} applies immediately to this situation. But we can go
  further to find an open subgroupoid.
  
  \begin{thm}
  \label{6:250}
  Let $\Gamma, X, Y, \zeta$ be as above and let $G = X \rtimes \Gamma$ 
  be the associated transformation groupoid. Define
  \begin{eqnarray*}
  \Delta & =  & \{ (y\gamma_{1}, \gamma_{1}^{-1}\gamma_{2}),  
  (y \gamma_{2},  \gamma_{2}^{-1}\gamma_{1})\mid 
  y \in Y, \gamma_{1}, \gamma_{2} \in \Gamma,  \\
  &  &  \zeta(\gamma_{1})\leq 0, \zeta(\gamma_{2}) > 0 \}
  \end{eqnarray*}
  Then we have the following.
 \begin{enumerate}
 \item  
  $G' = G - \Delta$ is an open subgroupoid.
  \item  
  The inclusion $G^{0} \subseteq G' \subseteq G$ is regular and finite index.
  \item If $H$ and $H'$ are the groupoids of Definition \ref{6:38} 
  and Theorem \ref{6:50}, then 
  \[
  H \cong (Y \rtimes \Gamma_{Y}) \times (\Z \rtimes \Z)
  \]
  (the latter is the co-trivial groupoid)
  while 
   \[
  H' \cong (Y \rtimes \Gamma_{Y}) \times 
  \left( \Z^{-} \rtimes \Z^{-} \cup \Z^{+} \rtimes \Z^{+} \right)
  \]
  where $\Z^{+} = \{ 1, 2, 3, \ldots \}$ and $\Z^{-} = \{ 0, -1, -2, \ldots \}$.
  \item 
  If there is a short exact sequence
   \[
0 \rightarrow C_{0}(X - Y) \rtimes_{r} \Gamma_{Y} 
  \rightarrow C_{0}(X ) \rtimes_{r} \Gamma_{Y} 
   \rightarrow C_{0}( Y) \rtimes_{r} \Gamma_{Y} 
   \rightarrow 0
  \]
  then the closed set $\Delta$ has the $C$-extension property 
  for any $C > 1$.
  \end{enumerate}
  \end{thm}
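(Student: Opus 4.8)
The plan is to use the explicit description of $H$, $H'$ and $\Delta$ from part (3) to reduce the statement to a matrix--lifting problem for the associated $C^{*}$--algebras and then to solve that problem with the hypothesised short exact sequence. Since $H \cong (Y \rtimes \Gamma_{Y}) \times (\Z \rtimes \Z)$ with the second factor the co--trivial (pair) groupoid on the discrete set $\Z$, the set $\Delta$ is clopen in $H$ and is identified with $(Y \rtimes \Gamma_{Y}) \times D$, where $D = \{(j,k)\in\Z^{2}\mid j\leq 0 < k \text{ or } k\leq 0 < j\}$. Thus $C_{c}(\Delta) = C_{c}(Y \rtimes \Gamma_{Y}) \odot c_{c}(D)$ and, identifying $C^{*}_{r}(H,\sigma)$ with $C^{*}_{r}(Y \rtimes \Gamma_{Y},\sigma)\otimes\mathcal{K}(\ell^{2}(\Z))$, a function $f\in C_{c}(\Delta)$ becomes a finitely supported ``off--diagonal'' operator matrix $f = \sum_{(j,k)\in D} f_{j,k}\otimes e_{j,k}$ with entries $f_{j,k}\in C_{c}(Y \rtimes \Gamma_{Y})$. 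Writing $\bar B = C_{0}(Y)\rtimes_{r}\Gamma_{Y}$ and letting $N$ be the number of levels occurring in the support of $f$, compression by the diagonal matrix units gives $\Vert f_{j,k}\Vert_{r}\leq\Vert f\Vert_{r}$ and, more precisely, $\Vert [\,f_{j,k}\,]\Vert_{M_{N}(\bar B)} = \Vert f\Vert_{r}$.

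Next I would bring in the cocycle $c(x,\gamma)=\zeta(\gamma)$ on $G = X\rtimes\Gamma$, which is well defined and locally constant with clopen fibres because $\Gamma_{Y}=\ker\zeta$ is open (Theorem \ref{5:110}). It grades $C^{*}_{r}(G,\sigma)$, with degree--zero part $B := C_{0}(X)\rtimes_{r}\Gamma_{Y}$ and $C^{*}_{r}(G,\sigma) = B\rtimes_{r}\Z$ carrying a canonical unitary multiplier $u$; fixing representatives $\gamma_{j}\in\Gamma$ with $\zeta(\gamma_{j})=j$, the automorphism implemented by $u^{j}$ carries a function supported near $Y$ onto one supported near the translate $Y\gamma_{j}$, and by semi--invariance (together with $\overline{Y\gamma_{j}}=Y\gamma_{j}$) the finitely many relevant translates $Y\gamma_{j}$ are pairwise disjoint closed subsets of $X$. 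The given exact sequence $0\to C_{0}(X-Y)\rtimes_{r}\Gamma_{Y}\to C_{0}(X)\rtimes_{r}\Gamma_{Y}\to\bar B\to 0$ amplifies to $M_{N}$, preserving exactness, and, exactly as in the proof of Proposition \ref{5:70} (using that $C_{c}(X-Y)\rtimes_{\mathrm{alg}}\Gamma_{Y}$ is dense in the ideal), for any $\epsilon>0$ one obtains a single lift $D = [\,d_{j,k}\,]\in M_{N}(C_{c}(U\rtimes\Gamma_{Y}))$ of $[\,f_{j,k}\,]$, where $U$ is an arbitrarily small neighbourhood of $Y$, with $d_{j,k}$ restricting to $f_{j,k}$ on $Y\rtimes\Gamma_{Y}$ and $\Vert D\Vert\leq(1+\epsilon)\Vert f\Vert_{r}$.

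I would then reassemble $\tilde f$ as a sandwich $\tilde f = W^{*}DW$ in $C^{*}_{r}(G,\sigma)$, where $W$ is the column with entries $e\,u^{j}$ ($j$ ranging over the levels in the support of $f$), with $e\in C_{c}(X)$, $0\leq e\leq 1$, equal to $1$ on a neighbourhood of $Y$ containing the supports of the $d_{j,k}$, and with $U$ chosen small enough that the conjugates $u^{-j}e^{2}u^{j}$ — which are supported near the pairwise disjoint translates $Y\gamma_{j}$ — have pairwise disjoint supports. Then $W^{*}W = \sum_{j}u^{-j}e^{2}u^{j}$ is an orthogonal sum of positive functions of norm $\leq 1$, so $\Vert W\Vert\leq 1$ and hence $\Vert\tilde f\Vert_{r}\leq\Vert D\Vert\leq(1+\epsilon)\Vert f\Vert_{r}$; a direct check from the convolution formulas of Theorem \ref{5:50} and from Lemma \ref{6:120} shows that $\tilde f\in C_{c}(G,\sigma)$ and that $\tilde f|_{\Delta}=f|_{\Delta}$, the disjointness of the translates ensuring that the $(j',k')$--summand vanishes on the block $\Delta_{j,k}$ whenever $(j',k')\neq(j,k)$. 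Letting $\epsilon\to 0$ yields the $C$--extension property of Definition \ref{5:60} for $\Delta$ for every $C>1$.

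The step I expect to be the main obstacle is the construction of the lift $D$ with \emph{simultaneous} control of its three features: lying in the convolution algebra $C_{c}$, having reduced norm within $(1+\epsilon)$ of $\Vert f\Vert_{r}$, and being supported in a neighbourhood of $Y$ small enough for the orthogonality used in bounding $\Vert W\Vert$. It is precisely here that the short exact sequence hypothesis enters, and reconciling all three requirements (rather than any two of them) is the delicate point; the remainder is bookkeeping with the $\Z$--grading and the translates of $Y$.
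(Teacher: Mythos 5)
Your proposal only addresses the fourth assertion of the theorem. Parts (1)--(3) --- that $\Delta$ is closed and $G'$ is a subgroupoid, that the inclusion is regular and of finite index, and the identification of $H$ and $H'$ (with their finer topologies) --- are used as inputs but never established. They are not deep, but they do require argument: the paper proves closedness of $\Delta$ by exhibiting it as a disjoint union of clopen sets $Ya^{i} \times \zeta^{-1}\{j-i\}$, proves regularity by showing $r$ restricted to each $Ya^{i} \times \{\gamma\}$ is a homeomorphism onto the clopen set $Ya^{i}$, and proves finite index by exhibiting the explicit two-element transversal $F = \{ (ya^{i},e), (ya^{-i+1}, a^{2i-1}) \}$. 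Since your part-(4) argument leans on the product description of $H$ from part (3), the proposal as it stands is an incomplete proof of the stated theorem.

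For part (4) itself, your argument is essentially the paper's, recast in crossed-product language. Both proofs reduce to a function supported on finitely many levels of the pair-groupoid factor, identify the relevant corner of $C^{*}_{r}(H)$ isometrically with $M_{N}(C_{0}(Y)\rtimes_{r}\Gamma_{Y})$, amplify the hypothesised exact sequence to $M_{N}$, lift with norm within a factor $1+\epsilon$ using density of the compactly supported functions in the kernel (exactly as in Proposition \ref{5:70}), and compress back into $C_{c}(G)$ by a cutoff supported near the pairwise disjoint translates $Ya^{j}$. Where you conjugate by the column isometry $W$ built from $e\,u^{j}$, the paper instead passes through the reduction groupoid $G(V)$ and the isomorphism $C^{*}_{r}(G(V)) \cong M_{2n}(C^{*}_{r}((X\rtimes\Gamma_{Y})_{U}^{U}))$ implemented by $a^{-j}\chi_{Ua^{j}}(\cdot)\chi_{Ua^{i}}a^{i}$ --- the same mechanism. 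One small imprecision: the lift $D$ produced by the exact sequence lives in $M_{N}(C_{c}(X\rtimes\Gamma_{Y}))$, not automatically in $M_{N}(C_{c}(U\rtimes\Gamma_{Y}))$; the localization to $U$ is achieved only by the sandwiching with $e$ (which preserves both the norm bound and the restriction to $Y\rtimes\Gamma_{Y}$). Your construction does perform that sandwiching via $W$, so this is a matter of wording rather than a flaw, and your identification of the delicate point --- simultaneously keeping the lift in $C_{c}$, controlling its reduced norm, and localizing its support --- is exactly where the exact-sequence hypothesis is spent in the paper as well.
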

  
  \begin{proof}
   Let us choose $a$ in $\Gamma$ with $\zeta(a) = 1$.
   
   First observe that $Y \gamma = Y a^{\zeta(\gamma)}$, for every 
   $\gamma $ in $\Gamma$. 
  It follows that  we may write
  \[
  \Delta =\left( \cup_{ i \leq 0, j   > 0}
   Y a^{i} \times \zeta^{-1}\{ i+ j \} \right) \cup 
 \left(  \cup_{ i > 0, j  \leq 0}
   Y a^{i} \times \zeta^{-1}\{ j-i\}  \right).
   \]
   The sets on the right are pairwise disjoint 
   and each is clopen in $G$. 
   It follows that $\Delta$ is closed so
  $G'$ is open. It is a simple computation to check that
  $G'$ is a subgroupoid.
  
  For the second point, it is easy to check from the equation above
  that
  \[
  r(\Delta) = \cup_{i \in \Z} Ya^{i},
  \]
  and that each of the sets $Ya^{i}$ is clopen in the quotient topology.
  In addition, the restriction of $r$ to a set of the
   form $Ya^{i} \times \gamma$
   is actually a 
  homeomorphism to $Ya^{i}$ and, in particular, $r$ is open so the
  inclusion is regular.
  It also follows that the map sending $ya^{i}$ to $(y,i)$ 
  is a homeomorphism between $r(\Delta)$ and $Y \times \Z$.
  
  We check that the inclusion is finite index.
  If $(x,e)$ is any unit in $G$ and $x \notin r(\Delta)$, then we may
  use $F = \{ (x,e) \}$ in Definition \ref{6:140}. On the other hand, 
  if $(x,e) = (ya^{i}, e)$, for some $y$ in $Y$ and integer $i$, letting 
  $F = \{ (ya^{i}, e), (ya^{-i+1}, a^{2i-1}) \}$ satisfies 
  $G_{u} = G'_{u}F$. (The essential point being that $i \leq 0$ if and only if 
  $-i+1 > 0$.)
  
  As for the descriptions of $H'$ and $H$, recall that $(ya^{i}, \gamma)$
  and $(y'a^{i'},\gamma')$ are composable if and only if  $ya^{i}\gamma= y'a^{i'}$.
  If this occurs then $i + \zeta(\gamma) = i'$ and $y' = a^{i}\gamma a^{-i'}$.
  We know also that if $i \leq 0$, then $i' =i + \zeta(\gamma) > 0$ while 
  if $i > 0$, then $i' =i + \zeta(\gamma) \leq 0$. It follows then that 
  \[
  \Delta^{2} \cap \mu^{-1}(G') = \cup_{(i,\gamma, \gamma')} 
  \cup_{y \in Y} \left( (y a^{i}, \gamma), (ya^{i}\gamma, \gamma') \right)
  \]
  where the union is over $y$ in $Y$ and triples $(i, \gamma, \gamma')$ with 
  either $i \leq 0, i + \zeta(\gamma) > 0, i + \zeta(\gamma) + \zeta(\gamma') \leq 0$
  or $i > 0, i + \zeta(\gamma) \leq 0, i + \zeta(\gamma) + \zeta(\gamma') > 0$.
  If we just take the union over $Y$, these sets are
   pairwise disjoint, clopen and each is homeomorphic to $Y$.
   
 As a set, we can write 
 \[
 H'= \{ (ya^{i}, \gamma) \mid y \in Y, 
  i, i+\zeta(\gamma) \leq 0, \text {or }
 i, i+\zeta(\gamma) > 0 \}
 \]
 and the isomorphism of part 3 sends $(ya^{i}, \gamma) $
  to \newline
 $\left( 
 (y, a^{-i} \gamma a^{i + \zeta(\gamma)}, (i, i + \zeta(\gamma) ) \right) $.
  We omit the topological details. This is extended to $H$ by mapping
  $(y a^{i}, \gamma)$ in $\Delta$ to 
  $\left( 
 (y, a^{-i} \gamma a^{i + \zeta(\gamma)}, (i, i + \zeta(\gamma) ) \right) $.
 
  We turn our attention to the last part. 
  Observe that, for any $n \geq 2$, the sets 
  \[
  \Delta_{n} = \cup_{i=1-n}^{0} \cup_{j=1}^{n}  \left(
  Y a^{i} \times \zeta^{-1}\{ j-i \} \cup Ya^{j} \zeta^{-1}\times \{i-j\} \right)
  \]
  are all in $\Delta$. Moreover, they are increasing with $n$
   and each is clopen (in $\Delta$ and hence also in $H$).
   Therefore, it suffices for us to consider a continuous function $f$
   with compact support in $\Delta_{n}$, for some fixed $n$.
   Let us denote $Y_{n} = \cup_{i=1-n}^{n} Y a^{i}$, which we identify
   with $r(\Delta_{n}) = s(\Delta_{n})$.
   
   In fact, $\Delta_{n}$ is a clopen subset of the groupoid
   $G_{Y_{n}}^{Y_{n}} = H_{Y_{n}}^{Y_{n}}$ and by 
   simply restricting the function
   we can regard $f$ is being in $C_{c}( G_{Y_{n}}^{Y_{n}})$ 
   and its norm there
   coincides with its norm in $C_{r}^{*}(H)$.
   
   As the sets $Ya^{i}, i \in \Z$ are pairwise disjoint, we may 
   choose an open set $Y \subseteq U \subseteq X$ such that
   the sets $U a^{l}, 1-3n \leq l \leq 3n$ are pairwise
   disjoint also.
   
   The set $U$ will not necessarily be  $\Gamma_{Y}$-invariant. In addition,
   $U \cap U^{a^{i}}$ may be non-empty for some $i \neq 0$. However, we may 
   consider the reductions $(X \times \Gamma)_{U}^{U}$ and 
   $(X \times \Gamma_{Y})_{U}^{U}$. Their associated $C^{*}$-algebras 
   are hereditary subalgebras of $C_{0}(X) \rtimes_{r} \Gamma$ and 
   $  C_{0}(X) \rtimes_{r} \Gamma_{Y}$, respectively.

   We define $V = \cup_{i=1-n}^{n} U a^{i}$ and, identifying $V$ 
   with $V \times \{ e \}$ in $G^{0}$,
   \[
   G(V) = G_{V}^{V} \cap X \times \zeta^{-1}\{ 1-2n, \ldots, 2n-1 \}.
   \]
   This is an open subset of the groupoid $G_{V}^{V}$.
   We claim that it is also a groupoid. Suppose that $(x, \gamma)$ is in 
   $G(V)$. Then for some $ 1-n \leq i \leq n$, $x$ is in $Ua^{i}$ 
   and $1-2n \leq \zeta(\gamma) \leq 2n-1$. In addition 
   $s(x,\gamma) = (x \gamma, e)$ is 
   in $V$. On the other hand, $x \gamma$ is in $Ua^{i + \zeta(\gamma)}$ 
   and $ 1-3n \leq i + \zeta(\gamma) \leq 3n-1$. As these sets
   are pairwise disjoint and only in $V$ if 
   $ 1-2n \leq i + \zeta(\gamma) \leq 2n-1$, we know 
   that this inequality must hold.
   
   The left regular representation of $C^{*}(G)$ also extends
    to a representation
   of the bounded Borel functions on $G^{0}$, as well as
    to a unitary
   representation of the group $\Gamma$ \cite{Ren:LNM}. 
   It is a simple matter to check that 
   \[
   \chi_{Ua^{i}}a^{i}  = a^{i} \chi_{U}
   \]
   and, for any $g$ in $C_{c}(G(V))$,
   $1-n \leq i, j \leq n$, 
   \[
   a^{-j}\chi_{U a^{j}} g \chi_{Ua^{i}}a^{i}
   \]
   is in $C_{c}((X \times \Gamma_{Y})_{U}^{U})$. This defines an isomorphism
   (we will not write it explicitly) between  $C^{*}_{r}(G(V))$ and
   $M_{2n}( C^{*}_{r}((X \times \Gamma_{Y})_{U}^{U}))$
    (the matrix entries are indexed by the 
   set $\{ 1-n, \ldots, n \}$). For simplicity, let us denote 
   $X \times \Gamma_{Y}$ by $L$.

   The set $Y_{n}$ is a closed invariant subset of 
   the unit space of $G(V)$ and we have a 
   quotient map from $C^{*}_{r}(G(V))$ to $C^{*}_{r}(G_{Y_{n}}^{Y_{n}})$. 
   This yields the following commutative diagram
   
  \hspace{3cm}
   \xymatrix{ C^{*}_{r}(G(V))  \ar[r] \ar[d] & 
     C^{*}_{r}(G_{Y_{n}}^{Y_{n}}) \ar[d] \\
    M_{2n}( C^{*}_{r}(G_{U}^{U}) ) \ar[r]  &   M_{2n}( C^{*}_{r}(G_{Y}^{Y}) )}
    
    and the vertical map on the right is also an isomorphism. 
   
   We now invoke our hypothesis to extend this diagram to
   
      \xymatrix{  C^{*}_{r}(G(V))  \ar[r] \ar[d] & 
     C^{*}_{r}(G_{Y_{n}}^{Y_{n}}) \ar[d] \ar[r] & 0 \\
  M_{2n}( C^{*}_{r}(L_{U}^{U}) ) \ar[r]  
  \ar[d]   &   M_{2n}( C^{*}_{r}(L_{Y}^{Y}) )  \ar[d] 
    \ar[r] & 0  \\ 
 M_{2n}(C_{0}(X ) \rtimes_{r} \Gamma_{Y} ) \ar[r]  &
  M_{2n}( C_{0}( Y) \rtimes_{r} \Gamma_{Y})  \ar[r] & 0.
    }

   We now take our function $f$ lying in $C_{c}(H)$, whose support is in
   $\Delta_{n}$ and assume that $f$ is non-zero. 
   We can regard this as an element of $M_{2n}( C_{c}( Y \rtimes \Gamma_{Y}) $
   and we may lift it an element $\tilde{f}$ in
    $M_{2n}( C_{c}( X \rtimes \Gamma_{Y})$. From our hypothesis, we know that 
    \[
    \Vert f \Vert_{r} = \inf\{ \Vert \tilde{f} + g \Vert_{r} \mid g \in 
    C_{c}((X-Y) \times \Gamma_{Y}) \},
    \]
    and so if $C > 1$, we may choose $g$ so that 
    $\Vert \tilde{f} + g   \Vert_{r} \leq C \Vert f \Vert_{r}$.
    It remains to get this function back into $M_{2n}( C^{*}_{r}(L_{U}^{U}) )$. 
    As we noted above, this is a  hereditary subalgebra of 
    $M_{2n}( C_{c}( X \rtimes \Gamma_{Y}))$ and we can multiply on both sides
    by a function bounded between $0$ and $1$, identically
     $1$ in $Y_{n}$ and supported
    in $V$.  
   \end{proof}
  
  \begin{cor}
  \label{6:260}
  With $X, \Gamma, Y, \zeta$ as above, 
  if there is a short exact sequence
    \[
0 \rightarrow C_{0}(X - Y) \rtimes_{r} \Gamma_{Y} 
  \rightarrow C_{0}(X ) \rtimes_{r} \Gamma_{Y} 
   \rightarrow C_{0}( Y) \rtimes_{r} \Gamma_{Y} 
   \rightarrow 0
  \]
  then 
  \[
  K_{*}(C^{*}_{r}(G');C_{0}(X) \rtimes_{r} \Gamma) 
  \cong K_{*}(C_{0}(Y) \rtimes_{r} \Gamma_{Y}).
  \]
  \end{cor}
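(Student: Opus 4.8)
The plan is to apply Theorem~\ref{3:70} to the pair $C^{*}_{r}(G') \subseteq C^{*}_{r}(G)$ and then to identify the resulting relative group using the explicit descriptions of $H$ and $H'$ from Theorem~\ref{6:250}. First I would take $B = C^{*}_{r}(G) = C_{0}(X)\rtimes_{r}\Gamma$ and $A = C^{*}_{r}(H)$ acting on $L^{2}(G,\nu)$, with $E = \mathcal{B}(L^{2}(G,\nu))$ and $\delta(x) = i[x,p]$, $p$ the projection onto $L^{2}(G',\nu)$, exactly as in the set-up preceding Lemma~\ref{6:120}. The groupoids of Theorem~\ref{6:50} satisfy the hypotheses of Theorem~\ref{5:50}, so $AB \subseteq A$, and Lemma~\ref{6:130} together with Lemma~\ref{4:15} gives $\delta(B) \subseteq \delta(A)$; hence Theorem~\ref{3:10} applies. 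By Theorem~\ref{6:250}(2) the inclusion $G^{0}\subseteq G'\subseteq G$ is regular and finite index, so Corollary~\ref{6:150} and Lemma~\ref{6:170} show that $\mathcal{A} = C_{c}(H)$ is a dense $*$-subalgebra of $A$ satisfying C1 and C2. Theorem~\ref{3:70} then gives that
\[
\alpha\colon K_{*}(\ker(\delta)\cap C^{*}_{r}(G);\, C^{*}_{r}(G)) \longrightarrow K_{*}(\ker(\delta)\cap C^{*}_{r}(H);\, C^{*}_{r}(H))
\]
is an isomorphism. Theorem~\ref{6:180} identifies $\ker(\delta)\cap C^{*}_{r}(H) = C^{*}_{r}(H')$, and since the hypothesised short exact sequence forces $\Delta$ to have the $C$-extension property for some $C>1$ (Theorem~\ref{6:250}(4)), Theorem~\ref{6:240} gives $\ker(\delta)\cap C^{*}_{r}(G) = C^{*}_{r}(G')$. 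So $\alpha$ is an isomorphism
\[
K_{*}(C^{*}_{r}(G');\, C^{*}_{r}(G)) \;\cong\; K_{*}(C^{*}_{r}(H');\, C^{*}_{r}(H)).
\]

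Next I would compute the right-hand side using Theorem~\ref{6:250}(3). The reduced $C^{*}$-algebra of a product of groupoids is the (minimal) tensor product, and the co-trivial groupoid on a countable set $Z$ has reduced $C^{*}$-algebra $\mathcal{K}(\ell^{2}(Z))$; writing $D = C_{0}(Y)\rtimes_{r}\Gamma_{Y}$ this gives $C^{*}_{r}(H)\cong D\otimes\mathcal{K}(\ell^{2}(\Z))$ and $C^{*}_{r}(H')\cong D\otimes\bigl(\mathcal{K}(\ell^{2}(\Z^{-}))\oplus\mathcal{K}(\ell^{2}(\Z^{+}))\bigr)$, the inclusion $C^{*}_{r}(H')\subseteq C^{*}_{r}(H)$ being the one induced by the block-diagonal embedding of $\mathcal{K}(\ell^{2}(\Z^{-}))\oplus\mathcal{K}(\ell^{2}(\Z^{+}))$ into $\mathcal{K}(\ell^{2}(\Z^{-})\oplus\ell^{2}(\Z^{+})) = \mathcal{K}(\ell^{2}(\Z))$. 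Choosing unitaries identifying $\ell^{2}(\Z^{-})$ and $\ell^{2}(\Z^{+})$ with $\ell^{2}(\N)$ puts this pair in the form $D'\oplus D' \subseteq M_{2}(D')$ with $D' = D\otimes\mathcal{K}(\ell^{2}(\N))$, so Example~\ref{2:110} — together with its $K_{1}$-analogue, obtained by tensoring with $C_{0}(\R)$ as in Remark~\ref{2:150} — gives $K_{*}(C^{*}_{r}(H');C^{*}_{r}(H)) \cong K_{*}(D') \cong K_{*}(D)$, the final step being stability of K-theory. Combining this with $\alpha$ yields $K_{*}(C^{*}_{r}(G');\, C_{0}(X)\rtimes_{r}\Gamma) \cong K_{*}(C_{0}(Y)\rtimes_{r}\Gamma_{Y})$, as claimed.

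The main obstacle is bookkeeping rather than anything deep: one must verify that the two isomorphisms of Theorem~\ref{6:250}(3) are compatible — that is, that the subgroupoid inclusion $H'\subseteq H$ matches the block-diagonal corner inclusion of the two $C^{*}$-algebras — so that the \emph{relative} group of the pair, and not merely the two individual $K$-groups, is computed correctly. Once that is pinned down, everything else is an assembly of the excision machinery of Sections 3--4 with the groupoid identifications of Section 6, and the concluding relative-$K$ computation is just the stable version of Examples~\ref{2:100} and \ref{2:110}.
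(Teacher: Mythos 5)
Your proposal is correct and is essentially the argument the paper intends: the corollary is stated as an immediate consequence of Theorem \ref{6:250}, whose parts (2) and (4) supply exactly the hypotheses (regularity, finite index, $C$-extension for $\Delta$) needed to run Theorem \ref{3:70} together with Theorems \ref{6:180} and \ref{6:240}, reducing to the pair $C^{*}_{r}(H') \subseteq C^{*}_{r}(H)$, which part (3) identifies with a stabilized version of Example \ref{2:110}. Your closing caveat about matching the subgroupoid inclusion with the block-diagonal corner inclusion is the right thing to check, and it holds here.
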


 \section{Application to factor groupoids}
\label{7}
In this section, we again
have two  groupoids, $G, G'$. We assume that 
each is locally compact and Hausdorff. In addition, we assume that
$G$ has a Haar system, $\lambda_{G}^{u}, u \in G^{0}$ and 
$G'$ has a Haar system, $\lambda_{G'}^{u}, u \in (G')^{0}$. Finally, we assume that
$\sigma$ is a $2$-cocycle on $G$ and that 
$\sigma'$ is a $2$-cocycle on $G'$

We assume that there is a map $\pi: G \rightarrow G'$ 
satisfying the following conditions, which we refer
 to as our standing hypotheses  on $\pi:G \rightarrow G'$:
\begin{enumerate}
\item 
$\pi$ is continuous,
\item 
$\pi$ is proper,
\item 
$\pi$ is a morphism of groupoids,
\item for every $u$ in $G$ and $\pi|_{G^{u}}: G^{u} \rightarrow (G')^{\pi(u)}$
is a homeomorphism and 
\item for every $u$ in $G$ and Borel set $E$ in $G^{u}$, we have 
\[
\lambda^{u}_{G}(E) = \lambda^{\pi(u)}_{G'}(\pi(E)).
\]
\item 
for all $(s,t)$ in $G^{2}$, we have $\sigma(s,t) = \sigma'(\pi(s), \pi(t))$.
\end{enumerate}
We will assume this holds throughout this section.

\begin{ex}
\label{7:10}
As an example, consider the case where $\Gamma$ is a 
locally compact, Hausdorff topological group 
acting continuously on  compact Hausdorff spaces, $X$ and $X'$. If 
$\pi: X \rightarrow X'$ is a continuous, $\Gamma$-invariant surjection then,   
$\pi \times id_{\Gamma}: X \rtimes \Gamma \rightarrow X' \rtimes \Gamma$ 
satisfies the desired conditions.
\end{ex}

The proof of the following result  is straightforward and we omit it. 

\begin{thm}
\label{7:40}
 The map sending $b$ in $C_{c}(G')$ to 
$b \circ \pi$ in $C_{c}(G)$ extends to an injective $*$-homomorphism, also
denoted $\pi$, from $C^{*}_{r}(G',\sigma)$ to  $C^{*}_{r}(G,\sigma)$.
\end{thm}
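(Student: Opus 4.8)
The plan is to build the map in three stages: define it on the dense twisted convolution $*$-algebras, check that it is $*$-multiplicative there, and then prove a norm identity that allows it to extend \emph{isometrically} (hence injectively) to the reduced completions. Throughout I use the notation of Section \ref{5} for Haar systems and left regular representations.

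\textbf{Stage 1: the map on $C_c$.} Given $b\in C_c(G')$, the function $b\circ\pi$ is continuous since $\pi$ is continuous (hypothesis (1)); and since $\pi$ is proper (hypothesis (2)), $\operatorname{supp}(b\circ\pi)\subseteq\pi^{-1}(\operatorname{supp}(b))$ is compact, so $b\circ\pi\in C_c(G)$. Thus $b\mapsto b\circ\pi$ is a well-defined linear map $C_c(G')\to C_c(G)$.

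\textbf{Stage 2: the $*$-homomorphism property.} Fix $g\in G$ and set $u=r(g)$, so that $r'(\pi(g))=\pi(u)$. For $a,b\in C_c(G')$,
\[
\bigl((a\circ\pi)\ast(b\circ\pi)\bigr)(g)=\int_{G^{u}}a(\pi(h))\,b(\pi(h^{-1}g))\,\sigma(h,h^{-1}g)\,d\nu^{u}(h).
\]
Since $\pi$ is a morphism (hypothesis (3)) one has $\pi(h^{-1}g)=\pi(h)^{-1}\pi(g)$, and by hypothesis (6) $\sigma(h,h^{-1}g)=\sigma'(\pi(h),\pi(h)^{-1}\pi(g))$. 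Hypothesis (4) says $h\mapsto\pi(h)$ is a homeomorphism of $G^{u}$ onto $(G')^{\pi(u)}$, and hypothesis (5) says it carries $\nu^{u}$ to $(\nu')^{\pi(u)}$; the substitution $h'=\pi(h)$ therefore turns the integral into $(a\ast b)(\pi(g))$, so $(a\circ\pi)\ast(b\circ\pi)=(a\ast b)\circ\pi$. The involution of a twisted groupoid convolution algebra is expressed only through $g\mapsto g^{-1}$ and $\sigma$, both of which are intertwined by $\pi$ (hypotheses (3) and (6)), so likewise $(b\circ\pi)^{*}=b^{*}\circ\pi$. Hence $b\mapsto b\circ\pi$ is a $*$-algebra homomorphism $C_c(G',\sigma')\to C_c(G,\sigma)$.

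\textbf{Stage 3: extension to the reduced completions and injectivity.} Realize $C^{*}_{r}(G,\sigma)$ on $L^{2}(G,\nu)=\bigoplus_{u\in G^{0}}L^{2}(G_{u},\nu_{u})$ and $C^{*}_{r}(G',\sigma')$ on $\bigoplus_{u'\in(G')^{0}}L^{2}((G')_{u'},\nu'_{u'})$, with fibre operators $\lambda_{u}$ and $\lambda'_{u'}$ as in Section \ref{5}. For each $u\in G^{0}$, applying hypothesis (4) together with inversion shows that $\pi$ restricts to a homeomorphism $G_{u}\to(G')_{\pi(u)}$ carrying $\nu_{u}$ to $\nu'_{\pi(u)}$; hence $W_{u}\xi:=\xi\circ\pi$ defines a unitary $W_{u}:L^{2}((G')_{\pi(u)},\nu'_{\pi(u)})\to L^{2}(G_{u},\nu_{u})$. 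Running the very same change of variables as in Stage 2 through the formula for $\lambda_{u}$ gives
\[
\lambda_{u}(b\circ\pi)\,W_{u}=W_{u}\,\lambda'_{\pi(u)}(b)\qquad(b\in C_c(G'),\ u\in G^{0}),
\]
so that $\Vert\lambda_{u}(b\circ\pi)\Vert=\Vert\lambda'_{\pi(u)}(b)\Vert$. Taking the supremum over $u\in G^{0}$, and using that $\pi(G^{0})=(G')^{0}$ (for $u'\in(G')^{0}$ choose $g$ with $\pi(g)=u'$; then $\pi(s(g))=s'(u')=u'$ with $s(g)\in G^{0}$), we obtain $\Vert b\circ\pi\Vert_{C^{*}_{r}(G,\sigma)}=\Vert b\Vert_{C^{*}_{r}(G',\sigma')}$. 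Therefore $b\mapsto b\circ\pi$ is isometric on $C_c(G',\sigma')$ and extends uniquely to an isometric $*$-homomorphism $C^{*}_{r}(G',\sigma')\to C^{*}_{r}(G,\sigma)$, which is in particular injective.

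\textbf{Expected main obstacle.} The standing hypotheses are strong enough (fibrewise homeomorphisms, exact push-forward of the Haar system, matching cocycles) that every change of variables is routine, so no single step is genuinely deep; the proof is essentially one computation done twice (once for the convolution product, once for the regular representation). The one point that needs care is upgrading the automatic inequality $\Vert b\circ\pi\Vert\le\Vert b\Vert$ to an \emph{equality}, since injectivity of the extension rests on it — and that is exactly where surjectivity of $\pi$ (precisely, $\pi(G^{0})=(G')^{0}$) is used. If only a bounded $*$-homomorphism were wanted, the unitary intertwining above already supplies the inequality directly.
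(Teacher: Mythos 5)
Your proof is correct and is precisely the straightforward argument the paper alludes to when it omits the proof: the change of variables along the fibrewise measure-preserving homeomorphisms of hypotheses (3)--(6) handles both the $*$-homomorphism property on $C_{c}$ and the unitary intertwining $\lambda_{u}(b\circ\pi)W_{u}=W_{u}\lambda'_{\pi(u)}(b)$, and the resulting isometry gives the extension and its injectivity. The one caveat is that Stage 3 relies on surjectivity of $\pi$ (to conclude $\pi(G^{0})=(G')^{0}$), which is not literally among the six standing hypotheses listed at the start of Section \ref{7}; it is clearly intended (the introduction describes $\pi$ as a surjective morphism, and without it the statement fails for $b$ supported off the image of $\pi$), but you should state explicitly that you are invoking it.
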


It will be convenient (though probably not necessary) for us to
assume that $d_{G}$ is a metric on $G$ giving rise to its topology.
By simply replacing $d_{G}$ by the function
\[
\max\{ d_{G}(x,y), d_{G}(x^{-1}, y^{-1}), d_{G}(r(x),r(y)), d_{G}(s(x),s(y))  \}
\]
 we may assume that the map $x \rightarrow x^{-1}$ is an isometry
 and that 
  $r, s$ are contractions.
  
We will let $G(x, r)$ denote the ball centred at $x \in G$ of 
radius $r > 0$. In addition, if $A \subseteq G$ is any subset, we let 
\[
G(A, r)= \{ x \in X \mid \text{ for some } y \in A, d(x,y) < r \}
\]
for $r > 0$.

We will then use the same notation, $d_{G}$, for the Hausdorff 
  metric on the compact subsets
of $G$:
\[
d_{G}(E, F) = \inf \{ \epsilon > 0 \mid 
F \subseteq  G(E, \epsilon), 
 E \subseteq G(F, \epsilon) \}, 
\]
for $E, F \subseteq G$ compact. We will  use the notation
\[
diam_{G}(E) = \sup \{ d_{G}(x,y) \mid x, y \in E \},
\]
for $E \subseteq G$ compact.
We will also assume for convenience that $G'$ is second countable.

The following is an obvious consequence of the third and fourth conditions, 
but we find it convenient to state explicitly.

\begin{lemma}
\label{7:20}
Let $\pi: G \rightarrow G'$ be as above. For every $x$ in $G$, we have 
\[
\# \pi^{-1}\{ x \} = \# \pi^{-1}\{ r(x) \} = \# \pi^{-1}\{ s(x) \}.
\]
\end{lemma}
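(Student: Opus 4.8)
The plan is to exploit the hypothesis that $\pi$ restricts to a bijection (in fact a homeomorphism) on each range fibre $G^u$, and to transport this from $G^u$ to a description of the full fibre $\pi^{-1}\{x\}$. The key observation is that for a fixed unit $u \in G^0$, the map $\pi|_{G^u}\colon G^u \to (G')^{\pi(u)}$ is a bijection, so $\pi^{-1}\{x'\} \cap G^u$ is a singleton for every $x' \in (G')^{\pi(u)}$. Thus the cardinality of $\pi^{-1}\{x'\}$, for $x'$ with $r(x') = \pi(u)$, is exactly the number of units $v \in G^0$ with $\pi(v) = \pi(u)$ and $r(\pi^{-1}\{x'\}\cap G^v)$ nonempty — but since $\pi$ is a morphism, any $x \in \pi^{-1}\{x'\}$ has $\pi(r(x)) = r(\pi(x)) = r(x') = \pi(u)$, so $r(x) \in \pi^{-1}\{\pi(u)\}$. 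Conversely, for each unit $v \in \pi^{-1}\{\pi(u)\}$, the fibre bijection $\pi|_{G^v}$ produces exactly one element of $\pi^{-1}\{x'\}$ lying in $G^v$. This sets up a bijection between $\pi^{-1}\{x'\}$ and $\pi^{-1}\{\pi(u)\} = \pi^{-1}\{r(x')\}$.

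Concretely, first I would fix $x \in G$, set $u = r(x)$ and $x' = \pi(x)$, and define a map $\Phi\colon \pi^{-1}\{x\} \to \pi^{-1}\{u\}$ by $\Phi(y) = r(y)$. This is well-defined: if $\pi(y) = x$ then $\pi(r(y)) = r(\pi(y)) = r(x) = u$ since $\pi$ is a groupoid morphism, so $r(y) \in \pi^{-1}\{u\}$. Injectivity of $\Phi$: if $r(y_1) = r(y_2) = v$ then $y_1, y_2 \in G^v$ and $\pi(y_1) = x = \pi(y_2)$, so by condition (4) (that $\pi|_{G^v}$ is a bijection) we get $y_1 = y_2$. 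Surjectivity of $\Phi$: given $v \in \pi^{-1}\{u\}$, condition (4) applied to $v$ says $\pi|_{G^v}\colon G^v \to (G')^{\pi(v)} = (G')^u$ is a bijection; since $x' \in (G')^u$ (as $r(x') = r(\pi(x)) = \pi(r(x)) = \pi(u) = u$, using that $u$ is a unit fixed by $\pi$), there is a unique $y \in G^v$ with $\pi(y) = x'= \pi(x)$... wait, I must be careful: I need $\pi(y) = x$, not merely $\pi(y)$ in the right fibre. Since $\pi|_{G^v}$ maps onto $(G')^{\pi(v)}$ and $\pi(v) = u = r(x)$, and $x \in (G')^{u}$ only makes sense if $x \in G'$, which it need not be — so the correct statement is: $\pi|_{G^v}$ surjects onto $(G')^{u}$; since $\pi(x) \in (G')^u$ there is a unique $y \in G^v$ with $\pi(y) = \pi(x)$, and then $\Phi(y) = v$. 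This gives $\#\pi^{-1}\{x\} = \#\pi^{-1}\{r(x)\}$.

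For the equality $\#\pi^{-1}\{r(x)\} = \#\pi^{-1}\{s(x)\}$, I would run the symmetric argument using source fibres, or more efficiently deduce it from the first identity applied to $x^{-1}$: since $\pi$ is a morphism, $\pi(x^{-1}) = \pi(x)^{-1}$, so $y \mapsto y^{-1}$ is a bijection $\pi^{-1}\{x\} \to \pi^{-1}\{x^{-1}\}$, whence $\#\pi^{-1}\{x\} = \#\pi^{-1}\{x^{-1}\} = \#\pi^{-1}\{r(x^{-1})\} = \#\pi^{-1}\{s(x)\}$, using the already-established first equality applied to $x^{-1}$ and the fact that $r(x^{-1}) = s(x)$.

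There is no serious obstacle here; the only point requiring a little care is keeping straight which fibre condition (4) controls — it governs range fibres $G^u$, not source fibres — and making sure the surjectivity step produces an element mapping to $x$ itself rather than just to something in the correct fibre of $G'$. Everything is purely set-theoretic and algebraic; no topology or the Haar system conditions (5), (6) are needed.
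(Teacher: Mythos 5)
Your proof is correct and uses exactly the ingredients the paper has in mind: the paper gives no argument at all, stating only that the lemma is ``an obvious consequence of the third and fourth conditions,'' and your bijection $y \mapsto r(y)$ from the fibre over $x'=\pi(x)$ to the fibre over $r(x')$ (well-defined because $\pi$ is a morphism, injective and surjective by condition (4)), followed by the inverse trick for the source, is precisely the fleshed-out version of that remark. Your mid-proof correction about needing $\pi(y)=\pi(x)$ rather than membership in the right fibre of $G'$ also quietly resolves the statement's small abuse of notation (the fibre $\pi^{-1}\{x\}$ only makes literal sense for $x$ in $G'$).
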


The following technical result will also be useful later.

\begin{lemma}
\label{7:30}
 Suppose the sequence $x_{k}'$ converges to
$x'$ in $G'$. Then
\[
\lim_{k} diam_{G}(\pi^{-1}\{ x_{k}'\}) = 0,
\]
if and only if
\[
\lim_{k} diam_{G}(\pi^{-1}\{ r(x_{k}') \}) = 0,
\]
\end{lemma}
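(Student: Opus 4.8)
The plan is to prove both implications by relating the two diameters through the fibered structure of $\pi$.  The forward implication is immediate: since $r$ is a contraction (we arranged $d_G(r(x),r(y)) \le d_G(x,y)$), and since $\pi$ is a morphism so that $\pi(r(x)) = r(\pi(x))$ and hence $r$ maps $\pi^{-1}\{x_k'\}$ onto $\pi^{-1}\{r(x_k')\}$ by Lemma \ref{7:20}, we get $diam_G(\pi^{-1}\{r(x_k')\}) \le diam_G(\pi^{-1}\{x_k'\})$ for every $k$.  So if the latter tends to $0$, so does the former.

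For the converse, suppose $diam_G(\pi^{-1}\{r(x_k')\}) \to 0$ but, for contradiction, $diam_G(\pi^{-1}\{x_k'\}) \not\to 0$.  Passing to a subsequence we may assume there is $\epsilon_0 > 0$ and, for each $k$, two points $a_k, b_k \in \pi^{-1}\{x_k'\}$ with $d_G(a_k,b_k) \ge \epsilon_0$.  The key tool is condition (4): $\pi$ restricts to a homeomorphism $G^u \to (G')^{\pi(u)}$ for each unit $u$.  I would use this to ``slide'' the points $a_k,b_k$ back to the range fibre: each element of $\pi^{-1}\{x_k'\}$ has a range in $\pi^{-1}\{r(x_k')\}$, and conversely given a unit $v \in \pi^{-1}\{r(x_k')\}$ there is a unique element of $G^v$ lying over $x_k'$, namely $(\pi|_{G^v})^{-1}(x_k')$.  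Thus the map $a \mapsto r(a)$ is a bijection $\pi^{-1}\{x_k'\} \to \pi^{-1}\{r(x_k')\}$ whose inverse is $v \mapsto (\pi|_{G^v})^{-1}(x_k')$.  Properness of $\pi$ (condition (2)): since $x_k' \to x'$, the set $\{x_k'\} \cup \{x'\}$ is compact, so $\pi^{-1}(\{x_k'\}\cup\{x'\})$ is compact in $G$; hence the sequences $(a_k)$ and $(b_k)$, together with all of $\pi^{-1}\{r(x_k')\}$ and $\pi^{-1}\{x'\}$, lie in one compact set.  Pass to a further subsequence so that $a_k \to a$, $b_k \to b$ in $G$; then $\pi(a) = \pi(b) = x'$ and $d_G(a,b) \ge \epsilon_0$, so $a \ne b$, while $r(a_k), r(b_k) \to r(a), r(b)$ with $d_G(r(a),r(b)) \le \lim diam_G(\pi^{-1}\{r(x_k')\}) = 0$, forcing $r(a) = r(b) =: v$.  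So $a$ and $b$ are two distinct points of $G^v$, both mapped by $\pi$ to $x'$, contradicting that $\pi|_{G^v}$ is injective.

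The step I expect to be the main obstacle is making the compactness argument airtight: one must be sure that the relevant points (the $a_k$, the $b_k$, their ranges, and the limit fibre $\pi^{-1}\{x'\}$) all stay in a single compact subset of $G$, which is exactly what properness of $\pi$ applied to the compact set $\overline{\{x_k'\}} = \{x_k'\}_{k}\cup\{x'\}$ delivers; and then to pass to subsequences enough times (first to extract the violating points $a_k,b_k$, then to make them converge) without losing the hypothesis.  Once the limit points $a, b$ are in hand, the contradiction with condition (4) is immediate.  A small point worth checking along the way is that $diam_G$ of a finite set is continuous enough for the inequality $d_G(r(a),r(b)) \le \liminf_k diam_G(\pi^{-1}\{r(x_k')\})$: this follows since $r(a_k) \to r(a)$ and $r(b_k) \to r(b)$ and each $r(a_k), r(b_k)$ lies in $\pi^{-1}\{r(x_k')\}$, whose diameter tends to $0$.
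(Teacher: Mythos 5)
Your proposal is correct and follows essentially the same route as the paper: the forward direction via the fact that $r$ is a (uniformly continuous) contraction carrying $\pi^{-1}\{x_k'\}$ onto $\pi^{-1}\{r(x_k')\}$, and the converse by extracting two separated points $a_k, b_k$ in the fibres, using properness of $\pi$ to pass to convergent subsequences, and deriving a contradiction with the injectivity of $\pi|_{G^{r(a)}}$ from standing hypothesis (4). The details you flag as needing care (compactness of $\pi^{-1}(\{x_k'\}\cup\{x'\})$ and the estimate $d_G(r(a),r(b))\leq \liminf_k diam_G(\pi^{-1}\{r(x_k')\})$) are handled exactly as in the paper's argument.
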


\begin{proof}
Let $X' $ consist of the sequence $x_{k}'$, along with its limit point, $x'$. 
This is evidently a  compact subset of $G'$. Hence, $r(X')$ is also 
compact in $G'$, while $X = \pi^{-1}(X')$ and $r(X)$ are compact in $G$ as $\pi$ is proper.
 
 As $X$  is compact, $r$ is uniformly continuous on $X$ and the 'only if'
 direction follows at once. Conversely, suppose
 the first condition fails: then there exist subsequences $x_{k}, y_{k}$ in $G$ with
 $\pi(x_{k}) = x'_{k} = \pi(y_{k})$, $\lim_{k} x_{k} = x \neq y =\lim_{k} y_{k}$
 and $\lim_{k} r(x_{k})  = \lim_{k} r(y_{k})$.
Then we have 
\[
r(x) = r( \lim_{k} x_{k} ) = \lim_{k} r(x_{k})  =  \lim_{k} r(y_{k}) = r(\lim_{k} y_{k} ) =r(y).
\]
 On the other hand, we also have
 \[
\pi(x) = \pi( \lim_{k} x_{k} ) = \lim_{k} \pi(x_{k}) = x'  =\lim_{k} \pi(y_{k}) = \pi(\lim_{k} y_{k} ) = \pi(y).
\]
Thus $x, y$ are in $G^{r(x)}$ with $\pi(x)=\pi(y)$. By the fourth condition of
 our standing hypotheses, this means $x=y$, 
a contradiction.
\end{proof}

As a final preliminary topological result, we have the following.

\begin{lemma}
\label{7:45}
 Given $x'$ in $G'$ and $\epsilon > 0$, there
is an open set $x' \in U' \subseteq G'$ such that 
$\pi^{-1}(U') \subseteq G( \pi^{-1}\{ x' \}, \epsilon)$.
\end{lemma}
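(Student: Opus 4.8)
The statement is an upper-semicontinuity property for the set-valued map $x' \mapsto \pi^{-1}\{x'\}$, and the natural tool is properness of $\pi$ together with the metrizability of $G$ and $G'$. I would argue by contradiction. First note that $\pi^{-1}\{x'\}$ is closed in $G$ (continuity of $\pi$), so $G(\pi^{-1}\{x'\},\epsilon)$ is a genuine open neighbourhood of it. Since $G'$ is second countable, it is first countable at $x'$, so fix a nested neighbourhood basis $U_1' \supseteq U_2' \supseteq \cdots$ at $x'$.

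\textbf{Step 1 (extract a sequence).} Suppose the conclusion fails. Then no $U_n'$ works, so for each $n$ there is a point $x_n \in \pi^{-1}(U_n')$ with $x_n \notin G(\pi^{-1}\{x'\},\epsilon)$, i.e.\ $d_G(x_n, z) \geq \epsilon$ for every $z \in \pi^{-1}\{x'\}$. Put $x_n' = \pi(x_n) \in U_n'$; then $x_n' \to x'$ in $G'$.

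\textbf{Step 2 (compactness).} The set $X' = \{x'\} \cup \{x_n' \mid n \geq 1\}$ is compact in $G'$, and since $\pi$ is proper, $\pi^{-1}(X')$ is compact in $G$. As $G$ is metrizable (via $d_G$), the sequence $(x_n)$, which lies in $\pi^{-1}(X')$, has a subsequence $x_{n_j}$ converging to some $x \in \pi^{-1}(X')$.

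\textbf{Step 3 (contradiction).} By continuity of $\pi$, $\pi(x) = \lim_j \pi(x_{n_j}) = \lim_j x_{n_j}' = x'$, so $x \in \pi^{-1}\{x'\}$. But then $d_G(x_{n_j}, x) \to 0$ while $d_G(x_{n_j}, x) \geq \epsilon$ for all $j$ by the choice in Step 1, a contradiction. Hence some $U_n'$ has the required property, and we take $U' = U_n'$.

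\textbf{Main obstacle.} There is no serious difficulty here; the argument is a standard properness-plus-sequential-compactness compactness argument. The only points requiring a moment of care are that $\pi^{-1}\{x'\}$ is closed (so that the $\epsilon$-neighbourhood makes sense) and that one may legitimately pass to sequences, which is justified by $G$ carrying the metric $d_G$ and $G'$ being second countable; both are already in force in this section.
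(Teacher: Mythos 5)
Your argument is correct and is essentially the paper's own proof: both argue by contradiction, extract a sequence $x_n$ with $\pi(x_n)\to x'$ lying outside $G(\pi^{-1}\{x'\},\epsilon)$, use properness of $\pi$ applied to the compact set $\{x'\}\cup\{x_n'\}$ to get a convergent subsequence, and derive a contradiction from continuity of $\pi$. The only cosmetic difference is in the last step, where the paper notes the limit lies in the closed complement of $G(\pi^{-1}\{x'\},\epsilon)$ while you contradict convergence directly via the inequality $d_G(x_{n_j},x)\geq\epsilon$; these are interchangeable.
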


\begin{proof}
If the conclusion is false, then we may find a 
sequence, $x'_{k}, k \geq 1$,
converging to $x'$ and a sequence $x_{k}, k \geq 1$ with 
$\pi(x_{k})=x'_{k}$, for all $k \geq 1$ such that 
$x_{k}$ is not in $G(\pi^{-1}\{ x' \}, \epsilon)$.
The sequence $x'_{k}, k \geq 1$ along with $x'$ forms
a compact set in $G'$. As $\pi$ is proper, its preimage is 
aslo compact in $G$. So we may find a subsequence $x_{k_{l}}, l \geq $
converging to some $x$ in $G$. As $G(\pi^{-1}\{ x' \}, \epsilon)$ is open, 
$x$ is also not in this set. But by continuity of $\pi$, 
$\pi(x) = \lim_{l} \pi(x_{k_{l}}) = x'$. This is a contradiction.
\end{proof}

We want to focus our attention on the parts of $G$ and $G'$ where they
are actually different; that is, where $\pi$ is not injective.

\begin{defn}
\label{7:50}
Let $\pi: G \rightarrow G'$ satisfy the standing hypotheses. 
We define
\[
H' = \{ x' \in G' \mid \# \pi^{-1}\{ x' \} > 1 \}
\]
and $H = \pi^{-1}(H')$.
\end{defn}

We endow $H'$ with the metric
\[
d_{H'}(x', y') = d_{G}(\pi^{-1}\{ x'\}, \pi^{-1}\{ y'\}), 
\]
for $x', y'$ in $H'$, and $H$ with the metric
\[
d_{H}(x,y) = d_{G}(x, y) + d_{H'}(\pi(x), \pi(y)),
\]
for $x, y$ in $H$. 

To obtain our excision result, we will need a  hypothesis
on our map $\pi$. It implies the continuity of
the fibres of $\pi$, but in a weak sense.

\begin{defn}
\label{7:60}
Let $\pi: G \rightarrow G'$ satisfy the standing hypotheses. 
We say that $\pi$ is \emph{regular} if, for every $x'$ in $H'$  
and 
$\epsilon > 0$, there is an open set $x' \in U' \subseteq G'$ 
such that if $y'$ is in $U'$, then either
\[
d_{G}(\pi^{-1}\{ x'\}, \pi^{-1}\{ y'\}) < \epsilon
\]
or 
\[
diam_{G}(\pi^{-1}\{ y'\}) < \epsilon.
\]
In view of Lemma \ref{7:45}, we may also assume the conclusion
there also holds for $U'$.
We remark that if $\epsilon < 3^{-1} diam_{G}(\pi^{-1}\{ x'\}) $, 
the two conditions are mutually exclusive.
\end{defn}

\begin{prop}
\label{7:65}
The topologies on $H'$ and $H$ from the metrics $d_{H'}$ and $d_{H}$ are finer, 
respectively, than the relative topologies from $G'$ and $G$.
\end{prop}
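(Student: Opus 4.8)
The plan is to show that each metric-induced topology refines the corresponding relative topology by verifying that the identity maps
\[
(H', d_{H'}) \longrightarrow (H', \mathcal{S}_{H'}), \qquad (H, d_{H}) \longrightarrow (H, \mathcal{S}_{H})
\]
are continuous, where $\mathcal{S}$ denotes the relative topology inherited from $G'$ (resp. $G$). Since all spaces in sight are metrizable, it suffices to argue sequentially: if $x_{k}' \to x'$ in $(H', d_{H'})$, then $x_{k}' \to x'$ in $G'$, and similarly for $H$.

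First I would handle $H'$. Suppose $d_{H'}(x_{k}', x') \to 0$, i.e. $d_{G}(\pi^{-1}\{x_{k}'\}, \pi^{-1}\{x'\}) \to 0$ in the Hausdorff metric on compact subsets of $G$. Fix a point $x \in \pi^{-1}\{x'\}$ (this set is nonempty and, for $x' \in H'$, has more than one element, but one point is all we need). For each $k$, the Hausdorff-distance bound produces a point $x_{k} \in \pi^{-1}\{x_{k}'\}$ with $d_{G}(x_{k}, x) \to 0$, so $x_{k} \to x$ in $G$. By continuity of $\pi$ (the first standing hypothesis), $x_{k}' = \pi(x_{k}) \to \pi(x) = x'$ in $G'$. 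Hence every $d_{H'}$-convergent sequence converges in $\mathcal{S}_{H'}$, which gives the claim for $H'$.

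The case of $H$ is then immediate from the definition of $d_{H}$: since $d_{H}(x,y) = d_{G}(x,y) + d_{H'}(\pi(x),\pi(y))$, convergence in $d_{H}$ forces $d_{G}(x_{k},x) \to 0$, i.e. $x_{k} \to x$ in $G$, which is exactly convergence in $\mathcal{S}_{H}$. (It also forces $\pi(x_{k}) \to \pi(x)$ in $d_{H'}$, consistent with the $H'$ statement, but that is not needed here.) Thus $d_{H}$ refines $\mathcal{S}_{H}$.

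I do not expect a serious obstacle here; the only point requiring a moment's care is extracting, from a Hausdorff-distance bound $d_{G}(\pi^{-1}\{x_{k}'\}, \pi^{-1}\{x'\}) < \varepsilon_{k}$ with $\varepsilon_{k} \to 0$, an \emph{honest sequence} $x_{k} \in \pi^{-1}\{x_{k}'\}$ with $x_{k} \to x$ rather than merely a subsequential statement — but this follows directly from the definition of the Hausdorff metric, since $\pi^{-1}\{x'\} \subseteq G(\pi^{-1}\{x_{k}'\}, \varepsilon_{k})$ means each point of $\pi^{-1}\{x'\}$, in particular $x$, is within $\varepsilon_{k}$ of some point of $\pi^{-1}\{x_{k}'\}$. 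No properness, no regularity, and no Haar-system hypotheses are invoked; only continuity of $\pi$ and the definitions of the two metrics are used.
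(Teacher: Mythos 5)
Your proposal is correct and follows essentially the same route as the paper: the paper picks a point $s$ in the fibre $\pi^{-1}\{s'\}$, uses the Hausdorff-distance bound to find a nearby point of $\pi^{-1}\{t'\}$, and pushes forward by continuity of $\pi$, exactly as you do (phrased with open sets rather than sequences), and it disposes of $H$ via the same inequality $d_{G}(x,y)\leq d_{H}(x,y)$. No further comment is needed.
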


\begin{proof}
Let $s'$ be in $H'$ and let $U$ be an open set in $G'$ containing it.
Choose any $s$ in $\pi^{-1}\{ s' \}$. As $\pi$ is continuous, there
is $\epsilon > 0$ such that $\pi(G(s,\epsilon) ) \subseteq U$. 
We claim that $H'(s', \epsilon) \subseteq U \cap H'$. If $t'$ is in
$H'(s', \epsilon) $ then it is clearly in $H'$. Moreover, we know
$d_{G}(\pi^{-1}\{ s' \}, \pi^{-1}\{ t' \}) < \epsilon$. It follows
that there is $t$ with $\pi(t) = t'$ and $d_{G}(s,t)  < \epsilon $. 
It follows that $t' = \pi(t)$ is in $U$. Hence the topology
from $d_{H'}$ finer than the relative topology from $G'$.

The fact that, for all $x, y$ in $H$,
 \[
 d_{G}(x,y) \leq d_{H'}(\pi(x), \pi(y)) + d_{G}(x,y) = d_{H}(x,y),
 \] 
 immediately implies the desired conclusion for $H$.
\end{proof}

\begin{thm}
\label{7:70}
Suppose that $\pi: G \rightarrow G'$  is regular. 
 Then $H'$ and $H$, with the metrics $d_{H'}$ and $d_{H}$,
  are locally compact, Hausdorff topological
  groupoids and $\pi: H \rightarrow H'$ is an open,  continuous, proper
  morphism of groupoids.
\end{thm}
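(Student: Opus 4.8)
The plan is to verify, in order: (i) that $d_{H'}$ and $d_{H}$ are genuine metrics; (ii) local compactness of $H'$ (and then $H$); (iii) Hausdorffness, which is immediate from Proposition \ref{7:65}; (iv) that the groupoid operations (product and inverse) are continuous for $d_{H}$; and (v) that $\pi:H\to H'$ is continuous, open, proper and a morphism. The algebraic facts — that $H'$ is a subgroupoid of $G'$, that $H=\pi^{-1}(H')$ is a subgroupoid of $G$, and that $\pi$ restricts to a morphism — follow from Lemma \ref{7:20}, since $\#\pi^{-1}\{x'\}>1$ is preserved under product and inverse (using $\#\pi^{-1}\{x'\}=\#\pi^{-1}\{r(x')\}=\#\pi^{-1}\{s(x')\}$ and the morphism property). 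That $d_{H'}$ separates points of $H'$: if $d_{G}(\pi^{-1}\{x'\},\pi^{-1}\{y'\})=0$ then, since these are compact sets (properness of $\pi$), they intersect, and a common point forces $x'=y'$ because $\pi$ is a function; the triangle inequality for the Hausdorff metric is standard. For $d_{H}$ the metric axioms are then automatic from those of $d_{G}$ and $d_{H'}$.

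The key step is local compactness, and here is where regularity does the work. Given $x'\in H'$, set $r_{0}=\tfrac13 \operatorname{diam}_{G}(\pi^{-1}\{x'\})>0$ and pick $\epsilon<r_{0}$. Using regularity choose an open $U'\ni x'$ in $G'$ such that every $y'\in U'$ satisfies either $d_{G}(\pi^{-1}\{x'\},\pi^{-1}\{y'\})<\epsilon$ or $\operatorname{diam}_{G}(\pi^{-1}\{y'\})<\epsilon$, and also (by Lemma \ref{7:45}) $\pi^{-1}(U')\subseteq G(\pi^{-1}\{x'\},\epsilon)$. Shrinking $U'$ to have compact closure $\overline{U'}$ in $G'$, I claim $\overline{U'}\cap H'$ with the $d_{H'}$-topology is compact: take a sequence $y'_{k}$ in it; by properness $\pi^{-1}(\overline{U'})$ is compact in $G$, so passing to a subsequence the compact sets $\pi^{-1}\{y'_{k}\}$ converge in the $d_{G}$-Hausdorff metric to some compact $F$, and $\pi(y'_{k})$ converges in $G'$ to some $y'\in\overline{U'}$; the dichotomy from regularity and the choice $\epsilon<r_{0}$ forces $y'_{k}$ to lie eventually in the first alternative, i.e. $\operatorname{diam}_{G}(\pi^{-1}\{y'_{k}\})$ stays bounded below, so $\#\pi^{-1}\{y'\}>1$, hence $y'\in H'$, and $d_{H'}(y'_{k},y')=d_{G}(\pi^{-1}\{y'_{k}\},\pi^{-1}\{y'\})\to 0$. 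Thus $d_{H'}$-small balls around $x'$ have compact closure. For $H$: a point $x\in H$ lies over $x'=\pi(x)\in H'$; intersecting a $d_{G}$-ball around $x$ (whose $\overline{U}$ is compact in $G$ since $G$ is locally compact) with $\pi^{-1}$ of a $d_{H'}$-precompact neighbourhood of $x'$ gives a $d_{H}$-neighbourhood of $x$ whose closure is compact, because a $d_{H}$-convergent extraction is exactly a jointly $d_{G}$- and $d_{H'}$-convergent one, and we have compactness in each factor.

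For continuity of the operations: inversion is an isometry of $G$ and preserves fibre-diameters and fibre-distances (since $\pi(x^{-1})=\pi(x)^{-1}$ and $r(x^{-1})=s(x)$, with Lemma \ref{7:20}), so it is $d_{H'}$- and $d_{H}$-isometric, hence continuous. For the product, suppose $(x_{k},y_{k})\to(x,y)$ in $H^{2}$ with $d_{H}$; then $x_{k}\to x,\ y_{k}\to y$ in $G$, so $x_{k}y_{k}\to xy$ in $G$ by continuity of the product in $G$, and $\pi(x_{k})\pi(y_{k})=\pi(x_{k}y_{k})\to\pi(xy)$ in $H'$ because $d_{H'}(\pi(x_{k}y_{k}),\pi(xy))=d_{G}(\pi^{-1}\{x_{k}y_{k}\},\pi^{-1}\{xy\})$ and one controls $\pi^{-1}\{x_{k}y_{k}\}$ via $\pi^{-1}\{r(x_{k}y_{k})\}=\pi^{-1}\{r(x_{k})\}$ together with Lemma \ref{7:30} and the already-known $d_{H'}$-convergence $\pi(x_{k})\to\pi(x)$ (whence $r(\pi(x_{k}))\to r(\pi(x))$ and the fibre over $r(x_{k})$ converges). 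Finally $\pi:H\to H'$ is continuous since $d_{H'}(\pi(x),\pi(y))\le d_{H}(x,y)$ by definition; it is proper because $H=\pi^{-1}(H')$ and $\pi:G\to G'$ is proper, and on a $d_{H'}$-compact (hence $d_{G'}$-compact, by Proposition \ref{7:65}) subset of $H'$ the preimage is $d_{G}$-compact and, by the fibre-convergence argument above, $d_{H}$-compact; and it is open because its restriction to each $G^{u}=H^{u}$ is already a homeomorphism onto $(G')^{\pi(u)}=(H')^{\pi(u)}$ by hypothesis (4), and openness of $r$ for groupoids with Haar systems (Lemma \ref{5:10}) lets one upgrade this to openness of $\pi$ on $H$. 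The main obstacle I anticipate is the local-compactness argument for $H'$ — specifically, pinning down that along a convergent subsequence the regularity dichotomy cannot oscillate, so that the limit fibre still has more than one point; everything else is bookkeeping with the two metrics.
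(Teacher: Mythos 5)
Most of your argument tracks the paper's own proof closely: the metric verifications, the local compactness of $H'$ via the regularity dichotomy (bounding fibre diameters from below on a small $d_{H'}$-ball so that the second alternative is excluded at the limit point), the product-of-compactness argument for local compactness of $H$, the isometry of inversion, the contractivity of $\pi$, and the properness argument by successive extraction of $d_{G}$- and $d_{H'}$-convergent subsequences are all essentially the arguments the paper gives. Two points, however, need attention.

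The genuine gap is your justification of openness of $\pi:H\rightarrow H'$. You appeal to the fact that $\pi|_{H^{u}}$ is a homeomorphism onto $(H')^{\pi(u)}$ and to openness of $r$ via Lemma \ref{5:10}. But the standing hypothesis (4) asserts this homeomorphism only for the \emph{original} topologies; that it persists for $d_{H}$ and $d_{H'}$ is precisely part (3) of Theorem \ref{7:80}, which is proved \emph{after} this theorem and depends on it. Likewise, openness of $r_{H'}$ in $d_{H'}$ (needed to invoke Lemma \ref{5:10}, or to have a Haar system on $H'$ at all) is only established in Theorem \ref{7:80}; and even granting both, a fibrewise homeomorphism does not upgrade to an open map without further argument. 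The fix is much more direct and is what the paper does (in the paragraph it mislabels as "continuity"): if $d_{H'}(y',\pi(x))<\delta$, then by the very definition of $d_{H'}$ as a Hausdorff distance, $x\in\pi^{-1}\{\pi(x)\}$ lies within $\delta$ of some $y\in\pi^{-1}\{y'\}$, whence $d_{H}(x,y)=d_{G}(x,y)+d_{H'}(\pi(x),y')<2\delta$ and $y'=\pi(y)\in\pi(H(x,2\delta))$. So $H'(\pi(x),\delta)\subseteq\pi(H(x,2\delta))$ and $\pi$ is open with no circularity.

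A secondary weakness is the continuity of the product in $H'$ (equivalently, the $d_{H'}$-convergence of $\pi(x_{k}y_{k})$). Lemma \ref{7:30} only relates the vanishing of fibre \emph{diameters} over a convergent sequence to that over their range units; it does not by itself give Hausdorff convergence of the fibres $\pi^{-1}\{\pi(x_{k}y_{k})\}$ to $\pi^{-1}\{\pi(xy)\}$. The paper's route is cleaner: the convergent sequence together with its limit is a compact subset $X'$ of $(G')^{2}$, its preimage $X=(\pi\times\pi)^{-1}(X')\cap G^{2}$ is compact, the product is uniformly continuous on $X$, and a uniformly continuous map is continuous on compact subsets with respect to the Hausdorff metric. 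Your sketch can likely be completed by a similar compactness argument, but as written the appeal to Lemma \ref{7:30} does not close the step.
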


\begin{proof}
We begin with $H'$. It follows from
Lemma \ref{7:20} that either $x', r(x')$ and $
 s(x')$ are all in $H'$, 
or none are. It follows that
$H'$ is a subgroupoid of $G'$.

 Next, we show that $H'$ is locally compact. Let $x'$ be in $H'$.
 Let $\epsilon = 2^{-1} diam_{G} \pi^{-1}\{ x' \}$. Observe that 
 if $y'$ is any element of $H'$ within $\epsilon$ of $x'$ then 
 $diam_{G}(\pi^{-1}\{ y \}) > \epsilon$. Select 
 $x' \in U \subseteq G'$ open, as in Definition \ref{7:60}.
  As $G'$ is locally 
 compact, we may assume that $U$ has compact closure.
 Finally, select $\epsilon > \epsilon' >0$ such that 
 $\pi(G(\pi^{-1}\{ x' \}, \epsilon') \subseteq U$. We claim that 
 the $d_{H'}$-ball around $x'$ of radius $\epsilon'$ has compact 
 closure. Let $x_{k}'$ be any sequence in this ball. The fact that
 $d_{H'}(x', x_{k}') < \epsilon'$ means that $\pi^{-1}\{ x_{k}'\}$ is 
 contained in $G(\pi^{-1}\{ x' \}, \epsilon')$ and so $x_{k}'$ is in $U$.
 As $U$ has compact closure, we may pass 
 to a subsequence which converges to $y'$ 
 in $G'$. We claim this sequence also converges to $y'$ in $d_{H'}$. 
 Let $\delta > 0$ be given. Without loss of generality, assume 
 $\delta < \epsilon$. We now apply our regularity 
 hypothesis to $y'$ to find an open set $y' \in W \subseteq G'$ such that,
  for all $z'$ in $W$, either
 \[
d_{G}(\pi^{-1}\{ y'\}, \pi^{-1}\{ z'\}) < \delta
\]
or 
\[
diam_{G}(\pi^{-1}\{ z'\}) < \delta.
\]
As our subsequence of $x_{k}'$ converges to $y'$ in the usual topology, 
we may find $K$ such that $x_{k}'$ is in $W$, for all $k \geq K$.
Since our subsequence is taken from the $d_{H'}$-ball, we know that
$diam_{G}(\pi^{-1}\{ x_{k}'\}) \geq \epsilon > \delta$. This eliminates the second
possibility above and hence, we have $d_{H'}(x_{k}', y') < \delta$, for
$k \geq K$.

Every metric space is Hausdorff.

As $d_{G}$ is preserved under inverses, so is $d_{H'}$. We must now check that
the product on $H'$ is continuous in $d_{H}$.
 Suppose that $(x_{k}', y_{k}')$ is a sequence
converging to $(x',y')$ in $(H')^{2}$. We again let $X'$
 be this sequence, together with its limit
point. As $G^{2}$ is closed in $G \times G$,
 $X = (\pi \times \pi)^{-1}(X') \cap G^{2}$ is 
compact in $G^{2}$. The product map on $X$ is continuous
 and it follows that 
it is continuous on the compact subsets of $X$, equipped
 with the Hausdorff metric. 
The continuity of the product on $H'$ follows from this.

We now turn our attention to $H$. As 
$\pi \circ r = r \circ \pi, \pi \circ r = r \circ \pi$, 
an element
$x$ in $G$ is in $H$ if and only if $r(x)$ is in $H$, 
if and only if $s(x)$ is in $H$.
It follows that $H$ is  a groupoid. 

We observe first that. for any $x, y$ in $G$, we have 
\[
d_{H'}(\pi(x), \pi(y)) \leq d_{H'}(\pi(x), \pi(y)) 
  + d_{G}(x, y) = d_{H}(x, y) 
  \]
  so the map $\pi$ is contractive and hence continuous.

We show that $H$ is locally compact in $d_{H}$. Let $x$ be in $H$. We may 
find $\epsilon > 0$ such that $H'(\pi(x), \epsilon)$ has compact closure.
We claim the same is true of $H(x, \epsilon)$. Let $x_{k}$ be any 
sequence in $H(x, \epsilon)$. It follows from the
 definition of $d_{H}$ that 
 that $\pi(x_{k})$ is in 
 $H'(\pi(x), \epsilon)$. Hence we may pass to a subsequence such that
 $\pi(x_{k})$ is converging to $x'$. We use the same trick again: let $X$
  denote the pre-image of of the subsequence and its limit point under 
  $\pi$, which is compact and contains $x_{k}$. Now we can further
  extract a subsequence where the $x_{k}$'s converge in the usual topology
  of $G$, also. It then follows
  from the definition of $d_{H}$ that this subsequence is also converging
  in $d_{H}$. Again, a metric space is Hausdorff. 
  
  To check that the product in $H$ is continuous in the metric
  $d_{H}$, it suffices to observe  the definition of the
   metric $d_{H}$ and the facts that the product is continuous in $d_{G}$, 
   $\pi$ is continuous and the product in $H'$ is continuous in $d_{H'}$.
   The inverse is isometric in $d_{H}$: this 
   follows from the fact that taking inverses 
   is isometric in both $d_{G}$ and $d_{H'}$.
   
   Let us show that $\pi$ is continuous. Let $x$ be in $H$ and 
   $\epsilon > 0$. Without loss of generality, assume that
   $\epsilon < 3^{-1} diam_{G}(\pi(x))$.
   There is an open set $U'$ in $G'$ containing $\pi(x)$
   such that, for all $x'$ in $U'$, $diam_{G}(x') < \epsilon$ or
   $d_{G}(\pi^{-1}\{ x' \}, \pi^{-1}\{ \pi(x) \}) < \epsilon$. As the 
   topology on $H'$ from the metric $d_{H'} $ is finer than the 
   relative topology from $G'$, we may find $\epsilon/2 > \delta > 0$ 
   such that 
   $H'(\pi(x), \delta) \subseteq U'$. Now suppose that $x'$ is in 
   $H'(\pi(x), \delta)$. It follows that 
    $d_{G}(\pi^{-1}\{ x' \}, \pi^{-1}\{ \pi(x) \})
     < \delta < \epsilon/2$, implying that we may find
    $y$ in $H$ with $\pi(y) = x'$ and $d_{G}(y,x) < \delta$. 
    It follows that $d_{H}(x,y) < \epsilon$ and this completes the proof.
   
   The last thing for us to check is that $\pi$ is
    proper. Let $K '\subseteq H'$ be 
   any subset which is compact in $d_{H'}$. Then it 
   is compact in the topology of
   $H$ as well and hence $K = \pi^{-1}(K')$ is compact in $G$. Now let
   $x_{k}, k \geq 1$ be any sequence in $K$.
    It follows that there is a subsequence
   which is converging in $d_{G}$. There is a
    further subsequence such that 
   $\pi(x_{k})$ is converging in $d_{H'}$. This
    subsequence converges from the 
   definition of $d_{H}$.
\end{proof}

\begin{thm}
\label{7:80}
Suppose that $\pi: G \rightarrow G'$  is regular. 
\begin{enumerate}
\item  The groupoids $G'$ and $H'$ satisfy the 
hypothesis of Theorem \ref{5:50}.
 \item  The groupoids $G$ and $H$ satisfy the 
hypothesis of Theorem \ref{5:50}.
\item For every $u$ in $H^{0}$, the map
$\pi : H^{u} \rightarrow (H')^{\pi(u)}$ is a homeomorphism.
 \end{enumerate}
\end{thm}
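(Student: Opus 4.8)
The plan is to verify each of the four hypotheses of Theorem \ref{5:50} for the two pairs $H' \subseteq G'$ and $H \subseteq G$, and then treat the fibrewise statement separately. I would begin by recording the algebraic facts already established: by Definition \ref{7:50} we have $H = \pi^{-1}(H')$, so Lemma \ref{7:20} shows that $x \in H$ iff $r(x) \in H$ iff $s(x) \in H$, and the same for $H'$ inside $G'$; this immediately gives that $(H')^{0} = H' \cap (G')^{0}$ is invariant in $G'$ and $H^{0} = H \cap G^{0}$ is invariant in $G$, establishing hypothesis (1) of Theorem \ref{5:50} in both cases. Hypothesis (2) — that the metric topologies $d_{H'}$ and $d_{H}$ are finer than the relative topologies from $G'$ and $G$ — is exactly the content of Proposition \ref{7:65}, so nothing new is needed there. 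For hypothesis (4), that $\nu^{u} = \lambda^{u}_{G}$ (restricted appropriately) is a Haar system, I would invoke Theorem \ref{5:30}: by Theorem \ref{7:70} the maps $r_{H'}$ and $r_{H}$ are open (since $\pi: H \to H'$ is open and continuous and $\pi \circ r = r \circ \pi$, openness of $r$ on $H$ follows from openness of $r$ on $H'$, which in turn follows because $H'$ is a topological groupoid with open range map — this last point may require the argument of Lemma \ref{5:10} applied inside $H'$), and then Theorem \ref{5:30} converts openness of the range map into the Haar-system continuity property.

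The substance of the proof, and the step I expect to be the main obstacle, is hypothesis (3): for every $u$ in $(H')^{0}$ (resp.\ $H^{0}$) one must show that the topologies of $G'$ and $H'$ agree on the fibre $(H')^{u} = (G')^{u}$, and likewise that $H_{u} = G_{u}$ with matching topologies. First I would check the set-theoretic equality $(H')^{u} = (G')^{u}$: this is immediate from $H = \pi^{-1}(H')$ together with Lemma \ref{7:20}, since if $r(x') = u \in H'$ then $x' \in H'$. The topological comparison is where the regularity hypothesis must be used. The metric $d_{H'}$ restricted to the fibre $(G')^{u}$ is $d_{H'}(x',y') = d_{G}(\pi^{-1}\{x'\}, \pi^{-1}\{y'\})$; one needs to show that on this fixed fibre this is topologically equivalent to $d_{G'}$. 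Since $d_{H'} \geq$ (something topologically equivalent to) $d_{G'}$ by Proposition \ref{7:65}, the nontrivial direction is to show that $d_{G'}$-convergence on the fibre implies $d_{H'}$-convergence. Here is where I would combine the fourth standing hypothesis (that $\pi|_{G^{u}}: G^{u} \to (G')^{\pi(u)}$ is a homeomorphism for each $u$) with regularity: on a fixed fibre $(G')^{u}$ all the points $x'$ have $r(x') = u$, and since the diameters $\mathrm{diam}_{G}(\pi^{-1}\{r(x')\})$ are constant (equal to $\mathrm{diam}_{G}(\pi^{-1}\{u\})$), Lemma \ref{7:30} forces $\mathrm{diam}_{G}(\pi^{-1}\{x'\})$ to stay bounded below along any $d_{G'}$-convergent sequence in the fibre, so the regularity alternative ``$\mathrm{diam}_{G}(\pi^{-1}\{y'\}) < \epsilon$'' is ruled out and the other alternative ``$d_{G}(\pi^{-1}\{x'\}, \pi^{-1}\{y'\}) < \epsilon$'' — i.e.\ $d_{H'}$-closeness — must hold. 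This is the heart of the matter and I expect the careful handling of the ``constant diameter on a fibre'' observation, via Lemma \ref{7:30}, to be the key technical point.

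For the fibres $H_{u} = G_{u}$ in $G$: again set-theoretic equality follows from $H = \pi^{-1}(H')$. The topology on $H$ is $d_{H}(x,y) = d_{G}(x,y) + d_{H'}(\pi(x),\pi(y))$; restricted to $G_{u}$ one has $\pi(x), \pi(y)$ lying in the fibre $(G')_{\pi(u)}$, so by the fibre case just handled $d_{H'}(\pi(x),\pi(y))$ is controlled by $d_{G'}(\pi(x),\pi(y)) \leq d_{G}(x,y)$ (using that $\pi$ is a contraction), whence $d_{H}$ and $d_{G}$ are topologically equivalent on $G_{u}$. The same argument applies to $G^{u}$. This disposes of hypothesis (3) for $H \subseteq G$, completing parts (1) and (2) of the theorem.

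Finally, for part (3) of the theorem — that $\pi: H^{u} \to (H')^{\pi(u)}$ is a homeomorphism for each $u \in H^{0}$ — I would argue as follows. Set-theoretically this is just the restriction of the bijection $\pi|_{G^{u}}: G^{u} \to (G')^{\pi(u)}$ from the fourth standing hypothesis to the subsets $H^{u} = H \cap G^{u} = \pi^{-1}(H') \cap G^{u}$ and $(H')^{\pi(u)}$, and since $H^{u} = (G^{u}) \cap \pi^{-1}(H')$ maps onto $(H')^{\pi(u)} = (G')^{\pi(u)} \cap H'$, it is a bijection. Continuity of $\pi: H \to H'$ (proved in Theorem \ref{7:70}) restricts to continuity $H^{u} \to (H')^{\pi(u)}$. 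For continuity of the inverse: the inverse of $\pi|_{G^{u}}$ is a homeomorphism $(G')^{\pi(u)} \to G^{u}$ for the $d_{G'}$ and $d_{G}$ topologies; but by hypothesis (3) already established, the $d_{H'}$-topology on $(H')^{\pi(u)}$ agrees with the $d_{G'}$-topology, and the $d_{H}$-topology on $H^{u}$ agrees with the $d_{G}$-topology, so the inverse map is continuous for the $H$-topologies as well. Thus $\pi: H^{u} \to (H')^{\pi(u)}$ is a homeomorphism. The main work, then, is genuinely concentrated in the fibre comparison of part (3) of Theorem \ref{5:50}, which rests on Lemma \ref{7:30} and the regularity hypothesis; everything else is bookkeeping that reduces to results already proved in Theorems \ref{5:30}, \ref{7:65}, and \ref{7:70}.
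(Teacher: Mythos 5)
Your overall architecture matches the paper's: hypotheses (1) and (2) of Theorem \ref{5:50} come from Lemma \ref{7:20} and Proposition \ref{7:65}, the fibrewise topology comparison is the substantive part of hypothesis (3), and part (3) of the theorem falls out of the standing hypothesis once the fibre topologies are matched. Your route to the fibre comparison is genuinely different from the paper's: you rule out the small-diameter alternative in the regularity condition \ref{7:60} by combining Lemma \ref{7:30} with the constancy of $\mathrm{diam}_{G}(\pi^{-1}\{r(x')\})$ on a fixed fibre, whereas the paper never invokes regularity here -- it uses Lemma \ref{7:45} together with a finite subcover of $\pi^{-1}\{s'\}$ by $\epsilon/2$-balls and the standing hypothesis that $\pi$ is a homeomorphism on each $G^{r(s_k)}$ to produce an explicit $G'$-open set inside the $d_{H'}$-ball. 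Your argument can be made to work (the point being that no subsequence of $\mathrm{diam}_{G}(\pi^{-1}\{x_k'\})$ can tend to zero, so the liminf is positive), but note that you only obtain topological agreement, not the Lipschitz-type bound "$d_{H'}(\pi(x),\pi(y))\leq d_{G'}(\pi(x),\pi(y))\leq d_{G}(x,y)$" you later assert; the paper also does not claim $\pi$ is a contraction from $d_{G}$ to a metric on $G'$. Topological agreement suffices for the $G_u$ step, so this is a cosmetic overstatement rather than an error.

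The genuine gap is in your treatment of hypothesis (4), the Haar system. You correctly reduce it via Theorem \ref{5:30} to openness of $r_{H'}$ and $r_{H}$, but your justification for openness of $r_{H'}$ -- that it "follows because $H'$ is a topological groupoid with open range map,'' possibly via Lemma \ref{5:10} -- is circular: being a topological groupoid does not imply the range map is open, and Lemma \ref{5:10} only asserts the \emph{equivalence} of openness of $r$, $s$ and $\mu$, none of which you have established for $(H',d_{H'})$. Proving that $r_{H'}$ is open in $d_{H'}$ is in fact the hardest step of the paper's proof: it requires the regularity hypothesis, Lemma \ref{7:45}, local compactness, and uniform continuity of $r_{G}$ on a compact neighbourhood of $\pi^{-1}\{x'\}$ to show that a small $d_{H'}$-ball around $r_{G'}(x')$ in $(H')^{0}$ lands inside $r_{G'}(H'(x',\epsilon))$, with the small-diameter alternative excluded by a diameter bound on $\pi^{-1}\{r_{G'}(x')\}$. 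Your derivation of openness of $r_{H}$ from openness of $r_{H'}$ is also invalid as stated: openness of $\pi$ and of $r_{H'}$ gives that $\pi(r_{H}(U))=r_{H'}(\pi(U))$ is open in $(H')^{0}$, not that $r_{H}(U)$ itself is open, since $\pi$ is far from injective on $H^{0}$. The paper closes this by a lifting argument: given $u_k\to r(x)$ in $d_{H}$, it uses openness of $r_{H'}$ to produce $y_k'$ with $r(y_k')=\pi(u_k)$ converging to $\pi(x)$, lifts to $y_k$ with $r(y_k)=u_k$ using the standing hypotheses, extracts a convergent subsequence by properness, and concludes $y=x$ from injectivity of $\pi$ on $G^{r(x)}$. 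Without these two arguments your proof of hypothesis (4) -- and hence of parts (1) and (2) of the theorem -- is incomplete.
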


\begin{proof}
We begin with $G'$ and $H'$. We have already seen $G'$-invariance 
as a consequence of Lemma \ref{7:10}. The second condition is the conclusion 
of Proposition \ref{7:65}.

Let us now fix $u'$ in $H'$ and verify that the relative 
topology from $G'$ on 
$(G')^{u'} = (H')^{u'}$ is finer than the topology of $d_{H'}$. Fix $s'$ in 
$H'$ with $r(s') = u'$ and let $\epsilon > 0$. We want to find an open set
$U$ in $(G')^{u'}$ such that $U  \subseteq H'(s', \epsilon)$.
It is a fairly easy result in topology that there exists an open set $V$
containing $s'$ such that, for every $t'$ in $V$,
 $\pi^{-1}\{ t' \} \subseteq G(\pi^{-1}\{ s' \}, \epsilon)$.
The collection of sets $G(s, \epsilon/2 ) , s \in \pi^{-1}\{ s' \}$ forms
an open 
cover of $\pi^{-1}\{ s' \}$.
As  $\pi$ is proper, we extract a finite subcover corresponding
to points $s_{1}, \ldots, s_{K}$ in $\pi^{-1}\{ s' \}$. 
As $\pi$ is assumed to 
be a homeomorphism when restricted to each $G^{r(s_{i})}$, 
each of the sets
$\pi(G(s_{k}, \epsilon/2) \cap G^{r(s_{k}})$ is an open 
subset of $(G')^{u'}$.
We define
\[
U = V \cap \left( \cap_{k=1}^{K} \pi(G(s_{k}, \epsilon/2) 
\cap G^{r(s_{k})}) \right),
\] 
which is an open set in $(G')^{u'}$ containing $s'$.  We claim that
$U$ is contained in the $d_{H'}$-ball of radius $\epsilon$ around $s'$.
 Let $t'$ be in $U $. As $U \subseteq V$, we 
 have $\pi^{-1}\{ t' \} \subseteq G(\pi^{-1}\{ s' \}, \epsilon)$. 
 For the other inclusion, for any $s$ in $\pi^{-1}\{ s' \}$, we know that
 $s$ is in $G(s_{k}, \epsilon/2)$, for some $k$. We also know that
 $t'$ is in $\pi(G(s_{k}, \epsilon/2) \cap G^{r(s_{k})})$, so we may find
 $t$ in $G(s_{k}, \epsilon/2)$ with $\pi(t) = t'$. From the triangle 
 inequality, we have $d_{G}(t, s) < \epsilon$. 
 
 Finally, to check that fourth property, 
  we must verify that $r: H' \rightarrow (H')^{0}$
 is open in $d_{H'}$. 
 Let $x'$ be in $H'$ and $\epsilon_{0} > 0$. As $G'$ is
  locally compact, we may find 
 an open set $x' \in U_{0} \subseteq G'$ with compact closure.
  It follows that 
$\pi^{-1}(\overline{U_{0}})$ is also compact and contains
 $\pi^{-1}\{ x' \}$. 
As $r_{G}$ is uniformly continuous on the compact set
 $\pi^{-1}(\overline{U_{0}})$, we may 
find $\epsilon_{0} > \epsilon > 0$ such that 
for any  $  y, z \in \pi^{-1}(\overline{U_{0}})$ 
 with $d_{G}(y, z) < \epsilon$, 
 it follows  that  
$d_{G}(r_{G}(y), r_{G}(z)) < 3^{-1} diam_{G}( \pi^{-1}\{ r_{G'}(x') \})$.
Now, we may use our hypothesis of regularity to find an 
open set $x' \in U \subseteq U_{0}$
such that, for all $y'$ in $U$, we have either 
$d_{G}(\pi^{-1}\{ y' \}, \pi^{-1}\{ x' \}) < \epsilon$
  or $diam_{G}(\pi^{-1}\{ y' \}) < \epsilon$.
  We use the fact that $r_{G'}$ is an open map to find an 
  open set $r_{G'}(x') \in V \cap (G')^{u} \subseteq r_{G'}(U)$. 
  Finally, we choose 
  $ 0 < \delta  < 3^{-1}diam_{G}( \pi^{-1}\{ r_{G'}(x') \}) $ such 
  that $G(\pi^{-1}\{ r_{G'}(x')\}, \delta) \subseteq \pi^{-1}(V)$.
  We claim that 
  \[
  H'(r_{G'}(x'), \delta) \cap (H')^{u} \subseteq r_{G'}(H'(x', \epsilon)) 
  \subseteq r_{G'}(H'(x', \epsilon_{0})).
  \] 
  Let $u'$ be in the leftmost set. From our choice of
  $\delta$, $\pi^{-1}\{ u'\}$ is contained in  $\pi^{-1}(V)$, 
  so $u'$ is in $V \cap (G')^{u}$, which, in turn, 
  is contained in $r_{G'}(U)$.
  Hence, we know that $u' = r_{G'}(y')$, with $y'$ in
   $U$. It follows from the
  choice of $U$ that either
   $d_{G}(\pi^{-1}\{ y' \}, \pi^{-1}\{ x' \}) < \epsilon$
  or $diam_{G}(\pi^{-1}\{ y' \}) < \epsilon$. In the 
  former case, it follows 
  from the definition of $d_{H'}$
   that $y'$ is in $H'(x', \epsilon)$ and we are done.
  In the latter case, as $y'$ is in $U \subseteq U_{0}$, we have 
  $diam_{G}(r_{G}(\pi^{-1}\{ y' \}) ) < 
  3^{-1} diam_{G}( \pi^{-1}\{ r_{G'}(x') \})$.
  As $r_{G}(\pi^{-1}\{ y' \})  = \pi^{-1}\{ r_{G'}(y') \} 
  = \pi^{-1}\{ u' \}$,
   this contradicts
  the hypothesis that 
  \[
  d_{H'}(u', r_{G'}(x')) < \delta < 3^{-1}diam_{G}( \pi^{-1}\{ r_{G'}(x') \}).
  \]

 We turn to $G$ and $H$. The first two  conditions follow from Lemma 
 \ref{7:10} and Proposition \ref{7:65}. For the third part, let 
 $\mathcal{S}$ denote the usual topology on
 $G'$. Fix a unit $u$ in $H^{0}$. We know that the relative topology
  $ \mathcal{S}|_{H'}$ and the metric topology from $d_{H'}$ agree on 
  $(H')^{\pi(u)}$. On the other hand, $\pi$ induces a 
  homeomorphism between $H^{u}$, with the metric topology from
  $d_{G}$  and   $(H')^{\pi(u)}$ with the  relative topology
  $ \mathcal{S}|_{H'}$. Together, these imply that
  $\pi$ is a homeomorphism between the metric spaces $(H^{u}, d_{G})$
  and $((H')^{\pi(u)}, d_{H'})$. It follows immediately
  that the metrics $d_{G}$ and $d_{H}$ induce the same topology on
  $H^{u}$. At the same time, we see that
   $\pi: (H^{u}, d_{H}) \rightarrow ((H')^{\pi(u)}, d_{H'})$
    is a homeomorphism.
   
   Finally, to check the last condition of Theorem \ref{5:50}, we 
   must see that $r: H \rightarrow H^{0}$ is open.  If this fails,
    then there is an $x$ in $G$, an $\epsilon > 0$ and a sequence
    $u_{k}, k \geq 1$ in $H^{0} $ converging 
    to $r(x)$ in $d_{H}$, while $u_{k} \notin H(x, \epsilon), k \geq 1$.
    By continuity, $\pi(u_{k}), k \geq 1$ converges
     to $\pi(r(x))$ in $d_{H'}$.
   We know that  $r_{G'}: H' \rightarrow (H')^{0}$, so we may 
   find $y_{k}',  k \geq 1$ in $H$ with 
   $r_{G}(y_{k}') = \pi(u_{k}), k \geq 1$ converging 
   to $\pi(x)$. 
     From our standing hypotheses and Lemma  \ref{7:20},
      for every $k \geq 1$, we may 
     find $y_{k}$ in $H$ with $\pi(y_{k})  = y_{k}', k \geq 1$
     and $r_{G}(y_{k}) = u_{k}, k \geq 1$. The sequence $y_{k}, k \geq 1$
     lies in $\pi^{-1}\{ y_{k}', \pi(x) \mid k \geq 1 \}$, which 
     is compact since $\pi$ is proper (from $d_{H}$ to $d_{H'}$).
    By passing  to a subsequence, we may assume
     $y_{k}, k \geq 1$ converging to some $y$.
     On  one hand, we have 
     \[
     \pi(y) = \lim_{k} \pi(y_{k}) =\lim_{k} y'_{k} = \pi(x)
     \]
     and on the other, 
     \[
     r_{G}(y) = \lim_{k} r_{G}(y_{k}) = \lim_{k} u_{k} = r_{G}(x).
     \]
    From our standing hypotheses, this implies that $x=y$. So for some 
    $k$ sufficiently large,
    $y_{k}$ is in $H(x, \epsilon)$ and $u_{k}= r_{G}(y_{k})$, a 
    contradiction. This completes the proof.      
\end{proof}

Our next task is to find a sequence of 
approximants to $G$, which we will use in 
constructing our subalgebra $\mathcal{A}$ within $C_{c}(G)$.
For each $n \geq 1$, we define an equivalence
 relation, $\sim_{n}$, on $G$ 
as follows. For $x$ in $G$, we set 
\[
[x]_{n} = \left\{ \begin{array}{cl} \{ x \}, 
& diam_{G}(\pi^{-1}\{ \pi(x) \}) > n^{-1}  \\ 
                   \pi^{-1}\{ x \},        
                   & diam_{G}(\pi^{-1}\{ \pi(x) \}) \leq n^{-1} 
                   \end{array} \right.
                   \]
 Observe that each $\sim_{n}$-equivalence class is compact.
      We let $G_{n}$ be the quotient space $G/ \sim_{n}$ which 
   we equip with the quotient topology.
We let $q_{n}: G \rightarrow G_{n}$ denote    
quotient map and since $[x]_{n} \supseteq [x]_{n+1} $, we
let $p_{n}: G_{n} \rightarrow G_{n-1}$ be the obvious 
quotient map, for $n \geq 2$ and 
$q'_{n}: G_{n} \rightarrow G'$ be the map sending
 $[x]_{n} $ to $\pi(x)$. All of these maps are
clearly continuous. 

It will also be convenient for us to define
\[
H'_{n} =   \{ x \in G' \mid 
diam_{G}(\pi^{-1}\{ x \}) > n^{-1}  \} \subseteq H',
\]
and 
\[
H_{n} =   \pi^{-1}(H_{n}') \subseteq H,
\] 
for all $n \geq 1$.

\begin{lemma}
\label{7:90}
\begin{enumerate}
\item Each space $G_{n}$ is locally compact.
\item Each space $G_{n}$ is Hausdorff.
\item The space $G$ is the inverse limit of 
\[
G_{1} \stackrel{p_{2}}{\leftarrow} G_{2} \stackrel{p_{3}}{\leftarrow} \cdots
\]
\item 
Each set $H_{n}$ is open in $H$.
\item The closure of $H_{n}$ in $G$, is contained in $H$.
\item $H_{1} \subseteq H_{2} \subseteq \cdots$ and the union is $H$.
\end{enumerate}
\end{lemma}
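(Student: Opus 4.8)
The plan is to treat the six assertions in three groups: $(4),(5),(6)$ concern the fibre‑diameter function and are direct; $(1),(2),(3)$ concern the quotient map $q_n\colon G\to G_n$, with $(2)$ being the real content.

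I would do $(6)$ first: since $n^{-1}>(n+1)^{-1}$, a fibre of diameter $>n^{-1}$ also has diameter $>(n+1)^{-1}$, so $H'_n\subseteq H'_{n+1}$ and hence $H_n=\pi^{-1}(H'_n)\subseteq H_{n+1}$; moreover $\bigcup_n H'_n=\{x'\in G'\mid \operatorname{diam}_G(\pi^{-1}\{x'\})>0\}=\{x'\mid \#\pi^{-1}\{x'\}\ge 2\}=H'$ because $d_G$ separates points, so $\bigcup_n H_n=\pi^{-1}(H')=H$. For $(4)$: by the very definition of $d_{H'}$ the map $x'\mapsto\pi^{-1}\{x'\}$ is an isometry of $(H',d_{H'})$ into the compact subsets of $G$ under the Hausdorff distance, on which $\operatorname{diam}_G$ is $2$‑Lipschitz; so $x'\mapsto\operatorname{diam}_G(\pi^{-1}\{x'\})$ is continuous on $(H',d_{H'})$, making $H'_n$ open there, and then $H_n=\pi^{-1}(H'_n)$ is open in $H$ since $\pi\colon(H,d_H)\to(H',d_{H'})$ is $1$‑Lipschitz by the definition of $d_H$. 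For $(5)$: if $x_k\in H_n$ and $x_k\to x$ in $G$, choose $u_k,v_k\in\pi^{-1}\{\pi(x_k)\}$ with $d_G(u_k,v_k)>n^{-1}$; these lie in the compact set $\pi^{-1}(\pi(\{x_k\}_k\cup\{x\}))$ (properness of $\pi$), so a subsequence gives $u_k\to u$, $v_k\to v$ with $d_G(u,v)\ge n^{-1}$ and $\pi(u)=\pi(v)=\pi(x)$, whence $x\in\pi^{-1}(H')=H$.

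For $(1)$, given $[x]_n$ I would pick a precompact open $V'\ni\pi(x)$ in $G'$; then $N:=\pi^{-1}(V')$ is open and $\sim_n$‑saturated with $\overline N$ compact (properness), so $q_n(\overline N)$ is a compact neighbourhood of $[x]_n$. The crux is $(2)$, and this is where regularity of $\pi$ (Definition \ref{7:60}) is essential. If $[x]_n\ne[y]_n$ with $\pi(x)\ne\pi(y)$, pull back disjoint open neighbourhoods of $\pi(x),\pi(y)$ in $G'$ (preimages from $G'$ are automatically open and $\sim_n$‑saturated). If $\pi(x)=\pi(y)=:x'$ then necessarily $[x]_n=\{x\}$, $[y]_n=\{y\}$, $x\ne y$, and $\operatorname{diam}_G(F)>n^{-1}$ where $F=\pi^{-1}\{x'\}$. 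I would fix $\epsilon>0$ with $\epsilon<n^{-1}$ and $2\epsilon<\operatorname{diam}_G(F)-n^{-1}$, then apply Definition \ref{7:60} (with the strengthening in Lemma \ref{7:45}) at $x'$ to get an open $x'\in W'\subseteq G'$ such that every $y'\in W'$ satisfies $d_G(F,\pi^{-1}\{y'\})<\epsilon$ or $\operatorname{diam}_G(\pi^{-1}\{y'\})<\epsilon$ (mutually exclusive, as $\operatorname{diam}_G(F)>3\epsilon$). The decisive point is that the first alternative forces $\operatorname{diam}_G(\pi^{-1}\{y'\})>\operatorname{diam}_G(F)-2\epsilon>n^{-1}$, i.e. $y'\in H'_n$; so, writing $W'_1=\{y'\in W'\mid d_G(F,\pi^{-1}\{y'\})<\epsilon\}$ — which contains $x'$ and is open in $G'$, the function $y'\mapsto d_G(F,\pi^{-1}\{y'\})$ being upper semicontinuous by properness — the saturated open set $\pi^{-1}(W'_1)$ contains $x$ and $y$ and meets \emph{only} singleton $\sim_n$‑classes. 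Then disjoint $G$‑open neighbourhoods of $x$ and $y$, intersected with $\pi^{-1}(W'_1)$, are disjoint $\sim_n$‑saturated open sets whose $q_n$‑images separate $[x]_n$ and $[y]_n$. I expect this ``no collapsed fibre stretches past $x$ near $x'$'' step to be the main obstacle; it is exactly the phenomenon that regularity was introduced to control.

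Finally $(3)$: let $\Phi=(q_n)_n\colon G\to\varprojlim_n(G_n,p_n)$. It is continuous, and injective because $\bigcap_n[x]_n=\{x\}$ (a zero‑diameter fibre is a point). It is surjective: a compatible string $(\xi_n)$ corresponds to a decreasing chain of nonempty compact $\sim_n$‑classes, whose intersection is a single point $x$ with $[x]_n=\xi_n$ for all $n$. To conclude I would show $\Phi$ is closed: given $C\subseteq G$ closed and $(\xi_n)\in\overline{\Phi(C)}$, recover $x$ with $\Phi(x)=(\xi_n)$ as above, take a precompact open $V'\ni\pi(x)$, and note $q_1(\pi^{-1}(V'))$ is an open neighbourhood of $\xi_1$ in $G_1$, so any net in $C$ mapping to $(\xi_n)$ eventually lies in the precompact saturated set $\pi^{-1}(V')$; a subnet converges to some $x^\ast\in C$ with $\Phi(x^\ast)=(\xi_n)=\Phi(x)$, so $x^\ast=x\in C$ (using injectivity, and that each $G_n$, hence the product, is Hausdorff by $(2)$). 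A continuous closed bijection is a homeomorphism, giving $G\cong\varprojlim_n G_n$.
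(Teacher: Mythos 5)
Parts (1), (4), (5), and (6) of your argument are sound and essentially coincide with the paper's, and your proof of (3) --- which the paper in fact leaves unproved --- is correct. The genuine problem is in part (2), in the case $\pi(x)=\pi(y)=x'$. You claim that $y'\mapsto d_{G}(F,\pi^{-1}\{y'\})$ is upper semicontinuous and hence that $W_1'=\{y'\in W'\mid d_{G}(F,\pi^{-1}\{y'\})<\epsilon\}$ is open. This is false: regularity explicitly permits points $y'$ arbitrarily close to $x'$ whose fibres have diameter less than $\epsilon$, and for such a near-singleton fibre the \emph{Hausdorff} distance to $F$ is at least $\tfrac{1}{2}(\operatorname{diam}_{G}(F)-\epsilon)>\epsilon$, since $F$ cannot lie in an $\epsilon$-neighbourhood of a set of diameter less than $\epsilon$ when $\operatorname{diam}_{G}(F)>3\epsilon$. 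In the binary-expansion example from the introduction, every $G'$-neighbourhood of a dyadic point $x'$ contains points with one-point fibres, so $W_1'$ contains no neighbourhood of $x'$ and $\pi^{-1}(W_1')$ is not open. More seriously, your underlying strategy --- finding a saturated open set around $x$ that meets \emph{only} singleton $\sim_n$-classes --- cannot succeed in general, because collapsed classes accumulate at $x$.

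The repair is to tolerate collapsed classes rather than exclude them, which is what the paper does: it proves that for any open $U\subseteq G$ the saturation interior $\{z\mid [z]_{n}\subseteq U\}$ is open in $G$, and then separates the two disjoint compact classes by disjoint $G$-open sets and passes to their saturation interiors. The key computation at a singleton class $\{x\}$ over $x'$ with $\operatorname{diam}_{G}(F)>n^{-1}$ is this: choose $\epsilon'<2^{-1}(\operatorname{diam}_{G}(F)-n^{-1})$ and $\epsilon'<2^{-1}\epsilon$, and take $U'$ from regularity at $x'$. Any \emph{collapsed} class $[z]_{n}=\pi^{-1}\{\pi(z)\}$ meeting $G(x,\epsilon')\cap\pi^{-1}(U')$ has diameter at most $n^{-1}$, hence satisfies $d_{G}(\pi^{-1}\{\pi(z)\},F)\geq 2^{-1}(\operatorname{diam}_{G}(F)-n^{-1})>\epsilon'$, so by regularity it falls into the second alternative, has diameter at most $\epsilon'$, and is therefore wholly contained in $G(x,2\epsilon')\subseteq U$. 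That is, small neighbourhoods of $x$ do contain collapsed classes, but only ones that are themselves tiny and hence swallowed by $U$. Your first case ($\pi(x)\neq\pi(y)$, separating by pullbacks of disjoint opens in $G'$) is fine as written.
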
       

\begin{proof}
The first part follows easily from the 
following observation: if $U' \subseteq G'$
is open with compact closure, then by the continuity of
$q'_{n}$, $(q_{n}')^{-1}(U')$ is open and is contained in 
$q_{n}( \pi^{-1}(\overline{U'}))$, which is compact.

The second part follows quite easily from the following fact:
if $U \subseteq G$ is an open set, then 
\[
U_{n} = \{ [z]_{n} \mid  [z]_{n} \subseteq U \}
\] 
is open in $G_{n}$. 
To prove this, we need to show that

\[
(q_{n})^{-1}(U_{n}) = \{ z \in G \mid  [z]_{n} \subseteq U \}
\]
is open in $G$.

Let $z$ be in $(q_{n})^{-1}(U_{n})$. As $[z]_{n} $
is compact, we may find $\epsilon > 0$ such that 
\[
G([z]_{n}, \epsilon) \subseteq U.
\]
 We consider two cases. First, 
suppose that $[z]_{n} = \pi^{-1}\{\pi(z)\}$. We appeal to Lemma 
\ref{7:45} to find an open set $\pi(z) \in U' \subseteq G'$ such that
\[
 \pi^{-1}(U') \subseteq G( \pi^{-1}\{\pi(z)\}, \epsilon) \subseteq U.
 \]
 Then $z \in \pi^{-1}(U')$ is open in $G$ and, 
 if $x$ is in $\pi^{-1}(U')$, 
 then we have 
 \[
 [x]_{n} \subseteq \pi^{-1}\{ \pi(x) \} \subseteq G( \pi^{-1}\{\pi(z)\}, \epsilon) \subseteq U
 \]
 so  $\pi^{-1}(U') \subseteq (q_{n})^{-1}(U_{n})$.
 
 The second case is $[z]_{n} = \{ z \}$. It follows from the definition
 of $[z]_{n}$ that $diam_{G}(\pi^{-1}\{ \pi(z) \}) > n^{-1}$.
 Now, we find an open set  $\pi(z) \in U' \subseteq G'$ satisfying the regularity
 condition at $\pi(z)$ for 
 \[
 \epsilon'< 2^{-1}( diam_{G}(\pi^{-1}\{ \pi(z)\}) - n^{-1}).
 \] 
 We also require that $\epsilon' < 2^{-1} \epsilon$.
 Then $z \in \pi^{-1}(U') \cap G(z, \epsilon')$, 
 which is open in $G$. We claim this set is contained in $(q_{n})^{-1}(U_{n})$.
 Suppose $x$ is in $G(z, \epsilon')$ with $\pi(x)$ in $U'$. We need to show that 
 $[x]_{n} \subseteq U$. The case $[x]_{n} = \{ x\}$ is easy as $x$ is in 
$ G(z, \epsilon') \subseteq G(z, \epsilon) \subseteq U$.  In the case 
$[x]_{n} = \pi^{-1}\{ \pi(x) \}$, we then know that 
$diam_{G}(\pi^{-1}\{ \pi(x) \}) \leq n^{-1}$. From this, we see that 
\begin{eqnarray*}
d_{G}( \pi^{-1}\{ \pi(x) \}, \pi^{-1}\{ \pi(z) \} ) &
\geq  &  2^{-1}\left( diam_{G}(\pi^{-1}\{ \pi(z)\})  \right. \\
  &  &  \left. -
 diam_{G}(\pi^{-1}\{ \pi(x)\}) \right)   \\
   &  >   & \epsilon'.
 \end{eqnarray*}
 It follows from the regularity condition and the fact that $\pi(x)$ is in
 $U'$ that
 $diam_{G}(\pi^{-1}\{ \pi(x) \}) \leq \epsilon'$. 
 Then we have 
 \[
 d_{G}(\pi^{-1}\{ \pi(x) \}, z) \leq diam_{G}(\pi^{-1}\{ \pi(x)\}) + d_{G}(x,z) < 2 \epsilon' < \epsilon.
 \]
 The desired conclusion follows.

   We now prove that $H_{n}$ is open in $H$. In fact, this follows if
   we show that 
   $H_{n}'$ is open in $H'$. Let $x$ be in $H'_{n}$. Choose 
   $\epsilon > 0$ so that $diam_{G}( \pi^{-1}\{ x \}) > n^{-1} + 3 \epsilon$.
   If $y$ is in $H'$ with $d_{h}(x,y) < \epsilon$, then 
   $d_{G}( \pi^{-1}\{ x \}, \pi^{-1}\{ y \} ) < \epsilon$ and it follows
   that 
   \[
   diam_{G}(\pi^{-1}\{ y\}) \geq diam_{G}( \pi^{-1}\{ x \}) - 2 \epsilon > n^{-1}.
   \]
   Hence $y$ is in $H_{n}'$. 
   
   Now suppose that $x$ in $G$ is in the closure of $H_{n}$.
   We will show that $\pi(x)$ is in $H'$.
   Every neighbourhood of $\pi(x)$ contains a point $x'$ with 
  \newline 
   $diam_{G}(\pi^{-1}\{ x' \}) > n^{-1}$.
   On the other hand, if $\pi(x)$ is not in $H'$,
    then $\pi^{-1}\{ x \}$ is a single point. In this case, 
    Lemma \ref{7:45} implies that there is an open set $U'$ containing
    $\pi(x)$ such that $diam(\pi^{-1}(U'))$ is 
    arbitrarily small. This would be a contradiction.

   The last statement is obvious.
\end{proof}

We now begin the task of establishing the conditions
 needed to apply Theorem \ref{3:70}. 
Our ultimate goal will be to see 
$B = C^{*}_{r}(G), B \cap \ker(\delta)= C^{*}_{r}(G'),
A = C^{*}_{r}(H)$ and $A \cap \ker(\delta) = C^{*}_{r}(H')$.
 The first part is to find a Hilbert space
with actions of both $C^{*}_{r}(G)$ 
 and $C^{*}_{r}(H)$. We assume throughout that $\pi$ satisfies the 
 standing hypothesis and is regular.

We define 
\[
G \times_{\pi} G= \{ (x,y) \in G \times G \mid \pi(x) = \pi(y) \},
\]
the fibred product of $G$ with itself 
over $\pi$. This is actually a groupoid in 
an obvious fashion.  We will not need 
that fact, but it does influence our notation.
It receives the relative topology from the product. 
We denote the map sending $(x,y)$ in 
$G\times_{\pi} G$ to $\pi(x)=\pi(y)$ in $G'$ by $\pi$.
For $u, v$ in $G^{0}$ with $\pi(u) = \pi(v)$, we let 
$(G \times_{\pi} G)^{(u,v)}  = \{ (x, y) \in G^{u} \times G^{v}
 \mid \pi(x) = \pi(y) \}$. There is an analogous definition for
$ (G \times_{\pi} G)_{(u,v)}$. 
It follows from the standing hypothesis that 
$\pi|_{(G \times_{\pi} G)^{(u,v)}}$ is a 
homeomorphism to $(G')^{\pi(u)}$. We let $\nu^{(u,v)}$
 be the measure
$\nu^{\pi(u)}$ pulled back by this map. There
 is an analogous definition
of $\nu_{(u,v)}$. We represent
$C_{c}(G)$ on the Hilbert space 
$L^{2}((G \times_{\pi} G)_{(u,v)},\nu_{(u,v)})$ by
\[
( b \xi) (x,y) = \int_{(r(w), r(z)) = (r(x), r(y))}
  b(w) \xi(w^{-1}x, z^{-1}y) d\nu^{(r(x), r(y))}(w, z),
  \]
  for $b$ in $C_{c}(G)$, $\xi$ in 
  $L^{2}((G \times_{\pi} G)^{(u,v)},\nu^{(u,v)})$ and 
  $(x,y)$ in $(G \times_{\pi} G)_{(u,v)}$ It is obvious that
  this is unitarily equivalent to the left regular
  representation of $C_{c}(G)$ on 
  $L^{2}(G_{u},\nu_{u})$ and hence extends to 
  all of $C^{*}_{r}(G)$. Let us also remark at this point that if
  $x, y, w,z$ are in $G$ with $r(w) = r(x), r(y)=r(z), 
  \pi(w)=\pi(z), \pi(x)=\pi(y)$, then 
  \[
  \sigma(w, w^{-1}x) = \sigma'(\pi(w), \pi(w)^{-1}\pi(x)) =\sigma(z, z^{-1}y)
  \] 
  as a consequence
  of the last part of our standard hypothesis.
  
  We define our Hilbert space 
  \[
  \mathcal{H} = \oplus_{(u,v) \in G^{0} \times_{\pi} G^{0} }
  L^{2}((G \times_{\pi} G)_{(u,v)},\nu_{(u,v)}).
  \]
  
  We also observe that all of the preceding discussion 
  applies equally well to the groupoid $H$, provided
  $u,v$ are in $H$, which
   is implied by the condition $u \neq v$.
If $u,v$ are not in $H$, we simply represent $C_{c}(H)$ on 
  $L^{2}((G \times_{\pi} G)_{(u,v)},\nu_{(u,v)})$ as the zero representation.
  
  Observe that the operator $(F\xi)(x,y) = \xi(y,x)$ is a
   unitary from $L^{2}((G \times_{\pi} G)_{(u,v)},\nu_{(u,v)})$
  to $L^{2}((G \times_{\pi} G)_{(v,u)},\nu_{(v,u)})$. In the case $u=v$, 
  it is the identity.  We 
  denote its extension to $\mathcal{H}$ by $F$ also. 
  Observe that $F^{2}=I, F=F^{*}$. We 
  we define $\delta(a) = i[a, F] = i(aF - Fa)$, for 
  all $a$ in $C^{*}_{r}(G) + C^{*}_{r}(H)$.
  
  It is probably worth recording the following fact. Its 
  proof is trivial and we omit it.
  
  \begin{lemma}
  \label{7:95}
  Let $(u, v)$ be in $G \times_{\pi} G$, $\xi$ be in 
   $L^{2}((G \times_{\pi} G)_{(v,u)},\nu_{(v,u)})$ and 
   $b$ be in either $C_{c}(G)$ or $C_{c}(H)$. We have 
   \begin{eqnarray*}
   \delta(b) \xi(x, y) & =  & 
i \int
  \left( b(w) - b(z)\right) \xi(z^{-1}y, w^{-1}x)  \\
   &  &  \hspace{2cm} \sigma(w, w^{-1}x)
  d\nu^{(r(x), r(y))}(w, z),
  \end{eqnarray*}
  for $(x,y)$ in $(G \times_{\pi} G)_{(v,u)}$, where the integral is over
  $(w,z) \in (G \times_{\pi} G)^{(r(x), r(y))} $.
  \end{lemma}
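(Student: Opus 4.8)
The plan is to expand $\delta(b)=i(bF-Fb)$ directly, evaluate each of $bF\xi$ and $Fb\xi$ using the representation formula for $C_{c}(G)$ (respectively $C_{c}(H)$) together with the identity $(F\eta)(x,y)=\eta(y,x)$ that defines $F$, and then fuse the two resulting integrals by means of the cocycle relation $\sigma(w,w^{-1}x)=\sigma'(\pi(w),\pi(w)^{-1}\pi(x))=\sigma(z,z^{-1}y)$ recorded in the paragraph preceding the lemma. Since $F$ merely interchanges the summands of $\mathcal{H}$ indexed by $(u,v)$ and $(v,u)$ while each element of $C_{c}(G)$ or $C_{c}(H)$ preserves summands, $\delta(b)$ interchanges those two summands, so it suffices to run the computation on a single pair $(u,v)$; on the diagonal summands $u=v$ the operator $F$ is the identity and both sides of the asserted identity vanish.

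For the first term, since $(F\xi)(s,t)=\xi(t,s)$, the representation formula gives
\[
(bF\xi)(x,y)=\int b(w)\,\xi(z^{-1}y,\,w^{-1}x)\,\sigma(w,w^{-1}x)\,d\nu^{(r(x),r(y))}(w,z),
\]
the integral running over $(w,z)$ in $(G\times_{\pi}G)^{(r(x),r(y))}$.

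For the second term, $(Fb\xi)(x,y)=(b\xi)(y,x)$, and applying the representation formula at $(y,x)$ produces an integral over pairs $(w',z')$ with $r(w')=r(y)$, $r(z')=r(x)$ and $\pi(w')=\pi(z')$. Relabelling $w=z'$ and $z=w'$ turns this into an integral over $(w,z)$ in $(G\times_{\pi}G)^{(r(x),r(y))}$ carrying the same measure, because $\nu^{(\cdot,\cdot)}$ is by construction the pull-back of $\nu^{\pi(\cdot)}$ along the fibrewise homeomorphism induced by $\pi$ and $\pi(w)=\pi(z')=\pi(w')=\pi(z)$. After relabelling the integrand reads $b(z)\,\xi(z^{-1}y,w^{-1}x)\,\sigma(z,z^{-1}y)$, and the cocycle identity above (legitimate precisely because $r(w)=r(x)$, $r(z)=r(y)$, $\pi(w)=\pi(z)$, $\pi(x)=\pi(y)$) rewrites $\sigma(z,z^{-1}y)$ as $\sigma(w,w^{-1}x)$. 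Subtracting the two displays and multiplying by $i$ yields the claimed formula, and the identical argument applies with $b$ in $C_{c}(H)$.

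The only step carrying any content is the change of variables $(w',z')\mapsto(z,w)$ in the second term: one has to confirm that the domains of integration and the pull-back measures $\nu^{(\cdot,\cdot)}$ transform correctly. This is immediate from the definition of $\nu^{(u,v)}$ as the pull-back of $\nu^{\pi(u)}$ under the restriction of $\pi$ to $(G\times_{\pi}G)^{(u,v)}$, which is a homeomorphism onto $(G')^{\pi(u)}$, so I anticipate no real obstacle — everything else is formal manipulation, which is exactly why the statement is labelled trivial.
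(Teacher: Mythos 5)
Your computation is correct and is exactly the direct expansion of $\delta(b)=i(bF-Fb)$ that the paper has in mind when it declares the proof trivial and omits it: the only non-formal points are the change of variables $(w',z')\mapsto(z,w)$, which is legitimate because the swap intertwines the two fibrewise homeomorphisms defining $\nu^{(r(y),r(x))}$ and $\nu^{(r(x),r(y))}$ as pull-backs of the same measure $\nu^{\pi(r(x))}$, and the cocycle identity $\sigma(z,z^{-1}y)=\sigma(w,w^{-1}x)$, which is precisely the remark recorded in the paragraph preceding the lemma. Both are handled correctly, as is the degenerate diagonal case $u=v$ where $\pi$ is injective on the fibre and both sides vanish.
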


  \begin{lemma}
  \label{7:100}
  If $b$ is in $C_{c}(G_{n}) \subseteq C_{c}(G)$, for
  some $n \geq 1$, then 
  $\delta(b) \in \delta(C_{c}(H))$.
  \end{lemma}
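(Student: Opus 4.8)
The plan is to exhibit an explicit $a\in C_c(H)$ with $\delta(a)=\delta(b)$, obtained by multiplying $b$ by a cutoff that is constant along the fibres of $\pi$. Fix a continuous $\psi\colon[0,\infty)\to[0,1]$ with $\psi\equiv 1$ on $[n^{-1},\infty)$ and $\psi\equiv 0$ on $[0,(n+1)^{-1}]$, and set $\rho\colon H\to[0,1]$, $\rho(x)=\psi\bigl(\operatorname{diam}_G\pi^{-1}\{\pi(x)\}\bigr)$. Since $\pi\colon H\to H'$ is continuous (Theorem \ref{7:70}) and $x'\mapsto\operatorname{diam}_G\pi^{-1}\{x'\}$ is continuous on $(H',d_{H'})$ — a sequence converging in $d_{H'}$ has its fibres converging in the Hausdorff metric, so their diameters converge — the function $\rho$ is continuous on $H$. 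Put $a=\rho\cdot(b|_H)$; it is continuous on $H$ because $b$ is continuous on $G$, hence on the finer space $H$.

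Next I would check $a\in C_c(H)$. Write $\bar H_{n+1}=\{x\in H:\operatorname{diam}_G\pi^{-1}\{\pi(x)\}\ge (n+1)^{-1}\}$, so that $\{a\neq 0\}\subseteq H_{n+1}\cap\operatorname{supp}_G(b)\subseteq \bar H_{n+1}\cap\operatorname{supp}_G(b)$. The key point is that $\bar H_{n+1}$ is closed in $G$ and that on $\bar H_{n+1}$ the topology of $d_H$ coincides with that of $d_G$: applying the regularity of $\pi$ (Definition \ref{7:60}) at a point of $H'$ with a radius $\epsilon<(n+1)^{-1}$ excludes the ``small diameter'' alternative for nearby points of $\bar H_{n+1}$, forcing the fibres to vary Hausdorff-continuously there, while Lemma \ref{7:45} rules out limit points of $\bar H_{n+1}$ lying outside $H'$. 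Consequently $\bar H_{n+1}\cap\operatorname{supp}_G(b)$ is compact in $G$, hence compact in $H$; since the $H$-closure of $\{a\neq0\}$ lies inside its $G$-closure, $\operatorname{supp}_H(a)$ is an $H$-compact set, so $a\in C_c(H)$.

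Finally I would verify $\delta(a)=\delta(b)$ using the formula of Lemma \ref{7:95}. On a block indexed by $(v,u)$ with $u\neq v$, both $\delta(b)\xi$ and $\delta(a)\xi$ are integrals against the \emph{same} kernel (involving the cocycle and the measure) of $b(w)-b(z)$, respectively $a(w)-a(z)$, over pairs $(w,z)$ with $\pi(w)=\pi(z)$; on the diagonal blocks $u=v$ both vanish because $F$ is the identity there. Now for $\pi(w)=\pi(z)$ we have $\rho(w)=\rho(z)$ (as $\rho$ depends only on $\pi$), so $a(w)-a(z)=\rho(w)\bigl(b(w)-b(z)\bigr)$; and if $b(w)\neq b(z)$ then $w\not\sim_n z$, which forces $\operatorname{diam}_G\pi^{-1}\{\pi(w)\}>n^{-1}$ and hence $\rho(w)=1$. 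In every case $a(w)-a(z)=b(w)-b(z)$, so the two operators agree on every block, giving $\delta(b)=\delta(a)\in\delta(C_c(H))$.

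I expect the main obstacle to be the compact-support step: because $H$ carries a strictly finer topology than $G$, a $G$-compact subset of $H$ need not be $H$-compact, and one must use regularity to see that on the region $\bar H_{n+1}$ supporting $a$ the two topologies actually coincide. The continuity of $\rho$ and the kernel comparison are then routine.
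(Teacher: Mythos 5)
Your proof is correct and follows essentially the same route as the paper: both construct $a$ by multiplying $b|_{H}$ by a continuous, fibrewise-constant cutoff that equals $1$ wherever $b$ can jump within a fibre of $\pi$, and then compare the kernels of $\delta(a)$ and $\delta(b)$ block by block via Lemma \ref{7:95}. The only difference is in the choice of cutoff and the compactness bookkeeping: the paper uses a Urysohn function $e \circ \pi$ with $e$ supported near the large-diameter part of the support of $b$ and proves its set $X$ is compact in $H$ directly, whereas you use $\psi\left( diam_{G}\, \pi^{-1}\{\pi(\cdot)\} \right)$ and recover compact support from the (correctly justified) coincidence of the $G$- and $H$-topologies on $\{ x \in H \mid diam_{G}\, \pi^{-1}\{\pi(x)\} \geq (n+1)^{-1} \}$ — a legitimate variant of the same regularity argument.
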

  
\begin{proof}
We regard $b$ as a function on $G$, constant
on the $\sim_{n}$-equivalence classes. Let $K$ be a compact set in $G$
such that $b$ is zero off $K$ and define
\[
X = \{ x \in H \cap \pi^{-1}(\pi(K))  \mid 
diam_{G} \pi^{-1}\{ \pi(x) \} \geq n^{-1} \}.
\]
 First, we claim that $X$ is compact in $H$.
If $x_{k}, k \geq 1$ is any sequence in $X$,
 then, as $K$ is compact and also $\pi^{-1}(\pi(K))$, it 
has a subsequence which converges to some $x$ in $K$ in the metric $d_{G}$.
It also follows that $diam_{G} \pi^{-1}\{ \pi(x_{k}) \} \geq n^{-1}$, for
every $k$ and so $diam_{G} \pi^{-1}\{ \pi(x) \} \geq n^{-1}$, as well.
This implies that $x$ is in $H$.  
It also follows from the fact that $\pi$ is regular that 
$\pi^{-1} \{ \pi(x_{k}) \}$ converges to 
$\pi^{-1} \{ \pi(x) \}$ and hence $x_{k}, k \geq 1$
 converges to $x$ in $d_{H}$. 
 
 Let $U'$ be an open set in $G'$, which
  contains $\pi(X)$ and 
 its closure is  
 compact. We 
 may find  $e: H' \rightarrow [0,1]$ in $C_{c}(H)$ such that 
 $e(x') =1$, for $x' \in \pi(X)$ and $e(x') = 0$, for $x' \notin U'$.
 Then for any $x$ in $H$, we define
 $a(x) = b(x)e(\pi(x))$. This function clearly has compact support. To 
 see it is continuous, it suffices to observe that
 it is non-zero only on $\pi^{-1}(\overline{U'})$, which is 
 compact in $H$ and as the 
 inclusion map is continuous, it is also compact
 in $G$ and the two relative topologies agree there.

It remains to prove $\delta(b)=\delta(a)$. 
From the formula provided by Lemma \ref{7:95}, it suffices for us to prove 
that $b(z)- b(w) = a(z) - a(w)$, for all $(w,z)$ in $G \times_{\pi} G$
(where we interpret $b$ to be zero off of $H$). If 
 $w,z$ are not in $H$, then $w=z$ and the conclusion holds.
 Next, let us suppose that $diam_{G} \pi^{-1}\{ \pi(w) \} \leq n^{-1}$. 
 As $b$ is in $C_{c}(G_{n})$, it follows that $b(w) = b(z)$ and 
 then $a(w) = b(w) e(\pi(w)) = b(z) e(\pi(z)) = a(z)$ and the conclusion holds.
 Now, let us assume that $diam_{G} \pi^{-1}\{ \pi(w) \} > n^{-1}$. 
 If $w$ is not in $\pi^{-1}(\pi(K))$, then $w$ is not in $K$ so  $b(w)=0$. 
 But this also means that $z$ is not in
  $\pi^{-1}(\pi(K))$, so $b(z)=0$, as well.
 Again, we have $a(w)=a(z)=0$. Finally, we are left with the case
 that $w$ and $z$ are both in $X$. It follows that
 \begin{eqnarray*}
 a(z) - a(w) & = &  b(z)e(\pi(z)) - b(w) e(\pi(w))  \\
    &  =    &  b(z) \cdot 1 - b(w) \cdot 1  \\
    &   =  &  b(z) - b(w),
 \end{eqnarray*}
 as $e(x')=1$, for $x'$ in $\pi(X)$.
\end{proof}  

The hypothesis of the next result is not particularly strong. We know that
the  space $H'$ has been given a topology in which the fibres of the map
$\pi$ vary continuously. This hypothesis ensures the existence on measures
on these fibres, also varying continuous.
  
  \begin{prop}
  \label{7:110}
  We say that $\pi$ is \emph{measure regular} if
  there is a continuous function 
   $\mu: (H')^{0} \rightarrow M(H^{0})$, the set 
   of Borel probability 
   measures on $H^{0}$ with the weak-* topology
  such that
  \begin{enumerate}
  \item for any $u'$ in $(H')^{0}$,
  the support of $\mu(u')$ is contained in $\pi^{-1}\{ u' \}$, 
  \item for any $x'$ in $H'$ and Borel subset $\phi$ of 
$\pi^{-1}\{ x' \}$, 
\[
\mu(r_{H'}(x'))(r_{H}(\phi))
 = \mu(s_{H'}(x'))(s_{H}(\phi)),
 \]
 \end{enumerate}
In this case, the $*$-algebra $C_{c}(H)$ satisfies condition
   C1 of Theorem \ref{3:70}. Moreover, we have 
  $\delta(C^{*}_{r}(G, \sigma)) \subseteq \delta(C^{*}_{r}(H, \sigma))$.
  \end{prop}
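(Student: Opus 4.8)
The plan is to verify condition C1 of Theorem \ref{3:70} for the dense $*$-subalgebra $\mathcal{A}=C_{c}(H)$, and then to deduce $\delta(C^{*}_{r}(G,\sigma))\subseteq\delta(C^{*}_{r}(H,\sigma))$ from C1 together with Lemma \ref{7:100}, Proposition \ref{4:10} and Lemma \ref{4:15}.

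\emph{The averaging operator.} Given $a$ in $C_{c}(H)$, I would produce the $a'$ required by C1 as a fibrewise average against the measures $\mu$. For $x$ in $H$ and $u$ in the compact set $\pi^{-1}\{r_{H'}(\pi(x))\}\subseteq H^{0}$, write $x^{\uparrow u}$ for the unique element of $H^{u}$ with $\pi(x^{\uparrow u})=\pi(x)$; this exists by Theorem \ref{7:80}(3), and $(x,u)\mapsto x^{\uparrow u}$ is continuous (using properness of $\pi$ and Theorem \ref{7:70}). Set
\[
a'(x)\;=\;\int_{\pi^{-1}\{r_{H'}(\pi(x))\}} a(x^{\uparrow u})\, d\mu(r_{H'}(\pi(x)))(u).
\]
Then $a'$ depends only on $\pi(x)$, hence is constant on the fibres of $\pi|_{H}$, so $\delta(a')=0$ by the formula of Lemma \ref{7:95}; and continuity of $\mu$ (measure regularity) together with properness of $\pi$ give $a'\in C_{c}(H)$. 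So $a'\in\ker(\delta)\cap\mathcal{A}$, and the real content of C1 is the estimate $\|a-a'\|\le K\|\delta(a)\|$ in the reduced norm; I would in fact aim for $K=1$.

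\emph{The norm estimate --- the main obstacle.} I would work in the representation on $\mathcal{H}=\bigoplus_{(u,v)\in G^{0}\times_{\pi}G^{0}}L^{2}((G\times_{\pi}G)_{(u,v)},\nu_{(u,v)})$, using $\pi$ to identify each summand with $L^{2}((G')_{\pi(u)},\nu_{\pi(u)})$ (legitimate since $H_{w}=G_{w}$ for $w\in H^{0}$). Under these identifications $F$ restricts, on each pair of blocks $(w,v),(v,w)$, to the identity operator between $L^{2}((G')_{\pi(w)})$ and $L^{2}((G')_{\pi(v)})=L^{2}((G')_{\pi(w)})$, while $a$ acts on the $(w,v)$-block as the integral operator $A_{w}$ with kernel $(x',y')\mapsto a((x'y'^{-1})^{\uparrow r((x')^{\downarrow w})})$ times the appropriate cocycle factor (which factors through $\pi$ by standing hypothesis (6)), where $(x')^{\downarrow w}$ is the lift of $x'$ with source $w$; this $A_{w}$ is unitarily equivalent to $\lambda^{H}_{w}(a)$. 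A $2\times 2$ block computation then yields $\|\delta(a)\|=\sup\|A_{w}-A_{v}\|$, the supremum over $w,v\in H^{0}$ with $\pi(w)=\pi(v)$. On the other side, the crucial point is that the invariance property (2) of measure regularity pushes $\mu(r_{H'}(x'))$ forward to $\mu(s_{H'}(x'))$ along the bijection $r_{H}(\mathrm{lift})\mapsto s_{H}(\mathrm{lift})$ between the range-fibre and source-fibre of a point of $H'$; feeding this into a Fubini computation identifies $\lambda^{H}_{w}(a-a')$ with $\int_{v\in\pi^{-1}\{\pi(w)\}}(A_{w}-A_{v})\,d\mu(\pi(w))(v)$. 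Since $\mu(\pi(w))$ is a probability measure and each $\|A_{w}-A_{v}\|\le\|\delta(a)\|$, this gives $\|\lambda^{H}_{w}(a-a')\|\le\|\delta(a)\|$, and taking the supremum over $w\in H^{0}$ gives $\|a-a'\|\le\|\delta(a)\|$. The work here is the lift bookkeeping and checking that property (2) is exactly what turns the fibrewise average into an average of ``blocks of $\delta(a)$'' --- this is where the hypothesis is genuinely used, and is the step I expect to cost the most care.

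\emph{The second inclusion.} Once C1 holds, $\delta(C^{*}_{r}(H,\sigma))$ is closed by Proposition \ref{4:10}, and Lemma \ref{7:100} gives $\delta(C_{c}(G_{n}))\subseteq\delta(C_{c}(H))$ for every $n$. It then suffices to see that every $b\in C_{c}(G)$ is a reduced-norm limit of functions constant on $\sim_{n}$-classes: by uniform continuity of $b$ and the fact that the non-trivial $\sim_{n}$-classes have diameter at most $n^{-1}$ and lie in $H$, one builds $b_{n}\in C_{c}(G_{n})$ with $\|b-b_{n}\|_{\infty}$ small and support inside the fixed compact set $\pi^{-1}(\pi(\mathrm{supp}\,b))$ --- again using the fibrewise average above on the non-trivial classes and the regularity of $\pi$ to check continuity across the boundary between the two regions, exactly as in the proof of Lemma \ref{7:90}. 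Then $\delta(b_{n})\to\delta(b)$ with $\delta(b_{n})\in\delta(C_{c}(H))\subseteq\delta(C^{*}_{r}(H,\sigma))$ closed, so $\delta(b)\in\delta(C^{*}_{r}(H,\sigma))$; as $C_{c}(G)$ is dense in $C^{*}_{r}(G,\sigma)$ and $\delta$ is continuous, $\delta(C^{*}_{r}(G,\sigma))\subseteq\delta(C^{*}_{r}(H,\sigma))$. (Equivalently, once one knows $\bigcup_{n}C_{c}(G_{n})$ is a dense $*$-subalgebra of $C^{*}_{r}(G,\sigma)$, this is immediate from Lemma \ref{4:15}.)
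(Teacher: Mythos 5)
Your proposal is correct and follows essentially the same route as the paper: the fibrewise average against $\mu$ is exactly the paper's $a'$, and your identity $\lambda^{H}_{w}(a-a')=\int_{v}(A_{w}-A_{v})\,d\mu(\pi(w))(v)$ is precisely the paper's $\int_{v}W_{v}^{*}(a-FaF)W_{v}\,d\mu(u)(v)=a-a'$, giving $K=1$ since $\mu(u)$ is a probability measure and $\Vert a-FaF\Vert=\Vert\delta(a)\Vert$. Your treatment of the second inclusion, routing through approximation of $C_{c}(G)$ by $\bigcup_{n}C_{c}(G_{n})$ with supports in a fixed compact set, is if anything slightly more careful than the paper's, which asserts $\delta(C_{c}(G))\subseteq\delta(C_{c}(H))$ directly although Lemma \ref{7:100} only provides this for each $C_{c}(G_{n})$.
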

  
  \begin{proof}
  For simplicity, we will ignore the cocycle.
 Let $a$ be any element of $C_{c}(H)$ and define
  \[
  a'(x') = \int_{\pi^{-1}\{ r_{H'}(x') \}} a(x)
   d\mu( r_{H'}(x'))(x),
   \]
   for any $x'$ in $H'$.
  
 It is clearly  in $C_{c}(H')$. 
 Fix a pair of units in $H^{0}$, $(u,v)$ with 
 $\pi(u) = \pi(v)$. Let 
 $W_{v}$ denote the canonical unitary between 
 $L^{2}(G_{u}, \nu_{u})$ and  $L^{2}((G \times_{\pi} G)_{(u, v)}, 
   \nu_{(u,v)}$ induced by the projection
   onto the first factor.
 
 It follows from 
 Lemma \ref{7:95} that
 \[
\int_{v}  W_{v}^{*} ( a - FaF)W_{v} d\mu(u) (v)  = a- a'.
\]
The conclusion then follows from the fact that $\mu(u)$ is 
a probability measure and 
\[
\Vert a - FaF \Vert_{r} = \Vert \delta(a) \Vert_{r}.
\]
  
   For the last statement, we know that 
   $\delta(C_{c}(G))$ is contained in \newline 
   $\delta(C_{c}(H))$. As $\delta$
   is continuous, $\delta(C^{*}_{r}(G, \sigma))$ is contained in 
   the closure of $\delta(C_{c}(H))$ which is 
   $\delta(C^{*}_{r}(H, \sigma))$ as a consequence of C1.
  \end{proof}

  \begin{prop}
  \label{7:115}
  If $\pi$ is regular and there is a continuous morphism of groupoids
   $\mu: H' \rightarrow H$
  such that $\pi \circ \mu = id_{H'}$, then
 $\pi$ is measure regular.
  \end{prop}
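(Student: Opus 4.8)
The plan is to construct the measure-regular data out of the section $\mu$, using only its restriction to units. Since a morphism of groupoids carries units to units, $\mu$ restricts to a continuous map $(H')^{0}\to H^{0}$, and since $\pi\circ\mu=\mathrm{id}_{H'}$ this restriction is a section of $\pi\colon H^{0}\to (H')^{0}$; in particular $\mu(u')$ lies in $\pi^{-1}\{u'\}$. I would then define $m\colon (H')^{0}\to M(H^{0})$ by letting $m(u')$ be the Dirac point mass $\varepsilon_{\mu(u')}$, and check that $m$ satisfies the two conditions in Proposition \ref{7:110}, which is exactly what is required for measure regularity.

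Condition (1) and the weak-$*$ continuity of $m$ are immediate. The support of $\varepsilon_{\mu(u')}$ is $\{\mu(u')\}\subseteq\pi^{-1}\{u'\}$. For continuity, if $f\in C_{0}(H^{0})$ then $\int f\,dm(u')=f(\mu(u'))$, which depends continuously on $u'$ because $f$ and the restricted section $\mu$ are continuous; hence $u'\mapsto m(u')$ is continuous into $M(H^{0})$ with the weak-$*$ topology.

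The content is in verifying condition (2). Fix $x'$ in $H'$ and a Borel set $\phi\subseteq\pi^{-1}\{x'\}$. Both sides of the required identity are $\{0,1\}$-valued: the left equals $1$ exactly when $\mu(r_{H'}(x'))=r_{H}(\mu(x'))$ lies in $r_{H}(\phi)$, the right exactly when $\mu(s_{H'}(x'))=s_{H}(\mu(x'))$ lies in $s_{H}(\phi)$, where I have used that $\mu$ is a groupoid morphism. The key point is that $r_{H}$ is injective on $\pi^{-1}\{x'\}$: if $y_{1},y_{2}\in\pi^{-1}\{x'\}$ have $r_{H}(y_{1})=r_{H}(y_{2})=u$, then $y_{1},y_{2}\in H^{u}$ and $\pi|_{H^{u}}$ is injective by Theorem \ref{7:80}(3), forcing $y_{1}=y_{2}$; similarly $s_{H}$ is injective on $\pi^{-1}\{x'\}$. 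Since $\mu(x')\in\pi^{-1}\{x'\}$, this injectivity gives $r_{H}(\mu(x'))\in r_{H}(\phi)\iff\mu(x')\in\phi$ and $s_{H}(\mu(x'))\in s_{H}(\phi)\iff\mu(x')\in\phi$; hence both sides of the identity equal the indicator of the event $\mu(x')\in\phi$, and condition (2) holds. Therefore $\pi$ is measure regular.

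I do not anticipate a real obstacle here: everything reduces to the fibrewise-homeomorphism property already recorded in Theorem \ref{7:80}(3). The only things to be careful about are notational --- the $\mu$ of the hypothesis (a section $H'\to H$) and the measure-valued map $m$ being built on $(H')^{0}$ are different objects, so I would rename one of them --- and the observation that the weak-$*$ continuity of $m$ genuinely uses nothing more than continuity of the section.
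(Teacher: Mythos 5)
Your proposal is correct and is essentially the paper's own proof: the paper also defines the measure-valued map by taking the point mass at $\mu(u')$ and then asserts that the hypotheses of Proposition \ref{7:110} are ``obviously satisfied.'' Your verification of condition (2) via the morphism property of $\mu$ and the injectivity of $r_{H}$ and $s_{H}$ on the fibres of $\pi$ (Theorem \ref{7:80}(3)) correctly fills in the details the paper leaves implicit.
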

  
  \begin{proof}
  We define a function, also denoted by $\mu$,  from $(H')^{0}$ to
  $M(H^{0})$ by setting  $\mu(u') $ to be point mass at $\mu(u')$, 
  for $u'$ in $(H')^{0}$. The hypotheses of Proposition \ref{7:110}
  are obviously satisfied. 
  \end{proof}
   
A main 
  case of interest is when these fibres are actually finite. 
   One should consider the hypotheses
  in this case to be analogous to those of the the
  condition of finite index \ref{6:140} in the 
  subgroupoid case.

   \begin{prop}
  \label{7:120}
 If  $\pi: G \rightarrow G'$ is
 regular and
  there is a positive integer $N \geq 2$ such that, 
  for each $x'$ in $H'$, $\# \pi^{-1} \{ x' \} =N$,
  then $\pi$ is measure regular.
  \end{prop}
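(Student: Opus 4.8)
The plan is to let $\mu(u')$ be the uniform probability measure on the fibre $\pi^{-1}\{u'\}$, namely $\mu(u')=\tfrac{1}{N}\sum_{x\in\pi^{-1}\{u'\}}\delta_{x}$ for $u'\in(H')^{0}$. The first thing to record is that for a unit $u'$ of $H'$ the set $\pi^{-1}\{u'\}$ consists of $N$ units of $H$: if $\pi(x)=u'\in(G')^{0}$, then $x$ and $r(x)$ both lie in $G^{r(x)}$ and have the same image $u'$ under $\pi$ (since $\pi(r(x))=r(\pi(x))=r(u')=u'$), so $x=r(x)\in G^{0}$ by the fourth standing hypothesis, and as $u'\in H'$ these points lie in $H^{0}=H\cap G^{0}$. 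Thus $\mu$ is a well-defined map $(H')^{0}\to M(H^{0})$, and condition (1) of Proposition \ref{7:110} holds by construction.

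Next I would verify the invariance condition (2). By Lemma \ref{7:20} the fibres over $r(x')$ and $s(x')$ also have exactly $N$ points, so $r_{H'}(x'),s_{H'}(x')\in(H')^{0}$. If $y,z\in\pi^{-1}\{x'\}$ have $r(y)=r(z)$, then $y,z\in G^{r(y)}$ with $\pi(y)=\pi(z)=x'$, whence $y=z$ by the fourth standing hypothesis; so $r_{H}$, and likewise $s_{H}$, is injective on $\pi^{-1}\{x'\}$. Therefore $\#\,r_{H}(\phi)=\#\,\phi=\#\,s_{H}(\phi)$ for any Borel $\phi\subseteq\pi^{-1}\{x'\}$, and since $\mu(r_{H'}(x'))$ and $\mu(s_{H'}(x'))$ are uniform of total mass $1$ on $N$-point sets, both $\mu(r_{H'}(x'))(r_{H}(\phi))$ and $\mu(s_{H'}(x'))(s_{H}(\phi))$ equal $\#\,\phi/N$. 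So (2) is automatic.

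The main step is weak-$*$ continuity of $u'\mapsto\mu(u')$, and here I would use that on $(H')^{0}$ the metric $d_{H'}$ is exactly the Hausdorff $d_{G}$-distance between fibres. Given $u'_{k}\to u'_{0}$ in $(H')^{0}$, write $\pi^{-1}\{u'_{0}\}=\{x_{1},\dots,x_{N}\}$ and set $\varepsilon_{0}=\tfrac{1}{3}\min_{i\neq j}d_{G}(x_{i},x_{j})>0$. For $\varepsilon<\varepsilon_{0}$ and all large $k$ one has $d_{G}(\pi^{-1}\{u'_{k}\},\pi^{-1}\{u'_{0}\})<\varepsilon$, so each point of $\pi^{-1}\{u'_{k}\}$ is within $\varepsilon$ of a unique $x_{i}$ and each $x_{i}$ has a point of $\pi^{-1}\{u'_{k}\}$ within $\varepsilon$; since both fibres have exactly $N$ points this matching is a bijection, giving an enumeration $\pi^{-1}\{u'_{k}\}=\{x_{1}^{(k)},\dots,x_{N}^{(k)}\}$ with $d_{G}(x_{i}^{(k)},x_{i})<\varepsilon$. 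Combining with $d_{H'}(u'_{k},u'_{0})\to 0$ yields $d_{H}(x_{i}^{(k)},x_{i})=d_{G}(x_{i}^{(k)},x_{i})+d_{H'}(u'_{k},u'_{0})\to 0$, i.e. $x_{i}^{(k)}\to x_{i}$ in $H^{0}$. Hence $\int f\,d\mu(u'_{k})=\tfrac{1}{N}\sum_{i}f(x_{i}^{(k)})\to\tfrac{1}{N}\sum_{i}f(x_{i})=\int f\,d\mu(u'_{0})$ for every $f\in C_{0}(H^{0})$, so $\mu$ is continuous and Proposition \ref{7:110} applies.

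I expect the only real obstacle to be this continuity argument, and even there it reduces to the elementary fact that a sequence of $N$-point sets converging in the Hausdorff metric to an $N$-point set admits labellings along which the points converge individually; the cardinality hypothesis $\#\,\pi^{-1}\{x'\}=N$ together with the fourth standing hypothesis makes conditions (1) and (2) purely formal.
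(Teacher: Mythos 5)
Your construction is exactly the one the paper uses: $\mu(u')$ is the uniform probability measure $N^{-1}\sum_{u\in\pi^{-1}\{u'\}}\delta_{u}$, and the paper simply asserts that the hypotheses of Proposition \ref{7:110} are ``obviously satisfied.'' Your verification of conditions (1), (2) and of weak-$*$ continuity (via the Hausdorff-metric matching of $N$-point fibres) correctly supplies the details the paper omits, so the proposal is correct and essentially identical in approach.
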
 
  
  \begin{proof}
  We define a function, also denoted by $\mu$,  from $(H')^{0}$ to
  $M(H^{0})$ by setting  
  \[
  \mu(u') = N^{-1} \sum_{u \in \pi^{-1}\{ u' \}} \mu_{u},
  \]
  where $\mu_{u}$ denotes point mass at $u$,
  for $u'$ in $(H')^{0}$. The hypotheses of Proposition \ref{7:110}
  are obviously satisfied. 
  \end{proof}
  
    \begin{lemma}
  \label{7:130}
  Suppose that $\pi: G \rightarrow G'$ is regular.
  The $*$-algebra $C_{c}(H)$ satisfies condition C2 of Theorem \ref{3:70}.
  \end{lemma}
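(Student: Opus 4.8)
The plan is to follow the proof of the subgroupoid analogue, Lemma~\ref{6:170}, replacing the role played there by $\chi_{\Delta}$ and Lemma~\ref{6:120} with the formula of Lemma~\ref{7:95}; as in Proposition~\ref{7:110} we may suppress the cocycle. Given $a_{1},\dots,a_{I}\in C_{c}(H)$, form the tuple $a=(a_{1},\dots,a_{I})$, let $U=\{x\in H\mid a(x)\neq 0\}$, so $\overline{U}$ is compact in $H$, and let $K_{0}=r(\overline{U})\cup s(\overline{U})$, which is compact in $H^{0}$. Using local compactness of $H^{0}$ we choose a compact $K\subseteq H^{0}$ with $K_{0}$ in its interior and a continuous $e\colon H^{0}\to[0,1]$ that is identically $1$ on $K_{0}$ with $\operatorname{supp}(e)\subseteq K$; by Renault's construction $e$ acts as a multiplier of $C^{*}_{r}(H)$ with $0\le e\le 1$, and the identities $ea_{i}=a_{i}=a_{i}e$ are immediate from $e\equiv 1$ on $K_{0}$, exactly as in Lemma~\ref{6:170}.

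The core of the argument is to construct $b=(b_{1},\dots,b_{I})\in C_{c}(G)$ with $b_{i}=a_{i}$ on a closed set containing $r^{-1}(K)\cup s^{-1}(K)$ — which, with $\operatorname{supp}(e)\subseteq K$, yields $eb_{i}=ea_{i}=a_{i}$ and $b_{i}e=a_{i}e=a_{i}$ as in Lemma~\ref{6:170} — and with $\delta(b_{i})=\delta(a_{i})$ for every $i$. By Lemma~\ref{7:95}, $\delta(b_{i})$ depends on $b_{i}$ only through the function $(w,z)\mapsto b_{i}(w)-b_{i}(z)$ on $G\times_{\pi}G$, and since $w=z$ whenever $(w,z)\in G\times_{\pi}G$ with $w\notin H$, the condition $\delta(b_{i})=\delta(a_{i})$ is equivalent to $b_{i}-a_{i}$ (with $a_{i}$ extended by $0$ off $H$) being constant along each fibre of $\pi$. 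The construction of $b$ proceeds along the lines of Lemma~\ref{5:20}, whose hypotheses are available here by Theorem~\ref{7:80}. Since $\overline{U}$ is compact in $H$ and $H=\bigcup_{n}H_{n}$ with the $H_{n}$ open in $H$ and increasing (Lemma~\ref{7:90}), we have $\overline{U}\subseteq H_{n}$ for some $n$; using that $\pi\colon H\to H'$ is proper and open (Theorem~\ref{7:70}) one picks a $\pi$-saturated subset $Z\subseteq H$ containing $\overline{U}$ on which the topologies of $G$ and $H$ coincide (a compact subset of $H$ is closed in $G$ and carries the $G$-topology, being Hausdorff there), and extends $a$ from $Z$ to a $G$-continuous, compactly supported $b$ that is constant along the fibres of $\pi$ outside $Z$. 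Then $b-a$ is fibrewise constant: it vanishes on $Z$, and each fibre of $\pi$ disjoint from $Z$ is disjoint from $\overline{U}$, so $a$ and $b$ both vanish along it; and $b=a$ on $r^{-1}(K)\cup s^{-1}(K)$ because $a$ vanishes off $\overline{U}\subseteq Z$ while $b$ vanishes off $Z$ together with $b=a$ on $Z$.

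The main obstacle is precisely this extension step: one must arrange that $b$ be continuous in the coarser topology of $G$ and that its fibrewise difference with $a$ remain constant away from $\overline{U}$, which forces $\overline{U}$ to lie in the $G$-interior of $Z$ and the patching to respect the sheets of $\pi$. It is here — and, I expect, only here — that the regularity of $\pi$ is essential: regularity governs how a $G'$-neighbourhood of a point of $H'$ pulls back to $G$, and, combined with $\overline{U}\subseteq H_{n}$ (so that the fibres over the compact set $\pi(\overline{U})\subseteq H'_{n}$ have diameter exceeding $n^{-1}$), it lets one cover $\pi(\overline{U})$ by small enough open sets on whose preimages the distinct preimage sheets are separated well enough to define $b$ sheet by sheet and glue the pieces continuously — just as regularity was exploited in the proof of Theorem~\ref{7:80}. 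Granting this, the remaining points — the algebraic identities for $e$ and the passage from individual functions to the tuple $(a_{1},\dots,a_{I})$ — are routine and follow the proof of Lemma~\ref{6:170} essentially verbatim.
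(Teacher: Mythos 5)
Your overall plan agrees with the paper's: produce $b_{i}$ in $C_{c}(G)$ whose fibrewise differences match those of $a_{i}$ (your reduction of $\delta(b_{i})=\delta(a_{i})$ via Lemma \ref{7:95} to ``$b_{i}-a_{i}$ constant on each fibre of $\pi$'' is correct), and then choose $e$ in $C_{c}(H^{0})$ acting as a multiplier. But the step you yourself flag as the main obstacle is not just unfinished -- the dichotomy you build it on is the wrong one, and the construction as described cannot work. You ask for $b$ to equal $a$ on a $\pi$-saturated set $Z\supseteq\overline{U}$ and to vanish (or at least be fibrewise constant) on every fibre disjoint from $Z$. Fibres disjoint from $Z$ can still have large diameter, and by regularity such a fibre can converge in the Hausdorff metric to a fibre $F\subseteq Z$ on which $a$ is not constant (e.g.\ $a$ nonzero at a point of $F\cap\overline{U}$ and zero at another point of $F$). $G$-continuity of $b$ together with constancy on the approximating fibres then forces $b$ to be constant on $F$, contradicting $b=a$ on $F$. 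In the degenerate reading where $b$ simply vanishes off $Z$, $b$ is the zero-extension of $a$, which fails to be $G$-continuous for exactly the same reason (the topology of $H$ is strictly finer than that of $G$ at such points). Your subsequent claims ``$a$ and $b$ both vanish along it'' and ``$b=a$ on $r^{-1}(K)\cup s^{-1}(K)$'' rest on this.

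The paper sorts the fibres by \emph{diameter}, not by whether they meet the support: choose $n$ with $\mathrm{supp}(a_{i})\subseteq H_{n}$ and apply the Tietze extension theorem not in $G$ but in the quotient space $G_{n}$ of Lemma \ref{7:90}, which collapses precisely the fibres of diameter $\leq n^{-1}$ and which is locally compact and Hausdorff -- this is where regularity has already been spent, rather than in an ad hoc sheet-by-sheet gluing. The resulting $b_{i}\in C_{c}(G_{n})\subseteq C_{c}(G)$ agrees with $a_{i}$ on all of $H_{n}$ (in particular on every large fibre, where $a_{i}$ may be nonconstant) and is constant on every small fibre (where $a_{i}\equiv 0$ since $\mathrm{supp}(a_{i})\subseteq H_{n}$); that is exactly the fibrewise condition you derived, with no boundary conflict. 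The multiplier $e$ is then taken to be $1$ on $r(K)\cup s(K)$ and supported in the open set $r(H_{n})\cup s(H_{n})$, not merely in a compact neighbourhood of $r(\overline{U})\cup s(\overline{U})$, so that $e(r(x))\neq 0$ only at points where $b_{i}$ and $a_{i}$ are already known to agree. To repair your argument you should replace your saturated set $Z$ and the hand gluing by the pair $(Cl(H_{n}),G_{n})$ and a single application of Tietze in $G_{n}$.
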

  
  \begin{proof}
  Let $a_{1}, \ldots, a_{I}$ be in $C_{c}(H)$. We may find a 
  compact set $K \subseteq H$ such that all are zero off of $K$. 
  By the last two parts of Lemma \ref{7:90}
  that there is some $n$ with $K \subseteq H_{n}$. We may apply
  the Tietze extension Theorem to each function, $a_{i}$, restricted 
  to the  closure of $H_{n}$ in $G$ to find $b_{i}$ in $C_{c}(G_{n})$
  such that $b_{i}|_{H_{n}} = a_{i}|_{H_{n}}$. 
  We also regard these functions as being in $C_{c}(G)$ as well.
  
As $H_{n}$ is open, $r(H_{n}) \cup s(H_{n})$ is open in $H^{0}$ 
and contains $r(K) \cup s(K)$ so  
  we may find
  $e$, a function in
  $C_{c}(H^{0})$, which is identically $1$ on  $r(K) \cup s(K)$. 
  and zero outside $r(H_{n}) \cup s(H_{n})$. It is now routine to check these
  satisfy the properties in C2.
  \end{proof}

  \begin{cor}
  \label{7:135}
 If $\pi: G \rightarrow G'$ is a factor map satisfying the standing 
 hypotheses and is regular and measure regular, then 
 \[
 K_{*}(\ker(\delta) \cap C^{*}_{r}(G, \sigma);C^{*}_{r}(G, \sigma)) \cong  
 K_{*}(C^{*}_{r}(H', \sigma);C^{*}_{r}(H, \sigma)).
 \]
  \end{cor}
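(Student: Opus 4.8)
The plan is to read the Corollary off from Theorem~\ref{3:70}, applied with $B = C^{*}_{r}(G,\sigma)$, $A = C^{*}_{r}(H,\sigma)$, the Hilbert space $\mathcal{H}$, the symmetry $F$, the bimodule $E = \mathcal{B}(\mathcal{H})$ and the derivation $\delta(a) = i[a,F]$ constructed in this section, taking $\mathcal{A} = C_{c}(H)$ as the dense $*$-subalgebra of $A$. Concretely, I would check in turn: (i) $A$ and $B$ are represented on $\mathcal{H}$ with $AB \subseteq A$; (ii) $\delta$ is a bounded $*$-derivation of $A+B$ into $E$ with $\delta(B) \subseteq \delta(A)$; (iii) $C_{c}(H)$ is a dense $*$-subalgebra of $C^{*}_{r}(H,\sigma)$ satisfying conditions C1 and C2 of Theorem~\ref{3:70}; and (iv) $\ker(\delta) \cap C^{*}_{r}(H,\sigma) = C^{*}_{r}(H',\sigma)$. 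Granting (i)--(iii), Theorem~\ref{3:70} gives that $\delta(A) = \delta(B)$ is closed and that the map $\alpha \colon K_{*}(\ker(\delta) \cap C^{*}_{r}(G,\sigma); C^{*}_{r}(G,\sigma)) \to K_{*}(\ker(\delta) \cap C^{*}_{r}(H,\sigma); C^{*}_{r}(H,\sigma))$ of Theorem~\ref{3:10} is an isomorphism; step (iv) then rewrites the target as the group in the statement.

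For (i), Theorem~\ref{7:80} says precisely that the pair $(G,H)$ satisfies the hypotheses of Theorem~\ref{5:50}, so $C^{*}_{r}(G,\sigma)$ acts as multipliers of $C^{*}_{r}(H,\sigma)$ on $\mathcal{H}$; in particular $AB \subseteq A$, and $A+B$ is a $C^{*}$-algebra by Proposition~\ref{3:5}. That $\delta$ is a contractive $*$-derivation of $A+B$ into $E$ is immediate from $F=F^{*}$, $F^{2}=I$. For the inclusion $\delta(B) \subseteq \delta(A)$ and for condition C1 I would cite Proposition~\ref{7:110}, whose hypothesis is exactly that $\pi$ is measure regular: it yields both $\delta(C^{*}_{r}(G,\sigma)) \subseteq \delta(C^{*}_{r}(H,\sigma))$ and condition C1 for $C_{c}(H)$. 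Condition C2 for $C_{c}(H)$ is Lemma~\ref{7:130}, which needs only regularity. This settles (ii) and (iii).

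The one genuinely new argument is (iv), the factor-groupoid analogue of Theorem~\ref{6:180}. The inclusion $C^{*}_{r}(H',\sigma) \subseteq \ker(\delta) \cap C^{*}_{r}(H,\sigma)$ is easy: for $b \in C_{c}(H')$, regarded inside $C_{c}(H)$ as $b\circ\pi$, the formula of Lemma~\ref{7:95} shows $\delta(b\circ\pi) = 0$, since $\pi(w) = \pi(z)$ forces $b\circ\pi(w) = b\circ\pi(z)$; continuity of $\delta$ and density then give the inclusion. Conversely, let $a \in \ker(\delta) \cap C^{*}_{r}(H,\sigma)$ and choose $a_{n} \in C_{c}(H)$ with $a_{n} \to a$, so $\delta(a_{n}) \to \delta(a) = 0$. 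The point is that the approximant produced in the proof of Proposition~\ref{7:110} already lies in $C_{c}(H')$; applying that estimate to each $a_{n}$ gives $a_{n}' \in C_{c}(H') \cap \ker(\delta)$ with $\Vert a_{n} - a_{n}' \Vert \leq K \Vert \delta(a_{n}) \Vert \to 0$, whence $a_{n}' \to a$ and $a \in \overline{C_{c}(H')} = C^{*}_{r}(H',\sigma)$.

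I expect step (iv) to be the main obstacle, but only a mild one: it rests entirely on the explicit form of the C1-approximant in Proposition~\ref{7:110} together with Lemma~\ref{7:95}, and once it is in place the Corollary is just Theorem~\ref{3:70} combined with Theorem~\ref{3:10}. Everything else is bookkeeping, since the required groupoid-level hypotheses have already been assembled in Theorems~\ref{7:70}, \ref{7:80} and Propositions~\ref{7:65}, \ref{7:110}.
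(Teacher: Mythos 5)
Your proposal is correct and follows exactly the route the paper intends for this corollary: assemble the hypotheses of Theorem \ref{3:70} from Theorems \ref{7:70} and \ref{7:80}, Proposition \ref{7:110} (which gives both C1 and $\delta(C^{*}_{r}(G,\sigma)) \subseteq \delta(C^{*}_{r}(H,\sigma))$) and Lemma \ref{7:130}, and then identify $\ker(\delta) \cap C^{*}_{r}(H,\sigma)$ with $C^{*}_{r}(H',\sigma)$. Your step (iv), carried out by the same argument as Theorem \ref{6:180} using the explicit C1-approximant of Proposition \ref{7:110} together with Lemma \ref{7:95} and the embedding of Theorem \ref{7:40}, is precisely the identification the paper leaves implicit, so you have in fact supplied a detail the text omits.
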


  We would now like to replace $\ker(\delta) \cap C^{*}_{r}(G, \sigma)$ with 
  $ C^{*}_{r}(G', \sigma)$.

  \begin{thm}
  \label{7:140}
  Assume  that $\pi$ is both regular and measure regular.
  If the closure of the sets $H_{n} \subseteq H, n \geq 1$ in $G$, 
  denoted $Cl(H_{n})$, 
   satisfy
   the $C$-extension property, for some $C \geq 1$, then we have 
 \[
  C^{*}_{r}(G, \sigma) \cap \ker(\delta)=  C^{*}_{r}(G', \sigma).
  \]
  \end{thm}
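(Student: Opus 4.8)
The plan is to follow the template of the proof of Theorem \ref{6:240}, with the exhausting family $Cl(H_{n})$ of Lemma \ref{7:90} playing the role that the clopen difference set $\Delta$ played there. The inclusion $C^{*}_{r}(G',\sigma) \subseteq C^{*}_{r}(G,\sigma) \cap \ker(\delta)$ is immediate: for $b'$ in $C_{c}(G')$ the function $b' \circ \pi$ is constant on the fibres of $\pi$, so $\delta(b' \circ \pi) = 0$ by Lemma \ref{7:95}, and continuity of $\delta$ together with density of $C_{c}(G')$ in $C^{*}_{r}(G',\sigma)$ gives the inclusion.

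For the reverse inclusion, fix $b$ in $C^{*}_{r}(G,\sigma) \cap \ker(\delta)$ and choose $b_{k}$ in $C_{c}(G)$ with $b_{k} \to b$, so that $\delta(b_{k}) \to 0$. It suffices to produce, for each $k$, an element $c_{k}$ of $C^{*}_{r}(G',\sigma)$ with $\Vert b_{k} - c_{k} \Vert_{r} \leq L\Vert \delta(b_{k}) \Vert_{r}$ for a constant $L$ independent of $k$; then $c_{k} \to b$ and $b \in C^{*}_{r}(G',\sigma)$. Fix $k$, put $\epsilon = \Vert \delta(b_{k}) \Vert_{r}$ (if $\epsilon = 0$ then $b_{k}$ is already fibre-constant, hence in $C_{c}(G')$), and use uniform continuity of $b_{k}$ on its compact support to choose $m$ so large that $b_{k}$ has oscillation less than $\epsilon$ on every fibre $\pi^{-1}\{x'\}$ of diameter at most $m^{-1}$. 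On the closed, $\pi$-saturated set $X = Cl(H_{2m}) \subseteq H$ of Lemma \ref{7:90} every fibre of $\pi$ has diameter at least $(2m)^{-1}$, so the small-diameter alternative in the definition of regularity is excluded there and the fibres of $\pi$ vary continuously on $X$ in the ambient topology of $G$; combined with measure regularity, this makes the fibrewise average $\Phi b_{k}$ (the integral of $b_{k}$ over each fibre against $\mu$) a continuous, compactly supported, fibre-constant function on $X$.

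Now choose a fibre-constant cut-off $e$ in $C_{c}(H)$, supported in $H_{2m}$, identically $1$ on $\pi(Cl(H_{m}) \cap supp(b_{k}))$, and arranged to act as a contractive multiplier (for instance a pullback along $r_{H'} \circ \pi$ of a $[0,1]$-valued function on $(H')^{0}$). Then $f = e \cdot (b_{k} - \Phi b_{k})$ lies in $C_{c}(H)$, is supported in $X$, has vanishing fibrewise average (since $e$ is fibre-constant), and $\delta(f) = e\,\delta(b_{k})$ because $\delta$ kills the fibre-constant summand $e\,\Phi b_{k}$; hence $\Vert \delta(f) \Vert_{r} \leq \epsilon$, and the vanishing-fibrewise-average case of Proposition \ref{7:110} gives $\Vert f \Vert_{r} \leq \Vert \delta(f) \Vert_{r} \leq \epsilon$. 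The hypothesis that $X = Cl(H_{2m})$ has the $C$-extension property then provides $\hat{f}$ in $C_{c}(G)$ with $\hat{f}|_{X} = f|_{X}$ and $\Vert \hat{f} \Vert_{r} \leq C\epsilon$. The element $b_{k} - \hat{f}$ is within $C\epsilon$ of $b_{k}$; where $e = 1$ it coincides with the fibre-constant function $\Phi b_{k}$, on the rest of $X$ it differs from $\Phi b_{k}$ by $(1-e)(b_{k} - \Phi b_{k})$, again of vanishing fibrewise average and reduced norm at most $\epsilon$ by Proposition \ref{7:110}, while off $X$ the fibres have diameter at most $(2m)^{-1}$, so $b_{k}$ is within $\epsilon$ of fibre-constant there and $\hat{f}$ is norm-small. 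Assembling these estimates, one shows that the (a priori only pointwise-defined) fibrewise average of $b_{k} - \hat{f}$ has all local oscillations bounded by a fixed multiple of $\epsilon$, hence agrees, up to $O(\epsilon)$ in the uniform norm, with a genuine element $c_{k}$ of $C_{c}(G')$ (a finite-partition argument on the compact set $\pi(supp(b_{k}))$); one further application of Proposition \ref{7:110} converts this into $\Vert (b_{k} - \hat{f}) - c_{k} \circ \pi \Vert_{r} = O(\epsilon)$, and therefore $\Vert b_{k} - c_{k} \circ \pi \Vert_{r} = O(\epsilon)$, as required.

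The crux, and the step I expect to be delicate, is precisely this last patching: the fibrewise average does not preserve continuity across the locus where the fibres of $\pi$ collapse, so $c_{k}$ cannot simply be taken to be the fibrewise average of $b_{k}$. The point of the $C$-extension hypothesis on the sets $Cl(H_{n})$ is that it allows one to excise, with control on the reduced norm, the genuinely non-fibre-constant part of $b_{k}$ — which lives on $Cl(H_{2m})$, where the fibres are nondegenerate and vary continuously — and so to reduce matters to the region of small fibres, on which $b_{k}$ is automatically nearly fibre-constant by uniform continuity. Making the matching across the interface of the two regions quantitative, and, if necessary, iterating the construction so that the residual error is driven to $0$, is where the real work lies, and is presumably where the amenability/exactness implicit in the $C$-extension property is actually being used.
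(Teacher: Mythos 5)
Your overall strategy --- excise the genuinely non-fibre-constant part of $b_{k}$, which lives on a set $Cl(H_{n})$ where the extension property gives norm control, and then handle the region of small fibres where $b_{k}$ is nearly fibre-constant --- is the same as the paper's. But the step you yourself flag as ``where the real work lies'' is a genuine gap, and it is exactly the step the paper's proof is engineered to avoid. Your plan requires converting a function that is only \emph{approximately} constant on the small fibres into an honest element of $C_{c}(G')$ with an $O(\epsilon)$ error in the \emph{reduced} norm. The fibrewise average of $b_{k}-\hat{f}$ need not be continuous on $G'$ (continuity can fail precisely where fibres collapse), and a ``finite-partition argument'' plus Proposition \ref{7:110} does not obviously produce the required element: Proposition \ref{7:110} compares an element of $C_{c}(H)$ with its own fibrewise average, whereas here you need to compare an element of $C_{c}(G)$ with something in $C_{c}(G')$, i.e.\ you are implicitly asserting a condition-C1 estimate for the pair $C_{c}(G)\supseteq C_{c}(G')$ for \emph{arbitrary} elements of $C_{c}(G)$ --- which is precisely the statement the paper cannot prove in general (compare the discussion preceding Lemma \ref{6:148}).

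The device you are missing is part 3 of Lemma \ref{7:90}: $G$ is the inverse limit of the quotients $G_{n}$, so the approximating sequence $b_{n}\to b$ may be taken with $b_{n}\in C_{c}(G_{n})$, i.e.\ \emph{exactly} constant on every fibre of diameter at most $n^{-1}$. For such $b_{n}$, Lemma \ref{7:100} produces $a_{n}\in C_{c}(H)$ with $\delta(a_{n})=\delta(b_{n})$ (and, after adding an element of $C_{c}(H')$ via C1, $\Vert a_{n}\Vert_{r}\leq K\Vert\delta(b_{n})\Vert_{r}$); the extension property on $Cl(H_{k_{n}})$ gives $c_{n}\in C_{c}(G)$ agreeing with $a_{n}$ there with $\Vert c_{n}\Vert_{r}\leq C\Vert a_{n}\Vert_{r}$; and then a cut-off $h_{n}\in C_{c}((G')^{0})$, equal to $1$ on $\pi(s(K_{n}))$ and $0$ on $\pi(s(L_{n}))$ (these sets being disjoint by the choice of $k_{n}$), makes $b_{n}-c_{n}h_{n}$ land \emph{exactly} in $C_{c}(G')$ --- no interface matching, no iteration, no approximate averaging. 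The correction $c_{n}h_{n}$ has reduced norm at most $CK\Vert\delta(b_{n})\Vert_{r}\to 0$, so $b_{n}-c_{n}h_{n}\to b$. I would recommend restructuring your argument around this exact-flattening of $b_{k}$ through the quotients $G_{n}$ rather than trying to make the approximate patching quantitative.
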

  
  \begin{proof}
  The containment $\supseteq$ is clear from  
  Lemma \ref{7:95} and the fact that $\delta$ is continuous.
  For the converse, if $b$ is in $ C^{*}_{r}(G, \sigma)$
  and $\delta(b)=0$, then we may find a sequence $b_{n}, n \geq 1$
  in $C_{c}(G)$ converging to $b$. In view of part 3 
  of Lemma \ref{7:90} (and after doing some re-indexing), we may 
  assume that $b_{n}$ is in $G_{n}$, for all $n$. it follows from Lemma 
  \ref{7:100} that there exist $a_{n}, n \geq 1$  in $C_{c}(H)$ 
  with $\delta(b_{n}) = \delta(a_{n})$. 
By  Proposition \ref{7:110}, we know that
 condition C1 of Theorem \ref{3:70} holds. By adding an element of 
 $C_{c}(H')$, we may assume that  
 \[
 \Vert a_{n} \Vert_{r} \leq K \Vert \delta(a_{n}) \Vert_{r} 
 = K \Vert \delta(b_{n}) \Vert_{r}.
 \]

  First, the condition that $\delta(b_{n})= \delta(a_{n})$ implies that
   $b_{n}|_{H} = a_{n}$.  Let $K_{n}$ be a compact subset of $H$ containing the 
   support of $a_{n}$. The sets $H_{k}, k \geq 1$, are an open 
  cover of $H$, so we may find $k_{n} \geq n$ such that 
  $H_{k_{n}}$ contains $K_{n} \cup  s(K_{n})$.
  In addition, the function sending $x$ in $H$ to 
  $diam_{G}( \pi^{-1}\{ \pi(x) \})$
  is continuous and positive, hence it is bounded below on 
  any compact set. It follows then that we may also choose $k_{n}$ so that 
  $diam_{G}( \pi^{-1}\{ \pi(x) \}) > k_{n}^{-1}$, 
  for all $x$ in  $K_{n} \cup s(K_{n})$.
   
   We use the extension property for
  $Cl(H_{k_{n}})$ to find $c$ in $C_{c}(G)$ such that 
  $c_{n}|_{H_{k_{n}}} = a_{n}|_{k_{n}}$ and 
  $\Vert c_{n} \Vert_{r} \leq C \Vert a_{n} \Vert_{r}$.

  As $b_{n} - c_{n}$ is in $C_{c}(G - G_{k_{n}})$, we may find a compact set 
  $L_{n} \subseteq H- H_{k_{n}}$ such 
  that the support of $\delta(b_{n} - c_{n})$ is in $L_{n}$.
  We claim that $\pi( s(L_{n}))$ is disjoint from 
  $\pi( s(K_{n}))$. Recall that $k_{n}$ was chosen so that
    $diam_{G}( \pi^{-1}\{ \pi(x) \}) > k_{n}^{-1}$, for all $x$ in 
     $ s(K_{n})$. If $x$ is in $L_{n}$, then 
     $diam_{G}(\pi^{-1}\{ \pi(x) \}) \leq l_{n}^{-1} < k_{n}^{-1}$.
     The fact that  $s$ is a  contraction implies that the 
     same conclusion holds for $s(x)$. This establishes the claim.
     
      We let $h_{n}: (G')^{0} \rightarrow [0,1]$ be continuous, 
      compactly supported function   
  which is  identically one on  $\pi( s(K_{n}))$ and identically zero
  on   $\pi(s(L_{n}))$. We next claim that $b_{n} - c_{n}h_{n}$ is in 
  $C_{c}(G')$. It suffices to check that 
  \[
  0 = \delta(b_{n} -c_{n}h_{n})  = \delta(b_{n}) - \delta(c_{n})h_{n},
  \]
  as $h_{n}$ is in $C_{c}((G')^{0})$. The function on the right is
  clearly supported in $K_{n} \cup L_{n}$. As $h_{n} =1$ on $\pi(s(K_{n}))$, 
  we have 
  \[
 \left( \delta(b_{n}) - \delta(c_{n})h_{n} \right)|_{K_{n}}
  = \delta(b_{n})|_{K_{n}} - \delta(c_{n})|_{K_{n}}
  = \delta(b_{n}-c_{n})|_{K_{n}} = 0
  \]
  as $K_{n} \subseteq H_{k_{n}}$ where $b_{n}$ and $c_{n}$ agree.
  On the other hand, we also know that $h_{n}$ is zero on $\pi(s (L_{n}))$ so
    \[
 \left( \delta(b_{n}) - \delta(c_{n})h_{n} \right)|_{L_{n}} 
 = \delta(b_{n})|_{L_{n}} - 0
  = 0
  \]
     since $L_{n} \subseteq H_{l_{n}}-H_{k_{n}}$.
     
  Finally, we have 
  \[
  \Vert c_{n} h_{n} \Vert_{r} \leq  \Vert c_{n}  \Vert_{r}
   \leq C \Vert a_{n} \Vert_{r} \leq 
   C K \Vert \delta(b_{n}) \Vert_{r}
  \]
  which tends to zero. Hence $b_{n} - c_{n}h_{n}$ 
  is in $C_{c}(G')$ and also converges to $b$.  
  \end{proof}
  
We will finish by giving some special examples of factor maps where
all of our hypotheses are satisfied.

 We  assume that $X$
 is a locally compact metric space with an action by the locally compact
metric 
group $\Gamma$ by homeomorphisms. We suppose that $ Y$ is a 
$\Gamma$-semi-invariant set (in the sense of \ref{5:105})
and that $\Gamma_{Y} \backslash \Gamma$
is discrete. Let us also suppose that, for any $\epsilon > 0$, the set
\[
\{ \Gamma_{Y} \gamma \in \Gamma_{Y} \backslash \Gamma
 \mid diam_{X}(Y\gamma) > \epsilon \}.
\]
is finite. 

We define the space $X'$ as the quotient of $X$ which identifies each set 
$Y \gamma, \gamma \in \Gamma$ to a single point. We let $\pi$ denote
the quotient map, $\pi: X \rightarrow X'$. From the hypotheses on the set $Y$
and on $\Gamma_{Y}$, $X'$ is locally compact and Hausdorff. 
There is an obvious
action of $\Gamma$ on $X'$ by homeomorphisms and we have a 
factor map, which we also denote by $\pi$ from $G = X \rtimes \Gamma$ 
to $G' =X' \rtimes \Gamma$.

\begin{thm}
\label{7:160}
Let $X, \Gamma, Y$ be as above and satisfy the hypotheses there.
Let $H, H'$ be the groupoids of of \ref{7:50}. 
The following hold.
\begin{enumerate}
\item
We have 
\[
C^{*}_{r}(H') \cong 
C_{0}(\Gamma_{Y} \backslash \Gamma) \rtimes_{r} \Gamma
\]
  and hence
 is Morita equivalent to $C^{*}_{r}(\Gamma_{Y})$ while 
 \[
 C^{*}_{r}(H) \cong 
 C_{0}( \cup_{\Gamma_{Y}\gamma \in \Gamma_{Y} \backslash \Gamma } Y\gamma) \rtimes_{r}
 \Gamma
 \]
 and hence is Morita equivalent to $C_{0}(Y) \rtimes_{r} \Gamma_{Y}$.
\item 
The factor map $\pi$ is regular \ref{7:60}.
\item If the action of $\Gamma_{Y}$ on $Y$ admits an 
invariant probability measure, then
$\pi$ is measure regular.
\item 
If there is a short exact sequence
   \[
0 \rightarrow C_{0}(X - Y) \rtimes_{r} \Gamma_{Y} 
  \rightarrow C_{0}(X ) \rtimes_{r} \Gamma_{Y} 
   \rightarrow C_{0}( Y) \rtimes_{r} \Gamma_{Y} 
   \rightarrow 0
  \]
then each of the closed sets $Cl(H_{n}) \subseteq H$ has the $C$-extension
 property, for any $C \geq 1$.
\end{enumerate}
\end{thm}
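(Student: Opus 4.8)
The plan is to prove the four statements in order, with parts (1)--(3) being essentially bookkeeping against the general machinery already established, and part (4) being the genuine work. For part (1), I would identify $H'$ and $H$ explicitly as transformation groupoids. By definition, $H' = \{ x' \in G' \mid \# \pi^{-1}\{x'\} > 1 \}$, and under the quotient $\pi: X \to X'$ the fibre over a point is either a singleton (off the collapsed part) or one of the sets $Y\gamma$. Using the hypothesis that $\Gamma_Y \backslash \Gamma$ is discrete and the ``$\epsilon$-finiteness'' condition on $\{ \Gamma_Y\gamma \mid \mathrm{diam}_X(Y\gamma) > \epsilon \}$, one checks that the metric $d_{H'}$ of Definition \ref{7:50} makes the unit space $(H')^0$ homeomorphic to $\Gamma_Y \backslash \Gamma$ (each coset $\Gamma_Y\gamma$ corresponding to the collapsed point $[Y\gamma]$), with $\Gamma$ acting by right translation; and that the metric $d_H$ makes $H^0$ homeomorphic to $\bigcup_{\Gamma_Y\gamma} Y\gamma$ with the inductive-limit-type topology, again with the $\Gamma$-action. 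This gives the isomorphisms $C^*_r(H') \cong C_0(\Gamma_Y\backslash\Gamma) \rtimes_r \Gamma$ and $C^*_r(H) \cong C_0(\bigcup Y\gamma) \rtimes_r \Gamma$; the Morita equivalences $C_0(\Gamma_Y\backslash\Gamma)\rtimes_r\Gamma \sim_M C^*_r(\Gamma_Y)$ and $C_0(\bigcup Y\gamma)\rtimes_r\Gamma \sim_M C_0(Y)\rtimes_r\Gamma_Y$ are the standard ``induced action'' Morita equivalence for a transformation groupoid built from a transitive $\Gamma$-set $\Gamma_Y\backslash\Gamma$, exactly as in Theorem \ref{5:110} and \cite{MRW:grpeq}.

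For part (2), regularity in the sense of Definition \ref{7:60} amounts to a uniform continuity statement for the map $x' \mapsto \pi^{-1}\{x'\}$ at points of $H'$: given $x' = [Y\gamma]$ and $\epsilon>0$, I must produce an open $U' \ni x'$ in $G'$ so that for $y' \in U'$ either $d_G(\pi^{-1}\{x'\},\pi^{-1}\{y'\})<\epsilon$ or $\mathrm{diam}_G(\pi^{-1}\{y'\})<\epsilon$. Points $y'$ near $x'$ in $G'$ are either collapsed points $[Y\gamma']$ with $Y\gamma'$ close to $Y\gamma$ (first alternative holds, using continuity of the $\Gamma$-action and that $Y$ is closed) or un-collapsed points, i.e.\ single orbit points $x\gamma'$ with $x\gamma'$ close to $Y\gamma$; for the latter, since such a point has a singleton $\pi$-fibre inside $G$, its $\pi$-fibre in $H$ either has small diameter or the point simply isn't in $H'$ at all. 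The $\epsilon$-finiteness hypothesis is what guarantees that only finitely many cosets contribute ``large'' fibres, so the bad behaviour is confined to a neighbourhood one can shrink away. For part (3), if $\Gamma_Y$ admits an invariant probability measure $m$ on $Y$, I would define $\mu: (H')^0 \to M(H^0)$ by pushing $m$ forward onto each collapsed fibre $Y\gamma \subseteq H^0$ (this is well-defined on cosets because $m$ is $\Gamma_Y$-invariant and $Y\gamma$ depends only on $\Gamma_Y\gamma$), and verify the two conditions of Proposition \ref{7:110}: condition (1) is immediate, and condition (2) (the $r$--$s$ compatibility) follows from $\Gamma$-equivariance of the family together with $\Gamma_Y$-invariance of $m$. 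Continuity in the weak-$*$ topology follows from continuity of the $\Gamma$-action. Thus $\pi$ is measure regular.

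Part (4) is where the real effort lies, and I expect it to be the main obstacle. The claim is that, assuming the short exact sequence
\[
0 \to C_0(X-Y)\rtimes_r\Gamma_Y \to C_0(X)\rtimes_r\Gamma_Y \to C_0(Y)\rtimes_r\Gamma_Y \to 0,
\]
each closed set $Cl(H_n) \subseteq H$ has the $C$-extension property of Definition \ref{5:60} for every $C>1$. The approach mirrors the proof of the last part of Theorem \ref{6:250}: one localizes. Fix $n$; the set $H_n = \pi^{-1}(H_n')$ consists of those orbit-arcs lying over the finitely many cosets $\Gamma_Y\gamma$ with $\mathrm{diam}_X(Y\gamma) > n^{-1}$, so $Cl(H_n)$ is ``finite'' in the coset direction and one can choose a precompact open $Y \subseteq U \subseteq X$ whose finitely many relevant translates $U\gamma$ are pairwise disjoint. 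Restricting to the reduction $(X\rtimes\Gamma)_U^U$ realizes the relevant corner of $C^*_r(H)$ as a matrix algebra (indexed by the finitely many cosets) over the hereditary subalgebra $C^*_r((X\rtimes\Gamma_Y)_U^U)$ of $C_0(X)\rtimes_r\Gamma_Y$, and similarly $C^*_r(G)$ restricted there involves $C_0(X)\rtimes_r\Gamma_Y$. A function $f \in C_c(H)$ supported in $Cl(H_n)$ then corresponds to an element of a matrix algebra over $C^*_r((X\rtimes\Gamma_Y)_Y^Y)$, i.e.\ (via the quotient map in the hypothesized exact sequence) over $C_0(Y)\rtimes_r\Gamma_Y$; lift it through the surjection to an element $\tilde f$ over $C_0(X)\rtimes_r\Gamma_Y$, and exactness gives
\[
\Vert f\Vert_r = \inf\{ \Vert \tilde f + g\Vert_r \mid g \in C_c((X-Y)\times\Gamma_Y)\},
\]
so for $C>1$ one picks $g$ with $\Vert \tilde f + g\Vert_r \le C\Vert f\Vert_r$; then push $\tilde f + g$ back into the hereditary corner by multiplying on both sides by a cutoff function equal to $1$ on the support and supported in $U$, and interpret the result as the desired extension $\bar f \in C_c(G)$ agreeing with $f$ on $Cl(H_n)$. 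The main obstacle — and the step I would spend the most care on — is the bookkeeping that identifies the relevant corners of $C^*_r(H)$, $C^*_r(G)$, $C^*_r((X\rtimes\Gamma_Y)_U^U)$, and the matrix algebras, and the verification that the cutoff-and-push-back operation genuinely lands in $C_c(G)$ and does not disturb the values on $Cl(H_n)$; this is exactly the delicate point flagged in the discussion after Definition \ref{5:60} about $h(\tilde f^*\tilde f)$ living in $C^*_r$ but not $C_c$, and it is handled there (and in Theorem \ref{6:250}) by allowing $C>1$ rather than $C=1$.
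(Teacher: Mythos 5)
Your proposal follows essentially the same route as the paper: part (1) is read off from the definitions together with the Morita equivalence results of \cite{Ren:LNM} and \cite{MRW:grpeq}, part (2) uses the sum metric and the finiteness of the cosets with large-diameter translates of $Y$, part (3) pushes the $\Gamma_Y$-invariant measure forward onto each collapsed fibre and checks the conditions of Proposition \ref{7:110}, and part (4) is the localization-and-lifting argument modelled on the last part of Theorem \ref{6:250}, which is exactly what the paper invokes (while omitting the details). Your write-up is correct and, if anything, more explicit than the paper's own proof.
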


\begin{proof}
The descriptions of $H' \cong \Gamma_{Y} \backslash \Gamma \rtimes \Gamma$ and 
$H = C_{0}(Y \times \Gamma_{Y} \backslash \Gamma) \rtimes \Gamma$ are immediate
from the definitions. The other parts of the first part follow from
results of \cite{Ren:LNM}.

To prove the factor map is regular, we use the sum metric 
\[
d_{G}((x,\gamma), (x', \gamma')) = d_{X}(x,x') + d_{\Gamma}(\gamma,\gamma')
\]
where $d_{X}, d_{\Gamma}$ are metrics on $X$ and $\Gamma$, respectively.
Let us verify regularity. Let $\epsilon > 0$ and $(\pi(Y\gamma_{1}), \gamma_{2})$
in $H'$ be given. 
There are only finite many sets of the form $Y \gamma$ with diameter less than 
$\epsilon$, so we may choose 
$\pi(Y\gamma_{1}) \subseteq U$,  an open set in $X'$ which is disjoint 
from the images of these other than $\pi(Y\gamma_{1})$,
 which is a finite set in $X'$. Letting
 $U' = U \times \Gamma_{Y}\gamma_{2}$, there is only point in 
 $U'$ with diameter of the pre-image greater than $\epsilon$, which 
 is the single
 point $\pi(Y\gamma_{1}) \times \Gamma_{Y}\gamma_{2}$ and the desired
 conclusion is trivial.
 
Let $\mu$ be a $\Gamma_{Y}$-invariant measure on $Y$. It is a simple 
matter to check that 
\[
\mu_(Y \gamma)(E) = \mu(E \gamma^{-1}), 
\]
for $\gamma \in \Gamma, E \subseteq Y \gamma$, is a well-defined 
function from $(H')^{0}$ to $M(H^{0})$ satisfying he conditions of 
Proposition \ref{7:110}, so $\pi$ is measure regular.

The proof of the last statement is very similar to that
of Theorem \ref{6:250} and we omit the details.
\end{proof}

\begin{cor}
\label{7:170}
Let $X, \Gamma, Y$ be as above and satisfy the hypotheses there.
Assume that the action of $\Gamma_{Y}$ on $Y$ admits an 
invariant probability measure and that
there is a short exact sequence
   \[
0 \rightarrow C_{0}(X - Y) \rtimes_{r} \Gamma_{Y} 
  \rightarrow C_{0}(X ) \rtimes_{r} \Gamma_{Y} 
   \rightarrow C_{0}( Y) \rtimes_{r} \Gamma_{Y}, 
   \rightarrow 0 
  \]
  then
  \[
  K_{*}(C_{0}(X') \rtimes_{r} \Gamma;C_{0}(X) \rtimes_{r} \Gamma ) 
  \cong K_{*}(C^{*}_{r}(\Gamma_{Y}); C_{0}(Y) \rtimes_{r} \Gamma_{Y}).
  \]
\end{cor}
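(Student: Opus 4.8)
The plan is to assemble Corollary \ref{7:135}, Theorem \ref{7:140} and Theorem \ref{7:160}, and then to push the resulting isomorphism through the two Morita equivalences recorded in Theorem \ref{7:160}(1). Throughout, the cocycle is trivial, $G = X \rtimes \Gamma$ and $G' = X' \rtimes \Gamma$, so that $C^{*}_{r}(G) = C_{0}(X) \rtimes_{r} \Gamma$, $C^{*}_{r}(G') = C_{0}(X') \rtimes_{r} \Gamma$, and $\delta$ is the $\ast$-derivation $i[\,\cdot\,,F]$ built in this section from the flip operator $F$ on $\mathcal{H}$.

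First I would check the running hypotheses. By Theorem \ref{7:160}(2), $\pi$ is regular; by Theorem \ref{7:160}(3), since the $\Gamma_{Y}$-action on $Y$ admits an invariant probability measure, $\pi$ is measure regular. Corollary \ref{7:135} then gives
\[
K_{*}\!\left(\ker(\delta)\cap C^{*}_{r}(G);\,C^{*}_{r}(G)\right) \;\cong\; K_{*}\!\left(C^{*}_{r}(H');\,C^{*}_{r}(H)\right).
\]
The assumed short exact sequence is exactly the hypothesis of Theorem \ref{7:160}(4), so each closed set $Cl(H_{n})$ has the $C$-extension property for some $C\geq 1$; hence Theorem \ref{7:140} applies and yields $C^{*}_{r}(G)\cap\ker(\delta) = C^{*}_{r}(G') = C_{0}(X')\rtimes_{r}\Gamma$. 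Substituting this into the display, the left-hand group becomes $K_{*}(C_{0}(X')\rtimes_{r}\Gamma;\,C_{0}(X)\rtimes_{r}\Gamma)$, and the Corollary is reduced to identifying $K_{*}(C^{*}_{r}(H');C^{*}_{r}(H))$ with $K_{*}(C^{*}_{r}(\Gamma_{Y});\,C_{0}(Y)\rtimes_{r}\Gamma_{Y})$.

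For the final step I would invoke Theorem \ref{7:160}(1): $C^{*}_{r}(H')\cong C_{0}(\Gamma_{Y}\backslash\Gamma)\rtimes_{r}\Gamma$ is Morita equivalent to $C^{*}_{r}(\Gamma_{Y})$, and $C^{*}_{r}(H)\cong C_{0}(\bigcup_{\Gamma_{Y}\gamma} Y\gamma)\rtimes_{r}\Gamma$ is Morita equivalent to $C_{0}(Y)\rtimes_{r}\Gamma_{Y}$, both by Green's imprimitivity theorem applied to the induction space $\Gamma$ with its commuting $\Gamma_{Y}$- and $\Gamma$-actions. The essential observation is that these two equivalences are compatible with the inclusion $C^{*}_{r}(H')\subseteq C^{*}_{r}(H)$ of Theorem \ref{7:40}: this inclusion is the one induced by the $\Gamma$-equivariant surjection $\bigcup_{\Gamma_{Y}\gamma}Y\gamma\to\Gamma_{Y}\backslash\Gamma$, and the Green bimodule for the larger pair restricts to the Green bimodule for the smaller pair (the former is obtained from the latter by tensoring the induction data with $C_{0}(Y)$ in place of $\mathbb{C}$). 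Using the mapping-cone description of relative $K$-theory from \cite{Ha:relK}, one tensors the mapping cone of $C^{*}_{r}(H')\hookrightarrow C^{*}_{r}(H)$ with this imprimitivity bimodule to obtain a Morita equivalence with the mapping cone of the corresponding inclusion $C^{*}_{r}(\Gamma_{Y})\hookrightarrow C_{0}(Y)\rtimes_{r}\Gamma_{Y}$ (which, when $Y$ is compact, is the unital inclusion $\mathbb{C}\rtimes_{r}\Gamma_{Y}\subseteq C(Y)\rtimes_{r}\Gamma_{Y}$), and taking $K$-theory gives the asserted isomorphism.

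I expect the one genuinely non-formal point to be this last compatibility: showing that the Morita equivalence is an equivalence of the \emph{pair} of inclusions, not merely of the two pairs of endpoint algebras separately, so that it descends to mapping cones. I would treat it by passing to the linking-algebra picture, or equivalently by noting that both Rieffel bimodules are completions of $C_{c}$-functions on the same induction groupoid and that the groupoid inclusion $H'\hookrightarrow H$ intertwines the two bimodule structures. Everything else is a direct citation of results established earlier in this section together with standard imprimitivity theory.
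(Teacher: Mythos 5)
Your proposal is correct and follows exactly the route the paper intends: the corollary is stated without an explicit proof precisely because it is the assembly of Corollary \ref{7:135}, Theorem \ref{7:140} and Theorem \ref{7:160} that you describe, with the final identification carried out by the two Morita equivalences of Theorem \ref{7:160}(1). Your extra care about the compatibility of the two imprimitivity bimodules with the inclusion $C^{*}_{r}(H') \subseteq C^{*}_{r}(H)$ (so that the equivalence descends to the mapping cones, hence to relative $K$-theory) addresses a point the paper leaves entirely implicit, and the linking-algebra argument you sketch is a reasonable way to settle it.
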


\bibliographystyle{amsplain}

\end{document}